\def\H{{\mathcal{H}}}
\def\Sep{{\rm Sep}}
\def\I{{\mathcal{I}}}
\def\top{{\rm top}}
\def\tS{{\tilde S}}
\def\C{{\mathbb C}}
\def\wt{{\rm wt}}
\def\Gr{{\rm Gr}}
\def\X{{\mathcal{X}}}
\def\R{{\mathbb R}}
\def\E{{\mathcal{E}}}
\def\L{{\mathcal{L}}}
\def\M{{\mathcal{M}}}
\def\I{{\mathcal{I}}}
\def\Z{{\mathbb Z}}
\def\S{{\mathcal{S}}}
\def\spn{{\rm span}}
\def\NC{{\mathcal{NC}}}
\def\P{{\mathbb P}}
\def\uncrossed{{\rm uncrossed}}
\def\tGamma{\tilde \Gamma}
\def\tF{\tilde F}
\def\Bound{{\rm Bound}}
\def\Elec{{\rm Elec}}
\def\tsigma{\tilde \sigma}
\def\oPi{\mathring{\Pi}}
\def\oX{{\mathring{\X}}}
\def\J{{\mathcal {J}}}
\def\A{{\mathcal{A}}}
\newcommand{\defn}[1]{{\bf #1}}
\newtheorem{theorem}{Theorem}
\newtheorem{proposition}[theorem]{Proposition}
\newtheorem{prop}[theorem]{Proposition}
\newtheorem{corollary}[theorem]{Corollary}
\newtheorem{cor}[theorem]{Corollary}
\newtheorem{lemma}[theorem]{Lemma}
\newtheorem{definition}[theorem]{Definition}
\theoremstyle{remark}
\newtheorem{remark}[theorem]{Remark}
\numberwithin{theorem}{section}
\numberwithin{equation}{section}
\numberwithin{figure}{section}
\newtheorem{example}[theorem]{Example}
\begin{document}
\title{Electroid varieties and a compactification of the space of electrical networks}
\author{Thomas Lam}
\address{Department of Mathematics, University of Michigan,
2074 East Hall, 530 Church Street, Ann Arbor, MI 48109-1043, USA}
\email{tfylam@umich.edu}\thanks{T.L. was supported by NSF grant DMS-1160726.}
\begin{abstract}
We construct a compactification of the space of circular planar electrical networks studied by Curtis-Ingerman-Morrow \cite{CIM} and Colin de Verdi\`{e}re-Gitler-Vertigan \cite{CGV}, using cactus networks.  We embed this compactification as a linear slice of the totally nonnegative Grassmannian, and relate Kenyon and Wilson's grove measurements to Postnikov's boundary measurements.  Intersections of the slice with the positroid stratification leads to a class of electroid varieties, indexed by matchings.  The uncrossing partial order on matchings arising from electrical networks is shown to be dual to a subposet of affine Bruhat order.  The analogues of matroids in this setting are certain distinguished collections of non-crossing partitions.
\end{abstract}
\maketitle
\tableofcontents
\section{Introduction}
\subsection{Circular planar electrical networks and groves}
A \defn{circular planar electrical network} is a weighted undirected graph $\Gamma$ embedded into a disk, with distinguished boundary vertices on the boundary of the disk.  Each edge is thought of as a resistor, and the weight of the edge is the conductance of that resistor.  The electrical properties of $\Gamma$ are encoded in a response matrix
$$
\Lambda(\Gamma): \R^{\#\text{boundary vertices}} \to  \R^{\#\text{boundary vertices}}
$$
which sends a vector of voltages at the boundary vertices, to the vector of currents induced through the same vertices.  Two electrical networks are electrically-equivalent if they have the same response matrix.  Circular planar electrical networks were studied thoroughly by Curtis-Ingerman-Morrow \cite{CIM} and Colin de Verdi\`{e}re-Gitler-Vertigan \cite{CGV}, who classified such $\Gamma$ up to electrical-equivalence, and showed that the space of their response matrices decomposes into a disjoint union of cells $\R_{>0}^d$.

In \cite{KW}, Kenyon and Wilson studied \defn{grove measurements} $L_\sigma(\Gamma)$, which count spanning subforests of $\Gamma$ inducing a particular boundary partition $\sigma$.  Our point of view is that the grove measurements can be used as projective coordinates on the space of electrical networks, giving a map
$$
\L: \Gamma \mapsto (L_\sigma(\Gamma)) \in \P^{\NC_n}
$$
where $\P^{\NC_n}$ is the projective space with coordinate labeled by non-crossing partitions $\NC_n$ on $n$ objects, where $n$ is the number of boundary vertices.  A natural question is: what is the closure $E_n$ (in the Hausdorff topology) of the image in $\P^{\NC_n}$, and does it possess a natural stratification?

\subsection{Cactus networks and the compactification $E_n$}
Roughly speaking, a \defn{cactus network} is an electrical network where some boundary vertices have been identified according to a non-crossing partition (so the boundary now looks like a cactus).  We show (Theorem \ref{thm:cactus}) that each point in $E_n$ is uniquely represented by an electrical-equivalence class of cactus networks.  The space of cactus networks has a stratification 
$$
E_n =\bigsqcup_{\tau \in P_n} E_\tau
$$
into cells $E_\tau$ labeled by the set $P_n$ of medial pairings (that is, matchings) on $2n$ objects.  Each point $\L \in E_\tau$ has the same vanishing and non-vanishing pattern of grove coordinates $L_\sigma$.  In other words, the \defn{electroid}
$$
\E(\L):= \{ \sigma \in \NC_n \mid L_\sigma \neq 0\} \subset \NC_n
$$
is constant in each $E_\tau$.  We remark that the (noncompact) space of circular planar electrical networks studied in \cite{CIM} and \cite{CGV} has a stratification indexed by only {\it some} matchings, which is combinatorially unnatural.

\subsection{An embedding into the totally nonnegative Grassmannian}
There is a well known analogy between the properties of the space of electrical networks, and spaces of totally positive matrices.  We amplify this analogy by constructing an embedding (Theorem \ref{thm:realizability})
$$
\iota: E_n \hookrightarrow \Gr(n-1,2n)_{\geq 0}
$$
of the space of electrical networks into the totally nonnegative Grassmannian of $(n-1)$-planes in $2n$-space, studied by Postnikov \cite{Pos}.  On the level of graphs, the map is induced by a version of the \defn{generalized Temperley's trick} of Kenyon, Propp, and Wilson \cite{KPW}, adapted to our situation.  For each cactus network $\Gamma$ on $n$ boundary vertices, we obtain a planar bipartite graph $N(\Gamma)$ embedded into a disk with $2n$ boundary vertices.  

The space of electrical networks is essentially generated by the combinatorial operations of adding boundary spikes and boundary edges, studied by Curtis-Ingerman-Morrow \cite{CIM}, and by Lam and Pylyavskyy \cite{LP}.  Under the above embedding, these operations correspond to adding boundary bridges to $N(\Gamma)$.  This observation gives rise to a new representation (Proposition \ref{prop:elecbraid}) of the electrical braid relation studied in \cite{LP}.  

We give (Theorem \ref{thm:concordant}) linear relations between Postnikov's \defn{boundary measurements} $\Delta_I(N(\Gamma))$, that count almost perfect matchings in $N(\Gamma)$, and the grove measurements $L_\sigma(\Gamma)$:
$$
\Delta_I(N(\Gamma)) = \sum_{\sigma \in \E(I)} L_\sigma(\Gamma)
$$
where $\E(I) \subset \NC_n$ denotes the set of non-crossing partitions that are \defn{concordant} (defined in Section \ref{sec:concordant}) with a subset $I \in \binom{[2n]}{n-1}$.  Thus we obtain
$$
\iota(E_n) \subset \Gr(n-1,2n) \cap \H =: \X
$$
where $\H \subset \P^{\binom{[2n]}{n-1}}$ is a linear subspace of Pl\"ucker space.  One of our main theorems (Theorem \ref{thm:realizability}) is that $\iota(E_n) = \X \cap \Gr(n-1,2n)_{\geq 0}$, so that each totally nonnegative point in $\X$ is representable by a cactus network.  The Pl\"ucker relations for the Grassmannian also give rise to new quadratic relations (Proposition \ref{prop:quadratic}) for grove measurements.  These relations suffice to cut out the Zariski-closure of $E_n$ from $\P^{\NC_n}$, set-theoretically.

\subsection{Electroid varieties and positroid varieties}
The Grassmannian has a stratification $\Gr(n-1,2n) = \bigsqcup_f \oPi_f$ by \defn{positroid varieties} studied by Postnikov \cite{Pos}, and by Knutson, Lam, and Speyer \cite{KLS}.  Positroid varieties are labeled by a set $\Bound(n-1,2n)$ of affine permutations called \defn{bounded affine permutations}.  Each matching $\tau \in P_n$ naturally gives rise to a bounded affine permutation $f_\tau \in \Bound(n-1,2n)$.  We define the \defn{electroid varieties} by 
$$
\oX_{f_\tau} := \X \cap \oPi_{f_\tau}
$$
and show (Theorem \ref{thm:stratification}) that the intersections of $\X$ with other positroid strata are empty.  The totally nonnegative part $(\oX_{f_\tau})_{\geq 0}:= \oX_{f_\tau} \cap \Gr(n-1,2n)_{\geq 0}$ of the electroid variety is identified with $E_\tau$.  The partial order on matchings $P_n$ induced by the closure order on the $E_\tau$ has been studied by Alman, Lian and Tran \cite{ALT}, by Kenyon \cite{Ken} and by Huang, Wen and Xie \cite{HWX}.  We show (Theorem \ref{thm:poset}) that this partial order is dual to an induced subposet of affine Bruhat order.  In \cite{Lammatch}, we use this result to show that the poset $\hat P_n$, obtained from $P_n$ by adjoining a minimum element, is Eulerian.

Bounded affine permutations $f \in \Bound(n-1,2n)$ are in bijection with sequences $(I_1,I_2,\ldots,I_{2n})$ of subsets called \defn{Grassmann necklaces} \cite{Pos}.  Similarly, matchings $\tau \in P_n$ on $[2n]$ are in bijection with sequences $\Sigma(\tau)=(\sigma^{(1)},\ldots,\sigma^{(2n)})$ of non-crossing partitions, called \defn{partition necklaces} (Section \ref{sec:partnecklace}).  Non-crossing partitions are in bijection with Dyck paths, and inherit a dominance partial order that corresponds to one Dyck path always staying underneath another.  In analogy with a theorem of Oh \cite{Oh} concerning positroids (those matroids that arise from the totally nonnegative Grassmannian), we show (Theorem \ref{thm:Ohelectroid}) that the electroid $\E(\tau)$ is an intersection of a number of cyclically rotated order ideals on $\NC_n$, with respect to this partial order.  In summary, we have the following list of analogies:
\begin{center}
\begin{tabular}{|c |c|}
\hline
Planar bipartite graph $N$ & Cactus network $\Gamma$ \\
\hline
\hline
Almost perfect matchings in $N$ & Groves in $\Gamma$ \\
\hline
Pl\"ucker space $\P^{\binom{[n]}{k}}$ & Non-crossing partition space $\P^{\NC_n}$ \\
\hline
Grassmannian $\Gr(k,n) \subset \P^{\binom{[n]}{k}}$ & Zariski closure of $E_n \subset \P^{\NC_n}$\\
\hline
Alternating strand diagram & Medial graph \\
\hline
Bounded affine permutations $f$ & Matchings $\tau$ \\
\hline
Bruhat order on bounded affine permutations & ``Uncrossing'' partial order on matchings \\
\hline
Subsets $I \in \binom{[n]}{k}$ & Non-crossing partitions $\sigma \in \NC_n$ \\
\hline
Positroids $\M \subset \binom{[n]}{k}$ & Electroids  $\E \subset \NC_n$ \\
\hline
Grassmann necklaces $\I=(I_1,\ldots,I_n)$ & Partition necklaces $\Sigma = (\sigma^{(1)},\ldots,\sigma^{(2n)})$ \\
\hline
$GL_{2n}$ & electrical Lie group $EL_{2n}$\\
\hline
\end{tabular}
\end{center}

Our work is related to Henriques and Speyer's work \cite{HS} on the cube recurrence and isotropic Grassmannian via Fock and Goncharov's cluster $\A$-$\X$-variety duality, studied in a situation related to ours by Goncharov and Kenyon \cite{GK}.  We shall return to this point in future work.

\bigskip

{\bf Acknowledgements.}  This project began after several long conversations with Alex Postnikov, who impressed upon me the naturality of indexing cells in the space of electrical networks with all medial pairings, not just some of them.  The idea that the boundary spike and boundary edge generators that I studied with Pylyavskyy could be interpreted in terms of the TNN Grassmannian was obtained in conversations with him.

My thinking about electrical networks has to a large extent been shaped by the ideas of Pasha Pylyavskyy, and I am grateful to him for all the ideas he has shared.  I also benefitted greatly from a number of conversations with Rick Kenyon, and from the hospitality of ICERM which led to these conversations.

I also thank David Speyer for a number of comments on an earlier version of this work, and Yu-tin Huang for explaining the relation to ABJM scattering amplitudes.

\section{Circular planar electrical networks}\label{sec:elec}
We will use the notation $[n]:=\{1,2,\ldots,n\}$, and $\binom{[n]}{k}$ will mean the set of $k$-element subsets of $[n]$.  

We refer the reader to \cite{CIM,CGV,KW} for the material of this section.  

\subsection{Electrical networks and response matrices} \label{sec:response}
For our purposes, a \defn{circular planar electrical network} (or just electrical network) is a finite weighted undirected graph $\Gamma$ embedded into a disk, where the vertex set is divided into the {\it boundary} vertices and the {\it interior} vertices.  The boundary vertices are denoted $\bar 1, \bar 2, \ldots, \bar n$ and are arranged in clockwise order on the boundary of a disk.

\begin{example}
A circular planar electrical network on $\{\bar 1,\bar 2,\bar 3,\bar 4,\bar 5\}$.  Unlabeled edges are assumed to have weight $1$.
\begin{center}
\begin{tikzpicture}[scale = 0.5]
\node at (0:4.6) {$\bar 5$};
\node at (72:4.6) {$\bar 4$};
\node at (144:4.6) {$\bar 3$};
\node at (216:4.6) {$\bar 2$};
\node at (288:4.6) {$\bar 1$};
\draw (0,0) circle (4cm);
\draw (288:4) -- node [left] {$2$} (270:2);
\draw (270:2) -- node [left] {$5$} (100:2);
\draw (270:2) -- (0:4);
\draw (100:2) -- (216:4);
\draw (100:2) -- (72:4);
\filldraw[black] (0:4) circle (0.1cm);
\filldraw[black] (72:4) circle (0.1cm);
\filldraw[black] (144:4) circle (0.1cm);
\filldraw[black] (216:4) circle (0.1cm);
\filldraw[black] (288:4) circle (0.1cm);
\filldraw[black] (100:2) circle (0.1cm);
\filldraw[black] (270:2) circle (0.1cm);
\end{tikzpicture}
\end{center}
\end{example}

The weight $w(e)$ of an edge is to be thought of as the conductance of the corresponding resistor, and is generally taken to be a positive real number.  Note that if $w(e) = 0$, we may just remove the edge, and if $w(e) = \infty$ we may glue together the endpoints of the edge.

We will think of electrical current as flowing along edges, and each vertex to have a voltage.  The axioms of electricity can be summarized by two laws.  {\it Kirchhoff's law} says that for any interior vertex the total current that flows into the vertex is equal to the total current that flows out.  {\it Ohm's law} says that if $e = (u,v)$ is an edge then we have
$$
w(e)(V(u) - V(v)) = I(e)
$$
where $w(e)$ is the conductance of the edge, $V(u), V(v)$ are the voltages of the vertices, and $I(e)$ is the current flowing from $u$ to $v$.


The \defn{response matrix} $\Lambda(\Gamma)$ is the $n \times n$ matrix defined as follows.  If $v \in \R^n$ is a vector of voltages assigned to the boundary nodes, then $\Lambda(\Gamma)$ is the vector of currents flowing into the network at each of the boundary nodes.  This vector of currents can be determined using Ohm's law and Kirchhoff's law by solving a system of linear equations.  We declare $\Gamma$ and $\Gamma'$ to be \defn{electrically-equivalent} if $\Lambda(\Gamma) = \Lambda(\Gamma')$.

\subsection{Groves}
A \defn{grove} $F$ on $\Gamma$ is a spanning subforest (that is, an acyclic subgraph that uses all the vertices) such that each component $F_i \subset F$ is connected to the boundary.  The \defn{boundary partition} $\sigma(F)$ is the set partition of $\{\bar 1,\bar 2,\ldots,\bar n\}$ which specifies which boundary vertices lie in the same component of $F$.   Note that since $\Gamma$ is planar, $\sigma(F)$ must be a \defn{non-crossing partition}, also called a \defn{planar set partition}.  We will often write set partitions in the form $\sigma = (\bar a,\bar b,\bar c|\bar d,\bar e|\bar f,\bar g|\bar h)$ or more simply just $(\bar a\bar b\bar c|\bar d\bar e|\bar f\bar g|\bar h)$.  Let $\NC_n$ denote the set of non-crossing partitions on $[\bar n]$.

Each non-crossing partition $\sigma$ on $[\bar n]$ has a dual non-crossing partition on $[\tilde n]$ where by convention $\tilde i$ lies between $\bar i$ and $\overline{i+1}$.   For example $(\bar 1, \bar 4, \bar 6| \bar 2, \bar 3| \bar 5)$ is dual to $(\tilde 1, \tilde 3| \tilde 2| \tilde 4, \tilde 5| \tilde 6)$.

\begin{center}
\begin{tikzpicture}[scale = 0.7]
\draw (0,0) circle (4cm);
\node at (180:4.5) {$\bar 1$};
\node at (120:4.5) {$\bar 2$};
\node at (60:4.5) {$\bar 3$};
\node at (0:4.5) {$\bar 4$};
\node at (300:4.5) {$\bar 5$};
\node at (240:4.5) {$\bar 6$};
\node at (150:4.5) {$\tilde 1$};
\node at (90:4.5) {$\tilde 2$};
\node at (30:4.5) {$\tilde 3$};
\node at (-30:4.5) {$\tilde 4$};
\node at (-90:4.5) {$\tilde 5$};
\node at (-150:4.5) {$\tilde 6$};
\draw (180:4) -- (0:4) -- (240:4) -- (180:4);
\draw (120:4) -- (60:4);
\draw (150:4) -- (30:4);
\draw (-30:4) -- (-90:4);
\filldraw[black] (180:4) circle (0.1cm);
\filldraw[black] (120:4) circle (0.1cm);
\filldraw[black] (60:4) circle (0.1cm);
\filldraw[black] (0:4) circle (0.1cm);
\filldraw[black] (240:4) circle (0.1cm);
\filldraw[black] (300:4) circle (0.1cm);

\filldraw[white] (90:4) circle (0.1cm);
\filldraw[white] (150:4) circle (0.1cm);
\filldraw[white] (30:4) circle (0.1cm);
\filldraw[white] (-30:4) circle (0.1cm);
\filldraw[white] (-90:4) circle (0.1cm);
\filldraw[white] (-150:4) circle (0.1cm);
\draw (90:4) circle (0.1cm);
\draw (-90:4) circle (0.1cm);
\draw (150:4) circle (0.1cm);
\draw (-150:4) circle (0.1cm);
\draw (30:4) circle (0.1cm);
\draw (-30:4) circle (0.1cm);
\end{tikzpicture}
\end{center}

Let $|\sigma|$ denote the number of parts of $\sigma$.  In the above example $|\sigma| = 3$ and $|\tsigma| = 4$.  The following result is straightforward.
\begin{lemma}\label{lem:dualpart}
If $\sigma$ and $\tsigma$ are dual non-crossing partitions, then $|\sigma|+ |\tsigma| = n+1$.
\end{lemma}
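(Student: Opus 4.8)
\textit{Proof proposal.} I would prove this by induction on $n$. First dispose of the case $\sigma = \hat 0_n$ (every block a singleton): then $|\sigma| = n$, and since no block of $\sigma$ separates two of the dual vertices $\tilde 1,\dots,\tilde n$ on the boundary circle, the dual is the one-block partition, so $|\tsigma| = 1$ and the identity holds. For the inductive step assume $\sigma$ has a block of size $\geq 2$; since $\sigma$ is non-crossing, an inclusion-minimal such block is an interval of consecutive indices, so there exist cyclically consecutive boundary vertices with $\bar i \sim_\sigma \overline{i+1}$.

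The key local observation is that $\{\tilde i\}$ is then a singleton block of $\tsigma$: in the combined non-crossing diagram of $\sigma$ on $\{\bar 1,\dots,\bar n\}$ and $\tsigma$ on $\{\tilde 1,\dots,\tilde n\}$, the vertex $\tilde i$ lies between $\bar i$ and $\overline{i+1}$, so a strand from $\tilde i$ to any $\tilde j$ with $j\neq i$ would cross the strand joining $\bar i$ and $\overline{i+1}$. Now perform the contraction move: merge $\bar i$ and $\overline{i+1}$ into a single boundary vertex (legitimate since they are connected) and delete the now-superfluous $\tilde i$. This yields a non-crossing partition $\sigma'$ on $n-1$ indices with $|\sigma'| = |\sigma|$ (a block only loses an element, so the number of blocks is unchanged), and its dual is $\tsigma$ with the singleton block $\{\tilde i\}$ deleted, so $|\tilde{\sigma'}| = |\tsigma| - 1$. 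Applying the inductive hypothesis to the pair $(\sigma',\tilde{\sigma'})$ gives $|\sigma| + (|\tsigma|-1) = (n-1)+1$, which rearranges to the claim. The step that genuinely requires engaging with the definition of the dual partition (rather than just the displayed example) is the assertion that deleting $\{\tilde i\}$ from $\tsigma$ produces exactly the dual of $\sigma'$; I expect this to be the main obstacle, though it is mild: the contraction is a purely local modification of the combined diagram, everything away from the short arc through $\bar i,\tilde i,\overline{i+1}$ is untouched, non-crossingness is preserved, and maximality (coarseness) of the dual lifts in both directions.

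As an alternative, one can argue by Euler's formula in a single step. Draw each block of $\sigma$ as a non-crossing path on its vertices (a forest $F_\sigma$ with $n - |\sigma|$ edges) and each block of $\tsigma$ likewise ($F_{\tsigma}$ with $n - |\tsigma|$ edges), disjointly inside the disk, and adjoin the $2n$ boundary arcs. The resulting connected planar graph has $2n$ vertices and $4n - |\sigma| - |\tsigma|$ edges, hence by $V - E + F = 2$ it has $1 + 2n - |\sigma| - |\tsigma|$ bounded faces. On the other hand $F_\sigma$ alone cuts the open disk into $|\tsigma|$ regions, one per block of $\tsigma$ (this is what the dual records), and inserting $F_{\tsigma}$ subdivides the region attached to a block $\beta$ into $|\beta|$ pieces, for a total of $\sum_{\beta\in\tsigma}|\beta| = n$ bounded faces. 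Equating the two counts gives $|\sigma| + |\tsigma| = n+1$.
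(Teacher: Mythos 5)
The paper offers no proof of this lemma at all (it is dismissed as ``straightforward''), so there is no argument of the paper's to compare yours against; I will assess your two arguments on their own terms.

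Your inductive argument has a genuine gap. The assertion that a non-crossing partition with a block of size at least $2$ must contain two cyclically consecutive vertices $\bar i,\overline{i+1}$ in a common block is false: take $n=4$ and $\sigma = (\bar 1\bar 3|\bar 2|\bar 4)$, which is non-crossing, has a block of size $2$, and has no cyclically adjacent pair in a common block. (The true statement is that every non-crossing partition has a block that is an interval of consecutive indices, but that interval block may be a singleton, as it is here; so ``an inclusion-minimal block of size $\geq 2$ is an interval'' is not correct.) Your case split --- either $\sigma$ is all singletons or the contraction move applies --- therefore does not cover all partitions and the induction stalls. The repair is routine: add a second reduction in which a singleton block $\{\bar j\}$ of $\sigma$ is deleted; this decreases $n$ and $|\sigma|$ by one and leaves $|\tsigma|$ unchanged, since $\widetilde{j-1}$ and $\tilde j$ lie in the same dual block and are merged by the deletion, so the identity is again preserved. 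If $\sigma$ has no singleton block, every block has size at least $2$, the minimal-span block is a genuine interval, and your contraction move applies; with both moves the induction closes.

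Your Euler-formula argument, by contrast, is correct as written, granted the standard description (implicit in the paper's definition-by-example) of $\tsigma$ as the partition of $\tilde 1,\ldots,\tilde n$ according to the connected regions into which a non-crossing drawing of $\sigma$ cuts the disk; that identification is where the content of the lemma lives, and you rightly flag it. Each region meets the boundary circle in at least one arc and hence contains at least one $\tilde\imath$, so the number of regions is exactly $|\tsigma|$, and your two face counts, $1+2n-|\sigma|-|\tsigma|$ versus $\sum_{\beta\in\tsigma}|\beta| = n$, give the identity in one step. I would present this argument as the proof and either drop the induction or patch it as above.
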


If $\sigma$ is a non-crossing partition on $[\bar n]$ we get a non-crossing matching $\tau(\sigma)$ on $[2n]$.  The matching $\tau(\sigma)$ separates $\sigma$ from $\tsigma$.  We fix labellings as follows: the vertex $2i-1$ in $\tau(\sigma)$ lies between $\widetilde{i-1}$ and $\bar i$; the vertex $2i$ lies between $\bar i$ and $\tilde i$.  For our above example, $\sigma = (\bar 1, \bar 4, \bar 6| \bar 2, \bar 3| \bar 5)$ gives $\tau(\sigma) = \{(1,12),(2,7),(3,6),(4,5),(8,11),(9,10)\}$.

\begin{center}
\begin{tikzpicture}[scale = 0.7]
\draw (0,0) circle (4cm);
\foreach \i in {1,...,12}
{
	\node at (225 - \i*30:4.3) {$\i$};
}
\draw[line width = 0.05cm] (225-30:4) .. controls (225- 15:3.7) .. (225-12*30:4);
\draw[line width = 0.05cm] (225-4*30:4) .. controls (225- 4.5*30:3.7) .. (225-5*30:4);
\draw[line width = 0.05cm] (225-2*30:4) -- (225-7*30:4);
\draw[line width = 0.05cm] (225-3*30:4) -- (225-6*30:4);
\draw[line width = 0.05cm] (225-8*30:4) -- (225-11*30:4);
\draw[line width = 0.05cm] (225-9*30:4) .. controls (225- 9.5*30:3.7) .. (225-10*30:4);

\draw[dashed] (180:4) -- (0:4) -- (240:4) -- (180:4);
\draw[dashed] (120:4) -- (60:4);
\draw[dashed] (150:4) -- (30:4);
\draw[dashed] (-30:4) -- (-90:4);
\filldraw[black] (180:4) circle (0.1cm);
\filldraw[black] (120:4) circle (0.1cm);
\filldraw[black] (60:4) circle (0.1cm);
\filldraw[black] (0:4) circle (0.1cm);
\filldraw[black] (240:4) circle (0.1cm);
\filldraw[black] (300:4) circle (0.1cm);

\filldraw[white] (90:4) circle (0.1cm);
\filldraw[white] (150:4) circle (0.1cm);
\filldraw[white] (30:4) circle (0.1cm);
\filldraw[white] (-30:4) circle (0.1cm);
\filldraw[white] (-90:4) circle (0.1cm);
\filldraw[white] (-150:4) circle (0.1cm);
\draw (90:4) circle (0.1cm);
\draw (-90:4) circle (0.1cm);
\draw (150:4) circle (0.1cm);
\draw (-150:4) circle (0.1cm);
\draw (30:4) circle (0.1cm);
\draw (-30:4) circle (0.1cm);
\end{tikzpicture}
\end{center}

\begin{lemma}\label{lem:planarset}
$\sigma \mapsto \tau(\sigma)$ gives a bijection between $\NC_n$ and non-crossing matchings on $[2n]$.  Thus the number of non-crossing partitions on $n$ vertices is equal to the Catalan number $\frac{1}{n+1}\binom{2n}{n}$.
\end{lemma}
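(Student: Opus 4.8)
The plan is to exhibit an explicit inverse to $\sigma\mapsto\tau(\sigma)$ and then quote the classical enumeration of non-crossing perfect matchings on $2n$ points.

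First I would check that $\tau(\sigma)$ really is a non-crossing perfect matching on $[2n]$ by making the picture precise. For each block $B$ of $\sigma$, choose a closed topological disk $R_B$ inside the ambient disk $D$ so that the $R_B$ are pairwise disjoint and $R_B$ meets the boundary circle $\partial D$ in a short arc around $\bar i$ for each $\bar i\in B$ and in nothing else; this is possible precisely because $\sigma$ is non-crossing. The portion of $\partial R_B$ interior to $D$ is then a union of $|B|$ disjoint arcs whose endpoints are the $2|B|$ marked points flanking the vertices of $B$, and one reads off that these arcs join $2i$ to $2j-1$ whenever $\bar j$ is the cyclic successor of $\bar i$ within $B$; in particular the two endpoints adjacent to $\bar i$ are $2i-1$ and $2i$, matching the stated labelling, and the union over all blocks is a non-crossing perfect matching, namely $\tau(\sigma)$. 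This also makes precise the assertion preceding the lemma: the regions cut out of $D$ by $\tau(\sigma)$ are the $R_B$ together with the components of their complement, the latter identified with the blocks of $\tsigma$, consistently with $|\sigma|+|\tsigma|=n+1$ (Lemma~\ref{lem:dualpart}).

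For the inverse, start from an arbitrary non-crossing perfect matching $M$ on $[2n]$, drawn as disjoint arcs in $D$, and place $\bar i$ on the boundary segment joining $2i-1$ to $2i$ (call such a segment \emph{barred}) and $\tilde i$ on the segment joining $2i$ to $2i+1$ (call it \emph{tilde}). The arcs of $M$ divide $D$ into $n+1$ regions, and I claim that each region meets $\partial D$ either only in barred segments or only in tilde segments; this lets me define $\sigma(M)$ to be the set partition of $\{\bar1,\ldots,\bar n\}$ recording which region each $\bar i$ lies in. Granting the claim, $\sigma(M)$ is non-crossing since its blocks occupy disjoint disks; running the construction of the previous paragraph on $\sigma(M)$ gives back $M$, because the regions that meet barred segments form an admissible choice of the disks $R_B$ and every arc of $M$ borders exactly one such region; and $\sigma(\tau(\sigma))=\sigma$ by the identification of regions in that paragraph. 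Thus $\sigma\mapsto\tau(\sigma)$ is a bijection, and since non-crossing perfect matchings on $2n$ points are classically enumerated by the Catalan number $\frac{1}{n+1}\binom{2n}{n}$ (for instance via the standard bijection with Dyck paths), the same count holds for $\NC_n$.

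The step that needs genuine care, which I expect to be the crux, is the separation claim that no region of $D\setminus M$ meets both a barred and a tilde segment. Here the relevant facts are: every marked point $p\in[2n]$ lies on exactly one arc of $M$ and has one barred and one tilde segment flanking it on $\partial D$, so a small neighborhood of $p$ inside $D$ is split by the arc at $p$ into two sectors, each bounded by one of those two segments; consequently the boundary of each region, read cyclically, alternates between arcs of $M$ and boundary segments, so every region meets $\partial D$, and crossing any single arc of $M$ interchanges the types of the two segments met at its endpoints. Now let $T$ be the graph with vertex set the $n+1$ regions and with one edge per arc of $M$, joining the two regions that arc separates; then $T$ is connected (a generic path between two regions crosses a succession of arcs) and has $n+1$ vertices and $n$ edges, hence is a tree and is $2$-colorable, and by the interchange property this $2$-coloring is exactly the partition of regions by segment type, which is therefore well defined. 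Alternatively this can be proved by induction on $n$, deleting a short arc $(i,i+1)$ from $M$.
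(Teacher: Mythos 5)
The paper offers no proof of this lemma at all: it is stated as a standard Catalan-type fact immediately after the pictorial definition of $\tau(\sigma)$, so there is no argument of the author's to compare yours against. Your proposal is a correct and complete justification, and the overall structure (make the disks $R_B$ precise, read off the arcs $2i \to 2j-1$ for cyclically consecutive $\bar i,\bar j$ in a block, invert by assigning each $\bar i$ to its region of $D\setminus M$) is exactly the right way to turn the paper's picture into a proof; I checked your arc description against the paper's example $\sigma=(\bar1,\bar4,\bar6|\bar2,\bar3|\bar5)$ and it matches.

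The one place where your write-up is looser than it should be is the separation claim. As phrased, the tree/2-coloring argument is slightly circular: knowing that the dual graph is a bipartite tree, and that at each \emph{single} endpoint $p$ the two sides of the arc through $p$ see segments of different types, does not yet tell you that the 2-coloring of regions agrees with ``type of segment met,'' because that notion is only well defined once the claim is proved. The missing ingredient is the consistency of types at the \emph{two} endpoints of one arc: every arc of a non-crossing perfect matching on $[2n]$ joins an odd-labeled point to an even-labeled point (the points strictly between its endpoints on either side must be matched among themselves, so there is an even number of them), and from the local picture at an odd point ($\ldots,\text{tilde},2i-1,\text{barred},\ldots$) versus an even point ($\ldots,\text{barred},2j,\text{tilde},\ldots$) one sees that the two segments adjacent to a fixed side of an arc, at its two endpoints, have the same type. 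Combined with your observation that $\partial R$ alternates between arcs and single boundary segments, this immediately forces all segments of a region to have one type, and the tree is not even needed. Alternatively the induction you mention (peel off an arc $(i,i+1)$) closes the gap cleanly. With that point made explicit, the proof is complete.
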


For a non-crossing partition $\sigma$, we define 
$$
L_\sigma(\Gamma) := \sum_{F \mid \sigma(F) = \sigma} \wt(F)
$$
where the summation is over all groves with boundary partition $\sigma$, and $\wt(F)$ is the product of weights of edges in $F$.  

\begin{center}
\begin{tikzpicture}[scale = 0.5]
\node at (0:4.6) {$\bar 5$};
\node at (72:4.6) {$\bar 4$};
\node at (144:4.6) {$\bar 3$};
\node at (216:4.6) {$\bar 2$};
\node at (288:4.6) {$\bar 1$};
\draw (0,0) circle (4cm);
\draw[line width = 0.05cm] (288:4) -- node [left] {$2$} (270:2);
\draw[line width = 0.005cm] (270:2) -- node [left] {$5$} (100:2);
\draw[line width = 0.05cm] (270:2) -- (0:4);
\draw[line width = 0.05cm] (100:2) -- (216:4);
\draw[line width = 0.05cm] (100:2) -- (72:4);
\filldraw[black] (0:4) circle (0.1cm);
\filldraw[black] (72:4) circle (0.1cm);
\filldraw[black] (144:4) circle (0.1cm);
\filldraw[black] (216:4) circle (0.1cm);
\filldraw[black] (288:4) circle (0.1cm);
\filldraw[black] (100:2) circle (0.1cm);
\filldraw[black] (270:2) circle (0.1cm);
\node at (0,-5.3) {a grove $F$ with $\sigma(F) = (\bar 1 \bar 5| \bar 2 \bar 4| \bar 3)$ and $\wt(F) = 2$};
\end{tikzpicture}
\end{center}

Let ``$\uncrossed$" denote the boundary partition where each vertex is in its own part.  The following result is essentially due to Kirchhoff.  See Kenyon and Wilson \cite{KW}.

\begin{proposition}\label{prop:Lijresponse}
We have
$$
\dfrac{L_{(i,j|{\rm rest \; singletons})}(\Gamma)}{L_{\uncrossed}(\Gamma)} = - \Lambda_{i,j}(\Gamma).
$$
\end{proposition}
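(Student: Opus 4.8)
The plan is to express both $\Lambda(\Gamma)$ and the grove polynomials through the weighted Laplacian $L=L(\Gamma)$ and then invoke Kirchhoff's matrix--tree theorem. Split the vertex set into the boundary vertices $\bar{[n]}=\{\bar 1,\dots,\bar n\}$ and the set $N$ of interior vertices, and block $L$ accordingly, writing $C:=L[N,N]$ for the interior block. Ohm's and Kirchhoff's laws say exactly that the harmonic extension of a boundary voltage vector $v$ has interior part $-C^{-1}L[N,\bar{[n]}]\,v$ (this uses $C$ invertible, which holds whenever $L_{\uncrossed}(\Gamma)\neq 0$, in particular generically in the edge weights), so that the response matrix is the Schur complement
$$
\Lambda(\Gamma)=L[\bar{[n]},\bar{[n]}]-L[\bar{[n]},N]\,C^{-1}\,L[N,\bar{[n]}].
$$

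I would first record two consequences of the matrix--tree theorem. By the Schur complement formula for determinants, applied with the interior indices placed last, for $i\neq j$
$$
\Lambda_{i,j}(\Gamma)\;=\;\frac{\det L[\{\bar i\}\cup N,\;\{\bar j\}\cup N]}{\det C},
$$
with no extra sign since $\{\bar i\}\cup N$ and $\{\bar j\}\cup N$, in increasing order, already exhibit the required block structure. By the principal-minor form of the matrix--tree theorem, $\det C=\det L[N,N]$ is the weighted number of spanning forests in which every component contains exactly one boundary vertex; such forests are precisely the groves with boundary partition $\uncrossed$, so $\det C=L_{\uncrossed}(\Gamma)$.

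It remains to identify the numerator. By the all-minors (Chaiken) matrix--tree theorem, $\det L[\{\bar i\}\cup N,\{\bar j\}\cup N]$ --- obtained from $L$ by deleting the rows $\bar{[n]}\setminus\{\bar i\}$ and the columns $\bar{[n]}\setminus\{\bar j\}$ --- equals, up to a single global sign $\epsilon=\pm1$ depending only on $n,i,j$, a signed weighted sum over spanning forests with $n-1$ components, each component meeting $\bar{[n]}\setminus\{\bar i\}$ in exactly one vertex and $\bar{[n]}\setminus\{\bar j\}$ in exactly one vertex. A short argument shows that such a forest must have boundary partition $(\bar i,\bar j\,|\,\text{rest singletons})$: each $\bar k$ with $k\notin\{i,j\}$ lies in both $\bar{[n]}\setminus\{\bar i\}$ and $\bar{[n]}\setminus\{\bar j\}$, so its component can contain no other boundary vertex, which forces $\bar i$ and $\bar j$ to share a component; conversely every such grove does contribute. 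Moreover all of these forests induce the same pairing of deleted rows with deleted columns and hence carry the same sign, so $\det L[\{\bar i\}\cup N,\{\bar j\}\cup N]=\epsilon\,L_{(\bar i,\bar j\,|\,\text{rest})}(\Gamma)$. Combining the three displays gives $\Lambda_{i,j}(\Gamma)=\epsilon\,L_{(\bar i,\bar j\,|\,\text{rest})}(\Gamma)/L_{\uncrossed}(\Gamma)$.

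The main obstacle is to pin down $\epsilon=-1$. Since $\epsilon$ is independent of $\Gamma$, it suffices to test the identity on the network consisting of a single conductance-$w$ edge joining $\bar 1$ and $\bar 2$, where $\Lambda_{1,2}=-w$, $L_{(\bar 1,\bar 2)}=w$ and $L_{\uncrossed}=1$ (with the two-edge star through one interior vertex as a further consistency check). I expect the bookkeeping in the all-minors theorem --- the compatibility of the Chaiken sign $(-1)^{\sum(u_k+w_k)}\operatorname{sgn}(\sigma_F)$ across all contributing forests, together with the kept-versus-deleted row and column conventions --- to be the only genuinely delicate point; the rest is the standard dictionary between Laplacian minors and weighted forest counts.
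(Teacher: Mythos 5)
The paper does not actually prove this proposition: it is stated with the remark that it is ``essentially due to Kirchhoff'' and a pointer to Kenyon--Wilson \cite{KW}, so there is no in-paper argument to compare against. Your proof is the standard matrix--tree argument and is essentially correct. The Schur complement identity for $\Lambda$, the identification $\det C = L_{\uncrossed}$, the entrywise formula $\Lambda_{i,j}=\det L[\{\bar i\}\cup N,\{\bar j\}\cup N]/\det C$ (correct with no sign precisely because you place the interior indices after all boundary indices), and the combinatorial step showing that the forests contributing to the all-minors expansion are exactly the groves with partition $(\bar i,\bar j\,|\,\text{rest singletons})$ and all induce the \emph{same} bijection between deleted rows and deleted columns (hence carry a common sign) are all sound; the last point is the real content and you state it correctly.

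The one genuine soft spot is the final sign determination. You correctly note that $\epsilon$ may a priori depend on $n$, $i$, $j$, but you then propose to pin it down by testing a single network with $n=2$, $i=1$, $j=2$; that test only determines $\epsilon$ for that one triple. The repair is immediate and costs nothing: for each $n$ and each pair $i\neq j$, test on the circular planar network with $n$ boundary vertices and a single conductance-$w$ edge joining $\bar i$ to $\bar j$, for which $\Lambda_{i,j}=-w$, $L_{(\bar i,\bar j|\text{rest})}=w$ and $L_{\uncrossed}=1$, giving $\epsilon=-1$ uniformly. With that adjustment (or, alternatively, with the explicit Chaiken sign computation you defer), the proof is complete.
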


Thus the grove measurements determine the response matrix.  As shorthand, we write $L_{ij}$ for $L_{(i,j|{\rm rest \; singletons})}$.

\subsection{Boundary spikes and boundary edges}
Given an odd integer $2k-1$, for $k = 1,2,\ldots,n$ and a nonnegative real number $t$, we define $v_{2k-1}(t)(\Gamma)$ to be the electrical network obtained from $\Gamma$ by adding a new edge from a new vertex $v$ to $\bar k$, with weight $1/t$, followed by treating $\bar k$ as an interior vertex, and the new vertex $v$ as a boundary vertex (now named $\bar k$).

Given an even integer $2k$, for $k = 1,2,\ldots,n$, and a nonnegative real number $t$, we define $v_{2k}(t)(\Gamma)$ to be the electrical network obtained from $\Gamma$ by adding a new edge 
from $\bar k$ to $\overline{k+1}$ (indices taken modulo $n+1$), with weight $t$.

These operations are called \defn{adjoining a boundary spike}, and \defn{adjoining a boundary edge} respectively.  
Our notation suggests, as explained in Theorem \ref{thm:LP} below, that there is some symmetry between these two types of operations.  

\begin{center}
\begin{tikzpicture}
\draw (0,0) circle (1.5cm);
\node at  (0,0) {$\Gamma$};
\node at (120:1.8) {$\bar 1$};
\filldraw[black] (120:1.5) circle (0.1cm);
\node at (60:1.8) {$\bar 2$};
\filldraw[black] (60:1.5) circle (0.1cm);
\begin{scope}[shift={(5,0)}]
\draw (0,0) circle (1.5cm);
\node at  (0,0) {$\Gamma$};
\draw (120:1.5) -- (120:2.5);
\node at (110:2) {$1/t$};
\node at (120:2.9) {new $\bar 1$};
\filldraw[black] (120:1.5) circle (0.1cm);
\filldraw[black] (120:2.5) circle (0.1cm);
\node at  (0,-2) {$v_1(t) \cdot \Gamma$};
\end{scope}

\begin{scope}[shift={(10,0)}]
\draw (0,0) circle (1.5cm);
\node at  (0,0) {$\Gamma$};
\draw (120:1.5) -- (60:1.5);
\node at (90:1) {$t$};
\node at (120:1.8) {$\bar 1$};
\filldraw[black] (120:1.5) circle (0.1cm);
\node at (60:1.8) {$\bar 2$};
\filldraw[black] (60:1.5) circle (0.1cm);
\node at  (0,-2) {$v_2(t) \cdot \Gamma$};
\end{scope}
\end{tikzpicture}
\end{center}

\begin{lemma}
$\Lambda(v_i(a) \cdot \Gamma)$ depends only on $\Lambda(\Gamma)$, giving an operation $v_i(t)$ on response matrices.  
\end{lemma}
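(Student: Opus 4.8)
The plan is to compute $\Lambda(v_i(t)\cdot\Gamma)$ directly in terms of $\Lambda(\Gamma)$, treating the boundary-edge case ($i=2k$) and the boundary-spike case ($i=2k-1$) separately, and in both cases working with the description of the response matrix as a Schur complement of the weighted graph Laplacian. First I would record the standard fact that if $L=L(\Gamma)$ is the weighted Laplacian of $\Gamma$, with rows and columns indexed by all vertices and partitioned into the boundary block (indexed by $\bar 1,\dots,\bar n$) and the interior block, then solving the Kirchhoff and Ohm equations to eliminate the interior voltages exhibits $\Lambda(\Gamma)$ as the Schur complement of $L$ with respect to the interior block. I would also use that Schur complements compose, so that interior vertices may be eliminated one at a time, or in batches, in any order.

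In the boundary-edge case, $v_{2k}(t)\cdot\Gamma$ has the same vertices and the same boundary/interior partition as $\Gamma$, and its Laplacian is $L(\Gamma)+t\,\delta$, where $\delta=(e_{\bar k}-e_{\overline{k+1}})(e_{\bar k}-e_{\overline{k+1}})^{T}$ is the Laplacian of the single added edge (indices as in the definition) and is supported entirely on the boundary block. Since the interior block is unchanged, taking its Schur complement gives $\Lambda(v_{2k}(t)\cdot\Gamma)=\Lambda(\Gamma)+t\,(e_{k}-e_{k+1})(e_{k}-e_{k+1})^{T}$, which visibly depends only on $\Lambda(\Gamma)$. In the boundary-spike case, I would set $c=1/t$. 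Here $v_{2k-1}(t)\cdot\Gamma$ has one new vertex $v$ and one new edge $(v,\bar k)$ of conductance $c$; its interior vertices are those of $\Gamma$ together with the old $\bar k$, and $v$ is the new $k$-th boundary vertex. The new edge is incident to no vertex that was already interior in $\Gamma$, so eliminating exactly those vertices first — legitimate by compositionality — turns the new Laplacian into the $(n+1)\times(n+1)$ matrix on $\{\bar 1,\dots,\bar n,v\}$ obtained from $\Lambda(\Gamma)$ by adjoining the vertex $v$ and the Laplacian $c\,(e_{\bar k}-e_{v})(e_{\bar k}-e_{v})^{T}$ of the new edge. It then remains to take the Schur complement with respect to the single vertex $\bar k$; since the diagonal entries of a response matrix are nonnegative and $c>0$, the pivot $c+\Lambda(\Gamma)_{\bar k\bar k}$ is positive, and the resulting rank-one correction is an explicit function of $\Lambda(\Gamma)$, $c$, and $k$. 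This produces the induced operation $v_{2k-1}(t)$ on response matrices; the case $t=0$ gives the identity, consistent with the $c\to\infty$ limit.

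I expect the only real obstacle to be the bookkeeping in the spike case — identifying precisely which vertices become interior, and justifying that eliminating the old interior vertices first reproduces $\Lambda(\Gamma)$ together with the Laplacian of the new edge. That justification is exactly the compositionality of Schur complements combined with the observation that the new edge meets the old interior block trivially; the remaining computation is a single pivot, and it also yields the explicit closed-form formulas for $v_i(t)$ acting on response matrices that will be convenient later.
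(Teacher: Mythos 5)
Your argument is correct. The paper states this lemma without proof, so there is nothing to compare against line by line, but the Schur-complement computation you give is the standard justification and surely the intended one: the boundary-edge case is immediate since the added rank-one term lives in the boundary block, and in the spike case the compositionality of Schur complements together with the positivity of the pivot $c+\Lambda(\Gamma)_{\bar k\bar k}$ (using $c=1/t>0$ and positive semidefiniteness of the response matrix) makes the single remaining pivot legitimate. As a bonus, your computation produces the explicit closed-form action of $v_i(t)$ on response matrices, which is the form in which this operation is used in \cite{LP}.
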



\subsection{Electrically-equivalent transformations of networks} \label{ss:trans}
\label{sec:trans}

The following proposition is well-known and can be found for example in \cite{CGV}.
\begin{center}
\begin{tikzpicture}
\draw (0,0) -- node [right = 1pt] {$a$} (0,1) -- node [right = 1pt] {$b$} (0,2);
\filldraw[black] (0,0) circle (0.1cm);
\filldraw[black] (0,1) circle (0.1cm);
\filldraw[black] (0,2) circle (0.1cm);
\draw[<->] (1,1) -- (1.5,1);
\begin{scope}[shift = {(2.0,0)}]
\draw (0,0) -- node [right = 1pt] {$\dfrac{ab}{a+b}$} (0,2);
\filldraw[black] (0,0) circle (0.1cm);
\filldraw[black] (0,2) circle (0.1cm);
\end{scope}
\begin{scope}[shift = {(5,0)}]
\draw (0,0) .. controls (-0.5,1) .. (0,2);
\draw (0,0) .. controls (0.5,1) .. (0,2);
\node at (-0.7,1) {$a$};
\node at (0.7,1) {$b$};
\filldraw[black] (0,0) circle (0.1cm);
\filldraw[black] (0,2) circle (0.1cm);
\draw[<->] (1,1) -- (1.5,1);
\begin{scope}[shift = {(2.0,0)}]
\draw (0,0) -- node [right = 1pt] {${a+b}$} (0,2);
\filldraw[black] (0,0) circle (0.1cm);
\filldraw[black] (0,2) circle (0.1cm);
\end{scope}
\end{scope}
\begin{scope}[shift={(9,0)}]
\draw (0,1) -- (0,2);
\node at (0.3,1.5) {$a$};
\draw (0,1) -- (-0.5,0.5);
\draw (0,1) -- (0,0.5);
\draw (0,1) -- (0.5,0.5);
\filldraw[black] (0,1) circle (0.1cm);
\filldraw[black] (0,2) circle (0.1cm);
\draw[<->] (0.8,1) -- (1.3,1);
\begin{scope}[shift = {(2,0)}]
\draw (0,1) -- (-0.5,0.5);
\draw (0,1) -- (0,0.5);
\draw (0,1) -- (0.5,0.5);
\filldraw[black] (0,1) circle (0.1cm);
\end{scope}
\end{scope}
\begin{scope}[shift={(13,0)}]
\draw (0,1) .. controls (-0.5,1.7) and (0.5,1.7) .. (0,1);
\node at (0,1.8) {$a$};
\draw (0,1) -- (-0.5,0.5);
\draw (0,1) -- (0,0.5);
\draw (0,1) -- (0.5,0.5);
\filldraw[black] (0,1) circle (0.1cm);
\draw[<->] (0.8,1) -- (1.3,1);
\begin{scope}[shift = {(2,0)}]
\draw (0,1) -- (-0.5,0.5);
\draw (0,1) -- (0,0.5);
\draw (0,1) -- (0.5,0.5);
\filldraw[black] (0,1) circle (0.1cm);
\end{scope}
\end{scope}
\end{tikzpicture}
\end{center}


\begin{prop}\label{prop:SP}
Series-parallel transformations, removing loops, and removing interior degree 1 vertices (pendant removal), do not change the response matrix of a network.
\end{prop}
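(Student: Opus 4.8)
The plan is to avoid ever solving a large linear system explicitly, and instead to use the variational description of the response matrix: $\Lambda(\Gamma)$ is the Schur complement of the weighted graph Laplacian onto the boundary vertices. Each of the four listed moves then becomes a transparent local identity for quadratic forms, either not changing the Dirichlet energy at all, or changing it only by a partial minimization over exactly the variable(s) being eliminated.

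Concretely, I would first record the following standard fact (cf.\ \cite{CGV}). Discarding any connected component of $\Gamma$ that meets no boundary vertex (such components carry no current and do not affect $\Lambda$), write the weighted Laplacian as $Q(\Gamma)=\bigl(\begin{smallmatrix} A & B\\ B^{T} & C\end{smallmatrix}\bigr)$ with the first block indexed by $\{\bar 1,\dots,\bar n\}$ and the second by the interior vertices; then $C$ is invertible and $\Lambda(\Gamma)=A-BC^{-1}B^{T}$. Equivalently, for a boundary voltage vector $v$,
$$
\tfrac12\,v^{T}\Lambda(\Gamma)\,v \;=\; \min_{V:\,V|_{\partial}=v}\ \E_{\Gamma}(V),\qquad \E_{\Gamma}(V):=\tfrac12\sum_{e=(x,y)}w(e)\bigl(V(x)-V(y)\bigr)^{2},
$$
the minimizer being the harmonic (Kirchhoff) extension of $v$, and $\Lambda(\Gamma)$ is recovered from this quadratic form by polarization since it is symmetric. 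So it suffices to show each move preserves the minimized energy with the boundary vertex set and boundary data untouched.

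Next I would check the moves, all of which are local. \emph{Parallel edges:} two edges of conductances $a,b$ between the same pair $u,w$ contribute $\tfrac12(a+b)(V(u)-V(w))^{2}$ to $\E_{\Gamma}$, exactly as a single edge of conductance $a+b$. \emph{Loops:} a loop at $v$ contributes $\tfrac12 w(e)(V(v)-V(v))^{2}=0$. \emph{Series edges:} the middle vertex $m$ is interior of degree $2$, and minimizing $\tfrac12 a(V(u)-V(m))^{2}+\tfrac12 b(V(m)-V(w))^{2}$ over $V(m)$ gives optimum $V(m)=\tfrac{aV(u)+bV(w)}{a+b}$ with minimal value $\tfrac12\tfrac{ab}{a+b}(V(u)-V(w))^{2}$; since no other edge touches $m$, this is precisely the Schur-complement elimination of the single variable $V(m)$, and the reduced network has the edge $uw$ of conductance $\tfrac{ab}{a+b}$. \emph{Pendant removal:} if $v$ is interior of degree $1$, joined to $u$ by an edge of conductance $a$, then Kirchhoff at $v$ forces $V(v)=V(u)$, so that edge carries zero current and contributes $0$ to the minimized energy; deleting the edge together with the now-isolated vertex $v$ changes neither $\E_{\Gamma}$ nor the boundary currents. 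In each case the minimized energy, hence the symmetric matrix $\Lambda$, is unchanged; iterating handles finitely many applications.

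There is no deep obstacle here; the only real work is bookkeeping around well-definedness. One must justify discarding boundary-free components so that $C$ is genuinely invertible, handle the degenerate corner cases of pendant removal (e.g.\ when deletion isolates its neighbor), and keep straight which hypotheses each move needs — the series move requires the middle vertex to be interior of degree exactly $2$, while the parallel-edge and loop reductions are unconstrained and apply at boundary vertices as well. Given the standard statement in \cite{CGV}, filling this in is routine.
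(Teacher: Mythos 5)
Your proof is correct. The paper itself gives no argument for this proposition --- it is stated as well known and attributed to \cite{CGV} --- so there is nothing to compare against; the Schur-complement/Dirichlet-energy argument you give is the standard one, and your treatment of the local moves (conductances adding in parallel, loops contributing zero energy, partial minimization over the degree-two interior vertex yielding the harmonic mean $\tfrac{ab}{a+b}$, and the zero-current pendant edge) is accurate, including the caveats about discarding boundary-free components so that the interior block is invertible and about the series move requiring the middle vertex to be interior of degree exactly two.
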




The following theorem is attributed to Kennelly \cite{Kenn}.

\begin{theorem}[$Y-\Delta$, or star-triangle transformation] \label{thm:YDelta}
 Assume that parameters $a$,$b$,$c$ and $A$,$B$,$C$ are related by 
$$A = \frac{bc}{a+b+c}, \;\; B = \frac{ac}{a+b+c}, \;\; C= \frac{ab}{a+b+c},$$
or equivalently by 
$$a = \frac{AB+AC+BC}{A}, \;\; b= \frac{AB+AC+BC}{B}, \;\; c = \frac{AB+AC+BC}{C}.$$
Then switching a local part of an electrical network between the two options shown does not change the response matrix of the whole network.
\begin{center}
\begin{tikzpicture}[scale = 0.4]

\draw (0:4) -- (0:0) -- (120:4);
\draw (0:0) -- (240:4);

\node at ($(120:2)+(0.3,0)$) {$a$};
\node at ($(0:2)+(0,0.3)$) {$b$};
\node at ($(240:2)+(0.3,0)$) {$c$};
\filldraw[black] (120:4) circle (0.1cm);
\filldraw[black] (0:4) circle (0.1cm);
\filldraw[black] (240:4) circle (0.1cm);
\filldraw[black] (0:0) circle (0.1cm);
\node at (0,-4.5) {$\Gamma$};
\begin{scope}[shift={(10,0)}]
\draw (0:4) -- (120:4) -- (240:4) -- (0:4);
\node at ($(240:4.4)!0.5!(0:4.4)$) {$A$};
\node at ($(240:4.6)!0.5!(120:4.6)$) {$B$};
\node at ($(120:4.6)!0.5!(0:4.6)$) {$C$};
\filldraw[black] (120:4) circle (0.1cm);
\filldraw[black] (0:4) circle (0.1cm);
\filldraw[black] (240:4) circle (0.1cm);
\node at (0,-4.5) {$\Gamma'$};
  \end{scope}
\end{tikzpicture}
\end{center}
\end{theorem}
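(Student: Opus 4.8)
\subsection*{Proof proposal}

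The plan is to prove the local statement that the star (Y) and the triangle ($\Delta$), each regarded as a network whose boundary is exactly the three displayed vertices (the center of the star being interior), are electrically-equivalent; the global claim then follows from a gluing principle. Indeed, the response matrix of a network is the Schur complement of the interior block of its weighted Laplacian onto the boundary block, and a Schur complement may be taken in stages: one may first eliminate the interior vertices that lie inside the star or triangle region, and then eliminate all remaining interior vertices. Replacing the star by the triangle changes only the rows and columns of the Laplacian indexed by the three shared vertices and by the star's center, and leaves every other entry untouched; so if the two pieces produce the same effective Laplacian on the three shared vertices after the first stage, the second stage yields the identical response matrix for the whole of $\Gamma$, no matter whether those three vertices are boundary, interior, or mixed in $\Gamma$.

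It thus remains to compute the effective conductance matrix on $\{1,2,3\}$ for each piece, where I label vertices so that the star-edges of conductance $a,b,c$ end at vertices $1,2,3$ respectively, and in the triangle the edge $A$ joins $2$ and $3$, the edge $B$ joins $1$ and $3$, and the edge $C$ joins $1$ and $2$. The triangle has no interior vertex, so its response matrix is simply its weighted Laplacian. For the star, set $s = a+b+c$; Kirchhoff's law at the center $o$ gives $a(V_1-V(o))+b(V_2-V(o))+c(V_3-V(o))=0$, hence $V(o)=(aV_1+bV_2+cV_3)/s$, and then by Ohm's law the current entering at vertex $1$ equals $a(V_1-V(o)) = \tfrac1s\bigl(ab(V_1-V_2)+ac(V_1-V_3)\bigr)$, with the analogous formulas at vertices $2$ and $3$. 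Comparing with the triangle, where the current entering at vertex $1$ is $C(V_1-V_2)+B(V_1-V_3)$, the two pieces inject the same currents for all boundary voltages precisely when $C=ab/s$, $B=ac/s$, $A=bc/s$ — exactly the stated relation.

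Finally I would record that the two systems of formulas are mutually inverse: from $A=bc/s$, $B=ac/s$, $C=ab/s$ with $s=a+b+c$ one computes $AB+AC+BC = (a^2bc+ab^2c+abc^2)/s^2 = abc/s$, so that $(AB+AC+BC)/A = (abc/s)/(bc/s) = a$, and symmetrically for $b,c$; in particular $a,b,c>0$ if and only if $A,B,C>0$, so the transformation is well-defined on honest electrical networks in both directions. The only genuinely delicate point is the gluing step in the first paragraph — one must be sure that staged Schur complementation is valid and that nothing goes wrong when a shared vertex happens to be a boundary vertex of $\Gamma$ — but since eliminating the center $o$ is the only structural change and it modifies only the $\{1,2,3,o\}$-block, the staged computation is literally unchanged, and this obstacle dissolves.
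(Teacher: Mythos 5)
Your proof is correct. Note that the paper itself offers no proof of this theorem --- it is stated as a classical result attributed to Kennelly --- so there is no argument of the paper's to compare against; your write-up simply supplies the standard one. The two ingredients are exactly right: the local computation (eliminate the star's center via Kirchhoff's law, read off the effective conductances, and match against the triangle's Laplacian, which yields $A=bc/s$, $B=ac/s$, $C=ab/s$ and, via $AB+AC+BC=abc/s$, the inverse formulas), and the gluing step via staged Schur complementation, which you correctly identify as the only delicate point. The quotient property of Schur complements does handle it: eliminating the center $o$ first modifies only the $3\times 3$ block indexed by the shared vertices, and the subsequent elimination of the remaining interior vertices of $\Gamma$ is then identical for both local pieces, regardless of whether the shared vertices are boundary or interior in $\Gamma$. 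Your labeling of which triangle edge corresponds to which pair of star edges is also consistent with the figure.
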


In \cite{LP}, we showed with Pylyavskyy that Theorem \ref{thm:YDelta} implies
\begin{theorem}\label{thm:LP}
The generators $v_i(t)$ acting on response matrices satisfies the \defn{electrical braid relations}
\begin{enumerate}
\item
$v_i(a)v_i(b) = v_i(a+b)$ for each $i$,
\item
$v_i(a) v_j(b) = v_j(b) v_i(a)$ for $|i-j|\geq 2$, and
\item
$
v_i(a) v_{i \pm 1}(b) v_i(c) = v_{i \pm 1}({bc}/({a+c+abc})) v_i(a+c+abc) v_{i \pm 1}({ab}/({a+c+abc})).
$
\end{enumerate}
Here the index $i$ of $v_i$ is taken modulo $2n$.
\end{theorem}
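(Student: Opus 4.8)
The plan is to verify each of (1)--(3) at the level of networks: I would apply both sides of the relation to an arbitrary circular planar electrical network $\Gamma$, exhibit the two resulting networks as electrically equivalent, and then invoke the lemma above stating that $v_i(t)$ induces a well-defined operation on response matrices. Since every response matrix is $\Lambda(\Gamma)$ for some $\Gamma$, this gives the asserted identities of operations on response matrices. The conductance values $0$ and $\infty$ are handled by the reductions recalled in Section \ref{sec:response} (so $v_i(0)$ is the identity), and the parameter values in (3) at which the displayed rational functions degenerate are obtained at the end by continuity.

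Relations (1) and (2) are elementary reductions. For (1): if $i = 2k$ is even, $v_i(a)v_i(b)\cdot\Gamma$ has two boundary edges between $\bar k$ and $\overline{k+1}$ where $v_i(a+b)\cdot\Gamma$ has one, and a parallel reduction (Proposition \ref{prop:SP}) identifies them; if $i = 2k-1$ is odd, $v_i(a)v_i(b)\cdot\Gamma$ has two spikes at $\bar k$ in series through an interior degree-$2$ vertex, whose series reduction produces a single spike of conductance $\big(\tfrac1a\cdot\tfrac1b\big)/\big(\tfrac1a+\tfrac1b\big) = \tfrac1{a+b}$, giving $v_i(a+b)\cdot\Gamma$. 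For (2): when $|i-j| \ge 2$ modulo $2n$, the operations $v_i(a)$ and $v_j(b)$ modify $\Gamma$ in regions that meet in at most one boundary vertex, and that vertex is not relabeled by either operation, so the two operations commute on networks.

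The substance is relation (3), which I would deduce from the $Y$--$\Delta$ transformation (Theorem \ref{thm:YDelta}). Consider first $i = 2k-1$ with the $+$ sign, so $v_i$ adjoins a spike at $\bar k$ and $v_{i+1} = v_{2k}$ adjoins a boundary edge at $\{\bar k, \overline{k+1}\}$. Reading $v_i(a)v_{i+1}(b)v_i(c)\cdot\Gamma$ from the right: $v_i(c)$ creates a vertex $w$ joined to the old $\bar k$ (call it $u$) by conductance $1/c$; $v_{i+1}(b)$ joins $w$ to $\overline{k+1}$ by conductance $b$; and $v_i(a)$ makes $w$ interior and joins it to the new boundary vertex $z$ by conductance $1/a$. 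Since $w$ was just created, it is now an interior vertex of degree exactly three --- a $Y$ with legs $1/c,\, b,\, 1/a$ to the distinct vertices $u,\, \overline{k+1},\, z$. By Theorem \ref{thm:YDelta} this $Y$ may be replaced by the triangle on $\{u,\overline{k+1},z\}$; with $S = \tfrac1c + b + \tfrac1a = \tfrac{a+c+abc}{ac}$, the new conductances are $\tfrac{1}{a+c+abc}$ on $uz$, $\tfrac{ab}{a+c+abc}$ on $u\,\overline{k+1}$, and $\tfrac{bc}{a+c+abc}$ on $\overline{k+1}\,z$. But applying $v_{2k}\big(\tfrac{ab}{a+c+abc}\big)$, then $v_{2k-1}(a+c+abc)$, then $v_{2k}\big(\tfrac{bc}{a+c+abc}\big)$ to $\Gamma$ produces exactly $\Gamma$ with $u$ made interior and precisely this triangle on $\{u,\overline{k+1},z\}$. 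Hence both sides of (3) yield electrically equivalent networks. The case $i$ even is the mirror of this: there the left-hand side already carries the triangle and the right-hand side the $Y$, so one applies Theorem \ref{thm:YDelta} in the reverse direction (using the second form of Kennelly's relations); the $-$ sign follows from the $+$ sign by reflecting the disk.

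The step I expect to be the main obstacle is setting up the local picture in (3) so that Theorem \ref{thm:YDelta} applies verbatim: one must correctly identify which of the freshly created vertices ends up interior of degree exactly three regardless of the edges $\Gamma$ already has at $u$ and $\overline{k+1}$ (if $\Gamma$ already joins them, a harmless parallel reduction follows the $Y$--$\Delta$ move), and then check that the three Kennelly-transformed conductances are exactly the coefficients $\tfrac{bc}{a+c+abc}$, $a+c+abc$, $\tfrac{ab}{a+c+abc}$ on the right-hand side of (3). Once the picture is drawn correctly, this is the short substitution sketched above.
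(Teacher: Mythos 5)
Your proposal is correct and follows exactly the route the paper indicates: the paper gives no proof of its own here but states that Theorem \ref{thm:YDelta} implies Theorem \ref{thm:LP} (citing \cite{LP}), and your argument — series/parallel reductions for (1), disjointness of supports for (2), and the $Y$--$\Delta$ move applied to the freshly created degree-three vertex for (3), with the Kennelly conductances checked to match — is precisely that derivation, with the computations carried out correctly.
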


\begin{remark}
The matrices
$$
x_1(a) = \left(\begin{array}{ccc} 1 & a & 0 \\ 0 & 1 & 0 \\ 0 & 0 & 1\end{array}\right) \qquad x_2(a) = \left(\begin{array}{ccc} 1 & 0 & 0 \\ 0 & 1 & a \\ 0 & 0 & 1\end{array}\right)
$$
satisfy the \defn{Lusztig braid relation} $$x_1(a) x_2(b) x_1(c) = x_2(bc/(a+c))x_1(a+c) x_2(ab/(a+c)).$$  The electrical braid relation can be thought of as a deformation of this, see \cite{LP}.  The matrices $x_i(a)$ will play an important role later, see Section \ref{sec:xy}.
\end{remark}

\subsection{Medial graphs}\label{sec:medial}
Let $\Gamma$ be an electrical network.  The medial graph $G(\Gamma)$ is defined as follows, and only depends on the underlying unweighted graph of $\Gamma$.  First place vertices $t_1,t_2,\ldots,t_{2n}$ on the boundary of the disk, so that in circular order we have $t_1 < \bar 1 < t_2 < t_3 <\bar  2 < \cdots$.  Next add a vertex $t_e$ for each edge $e \in E(\Gamma)$.  Now join $t_e$ with $t_{e'}$ if $e$ and $e'$ share a vertex and are incident to the same face.  For boundary vertices $t_{2i-1}$ or $t_{2i}$, we draw an edge to $t_e$ where $e$ is the ``closest" edge incident to vertex $i$ of $\Gamma$.  If vertex $i$ of $\Gamma$ is isolated, $t_{2i-1}$ and $t_{2i}$ are joined by an edge.  Note that each vertex $t_e$ is four-valent, and each vertex $t_i$ has degree $1$.  A \defn{strand} or \defn{wire} of a medial graph $G(\Gamma)$ is a maximal sequence of edges in $G$ such that we always go straight through any four-valent vertex encountered.  We will often write $T$ for a strand, or $T_i$ for the strand with one endpoint at $t_i$.  Medial strands either join boundary vertices to boundary vertices, or forms a cycle in the interior of the disk.  Thus a medial graph induces a matching on the set $[2n]$, which we will call the \defn{medial pairing} $\tau(\Gamma)$ of $\Gamma$.  Usually, we will only talk about the medial pairing $\tau(\Gamma)$ when $\Gamma$ is critical, to be defined below.  We will sometimes think of medial pairings as set partitions, and sometimes as involutions on $[2n]$.  For example, the set partition $\{(1,4),(2,5),(3,6)\}$ corresponds to the involution $\tau(i) = i+3 \mod 6$.

The electrical network $\Gamma$ can be recovered from the medial graph as follows: the graph $G$ divides the interior of the disk into regions.  The regions can be colored with two colors black and white, so that regions sharing an edge have different colors.  By convention, regions containing boundary vertices in their boundary are colored white.  To reconstruct $\Gamma$, place a vertex inside each white region (if this is a boundary region, then this vertex is just the boundary vertex), and join vertices with edges in $\Gamma$ when the corresponding white regions share a common vertex in $G$.

\begin{remark}
Note that in general we do not draw medial graphs with straight lines, but we draw edges as curves, giving an embedding of the graph into the interior of the disk.
\end{remark}

\begin{example}\label{ex:medial}
In the following picture, the medial graph of the electrical network from Section \ref{sec:response} is shown in dashed lines.  One of the medial strands is drawn extra thick.  The medial pairing is $\{(1,7),(2,9),(3,8),(4,10),(5,6)\}$.

\begin{center}
\begin{tikzpicture}[scale=0.9]
\node at (0:4.3) {$\bar 5$};
\node at (72:4.3) {$\bar 4$};
\node at (144:4.3) {$\bar 3$};
\node at (216:4.3) {$\bar 2$};
\node at (288:4.3) {$\bar 1$};
\draw (0,0) circle (4cm);
\coordinate (c1) at (288:4);
\coordinate (c2) at (216:4);
\coordinate (c5) at (0:4);
\coordinate (c4) at (72:4);
\coordinate (c3) at (144:4);
\coordinate (a) at (330:2);
\coordinate (b) at (100:1);

\coordinate (ab) at ($(a)!0.5!(b)$);
\coordinate (b4) at ($(b)!0.5!(c4)$);
\coordinate (b2) at ($(b)!0.5!(c2)$);
\coordinate (a1) at ($(a)!0.5!(c1)$);
\coordinate (a5) at ($(a)!0.5!(c5)$);

\coordinate (t1) at (312:4);
\coordinate (t2) at (264:4);
\coordinate (t3) at (240:4);
\coordinate (t4) at (192:4);
\coordinate (t5) at (168:4);
\coordinate (t6) at (120:4);
\coordinate (t7) at (96:4);
\coordinate (t8) at (48:4);
\coordinate (t9) at (24:4);
\coordinate (t10) at (-24:4);

\node at (312:4.3) {$t_1$};
\node at (264:4.3) {$t_2$};
\node at (240:4.3) {$t_3$};
\node at (192:4.3) {$t_4$};
\node at (168:4.3) {$t_5$};
\node at (120:4.3) {$t_6$};
\node at (96:4.3) {$t_7$};
\node at (48:4.3) {$t_8$};
\node at (24:4.3) {$t_9$};
\node at (-24:4.3) {$t_{10}$};

\draw[dashed,line width=2pt] (t1) -- (a1) -- (ab) -- (b4) -- (t7);
\draw[dashed] (t2) -- (a1) -- (a5) -- (t9);
\draw[dashed] (t3) -- (b2) -- (b4) -- (t8);
\draw[dashed] (t4) -- (b2) -- (ab) -- (a5) -- (t10);
\draw[dashed] (t5) -- (t6);

\filldraw[white] (ab) circle (0.1cm);
\draw[black] (ab) circle (0.1cm);
\filldraw[white] (b4) circle (0.1cm);
\draw[black] (b4) circle (0.1cm);
\filldraw[white] (b2) circle (0.1cm);
\draw[black] (b2) circle (0.1cm);
\filldraw[white] (a1) circle (0.1cm);
\draw[black] (a1) circle (0.1cm);
\filldraw[white] (a5) circle (0.1cm);
\draw[black] (a5) circle (0.1cm);

\draw (c1) -- (a);
\draw (a) --  (b);
\draw (a) -- (c5);
\draw (b) -- (c2);
\draw (b) -- (c4);
\filldraw[black] (c1) circle (0.1cm);
\filldraw[black] (c2) circle (0.1cm);
\filldraw[black] (c3) circle (0.1cm);
\filldraw[black] (c4) circle (0.1cm);
\filldraw[black] (c5) circle (0.1cm);
\filldraw[black] (a) circle (0.1cm);
\filldraw[black] (b) circle (0.1cm);
\end{tikzpicture}
\end{center}
\end{example}

A \defn{lens} in a medial graph consists of two edge disjoint arcs $[x,x']_p$ and $[x,x']_q$ of wires $p$ and $q$ between two vertices $x, x'$ that lie on both wires:

\begin{center}
\begin{tikzpicture}[scale=0.5]
\draw (0,0) .. controls (5,2) .. (10,0);
\draw (0,2) .. controls (5,0) .. (10,2);
\node at (5,-1) {A lens};
\end{tikzpicture}
\end{center} 

A medial graph is \defn{lensless} if every wire begins and ends on the boundary of the disk, no wire has a self intersection, and there are no lens.  An electrical network $\Gamma$ is \defn{critical}, or \defn{reduced} if its medial graph is lensless.  For $\tau$ a medial pairing of a lensless medial graph $G$, let $c(\tau)$ denote the number of crossings of the medial pairing $\tau$.  (This number does not depend on the actual choice of medial graph, only that it is lensless.). For example, the medial pairing of Example \ref{ex:medial} has $c(\tau) = 5$.  The number $c(\tau)$ is also equal to the number of edges in the corresponding critical electrical network.

\begin{proposition}
If $\Gamma$ and $\Gamma'$ are related by $Y-\Delta$ moves, then $G(\Gamma)$ and $G(\Gamma')$ are related by Yang-Baxter (or star-triangle) moves (see Figure \ref{fig:Yang-Baxter}).  If $\Gamma$ and $\Gamma'$ are related by the reduction moves of Proposition \ref{prop:SP}, then $G(\Gamma)$ and $G(\Gamma')$ are related by lens removals or loop removals (see Figures \ref{fig:lens} and \ref{fig:loop}).
\end{proposition}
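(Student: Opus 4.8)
The plan is to exploit the fact that the medial graph is built \emph{locally}: a medial vertex $t_e$ together with its four incident medial edges depends only on the edge $e$, the cyclic orders of edges at the two endpoints of $e$, and the faces of $\Gamma$ incident to $e$; similarly the strands near a boundary vertex $\bar k$ depend only on the edge of $\Gamma$ closest to $\bar k$. So the first step is to make precise the following reduction. If $\Gamma'$ is obtained from $\Gamma$ by one of the moves of Proposition~\ref{prop:SP} or Theorem~\ref{thm:YDelta}, that move is supported inside a small disk $D$ in the interior of the ambient disk (or a small half-disk at the boundary, when the move touches a boundary vertex); then $G(\Gamma)$ and $G(\Gamma')$ can be drawn to coincide outside a slightly larger disk $D'$, with all medial strands meeting $\partial D'$ in the same points and the same cyclic order, and with the black/white region colouring agreeing on $\partial D'$. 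The proposition then reduces, for each move, to a finite local check: draw the part of $G(\Gamma)$ and of $G(\Gamma')$ inside $D'$, confirm they have the same strand endpoints on $\partial D'$, and name the local medial move relating them. Since ``related by'' means a finite sequence of moves, composing these checks yields the general statement.

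Next I would run through the moves one at a time. For the $Y-\Delta$ transformation: the medial picture of a $3$-star (interior vertex $w$ of degree $3$ joined to $u_1,u_2,u_3$, with edges $e_1,e_2,e_3$) consists of $t_{e_1},t_{e_2},t_{e_3}$ joined pairwise in a triangle — one medial edge for each of the three faces at $w$ — with two further half-edges at each $t_{e_i}$ running towards $u_i$; tracing strands straight through the four-valent vertices, one gets three wires in general position, pairwise crossing once, with the three crossings at $t_{e_1},t_{e_2},t_{e_3}$, and with a definite pattern of six endpoints on $\partial D'$. The medial picture of the triangle $u_1u_2u_3$ arises the same way from the triangular face it bounds: again three wires in general position with the \emph{same} endpoint pattern, but with the small triangle on the other side. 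The two configurations differ exactly by sliding one wire across the crossing of the other two, i.e.\ by the Yang--Baxter move of Figure~\ref{fig:Yang-Baxter}. For a series reduction, two edges $e,e'$ meeting at an interior degree-$2$ vertex $w$ give medial vertices $t_e,t_{e'}$ joined by \emph{two} medial edges (one for each face at $w$), so the two wires through them bound a lens whose interior face corresponds to $w$; replacing $e\cup e'$ by a single edge merges $t_e,t_{e'}$ into one crossing, which is precisely a lens removal (Figure~\ref{fig:lens}). Parallel reduction is the planar dual of this and again produces a lens, between the two medial vertices of the parallel pair. Finally, for loop removal and for pendant (interior degree-$1$ vertex) removal, the single face meeting the loop edge on both sides — respectively the single face wrapping around the dead-end vertex $w$ — contributes a self-loop at the medial vertex $t_e$, so the straight-through rule produces a wire with one self-intersection bounding a monogon, through which a second wire passes; deleting the loop edge, respectively the pendant edge and its vertex, removes exactly this monogon, i.e.\ is a loop removal (Figure~\ref{fig:loop}). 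This exhausts Proposition~\ref{prop:SP} and Theorem~\ref{thm:YDelta}.

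The mathematical content here is light — it is four local pictures — so the main obstacle is the bookkeeping of the locality step: choosing $D$ and $D'$, checking that nothing outside $D'$ changes, that the strand endpoints on $\partial D'$ and the white/black colouring (which must respect the convention that boundary regions are white) glue back consistently, and treating uniformly the cases where a move is performed next to a boundary vertex. The second delicate point is the pendant/loop analysis: one must verify that $t_e$ is still four-valent with a self-loop and that ``going straight through'' yields the monogon configuration rather than two disjoint small loops, and one must fix handedness conventions so that the $Y-\Delta$ picture gives the Yang--Baxter move as drawn rather than its mirror image. All of this is implicit in the treatment of medial graphs in \cite{CIM,CGV,KW}, and the proof amounts to drawing the four local pictures with care.
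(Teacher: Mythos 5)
Your proposal is correct: the paper states this proposition without proof, treating it as standard material from \cite{CIM,CGV,KW}, and the local-disk reduction followed by the four pictures (the $Y$--$\Delta$ triangle of crossings versus the Yang--Baxter move, series/parallel giving a bigon lens, and loop/pendant giving a monogon) is exactly the verification implicitly intended. The only detail worth keeping explicit in a write-up is the one you already flag, namely that the strand endpoints and region colouring on the boundary of the small disk are unchanged by each move, so the local replacements glue into the global medial graph.
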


\subsection{Main results for circular planar electrical networks}
Let $A$ be an $n \times n$ matrix, and suppose $I = \{i_1,\ldots,i_r\}$ and $J = \{j_1,\ldots,j_r\}$ are disjoint subsets that index a subset of rows and columns respectively.  We call $A_{I,J}$ a \defn{circular minor} if after a cyclic permutation the sequence $(i_1,i_2,\ldots,i_r,j_r,\ldots,j_1)$ is in order.

\begin{theorem}[\cite{CIM,CGV}]\label{thm:CIM}\
 \begin{enumerate}
\item Any circular planar electrical network is electrically equivalent to some critical network.  
\item Any two circular planar electrical networks having the same response matrix can be connected by simple local transformations: series-parallel, loop removal, pendant removal, and star-triangle transformations.  Furthermore, if both networks are critical, then only star-triangle transformations are required.
\item The edge conductances of a critical circular planar electrical network can be recovered uniquely from the response matrix.
\item The response matrices realizable by circular planar networks is the space of $n \times n$ symmetric matrices such that each row sum is equal to 0, and $(-1)^r \det(A_{I,J}) \geq 0$ for any $r \times r$ circular minor $A_{I,J}$.
\item 
The space $E'_n$ of response matrices of circular planar networks has a stratification by cells $E'_n = \bigsqcup C_i$ where each $C_i \simeq \R_{>0}^{d_i}$ can be obtained as the set of response matrices for a fixed critical network with varying edge weights.
 \end{enumerate}
\end{theorem}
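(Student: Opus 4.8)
The plan is to deduce all five parts from the dictionary between the response matrix $\Lambda(\Gamma)$ and the combinatorics of the wires in the medial graph $G(\Gamma)$, following \cite{CIM,CGV}. \emph{Parts (1) and (5).} For an arbitrary $\Gamma$, if $G(\Gamma)$ is not lensless then it contains a wire with a self-intersection or a lens; choosing an innermost such feature makes available the corresponding reduction move (loop removal or lens removal), which on the network side is one of the moves of Proposition \ref{prop:SP} and hence preserves $\Lambda$. Each such move strictly decreases the number of edges, so finitely many steps produce a critical network, proving (1). Granting (2) and (3), part (5) follows: fixing a critical $\Gamma$ with $d$ edges and letting the conductances vary over $\R_{>0}^d$ gives a set $C_\Gamma$ of response matrices, injectively parametrized by (3), so $C_\Gamma \simeq \R_{>0}^d$; two critical networks yield the same cell precisely when they are star-triangle equivalent (which preserves $d$), and by the recovery of the medial pairing below, networks with distinct medial pairings yield disjoint cells, so the union over critical networks modulo star-triangle is a stratification.

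\emph{The circular-minor dictionary and necessity in (4).} The core estimate is that for a critical $\Gamma$ and a circular minor $A_{I,J}$ with $|I|=|J|=r$ one has $(-1)^r \det A_{I,J} \ge 0$, with equality exactly when $G(\Gamma)$ contains no family of $r$ pairwise non-crossing wires separating $I$ from $J$. I would obtain this by expanding $\det A_{I,J}$ through a matrix-tree/Lindstr\"om-Gessel-Viennot argument into a weighted sum over families of $r$ vertex-disjoint paths in $\Gamma$ (equivalently certain spanning subforests): the circular positions of $I$ and $J$ together with planarity force every surviving family to have the same permutation type, so all terms carry the sign $(-1)^r$. This gives the sign inequalities of (4) — symmetry and vanishing row sums being immediate from Ohm's and Kirchhoff's laws — and shows that the set of circular pairs $(I,J)$ with $\det A_{I,J}\ne 0$, which via the correspondence between non-crossing wire families and medial strands determines $\tau(\Gamma)$, is a function of $\Lambda(\Gamma)$.

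\emph{Parts (3), (2), and sufficiency in (4).} For (3) I would peel boundary features one at a time: a lensless medial graph always has a wire cutting off a crossing-free corner of the disk, corresponding to a boundary spike or boundary edge of $\Gamma$, whose removal yields a smaller critical network and whose effect on $\Lambda$ is the explicit invertible rational map $v_i(t)$ with parameter $t$ a ratio of circular minors; inverting and iterating recovers all conductances. Running the same construction forwards, starting from the empty network and adjoining a spike or edge for each ``missing'' strict inequality, realizes every matrix satisfying the conditions of (4), by induction on the number of crossings of the medial pairing it forces; this is the converse in (4). For (2), a general electrically equivalent pair reduces by (1) to two critical networks with the same $\Lambda$, hence the same medial pairing; the topological fact that any two lensless wiring diagrams of a fixed matching are connected by Yang-Baxter moves, together with the proposition in \S\ref{sec:medial} that Yang-Baxter moves lift to $Y-\Delta$ transformations, shows they are star-triangle equivalent.

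\emph{Main obstacle.} The hardest ingredient is the uniqueness statement behind (2): that a lensless medial graph realizing a prescribed matching is unique up to Yang-Baxter moves. The difficulty is arranging the induction so that an exchange move is always available and strictly simplifies the diagram while creating no new lenses. A second delicate point is non-cancellation in the Lindstr\"om-Gessel-Viennot expansion of $\det A_{I,J}$: one must check that planarity genuinely pins down the homotopy type of every contributing path family, so that the terms do not cancel and the inequality in (4) survives.
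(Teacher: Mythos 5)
This statement is background imported verbatim from \cite{CIM,CGV}; the paper gives no proof of it, so there is no internal argument to compare yours against. What you have written is a reconstruction of the strategy of those two papers, and as an outline it is broadly faithful: the reduction to lensless medial graphs, the circular-minor/connection dictionary, the peeling of boundary spikes and edges to recover conductances, and the reduction of (2) to the purely topological statement that lensless wiring diagrams with the same matching are connected by Yang--Baxter moves are all the right ingredients, and you correctly flag the last of these as the hardest step.

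That said, the proposal is an outline rather than a proof, and one step as stated would fail. In part (1) you claim that choosing an innermost lens or self-intersection ``makes available the corresponding reduction move.'' It does not: an innermost lens can still be traversed by other wires (they merely cannot form lenses inside it), and before the lens can be collapsed by a series-parallel or loop move one must first empty it by a sequence of Yang--Baxter moves pushing those wires out through the two endpoints, i.e.\ by $Y-\Delta$ transformations on $\Gamma$ (Theorem \ref{thm:YDelta}), not only the moves of Proposition \ref{prop:SP}. One must then also check that this emptying process terminates, which is where \cite{CGV} do real work. The two gaps you name yourself are also genuine and substantial: the connectivity of lensless wiring diagrams under Yang--Baxter moves, and the non-cancellation in the Lindstr\"om--Gessel--Viennot expansion of the circular minors (together with the surjectivity onto all of the lower-dimensional cells in (4), which your one-sentence induction does not yet establish). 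So the proposal correctly locates the load-bearing lemmas of \cite{CIM,CGV} but does not supply them.
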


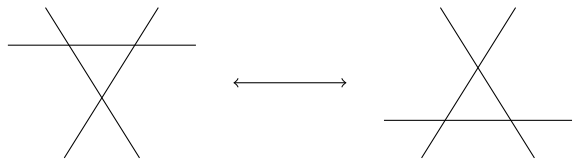
\begin{figure}
\begin{tikzpicture}[scale=0.5]
\draw (0,0) -- (5,0);
\draw (1,1) -- (3.5,-3);
\draw (4,1) -- (1.5,-3);
\draw[<->] (6,-1) -- (9,-1);
\begin{scope}[shift = {(10,-2)}]
\draw (0,0) -- (5,0);
\draw (1.5,3) -- (4,-1);
\draw (3.5,3) -- (1,-1);
\end{scope}
\end{tikzpicture}
\caption{The Yang-Baxter, or star-triangle transformation}
\label{fig:Yang-Baxter}
\end{figure}

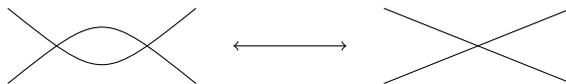
\begin{figure}
\begin{tikzpicture}[scale=0.5]
\draw (0,0) .. controls (2.5,2) .. (5,0);
\draw (0,2) .. controls (2.5,0) .. (5,2);
\draw [<->] (6,1) -- (9,1);
\draw (10,0) -- (15,2);
\draw (10,2) -- (15,0);
\end{tikzpicture}

\caption{Lens removal}
\label{fig:lens}
\end{figure}

\begin{figure}

\begin{tikzpicture}[scale = 0.5]
\draw (1.5,-1.5) -- (2,-2);
\draw (2,-2) .. controls (3,-3) and (3,-1) .. (2,-2);
\draw (2,-2) -- (1.5,-2.5);
\draw [<->] (5,-2) -- (8,-2);
\draw (10.5,-1.5) -- (10.5,-2.5);
\end{tikzpicture}

\caption{Loop removal}
\label{fig:loop}
\end{figure}
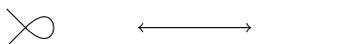

%

\begin{remark}
Not every matching on $[2n]$ can arise from electrical networks.  But in Section \ref{sec:cactus}, we will compactify the space to get all possible matchings on $[2n]$.
\end{remark}

\section{Planar bipartite graphs and the totally nonnegative Grassmannian}\label{sec:TNN}
In this section, we recall Postnikov's theory \cite{Pos} of the totally nonnegative (TNN) Grassmannian.  We follow the approach using planar bipartite graphs and matchings developed in \cite{Lamnotes}.  The connection between the matchings approach and Postnikov's is explained in the works of Talaska \cite{Tal} and Postnikov, Speyer, and Williams \cite{PSW}.  Though our approach is different, most of the results stated here are due to Postnikov, and we have also drawn from work of Oh \cite{Oh} and Knutson, Lam, and Speyer \cite{KLS}.  Some of the statements in Sections \ref{sec:xy} are new.

\subsection{TNN Grassmannian}
In this section, we fix integers $k,n$ and consider the real Grassmannian $\Gr(k,n)$ of (linear) $k$-planes in $\R^n$.  Recall that each $X \in \Gr(k,n)$ has Pl\"ucker coordinates $\Delta_I(X)$ labeled by $k$-element subsets $I \subset [n]$, defined up to a single common scalar.  The TNN Grassmannian $\Gr(k,n)_{\geq 0}$ is the subset of $\Gr(k,n)$ consisting of points $X$ represented by nonnegative Pl\"ucker coordinates $\Delta_I(X) \geq 0$ for all $I \in \binom{[n]}{k}$.  The Pl\"ucker coordinates $\{\Delta_I(X) \mid I \in \binom{[n]}{k}\}$ satisfy quadratic relations known as Pl\"ucker relations, see \cite{Ful}.

The cyclic group acts on $\Gr(k,n)_{\geq 0}$ with generator $\chi$ acting by the map
$$
\chi: \left(v_1,v_2,\ldots,v_n\right) \to \left(v_2,\ldots,v_n, (-1)^{k-1}v_1\right)
$$
where $v_i$ are columns of some $k\times n$ matrix representing $X$.

\subsection{Matchings for bipartite graphs in a disk}
Let $N$ be a weighted bipartite network embedded in the disk with $n$ boundary vertices, labeled $1,2,\ldots,n$ in clockwise order.  Each vertex (including boundary vertices) is colored either black or white, and all edges join black vertices to white vertices.  We let $d$ be the number of interior white vertices minus the the number of interior black vertices.  Furthermore, we let $d' \in [n]$ be the number of white boundary vertices.  Finally, we assume that all boundary vertices have degree 1, and that edges cannot join boundary vertices to boundary vertices.

\begin{remark}
Since the graph is bipartite, this last condition ensures that the coloring of the boundary vertices is determined by the interior part of the graph.  So sometimes we will pretend that boundary vertices are not colored, and usually will omit the color of boundary vertices from pictures.
\end{remark}

An \defn{almost perfect matching} $\Pi$ is a subset of edges of $N$ such that 
\begin{enumerate}
\item
each interior vertex is used exactly once
\item
boundary vertices may or may not be used.
\end{enumerate}
The boundary subset $I(\Pi) \subset \{1,2,\ldots,n\}$ is the set of black boundary vertices that are used by $\Pi$ union the set of white boundary vertices that are not used.  By our assumptions we have $|I(\Pi)| =  k:=d' + d$.

Define the \defn{boundary measurement}, or \defn{dimer partition function} as follows.  For $I \subset [n]$ a $k$-element subset,
$$
\Delta_I(N) := \sum_{\Pi : I(\Pi) = I} \wt(\Pi)
$$
where $\wt(\Pi)$ is the product of the weight of the edges in $\Pi$.

\begin{theorem}\label{thm:matchingplucker}
Suppose $N$ has nonnegative real weights, and that almost perfect matchings of $N$ exist.  Then the homogeneous coordinates $(\Delta_I(N))_{I \in \binom{[n]}{k}})$ defines a point $M(N)$ in the Grassmannian $\Gr(k,n)_{\geq 0}$.  Furthermore, every $X \in \Gr(k,n)_{\geq 0}$ is realizable $X = M(N)$ by a planar bipartite graph.
\end{theorem}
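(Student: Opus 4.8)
The plan is to prove the two assertions separately: first that $M(N)$ is a well-defined point of $\Gr(k,n)_{\geq 0}$, and then that every point of $\Gr(k,n)_{\geq 0}$ is realized.

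For the first assertion, the key point is to verify that the $\Delta_I(N)$ satisfy the three-term Pl\"ucker relations (which suffice to characterize decomposable vectors, since the full Pl\"ucker ideal is generated by them). The strategy is combinatorial: fix $I', I''$ differing appropriately and interpret each product $\Delta_{I'}(N)\,\Delta_{I''}(N)$ as a sum over \emph{pairs} of almost perfect matchings $(\Pi', \Pi'')$ with prescribed boundary subsets. The superposition $\Pi' \cup \Pi''$ is a disjoint union of doubled edges, even cycles, and paths between boundary vertices; on this superposition one performs a sign-reversing / weight-preserving involution that toggles which edges of each alternating cycle or path belong to $\Pi'$ versus $\Pi''$, thereby pairing up terms across the two sides of the Pl\"ucker relation. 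Planarity is what guarantees that the relevant boundary subsets match up with the correct signs. One must separately check nonnegativity, which is immediate since all weights are nonnegative, and nonemptiness of $M(N)$, which is exactly the hypothesis that almost perfect matchings exist. I would either carry out this superposition argument directly or, more economically, cite that this is precisely the content of the dimer/matching model of \cite{Tal, PSW, Lamnotes}, where the identification of $M(N)$ with Postnikov's boundary measurement map is established.

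For the second assertion — surjectivity onto $\Gr(k,n)_{\geq 0}$ — I would not argue from scratch but instead reduce to Postnikov's theorem that every $X \in \Gr(k,n)_{\geq 0}$ arises from a \defn{plabic graph} with nonnegative edge weights via his boundary measurement map. The remaining work is to translate between Postnikov's directed-network boundary measurements and the dimer partition functions $\Delta_I(N)$: this is the generalized Temperley-type correspondence, under which a plabic graph is converted to a bipartite network (inserting bivalent vertices, contracting, using the gauge/trivalent moves) so that the directed-path boundary measurements of the former equal, up to an overall normalization and a sign convention, the matching boundary measurements of the latter. Since every positroid cell (including the top cell) is hit by some plabic graph, and the translation preserves nonnegativity of weights, one concludes $\{M(N)\} \supseteq \Gr(k,n)_{\geq 0}$; combined with the first part this gives equality of images.

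The main obstacle, and the step requiring the most care, is the bookkeeping in the surjectivity half: matching up Postnikov's conventions (the choice of which boundary vertices are "sources," the signs $(-1)^{k-1}$ appearing in the cyclic action $\chi$, and the gauge freedom in edge weights) with the matching-model conventions used here, so that "realizable by a plabic graph" genuinely yields "realizable by a bipartite network $N$" in the precise sense of the theorem statement. The Pl\"ucker-relation verification in the first half is conceptually routine given the superposition involution, but writing the involution so that signs and boundary subsets are handled uniformly across the degenerate cases (doubled edges, closed cycles, boundary-to-boundary paths) also demands attention.
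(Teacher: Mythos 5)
Your proposal is correct and is essentially the same route the paper takes: the paper gives no proof of this theorem at all, stating only that the first assertion is ``essentially due to Kuo \cite{Kuo}'' and deferring the rest to Postnikov \cite{Pos} via the matching--flow translation of Talaska \cite{Tal} and Postnikov--Speyer--Williams \cite{PSW}, which are exactly the superposition/graphical-condensation argument and the plabic-graph reduction you outline. The one point worth keeping in mind is that the three-term Pl\"ucker relations cut out the Grassmannian only set-theoretically on the locus where some coordinate is nonzero, so your argument should invoke the hypothesis that an almost perfect matching exists to supply a nonvanishing $\Delta_I$.
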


The first statement of Theorem \ref{thm:matchingplucker} is essentially due to Kuo \cite{Kuo}.  

\subsection{Gauge equivalences and local moves}
The material of this subsection is not used essentially in the sequel.  We suggest the reader unfamiliar with this material to skip this section.

Let $N$ be a planar bipartite graph.  If $e_1,e_2,\ldots,e_r$ are adjacent to an interior vertex $v$, we can multiply all of their edge weights by the same constant $c \in \R_{> 0}$ to get a new graph $N'$, and we have $M(N') = M(N)$.  This is called a \defn{gauge equivalence}.

We also have the following \defn{local moves}:
\begin{enumerate}
\item[(M1)]
Spider move or square move \cite{Pos,GK}: assuming the leaf edges of the spider have been gauge fixed to 1, the transformation is
$$
a'=\frac{a}{ac+bd} \qquad b'=\frac{b}{ac+bd} \qquad c'=\frac{c}{ac+bd} \qquad d'=\frac{d}{ac+bd}
$$
\begin{center}
\begin{tikzpicture}[scale=0.8]
\draw (-2,0) -- (0,1)--(2,0)--(0,-1)--  (-2,0);
\draw (0,1) -- (0,2);
\draw (0,-1) -- (0,-2);
\node at (-1.2,0.7) {$a$};
\node at (-1.2,-0.7) {$d$};
\node at (1.2,0.7) {$b$};
\node at (1.2,-0.7) {$c$};

\filldraw[black] (0,1) circle (0.1cm);
\filldraw[black] (0,-1) circle (0.1cm);
\filldraw[white] (-2,0) circle (0.1cm);
\draw (-2,0) circle (0.1cm);
\filldraw[white] (0,2) circle (0.1cm);
\draw (0,2) circle (0.1cm);
\filldraw[white] (2,0) circle (0.1cm);
\draw (2,0) circle (0.1cm);
\filldraw[white] (0,-2) circle (0.1cm);
\draw (0,-2) circle (0.1cm);
\end{tikzpicture}
\hspace{30pt}
\begin{tikzpicture}[scale=0.8]
\draw (0,-2) -- (1,0)-- (0,2)-- (-1,0)-- (0,-2);
\draw (1,0) -- (2,0);
\draw (-1,0) -- (-2,0);
\node at (0.7,-1.2) {$a'$};
\node at (-0.7,-1.2) {$b'$};
\node at (0.7,1.2) {$d'$};
\node at (-0.7,1.2) {$c'$};

\filldraw[black] (1,0) circle (0.1cm);
\filldraw[black] (-1,0) circle (0.1cm);
\filldraw[white] (-2,0) circle (0.1cm);
\draw (-2,0) circle (0.1cm);
\filldraw[white] (0,2) circle (0.1cm);
\draw (0,2) circle (0.1cm);
\filldraw[white] (2,0) circle (0.1cm);
\draw (2,0) circle (0.1cm);
\filldraw[white] (0,-2) circle (0.1cm);
\draw (0,-2) circle (0.1cm);

\end{tikzpicture}
\end{center}
\item[(M2)]
Valent two vertex removal.  If $v$ has degree two, we can gauge fix both incident edges $(v,u)$ and $(v,u')$ to have weight 1, then contract both edges (that is, we remove both edges, and identify $u$ with $u'$).  Note that if $v$ is a valent two-vertex adjacent to boundary vertex $b$, with edges $(v,b)$ and $(v,u)$, then removing $v$ produces an edge $(b,u)$, and the color of $b$ flips.

\item[(R1)]
Multiple edges with same endpoints is the same as one edge with sum of weights.
\item[(R2)]
Leaf removal.  Suppose $v$ is leaf, and $(v,u)$ the unique edge adjacent to it.  Then we can remove both $v$ and $u$, and all edges adjacent to $u$.  However, if there is a boundary edge $(b,u)$ where $b$ is a boundary vertex, then that edge is replaced by a boundary edge $(b,w)$ where $w$ is a new vertex with the same color as $v$.
\item[(R3)]
Dipoles (two degree one vertices joined by an edge) can be removed.
\end{enumerate}

\begin{proposition}
Each of these relations preserves $M(N)$.
\end{proposition}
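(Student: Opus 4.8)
\emph{Setup.} Since $M(N)$ is the point of $\Gr(k,n)_{\ge 0}$ with homogeneous coordinates $(\Delta_I(N))_{I\in\binom{[n]}{k}}$, it suffices, for each move $N\rightsquigarrow N'$, first to check that $k=d+d'$ is unchanged (a routine bookkeeping of interior/boundary and black/white vertex counts, in which the color flips prescribed in (R2) and (M2) are exactly what make $k$ invariant), and then to produce one positive constant $\lambda$ with $\Delta_I(N')=\lambda\,\Delta_I(N)$ for all $I$. Every move alters $N$ only within a bounded region $D$; writing $N=D\cup D^c$ glued along a finite set $H$ of half-edges crossing $\partial D$, an almost perfect matching of $N$ restricts to a matching inside $D$ that covers each interior vertex of $D$ once and uses some subset $S\subseteq H$ of interface edges, together with a compatible matching of $D^c$. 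Hence $\Delta_I(N)=\sum_{S\subseteq H}Z_S\,W_{S,I}$, where $Z_S$ is the weighted count of matchings inside $D$ using exactly $S$ and $W_{S,I}$ is a factor coming from $D^c$ that does not depend on which version of $D$ is glued in; likewise $\Delta_I(N')=\sum_S Z'_S\,W_{S,I}$. So it is enough to prove $Z'_S=\lambda\,Z_S$ for all $S$ with a single $\lambda$.

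\emph{The easy moves.} For (R1) the two parallel edges join two interior vertices, so any matching uses at most one of them, and term by term $Z'_S=Z_S$ (a matching uses one of the two parallel edges, of weight $w_1$ or $w_2$, or the merged edge, of weight $w_1+w_2$, or neither). For (R3) the dipole edge $e$ is forced into every matching, so $Z'_S=w(e)^{-1}Z_S$ and $\lambda=w(e)^{-1}$. For (R2) the leaf edge $(v,u)$ is forced, hence $u$ is matched to $v$ in every matching and no other edge at $u$ is ever used; deleting $v$, $u$ and the edges at $u$ is therefore a weight-preserving bijection on matchings after dividing by $w(v,u)$, the only subtlety being a boundary vertex $b$ formerly joined to $u$: it is never used in $N$, so flipping its color and re-attaching it to a fresh leaf $w$ of color $v$ is forced by the requirement that its contribution to $I(\Pi)$ be unchanged — which is precisely the recipe in the statement. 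For (M2), after the stated gauge-fixing of the two edges at the degree-two vertex $v$ to $1$ (legitimate because gauge equivalence preserves $M(N)$), every matching uses exactly one of them; contracting identifies the two neighbours and gives a weight-preserving, $I(\Pi)$-preserving bijection, so $\Delta_I(N')=\Delta_I(N)$, and when $v$ is adjacent to a boundary vertex the same color-flip bookkeeping as in (R2) reproduces the stated recipe.

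\emph{The square move (M1).} This is the only move that is not a bijection of matchings and is the one genuine computation. After gauge-fixing the four leg edges to $1$, the interface $H$ consists of the four leg half-edges, so there are at most $2^4$ boundary patterns $S$; for each one enumerates the finitely many ways to cover the two interior black vertices of the square, each of which must be matched to one of its two incident corners. A short case analysis shows that, for every admissible pattern, the weighted count inside the old square equals $(ac+bd)$ times the weighted count inside the new square (and both vanish for the inadmissible patterns), so $Z_S=(ac+bd)\,Z'_S$ uniformly — this is exactly the identity forced by the definitions $a'=a/(ac+bd)$, etc. Substituting into the gluing formula gives $\Delta_I(N)=(ac+bd)\,\Delta_I(N')$ for all $I$, i.e.\ $M(N)=M(N')$.

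\emph{Main obstacle.} The real work is to formalize the gluing identity $\Delta_I(N)=\sum_S Z_S\,W_{S,I}$ so that the outside factor $W_{S,I}$ is manifestly independent of the side glued in — in particular handling boundary vertices that sit on $\partial D$ correctly — and then to carry out the small but fiddly enumeration in (M1) in tandem with the color-flip bookkeeping for boundary vertices in (R2) and (M2); once these are in place, everything reduces to the single square-move identity $\sum_S Z_S=(ac+bd)\sum_S Z'_S$ and to trivial or forced-edge arguments for the remaining moves.
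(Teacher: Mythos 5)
The paper states this proposition without proof --- it is quoted as part of the background on the TNN Grassmannian, deferring to \cite{Pos}, \cite{Tal}, \cite{PSW} and \cite{Lamnotes} --- so there is no internal argument to compare against. Your proof is correct and is essentially the standard one from that literature: localize each move to a gadget $D$, write $\Delta_I(N)=\sum_S Z_S W_{S,I}$ over interface patterns $S$, and check that the inside partition functions change by a uniform positive scalar. I verified the only nontrivial computation, the square move: with the two interior black vertices each forced to match one of the four white interface vertices, the nonzero patterns are exactly the six $2$-element subsets, with $Z_{\{L,R\}}=ac+bd$, $Z_{\{L,B\}}=a$, $Z_{\{R,B\}}=b$, $Z_{\{T,L\}}=d$, $Z_{\{T,R\}}=c$, $Z_{\{T,B\}}=1$, and the primed gadget gives the same list divided by $ac+bd$ (using $a'c'+b'd'=1/(ac+bd)$), so $Z_S=(ac+bd)Z'_S$ uniformly, as you claim. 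Your bookkeeping for (R2) and (M2) is also right: the color flip of the boundary vertex $b$ is exactly what compensates for $b$ switching from ``never used'' to ``always used'' (or vice versa) so that its contribution to $I(\Pi)$, and the quantity $k=d+d'$, are unchanged. One small point of hygiene: the claim that $W_{S,I}$ is independent of which gadget is glued in needs the two gadgets to present the same boundary (same attachment vertices with the same colors); this holds verbatim for (M1) and (R1), and for (R2), (R3), (M2) --- where the boundary of the gadget genuinely changes --- you correctly abandon the gluing formalism in favor of direct forced-edge bijections, so no gap results.
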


A planar bipartite graph $N$ is \defn{reduced} if it has the minimal number of faces in its move-equivalence class, and in addition, there are no leaves in $N$ connected to interior vertices.

\begin{theorem}\label{thm:Pos}
Suppose $N$ and $N'$ are planar bipartite graphs with $M(N) = M(N')$.  Then $N$ and $N'$ are related by local moves and gauge equivalences.  Suppose $N$ and $N'$ are reduced planar bipartite graphs with $M(N) = M(N')$.  Then $N$ and $N'$ are related by square moves, valent two vertex removals/additions, and gauge equivalences.
\end{theorem}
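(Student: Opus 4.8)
The plan is to reduce the statement to Postnikov's classification of reduced plabic graphs \cite{Pos}, translated into the bipartite–network language of \cite{Lamnotes}, by passing through the strand (trip) permutation.

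First I would attach to each planar bipartite network $N$ its \emph{strand permutation} $\pi_N$, a decorated permutation of $[n]$ read off by following wires through $N$ with the rule of the road (turn maximally one way at white vertices, the other way at black vertices). The key input is that the point $M(N)\in\Gr(k,n)_{\geq 0}$ already determines $\pi_N$. Indeed, the support $\{I\mid\Delta_I(N)\neq 0\}$ is the set of bases of a positroid: positivity of the weights makes $\Delta_I(N)\neq 0$ as soon as an almost perfect matching with boundary $I$ exists, and the converse containment (that $I$ outside the positroid forces $\Delta_I(N)=0$) follows from the strand/flow combinatorics of \cite{Tal,PSW}; a positroid in turn both determines and is determined by its Grassmann necklace, hence by the decorated permutation, so $\pi_N$ is recovered. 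In particular $M(N)=M(N')$ forces $\pi_N=\pi_{N'}$.

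Second — and this is the heart — I would prove two facts for a fixed decorated permutation $\pi$: (a) any two \emph{reduced} networks with strand permutation $\pi$ are connected by square moves (M1), valent two vertex removals/additions (M2), and gauge equivalences; and (b) for a \emph{fixed} reduced network $N$ with strand permutation $\pi$, the boundary measurement map from edge weights modulo gauge is injective onto the corresponding positroid cell (in fact a homeomorphism, the dimensions matching, $\#\mathrm{faces}-1=\dim$ of the cell). For (a) I would use bridge/BCFW decompositions: every reduced network can be moved, using only the allowed moves, into a standard form built from a reduced word for the affine permutation associated to $\pi$ by successively adjoining boundary bridges to a base network of lollipops; any two reduced words of that affine permutation differ by braid and commutation relations, which translate precisely into square moves and into sliding/commuting of bridges, i.e.\ into (M1) and (M2). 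Granting (a) and (b): if $N,N'$ are reduced with $M(N)=M(N')$ then $\pi_N=\pi_{N'}=:\pi$; by (a) move $N$ to a network $N''$ with the same underlying graph as $N'$; moves preserve $M$, so $M(N'')=M(N')$; and by (b), applied to the fixed reduced graph underlying $N'$, the weights of $N''$ and $N'$ agree up to gauge. This proves the second assertion.

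Third, for the first assertion I would show every planar bipartite network can be brought to a reduced one using the reduction moves (R1)--(R3) together with (M2): if $N$ is not reduced then it has a leaf attached to an interior vertex, or its strand arrangement contains a closed loop, a self-intersecting strand, or a ``bad'' double crossing, and in each case an explicit local move strictly decreases the number of faces (or deletes the offending leaf), so iteration terminates at a reduced $N_{\mathrm{red}}$ with $M(N_{\mathrm{red}})=M(N)$ (using Theorem \ref{thm:matchingplucker} and the move-invariance of $M$). Doing the same to $N'$ and invoking the reduced case completes the proof. I expect the main obstacle to be step (a): that all reduced networks with a given strand permutation are move-connected is exactly the bipartite incarnation of Postnikov's theorem and is not formal — making the bridge-decomposition argument precise, in particular handling boundary vertices, lollipops, and the affine wrap-around, and checking that braid and commutation relations in the affine symmetric group correspond exactly to (M1) and (M2), is where the real work lies. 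A secondary technical point is the non-cancellation/flow argument pinning the support of $(\Delta_I(N))$ to the positroid of $\pi_N$.
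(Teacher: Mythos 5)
The paper does not prove this theorem: it is stated as a recalled result, attributed to Postnikov \cite{Pos} (with the bipartite/matching formulation following \cite{Lamnotes}), so there is no in-paper argument to compare against. Your outline is, in substance, the standard proof from exactly those references: recover the decorated permutation, connect reduced graphs via bridge decompositions and reduced words in the affine symmetric group, show the weight parametrization of a fixed reduced graph is injective modulo gauge, and handle general graphs by first reducing them. Two cautions. First, a minor imprecision: the trip permutation $\pi_N$ is determined by the point $M(N)$ only for \emph{reduced} $N$ (this is the proposition $f_{M(N)}=f_N$ in the paper); for a non-reduced graph the trips can disagree with the positroid of $M(N)$, so the first step should be phrased for reduced graphs only -- which is all your argument actually uses. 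Second, you have correctly located the real work, but it remains undone in the proposal: (a) showing every reduced graph can be brought to bridge form by the allowed moves (including the treatment of lollipops and the affine wrap-around), and the step in (3) that a non-reduced graph always admits a face-decreasing reduction -- the latter requires first using square moves and valent-two removals to localize the offending feature (closed strand, self-intersection, bad double crossing) before (R1)--(R3) apply, which is itself a nontrivial lemma of Postnikov. As a plan the proposal is sound and faithful to the cited proof; as a proof it defers precisely the parts that constitute the theorem.
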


\subsection{Bounded affine permutations and Grassmann necklaces}
A \defn{bounded affine permutation}, or \defn{bounded juggling pattern} of type $(k,n)$ is a bijection $f:\Z \to \Z$ satisfying:
\begin{enumerate}
\item
$i \leq f(i) \leq i+n$
\item
$f(i+n) = f(i) + n$ for all $i \in \Z$
\item
$\sum_{i=1}^n (f(i)-i) = kn$
\end{enumerate}
A bijection satisfying only (2) and (3) is called an affine permutation.  The affine permutations of type $(k,n)$ are denoted $\tS_n^k$.  The bounded affine permutations of type $(k,n)$ are denoted $\Bound(k,n)$.  Bounded affine permutations inherit a length function, and a Bruhat order from the set of all affine permutations.  See \cite{KLS,BB} for full details.  It is often convenient to think of $f$ as a juggling pattern: $f(i) = j$ says that the ball thrown at time $i$ lands at time $j$.  Since $f$ is determined by its values on $[n]$, we sometimes give $f$ in window notation: $f = [f(1),f(2),\ldots,f(n)]$.

A \defn{(k,n)-Grassmann necklace} is a collection of $k$-element subsets $\I = (I_1,I_2,\ldots,I_n)$ satisfying the following property: for each $a \in [n]$:
\begin{enumerate}
\item
$I_{a+1} = I_a$ if $a \notin I_a$
\item
$I_{a+1} = I_a -\{a\} \cup \{a'\}$ if $a \in I_a$.
\end{enumerate}

For each $f \in \Bound(k,n)$ we define $\I(f) = (I_1,I_2,\ldots,I_n)$ by
$$
I_a := \{f(b) \mid b < a \text{ and } f(b) \geq a\} \mod n.
$$
Here ``$\mod n$" means that we take representatives in $[n]$.

We write $\leq_a$ for the ordering $a <_a a+1 <_a \cdots <_a n <_a 1 <_a \cdots <_a a-1$ on $[n]$.  We define the dominance ordering on $\binom{[n]}{k}$ by $I =\{i_1<i_2<\cdots<i_k\} \leq J =\{j_1<j_2<\cdots <j_k\}$ if $i_r \leq j_r$ for all $r$.  We also have the cyclically rotated version $I \leq_a J$: order the elements of $I$ and $J$ using $\leq_a$, and then compare them (in order) using $\leq_a$.  Define a partial order on Grassmann necklaces by $\I \leq \I'$ if $I_a \leq_a I'_a$ for each $a$.

\begin{theorem}\label{thm:Grassorder}
The map $f \mapsto \I(f)$ is a bijection between bounded affine permutations of type $(k,n)$ and $(k,n)$-Grassmann necklaces.  We have $f \leq f'$ in Bruhat order if and only if $\I(f) \leq \I(f')$.
\end{theorem}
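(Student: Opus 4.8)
The plan is to establish the two assertions in turn: first bijectivity of $f\mapsto\I(f)$, and then the fact that it intertwines Bruhat order with the necklace order.

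For bijectivity the guiding picture is that of juggling patterns. Given a bounded affine permutation $f$, fix $a$ and look at the "balls in flight just before time $a$'', namely the integers $f(b)$ for those $b<a$ with $f(b)\geq a$. Since $i\le f(i)\le i+n$, each such $f(b)$ lies in $[a,a+n-1]$, and these values are distinct because $f$ is injective; as $[a,a+n-1]$ is a complete residue system mod $n$, reduction mod $n$ identifies this collection bijectively with the subset $I_a\subseteq[n]$, which therefore has size $k$ by the standard juggling fact that the number of balls in flight equals $\tfrac1n\sum_{i=1}^n(f(i)-i)=k$. Incrementing $a$ to $a+1$ adds the index $b=a$ to the defining set exactly when $f(a)>a$, and deletes any $b<a$ with $f(b)=a$; since $f$ is a bijection with $f(i)\ge i$ we have $f^{-1}(a)\le a$, so there is at most one such $b$, and in fact $a\in I_a \iff f(a)\neq a \iff f^{-1}(a)<a$. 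This yields at once the two transition rules defining a Grassmann necklace, so $\I(f)$ is one, and moreover shows that the window value $f(a)$ is recovered from the pair $(I_a,I_{a+1})$: it is $a$ when $a\notin I_a$, and otherwise the representative in $[a+1,a+n]$ of the unique element removed from $I_a$ (namely $a$ itself, replaced by the new element $a'$). Injectivity of $f\mapsto\I(f)$ follows. For surjectivity, given a $(k,n)$-Grassmann necklace $\I=(I_1,\dots,I_n)$ I would define $f$ on $[n]$ by the same recipe and extend by $f(i+n)=f(i)+n$; one checks directly that $f$ satisfies $i\le f(i)\le i+n$, and that the $f$-transitions of the sets $I_a(f)$ coincide with the necklace transitions of $\I$. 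Since both are $n$-periodic cyclic sequences obeying the same transitions, agreement at one index — which follows from the cyclic closure $I_{n+1}=I_1$ together with the periodicity of $f$ — forces $\I(f)=\I$; this in turn gives $|I_a(f)|=|I_a|=k$ for all $a$ and hence $\sum_{i=1}^n(f(i)-i)=kn$, so $f\in\Bound(k,n)$.

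For the order statement I would invoke the combinatorial description of Bruhat order on affine permutations by rank conditions (see \cite{BB,KLS}): for $f,f'\in\tS_n^k$ one has $f\le f'$ if and only if
$$\#\{b\le i \mid f(b)\ge j\}\ \le\ \#\{b\le i \mid f'(b)\ge j\}\qquad\text{for all }i\in\Z,\ j>i,$$
where by $n$-periodicity it suffices to let $i$ run over one residue class. The bridge is that, writing $\hat I_a\subseteq[a,a+n-1]$ for the lift of $I_a$ described above, one has $\#\{x\in\hat I_a \mid x\ge m\}=\#\{b<a\mid f(b)\ge m\}$ for $a\le m\le a+n$. Since $I\le_a J$ in the cyclic dominance order is equivalent to $\#\{x\in I\mid x\ge_a t\}\le\#\{x\in J\mid x\ge_a t\}$ for every threshold $t$, the inequality $I_a\le_a I'_a$ unwinds exactly to $\#\{b<a\mid f(b)\ge m\}\le\#\{b<a\mid f'(b)\ge m\}$ for all $m$. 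Taking $i=a-1$ and $j=m$, we see that $\I(f)\le\I(f')$ is precisely the displayed rank condition (the cells with $j\le i$ being redundant, thanks to $i\le f(i)\le i+n$), hence equivalent to $f\le f'$. An alternative would be to run an induction on $\ell(f')-\ell(f)$ using Bruhat covering relations, checking monotonicity of $\I$ under a cover and producing an intermediate element when $\I(f)\le\I(f')$, but the rank-function route is cleaner.

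I expect the order half to be the main obstacle, and within it the step requiring the most care is the reduction of the bi-infinite family of rank inequalities to the $n$ comparisons $I_a\le_a I'_a$: this is exactly where boundedness $i\le f(i)\le i+n$ (which trivializes the cells $j\le i$) and periodicity $f(i+n)=f(i)+n$ must be used precisely, and where, without boundedness, the formulation would be more delicate. By contrast the bijection is conceptually routine once the juggling picture is adopted; its only subtleties are the mod-$n$ bookkeeping and the base-case check that the reconstructed $f$ reproduces $\I$.
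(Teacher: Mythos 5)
The paper does not actually prove this theorem: it is recalled as background from Postnikov and Knutson--Lam--Speyer (the section explicitly attributes its results to \cite{Pos}, \cite{Oh}, \cite{KLS}), so there is no in-paper argument to compare against. Judged on its own, your order half is correct and complete: the rank criterion you invoke is the paper's Theorem \ref{thm:rank} (\cite[Theorem 8.3.7]{BB}); the bridge identity $\#\{x\in\hat I_a\mid x\ge m\}=\#\{b<a\mid f(b)\ge m\}$ is right; and the redundancy of the cells with $j\le i$ (where $r_f(i,j)=(i-j+1)+k$ for every $f\in\Bound(k,n)$, using $f(b)\ge b$ and the balls-in-flight count) and with $j>i+n$ (where both sides vanish) is correctly identified. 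The transition analysis and the recovery of $f(a)$ from $(I_a,I_{a+1})$, hence injectivity of $f\mapsto\I(f)$, are also fine.

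The genuine gap is in surjectivity. First, you never verify that the reconstructed $f$ is a bijection of $\Z$ (equivalently that $a\mapsto f(a)\bmod n$ permutes $\Z/n\Z$); this is needed both for $f$ to be an affine permutation at all and for your own transition analysis of $I_a(f)$, which rests on ``$a\in I_a(f)\iff f^{-1}(a)<a$''. It is true, but requires an argument: if $f(a)\equiv f(b)$ with common residue $m$, one traces $m$ through the necklace using the fact that $m$ can be removed only at step $m$, and in each case contradicts either $|I_{c+1}|=k$ or the defining dichotomy $m\in I_m\iff f(m)\ne m$. Second, the assertion that agreement at one index ``follows from the cyclic closure $I_{n+1}=I_1$ together with the periodicity of $f$'' is not a proof: both $(I_a)$ and $(I_a(f))$ are $n$-periodic and cyclically closed, and transitions only constrain consecutive terms, so these properties alone cannot force the two sequences to coincide. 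What does close the argument is the observation that the symmetric difference $D_a=I_a\,\triangle\,I_a(f)$ satisfies $D_{a+1}\subseteq D_a$ (at step $a$ either both sequences are fixed, or both remove $a$ and insert $f(a)\bmod n$, and in particular $a\notin D_{a+1}$), hence by periodicity $D_a$ is a constant set $D$ with $a\notin D$ for every $a\in[n]$, i.e.\ $D=\emptyset$. With those two pieces supplied, your proof is complete.
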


\begin{example}
Let $k =2$ and $n = 6$.  Suppose $f = [2,4,6,5,7,9]$.  Then $\I(f) = (13,23,34,46,56,16)$.
\end{example}

Suppose $N$ is a reduced planar bipartite graph.  We define a bounded affine permutation $f_N$ as follows.  At each boundary vertex $i \in [n]$ we produce a path starting at $i$, called the \defn{trip} $T_i$, as follows.  Travel along the unique outgoing edge at $i$ and at each interior black vertex, turn maximally towards the right, while at each interior white vertex turn maximally towards the left.  The trip $T_i$ ends when we reach a boundary vertex $j \in [n]$.  We then define $f_N$ by $f_N(i) = j \mod n$.  There is a special case: when $T_i$ ends at $i$.  This can only happen (for a reduced planar bipartite graph) if $i$ is connected to a leaf.  In this case, we declare $f_N(i) = i$ if $i$ is connected to a black leaf, and $f_N(i) = i+n$ if $i$ is connected to a white leaf.

\subsection{Positroids and positroid varieties}
Let $X \in \Gr(k,n)$.  Define $f_X$ by
\begin{equation}\label{eq:perm}
f_X(i) := \min\{j \geq i \mid v_i \in \spn\{v_{i+1},v_{i+2},\ldots,v_j\}\}
\end{equation}
where $v_i$ are the columns of a representative of $X$, and we extend these columns periodically by defining $v_{i+n}:=v_i$.

\begin{proposition}
Suppose $N$ is a reduced planar bipartite graph.  Then $f_{M(N)} = f_N$.
\end{proposition}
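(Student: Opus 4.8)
The plan is to compare $f_N$ and $f_{M(N)}$ through their Grassmann necklaces. Both are bounded affine permutations of type $(k,n)$ with $k=d+d'$: this is built into the definition of $f_N$, while for $f_{M(N)}$ it follows from \eqref{eq:perm} together with the periodicity $v_{i+n}=v_i$ (which forces $i\le f_X(i)\le i+n$) and the fact that $M(N)\in\Gr(k,n)$ by Theorem~\ref{thm:matchingplucker}. Hence, by the injectivity part of Theorem~\ref{thm:Grassorder}, it suffices to show $\I(f_N)=\I(f_{M(N)})$.

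I would then identify each side explicitly. For $\I(f_{M(N)})$ I would use the standard reformulation of \eqref{eq:perm} in terms of the matroid of a point of $\Gr(k,n)$: for $X\in\Gr(k,n)$ and $a\in[n]$, the $a$-th component of $\I(f_X)$ is the $\le_a$-minimal $I\in\binom{[n]}{k}$ with $\Delta_I(X)\ne 0$ (see \cite{Pos,KLS,Oh}). Applied to $X=M(N)$, and using that $N$ has nonnegative weights so that no cancellation occurs in $\Delta_I(N)=\sum_{\Pi:I(\Pi)=I}\wt(\Pi)$, this says the $a$-th component of $\I(f_{M(N)})$ is the $\le_a$-minimal subset of the form $I(\Pi)$ as $\Pi$ runs over almost perfect matchings of $N$ (such matchings exist because $N$ is reduced, which is what lets us invoke Theorem~\ref{thm:matchingplucker}). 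Thus, writing $\I(f_N)=(I_1,\dots,I_n)$, the proposition follows once we show that each $I_a$ equals the $\le_a$-minimal matchable boundary subset of $N$. Since $N\mapsto f_N$ and $N\mapsto M(N)$ are equivariant for cyclic rotation up to signs (irrelevant for vanishing of Pl\"ucker coordinates), it suffices to treat $a=1$, i.e.\ to prove that $I_1(f_N)$ is the lexicographically smallest matchable subset of $N$.

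I would prove this by induction on the number of edges of $N$, freely applying local moves and gauge equivalences — which preserve both $f_N$ (classical \cite{Pos}) and $M(N)$ (as recorded earlier) and keep $N$ reduced (Theorem~\ref{thm:Pos}) — to put $N$ in a convenient form within its move class. In the inductive step I would examine the boundary vertex $1$. If it is a lollipop, then $f_N(1)\in\{1,1+n\}$, and correspondingly either $1$ lies in $I(\Pi)$ for every matching (white interior leaf; $1$ a coloop) or in no $I(\Pi)$ (black interior leaf; $1$ a loop); deleting the lollipop lowers $n$, and one checks that $I_1$ and the lexicographically minimal matchable set transform compatibly. Otherwise $1$ is adjacent to an interior vertex, and I would split off a small neighbourhood — equivalently, factor out a boundary bridge — using reducedness to ensure that the trip $T_1$ does not self-intersect and returns to the boundary only at $f_N(1)$, so that the effect on the trip permutation and on matchability is controlled and the induction hypothesis applies. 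The base cases are disjoint unions of lollipops and single bridges, where both sides are evaluated directly.

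The crux, and the main obstacle, is exactly this last combinatorial identity: passing between the local turning rules defining the trips $T_i$ and the global constraint that an almost perfect matching uses each interior vertex exactly once. The technical heart is a lemma asserting that an almost perfect matching $\Pi$ whose boundary set is $\le_a$-minimal subject to a prescribed condition at one boundary vertex must ``run along'' the relevant trip, together with organizing the induction so that a single reduction move simultaneously simplifies $N$, controls the bounded affine permutation $f_N$, and controls the matchability pattern. An alternative that sidesteps part of this is to use the flow / perfect-orientation dictionary of Talaska \cite{Tal} and Postnikov--Speyer--Williams \cite{PSW} (or the explicit matrix attached to $N$ in \cite{Lamnotes}) to write down a concrete representative of $M(N)$ read off directly from the graph, and then evaluate \eqref{eq:perm} on that representative; with it in hand, $f_{M(N)}=f_N$ becomes a direct computation tracing trips through the construction.
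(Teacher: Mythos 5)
The paper states this proposition without proof: it is part of the recalled Postnikov theory, and the dictionary between the matchings formulation and Postnikov's trip permutations is attributed to Talaska \cite{Tal} and Postnikov--Speyer--Williams \cite{PSW}, so there is no internal argument to compare against. Your reduction is sound as far as it goes: both maps land in $\Bound(k,n)$, the bijection of Theorem \ref{thm:Grassorder} lets you compare Grassmann necklaces instead of permutations, and since the weights are nonnegative there is no cancellation in $\Delta_I(N)=\sum_{\Pi}\wt(\Pi)$, so $I_a(M(N))$ is the $\leq_a$-minimal matchable boundary subset (strictly, the minimal subset supporting a matching of nonzero weight, so one should take the weights positive). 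This correctly isolates the real content of the proposition: that the trip-defined subset $I_a(f_N)$ coincides with the $\leq_a$-minimal matchable subset.

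That remaining step is where your argument has a genuine gap, and the sketch you give would not go through as written. Your inductive step examines boundary vertex $1$ and proposes to ``factor out a boundary bridge'' there whenever $1$ is not a lollipop; but a reduced planar bipartite graph need not admit a bridge at the pairs $(n,1)$ or $(1,2)$ --- the positions at which a bridge can be peeled off are dictated by the descent pattern of $f_N$, not by the base point $a=1$ you have normalized to, and you cannot rotate to fix this because the necklace entry you are computing is tied to that base point. The asserted equivalence between ``$1$ is adjacent to an interior vertex'' and ``there is a boundary bridge at $1$'' is false. Moreover, the ``key lemma'' that a $\leq_a$-minimal matching must run along the trip $T_a$ is exactly the assertion to be proved, and it is nowhere established. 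The cleanest way to close the gap is the alternative you mention only at the end: use a perfect-orientation/flow representative of $M(N)$ (as in \cite{Tal}, \cite{PSW}, or the explicit matrix attached to $N$ in \cite{Lamnotes}) so that \eqref{eq:perm} can be evaluated directly by tracing trips through the graph; that computation is the actual substance of the proposition and needs to be carried out rather than deferred.
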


Define $\I(X) := (I_1,I_2,\ldots,I_n)$ by setting $I_a(X)$ to be the lexicographically minimal non-vanishing Pl\"ucker coordinate of $X$ with respect to the order $\leq_a$.

\begin{proposition}
Let $X \in \Gr(k,n)$.  Then $f_X$ is a bounded affine permutation of type $(k,n)$ and $\I(X)$ is a $(k,n)$-Grassmann necklace.  We have $\I(f_X) = \I(X)$.
\end{proposition}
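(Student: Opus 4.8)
The plan is to show that the linear-algebraic data defining $f_X$ and the lex-minimal-Pl\"ucker data defining $\I(X)$ are two repackagings of the same combinatorial object, and then invoke Theorem~\ref{thm:Grassorder}. First I record the easy properties. Extending columns periodically, $v_{i+n}=v_i\in\spn\{v_{i+1},\dots,v_{i+n}\}$, so $i\leq f_X(i)\leq i+n$, and $f_X(i+n)=f_X(i)+n$ is immediate from periodicity of the $v_j$. A representative matrix of $X\in\Gr(k,n)$ has rank $k$, so the nonvanishing Pl\"ucker coordinates $\Delta_J(X)\neq 0$ are exactly those $J$ for which $\{v_j\mid j\in J\}$ is a basis of $\R^k$; hence $I_a(X)$, being the $\leq_a$-lex-minimal such $J$, is the basis returned by the matroid greedy algorithm run in the order $\leq_a$. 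Using integer representatives in $[a,a+n-1]$, this gives
\[ I_a(X)=\{\, i\in[a,a+n-1]\ :\ v_i\notin\spn\{v_a,v_{a+1},\dots,v_{i-1}\}\,\}\bmod n\ =:\ G_a, \]
and in particular $|G_a|=k$ for every $a$.

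The crux is the claim that for every $a$: if $v_a=0$ then $G_{a+1}=G_a$, and if $v_a\neq 0$ then $G_{a+1}=\bigl(G_a\setminus\{a\}\bigr)\cup\{f_X(a)\}\bmod n$. Since $a\in G_a\iff v_a\neq 0\iff f_X(a)>a$, this is precisely the defining recursion of a $(k,n)$-Grassmann necklace, with the exchanged element $a'$ equal to $f_X(a)\bmod n$. To prove it, put $U_j=\spn\{v_{a+1},\dots,v_j\}$ (so $U_a=0$), and note $f_X(a)=\min\{j\geq a: v_a\in U_j\}$, $G_{a+1}=\{i\in[a+1,a+n]:v_i\notin U_{i-1}\}\bmod n$, while $a\in G_a\iff v_a\neq 0$ and $i\in G_a\iff v_i\notin\spn\{v_a\}+U_{i-1}$ for $a<i\leq a+n-1$. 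If $v_a=0$ the two span conditions coincide and the zero column lies in neither greedy set, so $G_{a+1}=G_a$. If $v_a\neq 0$, a short linear-algebra lemma shows there is a unique $i^{*}\in[a+1,a+n]$ with $v_{i^{*}}\notin U_{i^{*}-1}$ yet $v_{i^{*}}\in\spn\{v_a\}+U_{i^{*}-1}$, and that $i^{*}=f_X(a)$: any such $i$ forces $v_a\in U_i$, hence $i\geq f_X(a)$, while if $i>f_X(a)$ then $v_a\in U_{i-1}$ and the two conditions become contradictory; and $i=f_X(a)$ itself works because $v_a\in U_{f_X(a)}\setminus U_{f_X(a)-1}$ forces both $v_{f_X(a)}\notin U_{f_X(a)-1}$ and $v_{f_X(a)}\equiv c\,v_a\pmod{U_{f_X(a)-1}}$ with $c\neq 0$. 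Comparing the two greedy sets index by index (and handling the endpoints $a$ and $a+n$) then yields exactly the stated exchange; the coloop case $f_X(a)=a+n$ corresponds to $a'=a$, i.e. $G_{a+1}=G_a$ even though $a\in G_a$.

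Granting this, $\I(X)=(G_1,\dots,G_n)$ is a genuine $(k,n)$-Grassmann necklace (all $|G_a|=k$ and the recursion holds), which is the second assertion. By Theorem~\ref{thm:Grassorder} there is a unique $f\in\Bound(k,n)$ with $\I(f)=\I(X)$; I claim $f=f_X$. Unwinding the definition of $\I(\cdot)$ shows that such an $f$ must satisfy $f(a)=a$ whenever $a\notin I_a(X)$, and otherwise $f(a)$ is the unique element of $[a+1,a+n]$ congruent mod $n$ to the element exchanged into $I_{a+1}(X)$. But the previous paragraph says $f_X$ (which by the first paragraph satisfies $a\leq f_X(a)\leq a+n$ and periodicity) obeys exactly these relations against $\I(X)$: if $a\notin G_a$ then $v_a=0$ and $f_X(a)=a$, while if $a\in G_a$ then $f_X(a)\in[a+1,a+n]$ and $f_X(a)\bmod n$ is the exchanged element $a'$. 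Hence $f_X=f\in\Bound(k,n)$, the first assertion, and $\I(f_X)=\I(f)=\I(X)$, the third.

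The step I expect to be most delicate is the exchange lemma: carefully tracking how the greedy basis changes as the cyclic starting point advances by one, through the degenerate configurations (zero columns lying between $a$ and the next element of $G_a$, and coloops), while keeping the ``$\bmod n$'' reductions and the cyclic orders $\leq_a$ consistent. Everything else is either the standard greedy characterization of the lex-minimal basis of a matroid or a direct appeal to Theorem~\ref{thm:Grassorder}.
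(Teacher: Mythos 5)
Your argument is correct. Note, however, that the paper states this proposition without proof: it appears in Section 3 as part of the recalled background on the totally nonnegative Grassmannian, which the author attributes to Postnikov, Oh, and Knutson--Lam--Speyer. So there is no in-paper proof to compare against; what you have written is essentially the standard argument from that literature (the ``juggling'' description of Grassmann necklaces), carried out correctly. The two pillars both check out: (i) the identification of the $\leq_a$-lexicographically minimal nonvanishing Pl\"ucker coordinate with the greedy basis $G_a$ of the column matroid, and (ii) the exchange claim, whose verification you sketch accurately --- if $v_i\notin U_{i-1}$ but $v_i\in\spn\{v_a\}+U_{i-1}$ then $v_a\in U_i\setminus U_{i-1}$ forces $i=f_X(a)$, and conversely $v_a\in U_{f_X(a)}\setminus U_{f_X(a)-1}$ forces both conditions at $i=f_X(a)$, with the endpoints $a$, $a+n$ and the coloop case $f_X(a)=a+n$ (where $a'=a$ and $G_{a+1}=G_a$ despite $a\in G_a$) handled consistently. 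Deducing $f_X=f$ from uniqueness in Theorem \ref{thm:Grassorder} is legitimate because, as you note, a bounded affine permutation is recoverable from its necklace via $f(a)=a$ iff $a\notin I_a$ and otherwise $f(a)$ is the unique representative in $[a+1,a+n]$ of the exchanged element; this also delivers bijectivity and the condition $\sum_i(f_X(i)-i)=kn$ without separate verification.
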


We shall consider matroids of rank $k$ on $[n]$.  The matroid $\M(X)$ of a point $X \in \Gr(k,n)$ is the collection
$$
\left\{I \in \binom{[n]}{k} \mid \Delta_I(X) \neq 0\right\}.
$$
A \defn{positroid} is the matroid of a point $X \in \Gr(k,n)_{\geq 0}$.

Let $\S_I = \{J \in \binom{[n]}{k} \mid I \leq J\}$ be the Schubert matroid with minimal element $I$.  Let $\S_{I,a} = \{J \in \binom{[n]}{k} \mid I \leq_a J\}$.   For $f \in \Bound(k,n)$ define a matroid 
\begin{equation}\label{eq:Oh}
\M(f):= \bigcap_{a=1}^n \S_{I_a,a}.
\end{equation}

For $f \in \Bound(k,n)$, define the \defn{open positroid variety}
$$
\oPi_f := \{X \in \Gr(k,n) \mid f_X = f\}.
$$
Obviously, we have $\Gr(k,n) = \bigsqcup_{f \in \Bound(k,n)} \oPi_f$.

\begin{theorem}\label{thm:TNNmain} \
\begin{enumerate}
\item
The \defn{closed positroid varieties} are given by
$$\Pi_f:=\overline{\oPi_f} = \bigsqcup_{f' \geq  f} \oPi_{f'}$$
\item
The intersection
$(\oPi_f)_{>0} := \oPi_f \cap \Gr(k,n)_{\geq 0}
$
is a cell $\R_{>0}^{k(n-k) - \ell(f)}$.  Each $X \in (\oPi_f)_{>0}$ has the same positroid $\M(f)$.
\end{enumerate}
\end{theorem}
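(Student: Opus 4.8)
The plan is to recover both statements from the Grassmann--necklace combinatorics recalled above together with the plabic-graph parametrization; part (1) is due to Postnikov \cite{Pos} and Knutson--Lam--Speyer \cite{KLS}, part (2) to Postnikov \cite{Pos} with the matroid identification due to Oh \cite{Oh}. For part (2) I would fix a reduced planar bipartite graph $N$ with $f_N = f$ (one exists by \cite{Pos}) and consider the space of positive edge weights modulo gauge equivalence; by Euler's formula this is $\R_{>0}^{k(n-k)-\ell(f)}$, the exponent being the number of faces of $N$ minus one. The boundary-measurement map $N \mapsto M(N)$ carries this space into $\Gr(k,n)_{\geq 0}$ by Theorem~\ref{thm:matchingplucker} and into $\oPi_f$ by the identity $f_{M(N)} = f_N$. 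It is injective by Postnikov \cite{Pos} (one recovers the edge weights from the $\Delta_I$ via the flow formula of \cite{Tal,PSW}), and it is surjective onto $(\oPi_f)_{>0}$: by Theorem~\ref{thm:matchingplucker} any totally nonnegative point is $M(N')$, by Theorem~\ref{thm:Pos} we may take $N'$ reduced, and then $f_{N'}=f$ forces $N'$ to be square-move equivalent to $N$, with weights transported by subtraction-free invertible rational maps. Hence $(\oPi_f)_{>0}$ is a cell of dimension $k(n-k)-\ell(f)$. For the matroid: with all weights positive, $\Delta_I(M(N)) \neq 0$ iff $N$ has an almost perfect matching with boundary set $I$, a purely combinatorial condition on the underlying graph and hence, by Theorem~\ref{thm:Pos}, constant on the move-equivalence class of $N$; so $\M(X)$ is one fixed matroid for all $X \in (\oPi_f)_{>0}$, and Oh's theorem \cite{Oh} identifies it with $\M(f)=\bigcap_a \S_{I_a,a}$.

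For the easy half of part (1), write $\I(f)=(I_1,\dots,I_n)$. The proposition identifying $\I(f_X)$ with $\I(X)$ (whose $a$-th entry is the $\leq_a$-lex-minimal nonvanishing Pl\"ucker coordinate) shows
$$
\oPi_f = \Bigl\{\, X \;\Bigm|\; \Delta_J(X)=0 \text{ for all } a \text{ and all } J <_a I_a,\ \text{ and } \Delta_{I_a}(X)\neq 0 \text{ for all } a \,\Bigr\}.
$$
The vanishing conditions are Zariski-closed, so $\overline{\oPi_f}$ lies in $Z(f):=\bigcap_a \{X : \Delta_J(X)=0\ \forall\, J <_a I_a\}$, an intersection of $n$ cyclically rotated Grassmannian Schubert varieties. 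If $X\in\oPi_{f'}$ with $f'\geq f$, then Theorem~\ref{thm:Grassorder} gives $I'_a \geq_a I_a$ for all $a$; since the $\leq_a$-minimal nonvanishing coordinate of $X$ is $I'_a \geq_a I_a$, every $\Delta_J(X)$ with $J <_a I_a$ vanishes, so $X\in Z(f)$. The same argument read backwards identifies $Z(f)$ exactly with $\bigsqcup_{f'\geq f}\oPi_{f'}$, a closed set containing $\oPi_f$; thus $\overline{\oPi_f}\subseteq \bigsqcup_{f'\geq f}\oPi_{f'}$.

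The main obstacle is the reverse inclusion: one must show $\oPi_{f'}\subseteq\overline{\oPi_f}$ for every $f'\geq f$, equivalently that $Z(f)$ is irreducible with $\oPi_f$ dense. I would argue this following \cite{KLS}: the closed positroid variety $\Pi_f$ is the image in $\Gr(k,n)$ of the closure of an open Richardson variety $\mathcal R$ in the affine flag variety of $GL_n$ under the natural projection $\pi$, and $\pi$ restricts to an isomorphism $\mathcal R \xrightarrow{\ \sim\ } \oPi_f$. Since $\overline{\mathcal R}$ is irreducible, projective, stratified by Richardson cells indexed by the relevant Bruhat interval, and of dimension $k(n-k)-\ell(f)$, its image $\pi(\overline{\mathcal R})=\overline{\oPi_f}$ is irreducible of that dimension and is a union of positroid strata; matching this against $\overline{\oPi_f}\subseteq\bigsqcup_{f'\geq f}\oPi_{f'}$ and against the dimension of each $\oPi_{f'}$ from part (2) forces equality. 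A more elementary alternative stays inside $\Gr(k,n)_{\geq 0}$: degenerating a single edge weight of a reduced plabic graph for $f$ to $0$ or $\infty$ exhibits each covering $(\oPi_{f'})_{>0}$, $f'\gtrdot f$, in the closure of $(\oPi_f)_{>0}$, and one then propagates along covers and lifts to the Zariski closure using the containment already proved; I expect the Richardson-variety route to be the cleaner of the two, and the place where a careless argument is most likely to hide a gap.
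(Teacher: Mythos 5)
The paper does not actually prove this theorem: Section 3 is explicitly a recollection of Postnikov's theory, and Theorem \ref{thm:TNNmain} is quoted from \cite{Pos} and \cite{KLS} (with the matroid formula \eqref{eq:Oh} going back to \cite{Oh}) without argument. So there is no in-paper proof to compare against; what you have written is a reconstruction of the standard literature arguments, and it is largely the right reconstruction. Part (2) --- reduced plabic graph, positive weights modulo gauge, the face count for the dimension, injectivity via the flow/matching formulas, surjectivity via move-equivalence of reduced graphs, and the purely combinatorial invariance of the matching matroid --- is exactly Postnikov's proof. Your identification of the reverse inclusion in (1) (irreducibility of $\Pi_f$ with $\oPi_f$ dense) as the genuinely hard step is also correct, and the Richardson-projection route is the one \cite{KLS} takes; note only that those Richardson varieties live in the finite flag variety $GL_n/B$, not the affine flag variety --- the affine permutations are purely indexing combinatorics.

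The one substantive soft spot is in your ``easy half'' of (1). You define $Z(f)$ by the vanishing of all $\Delta_J$ with $J <_a I_a$ in the \emph{lexicographic} order (the order in which $I_a(X)$ is minimal, by the definition of $\I(X)$), but Theorem \ref{thm:Grassorder} compares necklaces in the \emph{dominance} order, and lex-comparability of necklace entries does not imply dominance-comparability: for instance $\{2,3\}$ is lex-greater than $\{1,4\}$ but not dominance-greater. So ``the same argument read backwards'' does not identify $Z(f)$ with $\bigsqcup_{f' \geq f}\oPi_{f'}$ as written, and the containment $Z(f) \subseteq \bigsqcup_{f'\geq f}\oPi_{f'}$ is unjustified. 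The repair is to impose the dominance Schubert conditions from the outset: for any matroid, the lex-minimal basis with respect to $\leq_a$ is also the Gale (dominance) minimal basis, so every $X \in \oPi_f$ satisfies $\Delta_J(X)=0$ for all $J$ with $J \not\geq_a I_a$ in dominance order; these conditions are Zariski-closed, and any $X$ satisfying them has $I_a(X) \geq_a I_a$ in dominance for every $a$, whence $f_X \geq f$ by Theorem \ref{thm:Grassorder}. This is a small gap with a one-sentence fix, but it is precisely the lex-versus-Gale confusion that tends to derail write-ups of this material, so it is worth making explicit.
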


To summarize, there are bijections between bounded affine permutations, Grassmann necklaces, and positroids.

\subsection{A reduction result}\label{sec:xy}
The group $GL_{n}$ acts on $\Gr(k,n)$ by right multiplication.  For $a \in \R$ and $1 \leq i \leq n-1$, the element $x_i(a) \in GL_{2n}$ is the elementary matrix differing from the identity matrix by an entry $a$ in the $i$-th row and $(i+1)$-st column.  Similarly $y_i(a)$ has an entry $a$ in the $(i+1)$-st row and $i$-th column.  For example, for $n = 2$,
$$
y_2(a) = \left( \begin{array}{cccc} 1 & 0 & 0 & 0 \\ 
0 & 1 &0 & 0 \\
0 & a & 1 & 0 \\
0 & 0 & 0 & 1 
\end{array}\right).
$$
The generator $x_i(a)$ acts on $\Gr(k,n)$ by adding $a$ times the $i$-th column to the $(i+1)$-st, and we define the action of $x_{n}(a)$ by using the signed cyclic action: $x_{n}(a) \cdot X = (\chi \circ x_1(a) \circ \chi^{-1}) \cdot X$.  We may also think of $x_{n}(a) \in GL_{n}$ as the matrix that differs from the identity by an entry in the $n$-th row and first column.  Similarly, $y_i(a)$ acts by adding the $a$ times the $(i+1)$-st column to the $i$-th column. 

We shall need the following \defn{dual Grassmann necklace}.  Let $f \in \Bound(k,n)$.  Define $\J(f) = (J_1,J_2,\ldots,J_{n})$ by  
$$
J_b(f) := \{a < b \mid f(a) \geq b\} \mod n \subset [n].
$$
Thus $\J(f)$ keeps track of where balls are thrown instead of where they land.  The same arguments as for Grassmann necklaces show that $\J(f)$ consists of $k$-element subsets of $[n]$, and that $f \mapsto \J(f)$ is injective.

\begin{proposition}\label{prop:reduce} \
\begin{enumerate}
\item
Suppose $X \in \oPi_f$ and $i < f(i) < f(i+1) < i+n+1$.  If $$a = \Delta_{I_{i+1}}(X)/\Delta_{I_{i+1} \cup \{i\} - \{i+1\}}(X)$$ is well-defined (that is, $\Delta_{I_{i+1} \cup \{i\} - \{i+1\}}(X) \neq 0$) then $X' = X \cdot x_i(-a) \in \oPi_{f'}$ where $f' = f s_i > f$.  Furthermore, $a$ is always well-defined if $X \in \Gr(k,n)_{\geq 0}$.  In this case $X'$ also lies in $\Gr(k,n)_{\geq 0}$.
\item
Suppose $X \in \oPi_f$ and $i-n < f^{-1}(i) < f^{-1}(i+1) < i+1$.  If $$a = \Delta_{J_{i+1}}(X)/\Delta_{J_{i+1} \cup \{i+1\} - \{i\}}(X)$$ is well-defined (that is, $\Delta_{J_{i+1} \cup \{i+1\} - \{i\}}(X) \neq 0$) then $X' = X \cdot y_i(-a) \in \oPi_{f'}$ where $f' = s_i f  > f$.  Furthermore, $a$ is always well-defined if $X \in \Gr(k,n)_{\geq 0}$.  In this case $X'$ also lies in $\Gr(k,n)_{\geq 0}$.
\end{enumerate}
%
\end{proposition}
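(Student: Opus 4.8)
\emph{Proof plan.}
The plan is to prove the first statement in detail and obtain the second from it by duality: for the second one runs the same argument with the dual Grassmann necklace $\J(f)$ in place of $\I(f)$, with $s_i f$ in place of $f s_i$, and with the mirror-image column operation, or equivalently one transports the first through $\Gr(k,n)\cong\Gr(n-k,n)$, $X\mapsto X^{\perp}$, which interchanges right multiplication by $x_i(a)$ with right multiplication by $y_i(-a)$. For the first statement I would first reduce to $1\le i\le n-1$, the case $i=n$ being conjugate to the case $i=1$ by the signed cyclic shift $\chi$, which carries $\oPi_f$, Grassmann necklaces, and $x_i$ equivariantly. The heart of the argument is to determine the bounded affine permutation of $X':=X\cdot x_i(-a)$ through its Grassmann necklace: by Theorem~\ref{thm:Grassorder} and the proposition that $\I(f_Y)=\I(Y)$, it is enough to show $\I(X')=\I(fs_i)$, and $fs_i>f$ will come out of the hypothesis along the way.

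First I would dispatch the combinatorics: the inequalities $i<f(i)<f(i+1)<i+n+1$ are exactly the conditions that $f':=fs_i$ lie in $\Bound(k,n)$ and satisfy $\ell(f')=\ell(f)+1$, hence $f'>f$; and a direct computation from $I_b(g)=\{\,g(c)\mid c<b,\ g(c)\ge b\,\}\bmod n$ shows that $\I(f')$ agrees with $\I(f)$ in every slot but the $(i+1)$st, where $I_{i+1}(f')$ is obtained from $I_{i+1}(f)$ by deleting the residue of $f(i)$ and inserting the residue of $f(i+1)$, while the same formula gives $i+1\in I_{i+1}(f)$ and $i\notin I_{i+1}(f)$, so that $I_{i+1}\cup\{i\}-\{i+1\}$ is a genuine element of $\binom{[n]}{k}$ and $a$ is meaningful precisely when its denominator does not vanish.

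Then comes the linear algebra. Right multiplication by $x_i(-a)$ leaves $\Delta_J$ unchanged unless $i+1\in J$ and $i\notin J$, in which case $\Delta_J(X')=\Delta_J(X)-a\,\Delta_{J\cup\{i\}-\{i+1\}}(X)$ (with no sign, as $i,i+1$ are adjacent); the prescribed value of $a$ is exactly the one forcing $\Delta_{I_{i+1}(f)}(X')=0$. For every slot $b\ne i+1$ we have $i<_b i+1$, so whenever $i+1\in J\not\ni i$ the set $J\cup\{i\}-\{i+1\}$ is strictly $\leq_b$-smaller than $J$; applying this with $J=I_b(f)$ and with every $J<_b I_b(f)$, and using that all Pl\"ucker coordinates strictly $\leq_b$-below $I_b(f)$ vanish on $X\in\oPi_f$, one gets $\Delta_J(X')=\Delta_J(X)$ for exactly those $J$ and concludes $I_b(X')=I_b(f)=I_b(f')$. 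The slot $b=i+1$ is where the real work lies, since there $i+1$ is the $\leq_{i+1}$-minimum and $i$ the $\leq_{i+1}$-maximum and the monotonicity just used runs backwards. There I would invoke the explicit value of $a$, a single three-term Pl\"ucker relation, and the vanishing on $\oPi_f$ of every Pl\"ucker coordinate outside the positroid $\M(f)$ (the Knutson--Lam--Speyer description of $\Pi_f$): the Pl\"ucker relation expresses $\Delta_{I_{i+1}(f')}(X)\cdot\Delta_{I_{i+1}\cup\{i\}-\{i+1\}}(X)$ as, up to sign, $\Delta_{I_i(f)}(X)\cdot\Delta_{I_{i+2}(f)}(X)$ once a term carrying a coordinate outside $\M(f)$ is discarded, so $\Delta_{I_{i+1}(f')}(X)\ne0$; a parallel bookkeeping check shows that no coordinate strictly $\leq_{i+1}$-between $I_{i+1}(f)$ and $I_{i+1}(f')$ survives the operation. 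This yields $\I(X')=\I(fs_i)$, hence $X'\in\oPi_{f'}$ with $f'=fs_i>f$.

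Finally the positivity claims. If $X\in\Gr(k,n)_{\ge 0}$ then $\Delta_{I_{i+1}(f)}(X)\ne0$ automatically, since $I_{i+1}(f)=I_{i+1}(X)$ is a nonvanishing coordinate by definition; and $\M(X)=\M(f)$ by Theorem~\ref{thm:TNNmain}, while $I_{i+1}(f)\cup\{i\}-\{i+1\}\in\M(f)$ by Oh's presentation \eqref{eq:Oh}, i.e.\ by checking $I_{i+1}(f)\cup\{i\}-\{i+1\}\geq_b I_b(f)$ for all $b$ — the only nontrivial comparisons being those near $b=i$, which follow from $f(i)<f(i+1)$. Hence the denominator is nonzero and $a$ is well-defined. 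That $X'$ is again totally nonnegative amounts to saying that ``undoing the top bridge'' carries the cell $(\oPi_f)_{>0}$ into $(\oPi_{f'})_{>0}\subset\Gr(k,n)_{\ge 0}$, which follows from Postnikov's bridge parametrization — under which $(\oPi_f)_{>0}$ is swept out of $(\oPi_{f'})_{>0}$ by $Y\mapsto Y\cdot x_i(t)$ for $t>0$ — once one checks that the prescribed $a$ is the unique scalar undoing this bridge.

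I expect the slot $b=i+1$ of the third paragraph to be the main obstacle: away from that slot the effect of $x_i(-a)$ on Pl\"ucker coordinates is governed by a soft monotonicity, whereas at $b=i+1$ the same column operation moves the wrong way in $\leq_{i+1}$, forcing use of the precise value of $a$, the Pl\"ucker relations, and the Knutson--Lam--Speyer/Oh description of positroids. The modular bookkeeping in the orders $\leq_b$, and the cyclic reduction handling $i=n$, are the remaining, routine, sources of friction.
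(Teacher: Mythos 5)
Your proposal replaces the paper's identification of the cell containing $X'$ with a slot-by-slot computation of the Grassmann necklace of $X'$, and the hard slot $b=i+1$ is where it breaks down. The paper never needs that slot: since right multiplication by $x_i(-a)$ changes only the $(i+1)$-st column and preserves $\spn(v_i)$ and $\spn(v_i,v_{i+1})$, formula \eqref{eq:perm} gives $f_{X'}(r)=f_X(r)$ for all $r\not\equiv i,i+1$, and since $f_{X'}\neq f_X$ (because $\Delta_{I_{i+1}}(X')=0$ while $I_{i+1}$ is the lex-minimal nonvanishing coordinate for $f$), the only possibility is $f_{X'}=fs_i$. In your route, the claim that in the three-term relation $\Delta_{i\cup I\cup j}\Delta_{(i+1)\cup I\cup k}=\Delta_{i\cup I\cup k}\Delta_{(i+1)\cup I\cup j}+\Delta_{I_i}\Delta_{I_{i+2}}$ one may discard the first right-hand term because it carries a coordinate outside $\M(f)$ is false: for the top cell of $\Gr(2,4)$ with $i=1$ that term is $\Delta_{14}\Delta_{23}$ and nothing lies outside $\M(f)$. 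The term does drop, but only when the relation is evaluated at $X'$, where $\Delta_{(i+1)\cup I\cup j}(X')=\Delta_{I_{i+1}(f)}(X')=0$ by the very choice of $a$, giving $\Delta_{I_{i+1}(f')}(X')=\Delta_{I_i}(X)\Delta_{I_{i+2}}(X)/\Delta_{i\cup I\cup j}(X)\neq 0$; you instead apply it at $X$ and target $\Delta_{I_{i+1}(f')}(X)$, which is not the quantity needed. Moreover your ``parallel bookkeeping check'' for coordinates lex-between $I_{i+1}(f)$ and $I_{i+1}(f')$ is not routine: for such $J$ with $i+1\in J\not\ni i$ the correction subset $J-\{i+1\}\cup\{i\}$ is dominance-\emph{larger} in $\leq_{i+1}$ (as $i$ is the maximum there), so the soft monotonicity that handles the other slots gives nothing, and this is precisely the verification the paper's span argument renders unnecessary.

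The final positivity claim is also not established: you appeal to Postnikov's bridge parametrization, i.e., to $(\oPi_f)_{>0}=(\oPi_{f'})_{>0}\cdot x_i(\R_{>0})$, but within this paper's framework Proposition \ref{prop:reduce} is the inductive engine behind that parametrization, so the appeal is essentially circular. The paper supplies the missing content in Lemma \ref{lem:reduce}: the inequality $\Delta_{1\cup I\cup j}\Delta_{2\cup J}\geq\Delta_{1\cup J}\Delta_{2\cup I\cup j}$, proved by induction using the exchange lemma and Pl\"ucker relations, which together with \eqref{eq:Delta} shows every coordinate of $X'$ is nonnegative. Your remaining points are fine or close to it: the reduction of (2) to (1) and of $i=n$ to $i=1$ matches the paper's ``(2) is similar''; the slots $b\neq i+1$ work as you say; and proving well-definedness of $a$ for TNN $X$ via Oh's presentation \eqref{eq:Oh} is a workable alternative to the paper's one-line argument (positivity of the right-hand side of the same Pl\"ucker relation forces $\Delta_{i\cup I\cup j}\neq 0$), though the dominance check must be carried out for all $b$, not just those near $i$.
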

\begin{proof}
We only prove (1), as (2) is similar.  Let $v_i$ be the columns of a $k \times n$ matrix which represents $X$.

Suppose $a$ is well defined.  Now $X'$ is obtained from $X$ by adding $-a$ times $v_i$ to $v_{i+1}$.  So for any $J$ we have
\begin{equation}\label{eq:Delta}
\Delta_J(X') = \begin{cases} \Delta_J(X) -a \Delta_{J -\{i+1\} \cup \{i\}}(X) &\mbox{ if $i+1 \in J$ and $i \notin J$} \\
\Delta_J(X) & \mbox{otherwise.}
\end{cases}
\end{equation}
The formulae above are the minors of this specific representative of $X'$; the Pl\"ucker coordinates of the actual point in the Grassmannian are only determined up to a scalar. 

Let $v'_i$ be the columns for the matrix obtained from $v_i$ by right multiplication by $x_i(-a)$.  Then $\spn(v_i) = \spn(v'_i)$ and $\spn(v_i,v_{i+1}) = \spn(v'_i,v'_{i+1})$, so $f_{X'}(r) = f_X(r)$ unless $r \in \{i,i+1\} \mod n$.  But $f_{X'} \neq f_X$ since $\Delta_{I_{i+1}}(X') = 0$.  Thus $f_{X'}$ must be obtained from $f_X$ by swapping the values of $f(i)$ and $f(i+1)$, so $f' = fs_i > f$.

Now suppose that $X \in \Gr(k,n)_{\geq 0}$.  If $f(i) = i+1$, then by \eqref{eq:perm}, the columns $v_i$ and $v_{i+1}$ are parallel, and since $f(i+1) \neq i+1$ both $v_i$ and $v_{i+1}$ are non-zero.  In this case $a$ is just the ratio $v_{i+1}/v_i$, and $X'$ is what we get by changing the $(i+1)$-st column to $0$.  All the claims follow.

We now assume that $f(i) > i+1$.  Let $f(i) = j$ and $f(i+1) =  k$.  Since $f(i) \notin \{i,i+n\}$, we have $i \in I_i$ and $i \notin I_{i+1}$.  We also have $i+1 \in I_i \cap I_{i+1}$.  We let $I_i = \{i,i+1\} \cup I$, $I_{i+1} = (i+1) \cup I \cup \{j\}$, and $I_{i+2} = I \cup \{j,k\}$ for some $I \subset [n]-\{i,i+1\}$.  Note that if $k = n+i$, then $I_{i+2} = I \cup \{j,i\}$; this immediately gives $\Delta_{i \cup I \cup j} \neq 0$.

Suppose $k \neq n+i$.  Then we have a Pl\"ucker relation
$$
\Delta_{i \cup I\cup j} \Delta_{(i+1) \cup I \cup k} = \Delta_{i \cup I \cup k} \Delta_{(i+1) \cup I \cup j} + \Delta_{i \cup (i+1) \cup I} \Delta_{I \cup j \cup k}
$$
where all subsets are ordered according to $\leq_i$.  (The easiest way to see that the signs are correct is just to take $i = 1$.)  Since the RHS is positive, $\Delta_{i \cup I \cup j} \neq 0$.  We have shown that $a$ is well defined.

Using \eqref{eq:Delta} and Lemma \ref{lem:reduce} below, we see that $X' \in \Gr(k,n)_{\geq 0}$.
\end{proof}

\begin{lemma}\label{lem:reduce}
Let $X \in \Gr(k,n)_{\geq 0}$ be as in Proposition \ref{prop:reduce}, with $f(i) > i+1$.  For simplicity of notation suppose $i = 1$.  
Write $I_2 = 2 \cup I \cup j$.  Suppose $J \subset \{3,\ldots,n\}$ satisfies $1 \cup J \in \M_X$.  Then $\Delta_{1\cup I \cup j}(X)\Delta_{2 \cup J}(X) \geq \Delta_{1 \cup J}(X) \Delta_{2 \cup I \cup j}(X)$.
\end{lemma}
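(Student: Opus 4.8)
The plan is to prove the equivalent ratio inequality $\Delta_{1\cup K}(X)/\Delta_{2\cup K}(X)\ge\Delta_{1\cup J}(X)/\Delta_{2\cup J}(X)$, where $K:=I\cup\{j\}$, by transporting $J$ to $K$ through a chain of matroid bases and applying a single three-term Pl\"ucker relation at each step. Throughout, fix a representative of $X$ with all Pl\"ucker coordinates nonnegative. By the argument already given in the proof of Proposition~\ref{prop:reduce}, both $\Delta_{1\cup K}(X)=\Delta_{1\cup I\cup j}(X)$ and $\Delta_{2\cup K}(X)=\Delta_{I_2}(X)$ are strictly positive.

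The first substantive point is that the hypothesis $1\cup J\in\M_X$ forces $K\le J$ in dominance order, and in particular $\Delta_{2\cup J}(X)>0$. I would deduce this from the positroid description $\M_X=\bigcap_a\S_{I_a,a}$ of \eqref{eq:Oh}: because $1\cup J$ and $2\cup J$ differ only in the single entry $1$ versus $2$, and because nothing lies $\le_a$-between $1$ and $2$ for any $a\ne2$, the inequality $1\cup J\ge_a I_a$ automatically upgrades to $2\cup J\ge_a I_a$ for all such $a$; thus whether $2\cup J$ lies in $\M_X$ is controlled by the condition at $a=2$ alone, and $2\cup J\ge_2 I_2$ is precisely $K\le J$. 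The real content is that $1\cup J\in\M_X$ already entails $K\le J$. This is where the detailed shape of the Grassmann necklace enters: the hypotheses of Proposition~\ref{prop:reduce} force $I_2=\{2\}\cup I\cup\{j\}$ to be the $\le_2$-minimal nonvanishing subset, which pins the columns $v_2,v_3,\dots,v_{j-1}$ inside the hyperplane $\spn(\{v_2\}\cup\{v_i:i\in I\})$, and one must combine this with the Schubert conditions $1\cup J\ge_a I_a$ for $a=2,3,\dots$ to ``ratchet'' the shifted inequalities they give into $k_r\le j_r$ for every $r$. I expect this verification to be the main obstacle.

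Granting $K\le J$, I would finish by induction on $|J\setminus K|$. Since $1\cup J$ and $1\cup K$ are bases of $\M_X$ that both contain $1$ and satisfy $K\le J$, they are joined by a chain $1\cup J=B_0>B_1>\dots>B_m=1\cup K$ of bases of $\M_X$, all containing $1$, with consecutive terms differing by a single exchange and strictly decreasing in dominance order; write $B_t=1\cup J_t$. Every $J_t$ satisfies $K\le J_t$, hence $\min J_t\ge\min K\ge 3$ (as $K\subseteq\{3,\dots,n\}$), so $J_t\subseteq\{3,\dots,n\}$ and, by the previous paragraph, $\Delta_{1\cup J_t}(X)>0$ and $\Delta_{2\cup J_t}(X)>0$. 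A step of the chain has the form $J_{t+1}=J_t-a+b$ with $b<a$ and both in $\{3,\dots,n\}$; setting $S=J_t\setminus\{a\}$, the three-term Pl\"ucker relation with common set $S$ and four indices $1<2<b<a$ says
$$\Delta_{S\cup\{1,b\}}\,\Delta_{S\cup\{2,a\}}=\Delta_{S\cup\{1,2\}}\,\Delta_{S\cup\{a,b\}}+\Delta_{S\cup\{1,a\}}\,\Delta_{S\cup\{2,b\}},$$
i.e. $\Delta_{1\cup J_{t+1}}\Delta_{2\cup J_t}=\Delta_{S\cup\{1,2\}}\Delta_{S\cup\{a,b\}}+\Delta_{1\cup J_t}\Delta_{2\cup J_{t+1}}$. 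Since the first summand is nonnegative, this gives $\Delta_{1\cup J_{t+1}}/\Delta_{2\cup J_{t+1}}\ge\Delta_{1\cup J_t}/\Delta_{2\cup J_t}$; multiplying these inequalities along the chain yields $\Delta_{1\cup K}/\Delta_{2\cup K}\ge\Delta_{1\cup J}/\Delta_{2\cup J}$, and clearing the positive denominators $\Delta_{2\cup K}(X)$, $\Delta_{2\cup J}(X)$ gives exactly $\Delta_{1\cup I\cup j}(X)\Delta_{2\cup J}(X)\ge\Delta_{1\cup J}(X)\Delta_{2\cup I\cup j}(X)$.
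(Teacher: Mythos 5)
Your core mechanism --- walking from $1\cup J$ to $1\cup(I\cup j)$ through single basis exchanges and applying the three-term Pl\"ucker relation
$\Delta_{S\cup\{1,b\}}\Delta_{S\cup\{2,a\}}=\Delta_{S\cup\{1,2\}}\Delta_{S\cup\{a,b\}}+\Delta_{S\cup\{1,a\}}\Delta_{S\cup\{2,b\}}$
at each step --- is exactly the engine of the paper's proof. But your proposal has a genuine gap, and you have located it yourself: everything is conditioned on the claim that $1\cup J\in\M_X$ forces $K=I\cup\{j\}\leq J$ in dominance order, and on the further assertion that two bases $1\cup J$ and $1\cup K$ with $K\leq J$ are joined by a chain of bases of $\M_X$, all containing $1$, each step a single exchange that strictly decreases dominance. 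Neither is proved. The first does not fall out of the necklace condition at $a=2$ (as you note, that condition only gives the shifted inequalities $j_{r+1}\geq k_r$), and the second is not a general matroid fact: even granting $K\leq J$, the exchange axiom hands you \emph{some} $b\in K\setminus J$, and the resulting $J-a+b$ need not satisfy $K\leq J-a+b$, so the ``all intermediate terms dominate $K$'' property you rely on for positivity of $\Delta_{2\cup J_t}$ and for $b<a$ at the next step can fail to propagate.

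The paper avoids both issues by organizing the induction differently: it inducts on $r=|(I\cup j)\setminus J|$, at each step applying the exchange lemma to the single element $a=\max(J\setminus(I\cup j))$ to produce $L=J-\{a\}\cup\{b\}$ with $1\cup L\in\M_X$, and then proves $b<a$ \emph{directly} from two Grassmann necklace conditions, $I_1=\{1,2\}\cup I\leq(1\cup J)$ and $I_3=I\cup\{j,k\}\leq_3(1\cup J)$ (where $k=f(2)$) --- not from any global dominance statement. The inequality $b<a$ is all that is needed to know the extra Pl\"ucker term $\Delta_{S\cup\{a,b\}}$ is a correctly ordered, hence nonnegative, coordinate; and the inductive hypothesis only requires $1\cup L\in\M_X$ and $L\subset\{3,\ldots,n\}$, both of which the exchange step delivers for free. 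So to repair your argument you would either have to prove your two structural claims (which amounts to redoing the paper's local analysis in a harder global form), or simply adopt the paper's step-by-step choice of $a$ and its necklace-based proof that $b<a$.
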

\begin{proof}
Let $\M$ be the positroid of $X$.  
We let $I_1 = \{1,2\} \cup I$, $I_2 = 2 \cup I \cup \{j\}$, and $I_3 = I \cup \{j,k\}$, as in the proof of Proposition \ref{prop:reduce}. We have already shown in the proof of Proposition \ref{prop:reduce} that $(1 \cup I \cup j) \in \M$.

We proceed by induction on the size of $r = |(I \cup j) \setminus J|$.  The case $r = 0$ is tautological.  So suppose $r \geq 1$.  We may assume that $1 \cup J \in \M$ for otherwise the claim is trivial.  Applying the exchange lemma to $1 \cup J$ the element $a = \max(J \setminus (I \cup j)) \in J$ and the other base $1 \cup I \cup j$, we obtain $L = J -\{a\} \cup \{b\}$ such that $1 \cup L \in \M$.  

We claim that $b < a$.  To see this, note that $I_1 \leq (1 \cup J)$, which implies that $a > I \setminus J$.  So the only way that $b$ could be greater than $a$ is if $b = j$, and $a < j$.  But by assumption we also have $I_3 = I \cup \{j,k\} \leq_3 (1 \cup J)$ with $k \geq_2 j$.  This is impossible since both $k$ and $j$ are greater than $a$, but we have $J \setminus I \subset [3,a]$ -- the only element of $(1 \cup J) \setminus I$ that is greater than $j$ or $k$ in $\leq_3$ order is $1$. Thus $b < a$. 

So by induction we have that $\Delta_{2\cup L}/\Delta_{1 \cup L} \geq \Delta_{2 \cup I}/\Delta_{1 \cup I}$, where in particular we have $(1 \cup L), (2 \cup L) \in \M$.  It suffices to show that $\Delta_{2\cup J}/\Delta_{1 \cup J} \geq \Delta_{2\cup L}/\Delta_{1 \cup L}$.

We apply the Pl\"ucker relation to $\Delta_{2\cup J} \Delta_{1 \cup L}$, swapping $L$ with $(k-1)$ of the indices in $2\cup J$ to get
$$
\Delta_{1\cup L}\Delta_{2 \cup J}= \Delta_{1 \cup J} \Delta_{2 \cup L} +  \Delta_{12 j_1 j_2 \cdots \hat a \cdots j_{k-1}} \Delta_{\ell_1 \ell_2 \cdots a \cdots \ell_{k-1}} .
$$
We note that  $\ell_1 < \ell_2 < \cdots < a < \cdots < \ell_{k-1}$ is actually correctly ordered, since $L$ is obtained from $J$ by changing $a$ to a smaller number.  So all factors in the above expression are nonnegative.  The claim follows.
\end{proof}

\subsection{Bridges in planar bipartite graphs}\label{sec:bridge}
By adding degree two vertices to a planar bipartite graph $N$, we can always assume that a boundary vertex is the color we want it to be.  If $i$ and $i+1$ are two adjacent boundary vertices, we can add a \defn{bridge} between the two edges leaving $i$ and $i+1$.  There are two different kinds of bridges depending on which color is assigned to which vertex of the added edge.  For simplicity, we for example just say we are adding ``a bridge with white at $i+1$ and black at $i$'' for the following picture.

\begin{center}
\begin{tikzpicture}
\draw (0.2,3.5) arc (150:210:2);
\draw (0,2) -- (2,2);
\draw (0,3) -- (2,3);
\draw (1,2) -- (1,3);
\filldraw [black] (1,2) circle (0.1cm);
\filldraw [white] (1,3) circle (0.1cm);
\draw (1,3) circle (0.1cm);
\node at (-0.2,2) {$i$};
\node at (-0.4,3) {$i+1$};
\node at (1.2,2.5) {$a$};
\end{tikzpicture}
\end{center}

These are the network analogues of the Chevalley generator actions $x_i(a)$ and $y_i(b)$.

\begin{lemma}\label{lem:networkbridge}
Let $N$ be a network.  Now let $N'$ be obtained by adding a bridge with edge weight $a$ from $i$ to $i+1$ which is white at $i$ and black at $i+1$.  Then we have $M(N) = x_i(a) \cdot M(N)$, and the boundary measurements change as follows:
$$
\Delta_I(N') = \begin{cases} 
\Delta_I(N) + a\Delta_{I - \{i+1\} \cup \{i\}}(N) & \mbox{if $i+1 \in I$ but $i \notin I$} \\
\Delta_I(N) & \mbox{otherwise.}
\end{cases}
$$
If the bridge is black at $i$ and white at $i+1$, then we have $M(N) = y_i(a) \cdot M(N)$, and 
$$
\Delta_I(N') = \begin{cases} 
\Delta_I(N) + a\Delta_{I - \{i\} \cup \{i+1\}}(N) & \mbox{if $i \in I$ but $i+1 \notin I$} \\
\Delta_I(N) & \mbox{otherwise.}
\end{cases}
$$
\end{lemma}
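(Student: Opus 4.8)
The plan is to establish the displayed formula for $\Delta_I(N')$ by a direct count of almost perfect matchings of $N'$, and then to read off from it the identity $M(N') = x_i(a)\cdot M(N)$ (respectively $M(N') = y_i(a)\cdot M(N)$). For the latter, recall that right multiplication of a $k\times n$ matrix by the elementary matrix $x_i(a)$ adds $a$ times column $i$ to column $i+1$ and fixes the other columns; expanding a maximal minor on the columns indexed by $I$ multilinearly in column $i+1$ gives $\Delta_I\mapsto\Delta_I+a\,\Delta_{I-\{i+1\}\cup\{i\}}$ when $i+1\in I$, $i\notin I$ (no sign appears, because $i$ and $i+1$ are cyclically adjacent, so $i$ falls into the slot previously held by $i+1$), the extra term being a determinant with a repeated column, hence $0$, when $i\in I$, and nothing changing when $i+1\notin I$. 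Since $\Delta_I(M(N)) = \Delta_I(N)$ by Theorem~\ref{thm:matchingplucker}, the displayed formula says precisely that $M(N')$ and $x_i(a)\cdot M(N)$ have the same vector of Pl\"ucker coordinates, hence coincide in $\Gr(k,n)$; for $i=n$ one uses the signed cyclic convention defining $x_n$, i.e.\ conjugates the above by $\chi$. The $y_i(a)$ statement is the same computation with $i$ and $i+1$ interchanged, so everything reduces to the combinatorial formula.

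For that, describe the bridge gadget: its new edge, of weight $a$, joins two new interior vertices $u$ and $v$ sitting on the edges at $i$ and at $i+1$, coloured as dictated by ``white/black at $i$''; the rest of the gadget, together with a possible preliminary insertion of degree-$2$ vertices (which changes neither $M(N)$ nor, once weights are normalized, the numbers $\Delta_J(N)$), is whatever makes $N'$ a legitimate bipartite network, and the weights are arranged so that in the absence of the new edge the gadget is weight-neutral. Let $p_i, p_{i+1}$ denote the $N$-neighbours of $i$ and $i+1$. Every almost perfect matching $\Pi'$ of $N'$ covers $u$ and $v$ exactly once; sort the $\Pi'$ according to whether the new edge $(u,v)$ lies in $\Pi'$.

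If $(u,v)\notin\Pi'$, then $u$ is matched ``toward'' $i$ or toward $p_i$, and likewise $v$ toward $i+1$ or $p_{i+1}$; contracting the gadget back down identifies such a $\Pi'$ with an almost perfect matching $\Pi$ of $N$, giving a weight-preserving bijection onto \emph{all} almost perfect matchings of $N$, and it satisfies $I(\Pi) = I(\Pi')$ --- this accounts for the $\Delta_I(N)$ term in full. If $(u,v)\in\Pi'$, then the new edge consumes $u$ and $v$, which forces the rest of the gadget; contracting now identifies $\Pi'$, at the cost of a factor $a$, with an almost perfect matching $\Pi$ of $N$ whose incidence at both $i$ and $i+1$ is completely determined, and a check of the colours against the definition $I(\Pi) = \{\text{black boundary vertices used}\}\cup\{\text{white boundary vertices unused}\}$ shows $I(\Pi') = I(\Pi)\,\triangle\,\{i,i+1\}$ with $i\notin I(\Pi')$ and $i+1\in I(\Pi')$ in the ``white at $i$'' case; hence this case contributes exactly $a\,\Delta_{I-\{i+1\}\cup\{i\}}(N)$ to $\Delta_I(N')$, and only when $i+1\in I$, $i\notin I$. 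Summing the two cases gives the formula; the ``black at $i$'' colouring is the identical argument with $i$ and $i+1$ swapped, and yields the $y_i$ version.

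The step I expect to be most delicate is the boundary-subset bookkeeping in the bridge-used case: one must pin down exactly how the gadget is coloured and verify that routing a matching of $N$ through the new edge ``flips'' the incidence at \emph{both} $i$ and $i+1$, so that $I(\Pi')$ is obtained from $I(\Pi)$ precisely by interchanging $i$ and $i+1$ --- this is what fixes both which subsets $I$ acquire the correction term and the direction of the index swap, and hence separates the $x_i$ and $y_i$ cases. Secondary points are checking that the preliminary degree-$2$ additions can be normalized to preserve the dimer partition functions $\Delta_J(N)$ themselves, not merely their ratios, and that the argument, being purely local near $i$, carries over verbatim to $i=n$ up to the signed cyclic twist defining $x_n$.
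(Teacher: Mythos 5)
Your argument is correct, and it is the standard one: the paper states Lemma \ref{lem:networkbridge} without proof, as part of the background on Postnikov's theory of planar bipartite graphs, so there is no in-paper proof to compare against. Your decomposition of almost perfect matchings of $N'$ according to whether the bridge edge is used, together with the observation that the bridge-used case forces both boundary vertices $i$ and $i+1$ to be uncovered (hence, by the coloring, removes $i$ from and adds $i+1$ to the boundary subset), is exactly the computation one would supply, and your multilinear expansion of the minor matches the sign-free formula \eqref{eq:Delta} that the paper uses elsewhere (note the statement's ``$M(N)=x_i(a)\cdot M(N)$'' is a typo for $M(N')=x_i(a)\cdot M(N)$, which you have implicitly corrected).
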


\section{Compactifying the space of circular planar electrical networks}
\label{sec:cactus}
In this section we construct a compactification of the space of circular planar electrical networks from Section \ref{sec:elec}.  The section is organized as follows: in Section \ref{sec:cactus} we define cactus networks and in Section \ref{sec:grovecactus} we discuss grove coordinates on cactus networks.  The compactified space $E_n$ of circular planar electrical networks is defined in Section \ref{sec:compact}.  We discuss the $0$-dimensional cells of $E_n$ (a Catalan object) in Section \ref{sec:bottom}.  We introduce the uncrossing partial order on matchings in Section \ref{sec:poset} and prove that it is dual to an induced subposet of affine Bruhat order in Section \ref{sec:elecaffine}.  In Section \ref{sec:Catalan}, we observe that Grassmann necklaces of electrical affine permutations are special necklaces that we call Catalan necklaces.

\subsection{Cactus networks}\label{sec:cactus}
Let $S$ be a circle with $n$ boundary points labeled $[\bar n]$, as usual.  Let $\sigma$ be a non-crossing partition on $[\bar n]$.  Then identifying the boundary points according to the parts of $\sigma$ gives a \defn{hollow cactus} $S_\sigma$: it is a union of circles, glued together at the identified points.  (Note that it is possible for three or more circles to be glued together at the same point.)  We can still think of the interior of $S_\sigma$: this is a union of open disks.  The interior of $S_\sigma$, together with $S_\sigma$ itself will be called a \defn{cactus}.  We caution that our cacti are not identical to the similar notion in the theory of real stable curves.

A \defn{cactus network} is a weighted graph $\Gamma$ embedded into a cactus.  We may think of a cactus network as obtained from a usual circular planar network by declaring some boundary vertices (specified by $\sigma$) to have infinite conductance between them.  A cactus network also decomposes into a union of circular planar networks (with differing sets of boundary vertices) for each disk component of the cactus.  Any cactus network has a medial graph with the convention that medial strands/wires always stay completely within one disk.  Sometimes it is convenient to draw the medial strands of a cactus network in a disk, rather than in a cactus.

\begin{example}
The hollow cactus $S_\sigma$ where $\sigma = (\bar 1|\bar 2,\overline{13},\overline{14}|\bar 3,\bar 7|\bar 4,\bar 6|\bar 5|\bar 8|\bar9,\overline{12}|\overline{10}|\overline{11})$.
\begin{center}
\begin{tikzpicture}
\draw (0,0) circle (1cm);
\draw (2,0) circle (1cm);
\draw (2,2) circle (1cm);
\draw (4,0) circle (1cm);
\draw (6,0) circle (1cm);
\filldraw[black] (0:1) circle (0.1cm);
\filldraw[black] (120:1) circle (0.1cm);
\filldraw[black] (240:1) circle (0.1cm);
\filldraw[black] (2,3) circle (0.1cm);
\filldraw[black] (2,1) circle (0.1cm);
\filldraw[black] (3,0) circle (0.1cm);
\filldraw[black] (7,0) circle (0.1cm);
\filldraw[black] (2,-1) circle (0.1cm);
\filldraw[black] (5,0) circle (0.1cm);
\node at (2,3.3) {$\bar 1$};
\node at (2.3,1.3) {$\bar 2$};
\node at (3,0.65) {$\bar 3$};
\node at (5,0.65) {$\bar 4$};
\node at (7.3,0) {$\bar 5$};
\node at (5,-0.7) {$\bar 6$};
\node at (3,-0.7) {$\bar 7$};
\node at (2,-1.4) {$\bar 8$};
\node at (1,-0.7) {$\bar 9$};
\node at (240:1.4) {$\overline{10}$};
\node at (120:1.4) {$\overline{11}$};
\node at (1,0.7) {$\overline{12}$};
\node at (1.7,0.7) {$\overline{13}$};
\node at (1.7,1.3) {$\overline{14}$};
\end{tikzpicture}
\end{center}
\end{example}

There is a natural notion of the response matrix $\Lambda(\Gamma)$ of a cactus network: we specify voltages at boundary vertices such that vertices belonging to the same part of $\sigma$ are assigned the same voltage.  Two cactus networks are electrically equivalent if they have the same response matrices.  We shall call $\sigma$ the \defn{shape} of the cactus network $\Gamma$.

\begin{proposition}\label{prop:cactus}\
\begin{enumerate}
\item
Each cactus network $\Gamma$ is electrically equivalent to one whose medial graph is lensless.  Such cactus networks are called \defn{critical} or \defn{reduced}.
\item
Any two electrically equivalent reduced cactus networks are related by $Y-\Delta$ transformations.
\item
Any matching on $[2n]$ can be obtained as the medial pairing of some cactus network.
\end{enumerate}
\end{proposition}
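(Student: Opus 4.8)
The plan is to reduce (1) and (2) to the already--known results for ordinary circular planar networks (Theorem \ref{thm:CIM}) by working one disk component at a time, and to prove (3) by induction on the number of crossings $c(\tau)$, using boundary spikes and boundary edges to introduce crossings one at a time.

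For (1): a cactus network $\Gamma$ of shape $\sigma$ decomposes, along the pinch points of $S_\sigma$, into a collection of ordinary circular planar networks $\Gamma_1,\ldots,\Gamma_m$, one for each disk component, with the medial graph $G(\Gamma)$ being the union of the $G(\Gamma_i)$, each confined to its own disk. Since wires of a cactus medial graph never leave a single disk, $G(\Gamma)$ is lensless if and only if every $G(\Gamma_i)$ is. Applying Theorem \ref{thm:CIM}(1) to each $\Gamma_i$ produces electrically--equivalent critical networks $\Gamma_i'$; because the components of a cactus interact only through the pinch vertices, where the voltages are in any case forced equal, the response matrix of $\Gamma$ is assembled from those of the $\Gamma_i$, so replacing each $\Gamma_i$ by $\Gamma_i'$ yields a critical cactus network electrically equivalent to $\Gamma$. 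Part (2) is handled the same way: two electrically--equivalent reduced cactus networks first have the same shape $\sigma$ (the pattern of identifications is visible in the response matrix --- equivalently, in which boundary vertices are forced to share a voltage), and then the decompositions into disk components are componentwise electrically equivalent, so Theorem \ref{thm:CIM}(2) connects them by $Y-\Delta$ moves, each of which is a $Y-\Delta$ move of the whole cactus.

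The substance is in (3). I would argue by induction on $c(\tau)$. When $c(\tau)=0$, $\tau$ is a non-crossing matching, hence $\tau=\tau(\sigma)$ for a unique $\sigma\in\NC_n$ by Lemma \ref{lem:planarset}; taking $\Gamma$ to be the edgeless network on the cactus of the suitable shape realizes $\tau$ as its medial pairing, and this $\Gamma$ is visibly lensless. For the inductive step, suppose $c(\tau)\geq 1$. The key combinatorial point is that some pair of cyclically adjacent boundary points $j,j+1$ (indices mod $2n$) carries strands of $\tau$ that cross: otherwise, deleting any adjacent pair of points matched to each other and iterating would exhibit $\tau$ as non-crossing. ``Uncrossing'' those two strands --- replacing the chords $\{j,m\}$, $\{j+1,m'\}$ by $\{j,m'\}$, $\{j+1,m\}$ --- produces a matching $\tau'$ with $c(\tau')=c(\tau)-1$ in which the strands through $j$ and $j+1$ do \emph{not} cross. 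By induction, $\tau'=\tau(\Gamma')$ for some critical cactus network $\Gamma'$. Now apply the generator $v_j(t)$ (a boundary spike if $j$ is odd, a boundary edge if $j$ is even) with generic $t>0$: on medial graphs this inserts a single crossing of the strands $T_j$ and $T_{j+1}$ near the boundary, and since those strands do not cross elsewhere in $G(\Gamma')$, no lens is created, so $\Gamma:=v_j(t)\cdot\Gamma'$ is a critical cactus network with $\tau(\Gamma)=\tau$.

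The main obstacle is making this induction airtight at the level of cactus networks rather than ordinary ones. Two things need care. First, when $j$ is even, $v_j(t)$ adds a boundary edge between $\bar k$ and $\overline{k+1}$, and if those two boundary vertices already lie in the same part of the shape of $\Gamma'$ this edge is a loop and accomplishes nothing; one must check that the adjacent crossing pair $j,j+1$ can always be chosen so that this degeneracy does not occur (or handle it by a local modification of $\Gamma'$ first). Second, one must verify that adding a boundary spike or boundary edge to a \emph{cactus} network again yields a cactus network and has exactly the claimed local effect on the medial graph even when the relevant disk component is small or the nearby boundary vertices are identified. Both points are geometric bookkeeping, but they are where the cactus generality is genuinely used: ordinary networks realize only some matchings, and it is precisely the identifications allowed in the shape that let the induction start from an arbitrary non-crossing base.
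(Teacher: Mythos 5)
Parts (1) and (2) of your argument are fine and agree with the paper, which disposes of them by saying they are proved exactly as for circular planar networks; your componentwise reduction to Theorem \ref{thm:CIM} is that argument. The gap is in (3): your ``key combinatorial point'' --- that any $\tau$ with $c(\tau)\geq 1$ has a cyclically adjacent pair $j,j+1$ whose strands cross --- is false. Take $n=6$ and
$$\tau=\bigl\{\{1,7\},\{4,10\},\{2,3\},\{5,6\},\{8,9\},\{11,12\}\bigr\}.$$
The chords $\{1,7\}$ and $\{4,10\}$ cross, so $c(\tau)\geq 1$, yet for every $j$ the chords through $j$ and $j+1$ either coincide or are non-crossing (each short chord is nested inside or disjoint from the chords at its neighbouring positions, and the two long chords never sit at adjacent positions). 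Your justification --- delete an adjacent matched pair and iterate --- fails exactly here: after deleting $\{2,3\}$, the points $1$ and $4$ become adjacent and their chords \emph{do} cross, so the hypothesis ``no adjacent pair crosses'' is not preserved by the deletion and the iteration does not exhibit $\tau$ as non-crossing. Consequently your induction on $c(\tau)$ alone cannot reach this $\tau$: the generators $v_j(t)$ only create crossings adjacent to the boundary, so no single application of $v_j(t)$ to a lensless network yields a lensless network with this medial pairing.

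What is true, and what rescues the strategy, is the weaker statement: if $\tau$ has no adjacent \emph{matched} pair, then it has an adjacent crossing pair (take a chord $\{a,b\}$ of minimal cyclic span $\geq 2$; the chord through $a+1$ either stays inside the arc $(a,b)$, contradicting minimality, or leaves it and hence crosses $\{a,b\}$). So the induction must be a double induction on $n$ and then on $c(\tau)$: if $\tau(j)=j+1$, delete that pair, realize the resulting matching on $[2n-2]$ by a cactus network on $n-1$ boundary vertices, and reinsert either an isolated boundary vertex or a glued adjacent pair of boundary vertices (this is where the cactus structure is genuinely used); otherwise an adjacent crossing pair exists and your generator step applies. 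That repaired argument is essentially the paper's proof of the realizability part of Theorem \ref{thm:realizability} in Section \ref{sec:realizability}. For Proposition \ref{prop:cactus}(3) itself the paper takes a different, non-inductive route: draw any lensless medial graph $G$ representing $\tau$, let its strands cut the disk into regions, glue together boundary vertices lying in a common region to obtain the shape of the cactus, and then realize the induced matching inside each disk component by a critical circular planar network.
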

\begin{proof}
(1) and (2) are proved in exactly the same way as the corresponding statement for circular planar electrical networks.  (3) is proved by construction: let $\tau$ be a matching on $[2n]$ and $G$ any lensless medial graph with that medial pairing.  The strands of $G$ cuts the disk up into regions.  We glue together boundary vertices $[\bar n]$ if they belong to the same such region.  This recovers the shape of the cactus network.  In each disk component of the cactus, we now have a matching (on a smaller number of vertices), and this matching arises from a critical electrical network in that disk.
\end{proof}

The number of medial pairings of cactus networks is just the number of matchings on $2n$ objects, that is $(2n-1) \cdot (2n-3)\cdots 3 \cdot 1$.
\begin{center}
\begin{tikzpicture}
\draw (0,0) circle (1cm);
\draw (2,0) circle (1cm);
\draw (-1,0) -- (1,0) -- (3,0);
\draw[dashed] (135:1) -- (-45:1);
\draw[dashed] (45:1) -- (-135:1);
\node at (135:1.4) {$t_2$};
\node at (45:1.2) {$t_3$};
\node at (-45:1.2) {$t_8$};
\node at (-135:1.4) {$t_1$};
\begin{scope}[shift={(2,0)}]
\draw[dashed] (135:1) -- (-45:1);
\draw[dashed] (45:1) -- (-135:1);
\node at (135:1.2) {$t_4$};
\node at (45:1.4) {$t_5$};
\node at (-45:1.4) {$t_6$};
\node at (-135:1.2) {$t_7$};
\end{scope}
\node at (0,-2) {A cactus network and its medial graph};

\begin{scope}[shift={(8,0)}]
\draw (0,0) circle (2cm);
\draw[dashed] (120:2) -- (-150:2);
\draw[dashed] (150:2) -- (-120:2);
\draw[dashed] (60:2) -- (-30:2);
\draw[dashed] (30:2) -- (-60:2);
\node at (-150:2.4) {$t_1$};
\node at (150:2.4) {$t_2$};
\node at (120:2.4) {$t_3$};
\node at (60:2.4) {$t_4$};
\node at (30:2.4) {$t_5$};
\node at (-30:2.4) {$t_6$};
\node at (-60:2.4) {$t_7$};
\node at (-120:2.4) {$t_8$};
\node at (0,-2.5) {The same medial graph drawn in the disk};
\end{scope}
\end{tikzpicture}
\end{center}
\subsection{Grove measurements as projective coordinates}
\label{sec:grovecactus}
Recall that we have defined grove counting measurements $L_\sigma(\Gamma)$ for $\Gamma$ a planar electrical network and $\sigma$ a non-crossing partition.  The combinatorial definition of $L_\sigma(\Gamma)$ naturally extends to cactus networks $\Gamma$.  

Let $\P^{\NC_n}$ be the projective space with homogeneous coordinates indexed by non-crossing partitions.  The map
$$
\Gamma \longmapsto (L_\sigma(\Gamma))_{\sigma}
$$
sends a cactus network to a point in $\L(\Gamma) \in \P^{\NC_n}$.

Define the \defn{electroid} of $\L(\Gamma)$ by
$$
\E(\Gamma) := \E(\L(\Gamma)) := \{\sigma \mid L_\sigma \neq 0\} \subset \NC_n.
$$
\begin{remark}
Our electroids differ from Alman, Lian, and Tran's electrical positroids \cite{ALT} in two different ways: first, we study electroids for cactus networks, while \cite{ALT} only consider circular planar electrical networks.  Second, the elements of their electrical positroids are certain pairs of subsets of $[\bar n]$, which correspond to non-crossing partitions $\sigma$ with exactly two non-singleton parts of equal size.  (See \cite{KW} and \cite{Ken} for a discussion of the relationship between grove measurements and minors of the response matrix.)
\end{remark}
\begin{proposition}\label{prop:Lequiv}
If $\Gamma$ and $\Gamma'$ are electrically equivalent cactus networks, then $\L(\Gamma) = \L(\Gamma')$ in $\P^{\NC_n}$.  In particular, $\E(\Gamma)= \E(\Gamma')$.
\end{proposition}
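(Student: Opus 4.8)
The plan is to reduce to the generating local moves and to check that each of them changes the grove vector only by an overall positive scalar. By Proposition~\ref{prop:cactus}, every cactus network is electrically equivalent, via series--parallel transformations and loop and pendant removals, to a critical one, and any two electrically equivalent critical cactus networks differ by a sequence of $Y-\Delta$ moves; all of these operations are local and preserve the shape of the cactus, so in particular $\Gamma$ and $\Gamma'$ are connected by a chain of such moves and their inverses. Hence it suffices to show that each of these moves multiplies the vector $(L_\sigma(\Gamma))_{\sigma \in \NC_n} \in \R^{\NC_n}$ by a nonzero (in fact positive) constant; then $\L(\Gamma) = \L(\Gamma')$ in $\P^{\NC_n}$, and $\E(\Gamma) = \E(\Gamma')$ follows at once, since $\E$ is simply the support of a point of $\P^{\NC_n}$.

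The mechanism is a gluing principle for grove measurements. If $\Gamma$ is obtained from a fixed ``environment'' by inserting a small piece $P$ attached along terminal vertices $t_1,\dots,t_r$ (the vertices $P$ shares with the rest of $\Gamma$, all of which lie in one disk of the cactus), then $L_\sigma(\Gamma) = \sum_{\pi} N_\pi(\sigma)\, Z_\pi(P)$, where $\pi$ ranges over set partitions of $\{t_1,\dots,t_r\}$, the quantity $Z_\pi(P)$ is the weighted count of spanning forests of $P$ inducing the partition $\pi$ on the terminals (with each component required to reach a terminal or a boundary vertex of $\Gamma$ inside $P$), and the coefficients $N_\pi(\sigma)$ depend only on the environment and on $\sigma$, not on $P$. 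Consequently, if two pieces $P$ and $P'$ on the same terminals satisfy $Z_\pi(P) = \lambda\, Z_\pi(P')$ for all $\pi$ with a single constant $\lambda > 0$, then $L_\sigma(\Gamma) = \lambda\, L_\sigma(\Gamma')$ for every $\sigma$, which is exactly what is needed. Since this computation takes place inside a single disk and never moves a boundary vertex of the cactus, and since the boundary partition of a grove on a cactus is still a non-crossing partition on $[\bar n]$, the bookkeeping is identical to the circular planar case.

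It then remains to compute $\lambda$ for each move. Loop removal and merging parallel edges into a single edge give $\lambda = 1$ (loops never lie in a forest, and parallel edges only reorganize the count); removing an interior pendant vertex across an edge of weight $a$ gives $\lambda = a$, since that edge lies in every grove; collapsing a length-two path through an interior degree-two vertex with edge weights $a, b$ into a single edge of weight $ab/(a+b)$ gives $\lambda = a+b$; and the $Y-\Delta$ transformation of Theorem~\ref{thm:YDelta} gives $\lambda = a+b+c$. I expect the $Y-\Delta$ case to be the only substantive computation: one runs through the connectivity patterns the local piece can induce on its three terminals $\{p_1,p_2,p_3\}$ (all three separate, one pair joined, all three joined) and checks that Kennelly's relations $A = bc/(a+b+c)$, etc., make the weighted spanning-forest counts of the star and of the triangle agree up to the common factor $a+b+c$ --- an ``all-minors'' form of the star--triangle identity, already transparent in the $n=3$ model, where in the coordinates indexed by the five elements of $\NC_3$ one has $\L(\text{star}) = [\,a{+}b{+}c : ab : ac : bc : abc\,] = \L(\text{triangle})$. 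The remaining moves are immediate case checks, and the proof concludes.
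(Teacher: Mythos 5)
Your proof is correct and follows essentially the same route as the paper, which simply says to check each generating local equivalence (series--parallel, loop and pendant removal, star--triangle) against the combinatorial definition of $L_\sigma$; you have carried out those checks explicitly and your scaling constants $\lambda$ (namely $1$, $1$, $a$, $a+b$, and $a+b+c$) are all correct, matching the paper's Example~\ref{ex:YDelta} in the $Y-\Delta$ case. The reduction to local moves via Proposition~\ref{prop:cactus} and the gluing principle for grove measurements are exactly the implicit content of the paper's one-line argument.
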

\begin{proof}
Use the combinatorial definition of $L_\sigma(\Gamma)$, and do a check for each of the local electrical equivalences (series-parallel, star-triangle, and so on). 
\end{proof}
\begin{example}\
\label{ex:YDelta}
\begin{center}
\begin{tikzpicture}[scale = 0.5]
\draw (0,0) circle (4cm);
\draw (0:4) -- (0:0) -- (120:4);
\draw (0:0) -- (240:4);
\node at (120:4.3) {$\bar 1$};
\node at (0:4.3) {$\bar 2$};
\node at (240:4.4) {$\bar 3$};
\node at ($(120:2)+(0.3,0)$) {$a$};
\node at ($(0:2)+(0,0.3)$) {$b$};
\node at ($(240:2)+(0.3,0)$) {$c$};
\node at (0,-4.5) {$\Gamma$};
\begin{scope}[shift={(10,0)}]
\draw (0,0) circle (4cm);
\draw (0:4) -- (120:4) -- (240:4) -- (0:4);
\node at (120:4.3) {$\bar 1$};
\node at (0:4.3) {$\bar 2$};
\node at (240:4.4) {$\bar 3$};
\node at ($(240:4.4)!0.5!(0:4.4)$) {$A$};
\node at ($(240:4.6)!0.5!(120:4.6)$) {$B$};
\node at ($(120:4.6)!0.5!(0:4.6)$) {$C$};
\node at (0,-4.5) {$\Gamma'$};
  \end{scope}
\end{tikzpicture}
\end{center}
Let $L = L(\Gamma)$ where $\Gamma$ is the star, and $L' = L(\Gamma')$ where $\Gamma'$ is the triangle, as illustrated.  We have 
\begin{align*}
L_{\bar 1|\bar 2|\bar 3|} &= a + b + c, \qquad 
L_{\bar 1\bar 2|\bar 3} = a b, \qquad
L_{\bar 1|\bar 2\bar 3} = bc, \qquad
L_{\bar 1\bar 3|\bar 2} = ac, \qquad
L_{\bar 1 \bar 2 \bar 3} = abc 
\end{align*}
and
\begin{align*}
L'_{\bar 1|\bar 2|\bar 3|} &= 1, \qquad
L'_{\bar 1\bar 2|\bar 3} = C ,\qquad
L'_{\bar 1|\bar 2\bar 3} = A, \qquad
L'_{\bar 1\bar 3|\bar 2} = B, \qquad
L'_{\bar 1 \bar 2 \bar 3} = AB + BC + AC.
\end{align*}
If $a,b,c$ and $A,B,C$ are related as in Theorem \ref{thm:YDelta}, then $\L(\Gamma) = \L(\Gamma')$ in $\P^{\NC_n}$.
\end{example}

\begin{remark}
For a circular planar electrical network $\Gamma$, Proposition \ref{prop:Lequiv} follows immediately from the following theorem of Kenyon and Wilson \cite{KW}: for any planar partition $\sigma$, the ratio $L_\sigma/L_{\uncrossed}$ is an integer coefficient polynomial in the $\Lambda_{i,j}$ of degree equal to $n - \#$ parts of $\sigma$.\end{remark}

\subsection{The compactified space of circular planar electrical networks}
\label{sec:compact}
Let $E'_n$ denote the space of electrical networks modulo electrical equivalence, or equivalently, the space of response matrices characterized in Theorem \ref{thm:CIM}.

\begin{lemma}
The map $\Gamma \to \L(\Gamma)$ descends to an injection $E'_n \hookrightarrow \P^{\NC_n}$.
\end{lemma}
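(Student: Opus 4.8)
The plan is to establish two things: first, that $\L(\Gamma)$ depends only on the electrical-equivalence class of $\Gamma$ (so the map descends to $E'_n$), and second, that the induced map on $E'_n$ is injective. The first point is already done: it is exactly Proposition \ref{prop:Lequiv} (for circular planar $\Gamma$ one could alternatively invoke the Kenyon--Wilson polynomiality of $L_\sigma/L_{\uncrossed}$ in the $\Lambda_{i,j}$, as noted in the remark). So the content of the lemma is the injectivity statement, and I would reduce this to recovering the response matrix from the projective point $\L(\Gamma) \in \P^{\NC_n}$.

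The key observation is Proposition \ref{prop:Lijresponse}: for distinct $i,j$ we have $-\Lambda_{i,j}(\Gamma) = L_{ij}(\Gamma)/L_{\uncrossed}(\Gamma)$, and since $\Lambda(\Gamma)$ is symmetric with zero row sums, the off-diagonal entries determine the whole matrix. Thus if $\L(\Gamma) = \L(\Gamma')$ in $\P^{\NC_n}$ — i.e. the tuples $(L_\sigma(\Gamma))_\sigma$ and $(L_\sigma(\Gamma'))_\sigma$ agree up to a common nonzero scalar — then in particular the ratios $L_{ij}/L_{\uncrossed}$ agree for all $i \neq j$, hence $\Lambda_{i,j}(\Gamma) = \Lambda_{i,j}(\Gamma')$ for all $i\neq j$, hence $\Lambda(\Gamma) = \Lambda(\Gamma')$, so $\Gamma$ and $\Gamma'$ are electrically equivalent and represent the same point of $E'_n$. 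One technical point to check: the ratio $L_{ij}/L_{\uncrossed}$ makes sense, i.e. $L_{\uncrossed}(\Gamma) \neq 0$ for a genuine circular planar electrical network; this holds because any connected enough network admits a spanning forest realizing the all-singletons partition (e.g. take each component to be a single edge-path to the boundary, or more carefully use that $\Gamma$ can be taken critical and note $L_{\uncrossed}$ counts groves where every boundary vertex is its own component), so the scalar normalization is legitimate and the projective point indeed carries the affine data $\Lambda$.

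The main obstacle is really just making sure the degenerate/normalization issues are handled cleanly — that $L_{\uncrossed} \neq 0$ throughout $E'_n$ so that passing from the projective coordinates to the ratios $-\Lambda_{i,j}$ is valid, and that a point of $E'_n$ (a response matrix) conversely does arise from some actual network so the map is well-defined on all of $E'_n$ in the first place. Once that is in hand, injectivity is immediate from Proposition \ref{prop:Lijresponse} plus the symmetric/zero-row-sum structure of response matrices recorded in Theorem \ref{thm:CIM}(4). I would write the argument in essentially the order above: invoke Proposition \ref{prop:Lequiv} for well-definedness, note $L_{\uncrossed} \neq 0$, then read off $\Lambda$ via Proposition \ref{prop:Lijresponse} to conclude injectivity.
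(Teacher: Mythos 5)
Your proposal is correct and follows the same route as the paper: invoke Proposition \ref{prop:Lequiv} for well-definedness, observe that $L_{\uncrossed}\neq 0$ for a genuine circular planar network, and recover $\Lambda(\Gamma)$ from the ratios $L_{ij}/L_{\uncrossed}$ via Proposition \ref{prop:Lijresponse}. The extra details you supply (symmetry and zero row sums determining the diagonal, and the justification that $L_{\uncrossed}\neq 0$) are exactly the points the paper leaves implicit.
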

\begin{proof}
Proposition \ref{prop:Lequiv} says that $\L(\Gamma)$ is invariant under electrical equivalence.   For $\Gamma$ a planar electrical network, we have that $L_{\uncrossed}$ is always non-zero, so the entries of the response matrix $\Lambda(\Gamma)$ can be computed using Proposition \ref{prop:Lijresponse}.
\end{proof}

\begin{definition}
The \defn{compactified space of circular planar electrical networks} $E_n:= \overline{E'_n} \subset \P^{\NC_n}$ is defined to be the closure of $E'_n$ (in the Hausdorff topology).
\end{definition}

\begin{theorem}\label{thm:cactus}
The space $E_n$ is exactly the set of grove measurements of cactus networks.  A cactus network $\Gamma$ is determined, up to electrical equivalence, by $\L(\Gamma) \in E_n$.
\end{theorem}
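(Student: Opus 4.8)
The plan is to prove both assertions by viewing $E_n$ as a Hausdorff closure and shuttling between cactus networks and honest circular planar networks by means of one-parameter degenerations of edge weights. The technical engine is a degeneration lemma: given a circular planar network $G$, disjoint edge subsets $T,S\subseteq E(G)$, and weights $u$ on the remaining edges, let $\Gamma^{*}=(G/T)\setminus S$, weighted by $u$; since $G$ is embedded in a disk, the partition $\pi_T$ of $[\bar n]$ recording which boundary vertices lie in a common component of $(V(G),T)$ is non-crossing (two vertex-disjoint paths with interleaved boundary endpoints would have to cross), so $\Gamma^{*}$ is a cactus network of shape $\pi_T$. Reweighting the edges of $T$ by a parameter $M$ and those of $S$ by $1/M$ yields a circular planar network $G_M$, and I would show $\L(G_M)\to\L(\Gamma^{*})$ in $\P^{\NC_n}$ as $M\to\infty$ by sorting groves according to how many $T$- and $S$-edges they use: a grove uses at most a spanning forest of $(V(G),T)$, using an $S$-edge strictly lowers the order in $M$, and the top-order coefficient of $L_\rho(G_M)$ is exactly the grove count $L_{\bar\rho}(\Gamma^{*})$ for the partition $\bar\rho$ of $\Gamma^{*}$ corresponding to $\rho$.

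Granting the lemma, the inclusion $\{\L(\Gamma):\Gamma\text{ a cactus network}\}\subseteq E_n$ is immediate: given a cactus network $\Gamma$ of shape $\sigma$, cut open the gluing points to realize its underlying graph in a single disk, then adjoin inside each block of $\sigma$ the edges joining consecutive boundary vertices of that block --- drawable without crossings because $\sigma$ is non-crossing --- obtaining a circular planar network $G$ and a distinguished edge set $T$ with $G/T=\Gamma$; by the lemma (with $S=\varnothing$), $\L(\Gamma)=\lim_{M\to\infty}\L(G_M)\in\overline{E'_n}=E_n$. For the reverse inclusion I would first note that $E'_n\subseteq\P^{\NC_n}$ is semialgebraic, being the finite union over the critical graphs $G$ of Theorem~\ref{thm:CIM} of the images of the semialgebraic maps $w\mapsto[(L_\rho(G,w))_\rho]$ on $\R_{>0}^{E(G)}$; hence $E_n=\overline{E'_n}$ is semialgebraic and compact. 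Given $p\in E_n$, the Curve Selection Lemma provides a semialgebraic arc into $E'_n$ with limit $p$; after shrinking and lifting, this arc has the form $s\mapsto[(L_\rho(G,w(s)))_\rho]$ for a single critical $G$ and a semialgebraic path $w(s)$, whose coordinates admit Puiseux expansions $w_e(s)=c_es^{q_e}(1+o(1))$ with $c_e>0$ and $q_e\in\mathbb{Q}$. Then $L_\rho(G,w(s))\sim s^{Q_\rho}\big(\sum\prod_{e\in F}c_e\big)$, the sum over groves $F$ with boundary partition $\rho$ minimizing $\sum_{e\in F}q_e=:Q_\rho$, so $p$ is supported on the $\rho$ with $Q_\rho$ minimal. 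With $T:=\{e:q_e<0\}$ and $S:=\{e:q_e>0\}$, an exchange argument --- any grove minimizing $\sum_{e\in F}q_e$ must contain a spanning forest of $(V(G),T)$ and avoid $S$, else one could add a $T$-edge or remove an $S$-edge within the same boundary partition and lower the exponent --- identifies $p$ with $\L\big((G/T)\setminus S\big)$ for the surviving weights $(c_e)$, a cactus network. The two inclusions together give the first assertion.

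For uniqueness I would reduce, via Proposition~\ref{prop:cactus}(1),(2), to critical cactus networks. From $\L(\Gamma)$ one recovers the shape as $\sigma(\Gamma)=\bigwedge\E(\Gamma)$: every grove of a shape-$\sigma$ network has boundary partition $\ge\sigma$, while the grove that is ``uncrossed within each disk component'' realizes $\sigma$ exactly, so $\sigma\in\E(\Gamma)$ and it is the minimum of $\E(\Gamma)$. Decomposing $\Gamma$ into its circular planar pieces (one per disk of the cactus), the coordinates $L_\rho(\Gamma)$ with $\rho\ge\sigma(\Gamma)$ recover the grove measurements, and hence --- via Proposition~\ref{prop:Lijresponse}, using that $L_{\uncrossed}\neq 0$ on each piece --- the response matrix of every piece; by definition this determines each piece up to electrical equivalence (with the edge weights of the critical representatives further pinned down by Theorem~\ref{thm:CIM}(3)), and reassembling shows $\Gamma$ is determined up to electrical equivalence.

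I expect the main obstacle to be the final identification in the second inclusion: extracting from the Puiseux analysis not merely that $p$ is \emph{some} degeneration, but that it is the grove vector of the single cactus network $(G/T)\setminus S$. This rests on the matroid-exchange bookkeeping controlling which groves survive in the leading coefficient, together with the verification that deleting $S$ does not destroy the groves whose boundary partitions support $p$ --- equivalently, that $(G/T)\setminus S$ still admits a grove of each relevant shape.
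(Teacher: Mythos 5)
Your proposal is correct and its uniqueness half coincides with the paper's argument: both recover the shape $\sigma(\Gamma)$ from $\L(\Gamma)$, decompose the cactus into its circular planar pieces, and reconstruct each piece's response matrix from the ratios $L_{\sigma_{ij}}/L_\sigma$ via Proposition \ref{prop:Lijresponse}. Where you genuinely diverge is in the identification of $E_n$ with the set of cactus grove vectors. The paper's route is short: the top cell of $E'_n$ is dense in $E_n$, so any $\L\in E_n$ is a limit $\lim_i \L(\Gamma_i)$ with all $\Gamma_i$ supported on one fixed critical graph; by Theorem \ref{thm:CIM}(3) the conductances are functions of $\L(\Gamma_i)$, so after passing to a subsequence each conductance converges in $[0,\infty]$ and the limit is a cactus network. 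You instead use semialgebraicity of $E'_n$, the Curve Selection Lemma, and Puiseux expansions to reduce to a one-parameter degeneration on a single critical graph, and then do the leading-order analysis by hand. Your route is heavier but makes explicit two things the paper elides: that the limit of the projectivized grove vectors is the grove vector of $(G/T)\setminus S$ \emph{independently of the rates} at which the various weights degenerate, and the converse inclusion that every cactus grove vector actually lies in $E_n$ (your chord construction), which the paper leaves implicit; the paper in exchange avoids curve selection entirely by leaning on the density of the top cell and the continuity of the conductance-recovery map. One local repair to your sketch: the phrase ``within the same boundary partition'' in your exchange argument is not right, since adding a $T$-edge or deleting an $S$-edge changes the boundary partition; but no such constraint is needed. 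Writing $Q^*$ for the minimum of $\sum_{e\in T'}q_e$ over maximal spanning forests $T'$ of $(V,T)$, every grove $F$ satisfies $\sum_{e\in F}q_e\ge\sum_{e\in F\cap T}q_e\ge Q^*$ (using $q_e>0$ on $S$, $q_e<0$ on $T$), with equality iff $F\cap S=\emptyset$ and $F\cap T$ is a minimizing $T'$; such groves factor bijectively as a minimizing $T'$ together with a grove of $(G/T)\setminus S$, and the common factor $\sum_{T'}\prod_{e\in T'}c_e\,s^{q_e}$ cancels projectively, giving exactly $\L\bigl((G/T)\setminus S\bigr)$ and settling the obstacle you flag at the end.
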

\begin{proof}
By definition any point $\L$ in $E_n$ is the limit of points in $E'_n$ which are representable by usual circular planar networks.  Since the top cell of $E'_n$ is dense in $E_n$, using Theorem \ref{thm:CIM}(5), we can assume that $\L$ is the limit $\lim_{i \to \infty} \L(\Gamma_i)$ of circular planar networks $\Gamma_i$, all with the same underlying graph.  Since the edge weights of such graphs depend continuously on the point $\L(\Gamma_i)$, the point $\L$ can be obtained from a usual circular planar network by sending some of the edge weights to $\infty$.  Such a limit is just a cactus network $\Gamma$.  So $\L = \L(\Gamma)$.

Now let $\Gamma$ be a cactus network, and let $\Gamma = \bigcup_r \Gamma^{(r)}$ be the decomposition of $\Gamma$ into a union of circular planar electrical networks, each embedded into a disk.  It is clear that the shape $\sigma(\Gamma)$ is determined by $\L(\Gamma)$.  To see that $\L(\Gamma)$ uniquely determines $\Gamma$, it suffices to recover the response matrices of each $\Gamma^{(r)}$.  Suppose $i,j$ belong to the same disk of the cactus, so that $i$, $j$ are distinct vertices of $\Gamma^{(r)}$.  Let $\sigma_{ij}$ be obtained from $\sigma$ by gluing the parts containing $i$ and $j$ together.  Then by Proposition \ref{prop:Lijresponse} we have
$$
\Lambda(\Gamma^{(r)})_{ij} = -\dfrac{L_{\sigma_{ij}}(\Gamma)}{L_\sigma(\Gamma)}.
$$
We have used that a grove for $\Gamma$ is just a union of groves for each $\Gamma^{(r')}$: in the above ratio, the contribution of groves from components $\Gamma^{(r')}$ for $r' \neq r$ cancel out.  So the response matrix of each $\Gamma^{(r)}$ can be recovered from $\L(\Gamma)$, and hence $\Gamma$ is determined by $\L(\Gamma)$ up to electrical equivalence.
\end{proof}

Define
$$
E_\tau := \{\L(\Gamma) \mid \tau(\Gamma) = \tau\} \subset E
$$
to be those points representable by critical cactus networks with medial pairing $\tau$. 

\begin{proposition}\label{prop:cactusparam}
Each stratum $E_\tau$ is parametrized by choosing a criticial cactus network $\Gamma$ with $\tau(\Gamma) = \tau$, and letting the edge weights vary, so that we have $E_\tau \simeq \R_{>0}^{c(\tau)}$.  Furthermore,
$$
E = \bigsqcup_{\tau \in P_n} E_\tau.
$$
\end{proposition}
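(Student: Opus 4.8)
The plan is to combine the classification of cactus networks up to electrical equivalence (Proposition \ref{prop:cactus}) with the earlier structural results about medial graphs and grove coordinates. The statement has two parts: first, that each $E_\tau$ is a cell $\R_{>0}^{c(\tau)}$ parametrized by the edge weights of a fixed critical cactus network with medial pairing $\tau$; and second, that $E_n$ is the disjoint union of the $E_\tau$ over all matchings $\tau \in P_n$.

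First I would address the decomposition. By Theorem \ref{thm:cactus}, every point of $E_n$ is $\L(\Gamma)$ for some cactus network $\Gamma$, and $\Gamma$ is determined up to electrical equivalence by $\L(\Gamma)$. By Proposition \ref{prop:cactus}(1), $\Gamma$ is electrically equivalent to a critical one $\Gamma'$, so $\L(\Gamma) = \L(\Gamma')$ by Proposition \ref{prop:Lequiv}, and $\Gamma'$ has a well-defined medial pairing $\tau(\Gamma')$. This shows $E_n = \bigcup_{\tau} E_\tau$. For disjointness I must show that if $\Gamma_1, \Gamma_2$ are critical cactus networks with $\L(\Gamma_1) = \L(\Gamma_2)$ then $\tau(\Gamma_1) = \tau(\Gamma_2)$. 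Since $\L(\Gamma_1) = \L(\Gamma_2)$, the two networks are electrically equivalent (using that $\L$ determines $\Gamma$ up to electrical equivalence, Theorem \ref{thm:cactus}), hence by Proposition \ref{prop:cactus}(2) they are related by $Y$-$\Delta$ moves; by the proposition relating $Y$-$\Delta$ moves on networks to Yang--Baxter moves on medial graphs, the medial graphs of $\Gamma_1$ and $\Gamma_2$ are related by Yang--Baxter moves, which preserve the medial pairing. So $\tau$ is a well-defined invariant of each point of $E_n$, and the union is disjoint. That $\tau$ ranges over \emph{all} of $P_n$ is exactly Proposition \ref{prop:cactus}(3).

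Next I would establish the parametrization of a single stratum. Fix a critical cactus network $\Gamma_0$ with $\tau(\Gamma_0) = \tau$; by Theorem \ref{thm:CIM}(3) applied disk-by-disk (a critical cactus network decomposes into critical circular planar networks, one per disk of the cactus), the edge conductances of any critical cactus network are recovered uniquely from its response matrix, hence from $\L$. Conversely, by Proposition \ref{prop:cactus}(2) any critical cactus network with medial pairing $\tau$ is related to $\Gamma_0$ by $Y$-$\Delta$ moves; but $Y$-$\Delta$ moves applied to a lensless medial graph that produce another lensless medial graph with the \emph{same} medial pairing can only be trivial rearrangements --- in fact a lensless medial graph is determined by its medial pairing up to Yang--Baxter moves, which on the network side means $\Gamma_0$ is the \emph{unique} (up to relabeling irrelevant to the weights) critical network underlying the stratum. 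So each point of $E_\tau$ comes from a choice of positive edge weights on $\Gamma_0$, and conversely every choice of positive edge weights gives a critical network with medial pairing $\tau$ (criticality being a property of the underlying unweighted graph) hence a point of $E_\tau$. The map from edge weights to $\L$ is injective by the uniqueness of conductance recovery, and the number of edges of $\Gamma_0$ equals $c(\tau)$, the number of crossings of $\tau$ --- this is the standard correspondence between edges of a critical network and crossings of its lensless medial graph (each crossing of two wires corresponds to one edge). Thus $E_\tau \cong \R_{>0}^{c(\tau)}$, with the identification being a homeomorphism since the weights depend continuously on $\L$ and vice versa (Theorem \ref{thm:CIM}(3) gives the conductances as rational functions of the response matrix entries, which are rational functions of the $L_\sigma$).

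The main obstacle I anticipate is the rigidity claim: that a lensless medial graph of a cactus network is determined, up to Yang--Baxter moves, by its medial pairing, so that the underlying unweighted critical network $\Gamma_0$ is genuinely unique within the stratum. For ordinary circular planar networks this is part of the Curtis--Ingerman--Morrow / Colin de Verdi\`ere--Gitler--Vertigan theory (Theorem \ref{thm:CIM}), and the cactus case should follow by working in each disk separately; but one must check carefully that the decomposition into disks is itself forced by $\tau$ (the shape $\sigma$ is read off from which $t_i$ lie in a common region cut out by the wires, which is determined by $\tau$) and that no subtlety arises at the gluing points of the cactus. Once that rigidity is in hand, everything else is bookkeeping: counting edges versus crossings, and invoking the already-established continuity and uniqueness of the conductance-recovery map.
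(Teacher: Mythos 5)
Your proof is correct and follows essentially the same route as the paper's: the parametrization of each $E_\tau$ is obtained by applying Theorem~\ref{thm:CIM} disk-by-disk to the decomposition $\Gamma = \bigcup_r \Gamma^{(r)}$ of a critical cactus network, and the disjoint decomposition $E_n = \bigsqcup_\tau E_\tau$ follows from Theorem~\ref{thm:cactus} together with Proposition~\ref{prop:cactus}. You simply spell out in more detail (the rigidity of the medial pairing under $Y$-$\Delta$/Yang--Baxter moves, the edge-count equalling $c(\tau)$) what the paper leaves implicit in those cited results.
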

\begin{proof}
The first statement just follows from applying Theorem \ref{thm:CIM} to each circular planar electrical network $\Gamma^{(r)}$ in the decomposition $\Gamma = \bigcup_r \Gamma^{(r)}$ in the proof of Theorem \ref{thm:cactus}.  The second statement follows from Theorem \ref{thm:cactus} and Proposition \ref{prop:cactus}.
\end{proof}

Write $\E(\tau)$ for the electroid of any $\L \in E_\tau$.  This does not depend on the choice of $\L$, by Proposition \ref{prop:Lequiv}.  In this paper we will focus on the topological spaces $E_\tau$.  In the future we hope to consider the algebraic geometry of their Zariski closures in $\P^{\NC_n}$.

\subsection{The bottom cells of $E_n$}
\label{sec:bottom}
We have defined a decomposition of $E_n$ into cells.  Let $P_n$ be the set of medial pairings, or matchings, on $[2n]$.   For a medial pairing $\tau$, let $E_\tau \subset E_n$ be the corresponding electrical cell, so that $E_n = \bigcup_{\tau \in P_n} E_\tau$.  
There is a unique top cell $E_{\tau_{\rm top}}$, where $\tau_{\rm top}$ is given by the involution $\tau_{\rm top}(i) = i + n \mod 2n$.  There are Catalan number of 0-dimensional cells, corresponding to medial pairings $\tau$ that are non-crossing matchings.  

Let $p_\sigma$ denote the point in $E_n$ with 
$$
L_{\sigma'}(p_\sigma) = \begin{cases} 1 & \mbox{if $\sigma' = \sigma$} \\
0 & \mbox{otherwise.} \end{cases}
$$
(Recall that the coordinates $L_\sigma$ are projective coordinates, so the value $1$ is not important.)  

\begin{proposition}
The points $p_\sigma$ are exactly the $0$-dimensional cells of $E_n$.  The point $p_\sigma$ is the grove measurement of the cactus network with boundary vertices identified according to $\sigma$.
\end{proposition}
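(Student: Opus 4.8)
The plan is to exhibit, for each non-crossing partition $\sigma$, a concrete critical cactus network with \emph{no edges} that represents $p_\sigma$, and then to read off both claims by a counting argument. The basic structural input is Proposition~\ref{prop:cactusparam}: $E_n = \bigsqcup_{\tau \in P_n} E_\tau$ with $E_\tau \simeq \R_{>0}^{c(\tau)}$. Thus the $0$-dimensional cells are precisely those $E_\tau$ with $c(\tau) = 0$, i.e.\ those for which the medial pairing $\tau$ is a non-crossing matching on $[2n]$; by Lemma~\ref{lem:planarset} there are exactly $\frac{1}{n+1}\binom{2n}{n}$ of these.

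First I would introduce $\Gamma_\sigma$, the cactus network of shape $\sigma$ with no interior vertices and no edges --- that is, the boundary vertices $[\bar n]$ identified according to the parts of $\sigma$, and nothing more. Its only spanning subforest is the empty one $F = \varnothing$, which is a grove since each component of $F$ is a single boundary vertex (hence connected to the boundary), and $\wt(F) = 1$. Two boundary vertices $\bar i, \bar j$ lie in the same component of $F$ exactly when they coincide in $\Gamma_\sigma$, i.e.\ exactly when they lie in the same part of $\sigma$; so $\sigma(F) = \sigma$. Hence $L_{\sigma'}(\Gamma_\sigma) = 1$ if $\sigma' = \sigma$ and $= 0$ otherwise, i.e.\ $\L(\Gamma_\sigma) = p_\sigma$, which already proves the last sentence of the proposition.

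Next I would locate the cell of $p_\sigma$. Since $\Gamma_\sigma$ has no edges, its medial graph has no four-valent vertices: inside each disk component of the cactus it is just the disjoint union of the arcs joining $t_{2i-1}$ to $t_{2i}$ over the isolated boundary vertices $\bar i$ of that disk. Such a medial graph is visibly lensless, so $\Gamma_\sigma$ is critical, and its medial pairing $\tau(\Gamma_\sigma)$ has no crossings; hence $c(\tau(\Gamma_\sigma)) = 0$, and by Proposition~\ref{prop:cactusparam} the cell $E_{\tau(\Gamma_\sigma)} \simeq \R_{>0}^{0}$ is a single point. As it contains $\L(\Gamma_\sigma) = p_\sigma$, we get $E_{\tau(\Gamma_\sigma)} = \{p_\sigma\}$, a $0$-dimensional cell. (Tracing the medial-graph construction through the pinch points of the cactus in fact identifies $\tau(\Gamma_\sigma)$ with $\tau(\sigma)$, the unique non-crossing matching separating $\sigma$ from $\tsigma$; but only $c(\tau(\Gamma_\sigma)) = 0$ is needed here.)

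Finally I would assemble the pieces: the map $\sigma \mapsto p_\sigma$ is injective because $\E(p_\sigma) = \{\sigma\}$ recovers $\sigma$, so $\{p_\sigma \mid \sigma \in \NC_n\}$ consists of $\frac{1}{n+1}\binom{2n}{n}$ pairwise-distinct $0$-dimensional cells of $E_n$; since this is the total number of $0$-dimensional cells, they exhaust them. The only step needing genuine (if routine) care is the determination that the cell of $p_\sigma$ is $0$-dimensional, which is just a matter of reading off the medial graph of an edgeless cactus network --- the pinch-point bookkeeping being the one slightly fiddly point; everything else follows directly from the definitions of groves, of $p_\sigma$, and of the cell decomposition.
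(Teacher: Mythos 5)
Your argument is correct, and it fills in exactly the reasoning the paper leaves implicit: the proposition is stated without proof there, following directly from Proposition \ref{prop:cactusparam} (cells $E_\tau\simeq\R_{>0}^{c(\tau)}$, so the $0$-dimensional ones are those with $\tau$ non-crossing) together with the observation that the edgeless cactus of shape $\sigma$ has only the empty grove and a crossingless medial graph, plus the Catalan count from Lemma \ref{lem:planarset}. The one imprecision --- your description of the medial arcs as joining $t_{2i-1}$ to $t_{2i}$ is not literally the pairing at pinch points, where those two endpoints can lie in different disks of the cactus (the correct pairing is $\tau(\sigma)$) --- is harmless, since, as you note, only the absence of four-valent vertices, hence $c(\tau)=0$, is used.
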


\subsection{Uncrossing partial order on matchings}
\label{sec:poset}
Define a partial order on $P_n$ as follows.  Let $\tau$ be a medial pairing and take any lensless medial graph $G$ representing $\tau$.  Now uncross any crossing in $G$ in either of two ways:  

\begin{center}
\begin{tikzpicture}[scale=0.7]
\node at (7.5,0) {or};
\draw (-1,-1) -- (1,1);
\draw (-1,1) -- (1,-1);

\draw[->] (2,0) -- (3,0);

\draw (4,-1) .. controls (5,-0.25) .. (6,-1);
\draw (4,1) .. controls (5,0.25) .. (6,1);
\draw (9,-1) .. controls (9.75,0) .. (9,1);
\draw (11,-1) .. controls (10.25,0) .. (11,1);
\end{tikzpicture}
\end{center}

This gives a new medial graph $G'$.  Suppose $G'$ is also lensless.  Then we declare that $\tau(G') \lessdot \tau(G)$ is a cover relation in $P_n$.  The partial order $P_n$ is the transitive closure of these relations.    This partial order was studied by Alman, Lian, and Tran \cite{ALT}, by Kenyon \cite{Ken}, and also by Huang, Wen and Xie \cite{HWX}.

\begin{lemma}\label{lem:covers}
Let $G$ be a medial graph with $\tau(G) = \tau$.  Suppose $(a,b,c,d)$ are in cyclic order, and $\tau$ has strands from $a$ to $c$ and from $b$ to $d$.  Let $G'$ be obtained by uncrossing the intersection point of these strands so that in $G'$ we have that $a$ is joined to $d$ and $b$ is joined to $c$.  Then $G'$ is lensless if and only if no other medial strand goes from the arc $(a,b)$ to the arc $(c,d)$.
%
\end{lemma}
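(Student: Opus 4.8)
The plan is to argue directly with the medial graph in a neighbourhood of the crossing being uncrossed. Since $G$ is lensless, two distinct wires of $G$ cross at most once (otherwise the two arcs between two crossings bound a lens); in particular the strand from $a$ to $c$ and the strand from $b$ to $d$ meet at most once, and since $a,c$ separate $b,d$ on the boundary circle they meet an odd number of times, hence exactly once. Call this point $z$. The four half-strands running from $z$ to $a$, $b$, $c$, $d$, together with the four boundary arcs $(a,b)$, $(b,c)$, $(c,d)$, $(d,a)$, cut the disk into four regions; write $R_{ab}$ for the one bounded by the $a$-half, the $b$-half and the arc $(a,b)$, and cyclically for $R_{bc}$, $R_{cd}$, $R_{da}$. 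The prescribed uncrossing replaces the two crossing strands by the strand obtained by gluing the $a$-half to the $d$-half and the strand obtained by gluing the $b$-half to the $c$-half; away from a small disk around $z$ the graph $G'$ is identical to $G$.

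First I would dispose of the ``automatic'' parts of lenslessness in $G'$. Every wire of $G'$ begins and ends on the boundary. Neither new strand has a self-intersection and the two new strands are disjoint: in each case the relevant pair of sub-arcs (for instance the $a$-half and the $d$-half) lie on two strands of $G$ that meet only at $z$, which has been smoothed away. Consequently the only way $G'$ can fail to be lensless is a lens formed by one of the two new strands and a third, unchanged, strand $p$ (a strand of $G$ other than the two being uncrossed), and since medial strands cross transversally, such a lens requires $p$ to cross the new strand in exactly two points.

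The core of the proof is then a region count, which I would run in both directions. If $p$ crosses the new $a$--$d$ strand twice, then, since $p$ crosses each of the old $a$--$c$ and $b$--$d$ strands at most once, $p$ crosses the $a$-half once and the $d$-half once; on the segment of $p$ between these two crossings $p$ meets neither old strand, so it lies in a single one of $R_{ab},R_{bc},R_{cd},R_{da}$, which must border both the $a$-half and the $d$-half, forcing $R_{da}$; continuing $p$ toward each endpoint, $p$ can re-cross neither old strand, so it stays in $R_{ab}$ on one side and in $R_{cd}$ on the other, hence terminates on the arcs $(a,b)$ and $(c,d)$. The same computation with the $b$- and $c$-halves shows that a double crossing with the new $b$--$c$ strand likewise forces $p$ to join arc $(a,b)$ to arc $(c,d)$; this gives ``$G'$ has a lens $\Rightarrow$ some strand joins arc $(a,b)$ to arc $(c,d)$''. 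Conversely, if $p$ joins a point of arc $(a,b)$ to a point of arc $(c,d)$ (and $p$ is automatically distinct from the $a$--$c$ and $b$--$d$ strands, which end at $a,b,c,d$), then, as each of these arcs lies on the opposite side of the $a$--$c$ strand from the other and likewise for the $b$--$d$ strand, $p$ crosses each old strand exactly once; starting in $R_{ab}$ and ending in $R_{cd}$ it leaves $R_{ab}$ across the $a$-half or the $b$-half, and in the first case must then cross the $d$-half (its $a$--$c$ crossing being spent) and in the second must cross the $c$-half, producing two crossings with the new $a$--$d$ strand, respectively the new $b$--$c$ strand, whose two crossing points bound a lens in $G'$.

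The step I expect to be the most delicate is the reduction in the second paragraph: pinning down precisely that every lens of $G'$ involves one of the two new strands, and that the only changes to $G$ are supported near $z$, so that all the crossing counts in the third paragraph are legitimate, without an avalanche of case pictures. Once the four regions $R_{ab},R_{bc},R_{cd},R_{da}$ are set up correctly, the combinatorics of the third paragraph is forced.
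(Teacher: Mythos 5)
The paper states Lemma \ref{lem:covers} without proof, so there is no argument of the author's to compare against; your write-up supplies the missing justification, and it is correct. Your reading of the hypotheses is the intended one (the surrounding text makes clear that $G$ is lensless, which you need both for ``two distinct wires cross at most once'' and for the reduction to a lens involving a new strand), and the two key points are handled properly: the enumeration showing that any lens of $G'$ must pair an unchanged strand with one of the two new strands, and the four-region count forcing such a strand to run from arc $(a,b)$ to arc $(c,d)$ and conversely. This is the natural argument the author presumably had in mind, and I see no gap in it.
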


In particular, the definition of the set of covering relations $\tau' \lessdot \tau$ involving a fixed $\tau$ can be defined using any lensless medial graph $G$ with $\tau(G) = \tau$.

As before, the \defn{crossing number} $c(\tau)$ of a medial pairing $\tau$ of a cactus network is defined to be the number of crossings in a reduced/lensless representative medial graph.  

\begin{lemma}
$P_n$ is a graded poset with grading given by $c(\tau)$.
\end{lemma}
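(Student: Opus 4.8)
The plan is to show two things: first, that every maximal chain in $P_n$ from a $0$-dimensional cell (a non-crossing matching, with $c(\tau) = 0$) up to a given $\tau$ has length exactly $c(\tau)$; and second, that each cover relation $\tau' \lessdot \tau$ changes the crossing number by exactly $1$. Granting the second claim, gradedness follows immediately, since then $c$ is a rank function: any saturated chain between $\tau'$ and $\tau$ has length $c(\tau) - c(\tau')$, and in particular $P_n$ is graded.

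First I would pin down the combinatorics of a single uncrossing move using Lemma~\ref{lem:covers}. Start with a lensless medial graph $G$ with $\tau(G) = \tau$ and $c(\tau)$ crossings. Pick a crossing of strands $a$--$c$ and $b$--$d$ (with $a,b,c,d$ in cyclic order) such that no other strand runs from the arc $(a,b)$ to the arc $(c,d)$; by the Lemma the uncrossed graph $G'$ is lensless, so $c(\tau(G'))$ is well-defined and computed by $G'$. The move deletes exactly the one chosen crossing point and reconnects the four strand-ends without introducing any new intersection point — resolving a transverse double point locally either separates the strands or reconnects them, and in neither local picture is a new crossing created, while globally the rest of the diagram is untouched. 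Hence $G'$ has exactly $c(\tau) - 1$ crossings, and since $G'$ is lensless, $c(\tau(G')) = c(\tau) - 1$. This gives $c(\tau') = c(\tau) - 1$ for every cover $\tau' \lessdot \tau$.

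Next I would verify that $P_n$ has a unique minimal rank stratum and that $c$ takes the value $0$ there: a lensless medial graph has no crossings iff its medial pairing is non-crossing, and these are precisely the $0$-dimensional cells identified in Section~\ref{sec:bottom}. It remains to check that every $\tau$ is connected downward to such a minimal element by cover relations — i.e.\ that whenever $c(\tau) > 0$ there exists an admissible uncrossing. This is the one place where I expect the real work: I need to produce, in any lensless $G$ with at least one crossing, a crossing satisfying the hypothesis of Lemma~\ref{lem:covers}, namely one whose two strands bound an arc-pair crossed by no third strand. I would argue this by an innermost/extremal choice: among all crossings, take one cutting off a region of the disk (bounded by two of the four sub-arcs of the two strands) that is minimal, or take two strands whose intersection is "closest to the boundary'' in a suitable sense; minimality forces no further strand to separate the relevant arcs, for otherwise that strand would, together with one of the original two, produce a smaller configuration or a lens, contradicting either minimality or lenslessness. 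Combined with Theorem~\ref{thm:CIM} (the fact that critical/lensless configurations exist and behave well under reduction), this yields a descending cover chain of length $c(\tau)$ down to a non-crossing matching.

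The main obstacle is precisely this last existence step — guaranteeing an admissible uncrossing in every nontrivial $\tau$ — since the monotonicity of $c$ under covers and the identification of the bottom rank are essentially immediate once the local picture of an uncrossing move is understood. I would handle the existence step by a careful extremal argument on the planar arrangement of strands, using lenslessness (no two strands cross twice, no self-crossings) to rule out the obstructions; this is the kind of planar-topology bookkeeping that is routine but must be done with care to avoid circularity with Lemma~\ref{lem:covers}.
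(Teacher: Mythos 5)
The paper states this lemma without proof, so there is nothing to compare against line by line; your outline supplies the missing argument and takes the natural route. Your two main points are sound: a non-crossing matching has $c(\tau)=0$, and an admissible uncrossing deletes exactly one $4$-valent vertex and creates none, so every declared cover (hence every cover of the transitive closure) drops $c$ by exactly $1$, making $c$ a rank function.

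You correctly identify the one substantive step as the existence of an admissible uncrossing whenever $c(\tau)>0$, but you leave it as a sketch. It does go through, and here is a clean way to finish it: among all pairs of crossing strands $P\colon a\to c$, $Q\colon b\to d$ (endpoints in cyclic order $a,b,c,d$) and all choices of adjacent endpoint pair, pick one minimizing the number of strand endpoints strictly inside the boundary arc $(a,b)$. If some strand $S\colon s_1\to s_2$ ran from $(a,b)$ to $(c,d)$, its endpoints would interleave with those of $P$, so $S$ and $P$ cross (exactly once, by lenslessness); the crossing pair $(S,P)$ with adjacent endpoints $(a,s_1)$ then has strictly fewer endpoints in its arc, since $s_1$ itself is lost --- contradiction. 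By Lemma \ref{lem:covers} the chosen crossing admits a lensless resolution, so every $\tau$ with $c(\tau)>0$ covers something and all minimal elements sit at rank $0$.

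One small omission: if ``graded'' is read in the strong sense that all maximal chains of $P_n$ have the same length (which is what the Eulerian statement for $\hat P_n$ requires), you also need that $\tau_{\rm top}$ is the \emph{unique} maximal element, i.e.\ every $\tau\neq\tau_{\rm top}$ is covered by some $\tau''$. This is easy and is in effect carried out in the paper's proof of Lemma \ref{lem:wgw}: choose $i$ with $g_\tau(i)>g_\tau(i+1)$ and swap the partners of $i$ and $i+1$; the new crossing has the empty arc $(i,i+1)$ on one side, so Lemma \ref{lem:covers} applies and the swap is a genuine cover above $\tau$. With these two points filled in, your argument is complete.
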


We shall show later that $P_n$ is the closure partial order on the stratification $\{E_\tau \mid \tau \in P_n\}$ of $E_n$.  Here is a picture of $P_3$:
\begin{center}
\begin{tikzpicture}[scale = 0.4]
\draw (0,0) circle (2cm);
\coordinate (a1) at (130:2);
\coordinate (a2) at (50:2);
\coordinate (a3) at (10:2);
\coordinate (a4) at (-50:2);
\coordinate (a5) at (-130:2);
\coordinate (a6) at (170:2);
\draw (a1) to [bend right] (a4);
\draw (a2) to [bend left] (a5);
\draw (a3) to [bend right] (a6);

\draw (0,-2.3) -- (0,-3.7);
\draw (0,-2.3) -- (5,-3.7);
\draw (0,-2.3) -- (-5,-3.7);
\begin{scope}[shift = {(0,-6)}]
\draw (0,0) circle (2cm);
\coordinate (a1) at (120:2);
\coordinate (a2) at (60:2);
\coordinate (a3) at (0:2);
\coordinate (a4) at (-60:2);
\coordinate (a5) at (-120:2);
\coordinate (a6) at (180:2);
\draw (a1) -- (a4);
\draw (a2) -- (a6);
\draw (a3) -- (a5);
\draw (0,-2.3) -- (-17.5+1*5,-6+2.3);
\draw (0,-2.3) -- (-17.5+4*5,-6+2.3);
\draw (0,-2.3) -- (-17.5+5*5,-6+2.3);
\draw (0,-2.3) -- (-17.5+2*5,-6+2.3);
\end{scope}

\begin{scope}[shift = {(-5,-6)}]
\draw (0,0) circle (2cm);
\coordinate (a1) at (120:2);
\coordinate (a2) at (60:2);
\coordinate (a3) at (0:2);
\coordinate (a4) at (-60:2);
\coordinate (a5) at (-120:2);
\coordinate (a6) at (180:2);
\draw (a1) -- (a3);
\draw (a2) -- (a5);
\draw (a4) -- (a6);
\end{scope}
\begin{scope}[shift={(0,-6)}]
\draw (-5,-2.3) -- (-17.5+2*5,-6+2.3);
\draw (-5,-2.3) -- (-17.5+3*5,-6+2.3);
\draw (-5,-2.3) -- (-17.5+5*5,-6+2.3);
\draw (-5,-2.3) -- (-17.5+6*5,-6+2.3);
\end{scope}

\begin{scope}[shift = {(5,-6)}]
\draw (0,0) circle (2cm);
\coordinate (a1) at (120:2);
\coordinate (a2) at (60:2);
\coordinate (a3) at (0:2);
\coordinate (a4) at (-60:2);
\coordinate (a5) at (-120:2);
\coordinate (a6) at (180:2);
\draw (a2) -- (a4);
\draw (a3) -- (a6);
\draw (a1) -- (a5);
\end{scope}
\begin{scope}[shift={(0,-6)}]
\draw (5,-2.3) -- (-17.5+1*5,-6+2.3);
\draw (5,-2.3) -- (-17.5+3*5,-6+2.3);
\draw (5,-2.3) -- (-17.5+4*5,-6+2.3);
\draw (5,-2.3) -- (-17.5+6*5,-6+2.3);
\end{scope}

\begin{scope}[shift = {(-12.5,-12)}]
\draw (0,0) circle (2cm);
\coordinate (a1) at (120:2);
\coordinate (a2) at (60:2);
\coordinate (a3) at (0:2);
\coordinate (a4) at (-60:2);
\coordinate (a5) at (-120:2);
\coordinate (a6) at (180:2);
\draw (a3) -- (a4);
\draw (a5) -- (a1);
\draw (a6) -- (a2);
\end{scope}
\begin{scope}[shift={(0,-12)}]
\draw (-12.5,-2.3) -- (-15+1*5,-6+2.3);
\draw (-12.5,-2.3) -- (-15+5*5,-6+2.3);
\end{scope}

\begin{scope}[shift = {(-7.5,-12)}]
\draw (0,0) circle (2cm);
\coordinate (a1) at (120:2);
\coordinate (a2) at (60:2);
\coordinate (a3) at (0:2);
\coordinate (a4) at (-60:2);
\coordinate (a5) at (-120:2);
\coordinate (a6) at (180:2);
\draw (a4) -- (a5);
\draw (a6) -- (a2);
\draw (a3) -- (a1);
\end{scope}
\begin{scope}[shift={(0,-12)}]
\draw (-7.5,-2.3) -- (-15+2*5,-6+2.3);
\draw (-7.5,-2.3) -- (-15+3*5,-6+2.3);
\end{scope}

\begin{scope}[shift = {(-2.5,-12)}]
\draw (0,0) circle (2cm);
\coordinate (a1) at (120:2);
\coordinate (a2) at (60:2);
\coordinate (a3) at (0:2);
\coordinate (a4) at (-60:2);
\coordinate (a5) at (-120:2);
\coordinate (a6) at (180:2);
\draw (a5) -- (a6);
\draw (a1) -- (a3);
\draw (a2) -- (a4);
\end{scope}
\begin{scope}[shift={(0,-12)}]
\draw (-2.5,-2.3) -- (-15+1*5,-6+2.3);
\draw (-2.5,-2.3) -- (-15+4*5,-6+2.3);
\end{scope}

\begin{scope}[shift = {(2.5,-12)}]
\draw (0,0) circle (2cm);
\coordinate (a1) at (120:2);
\coordinate (a2) at (60:2);
\coordinate (a3) at (0:2);
\coordinate (a4) at (-60:2);
\coordinate (a5) at (-120:2);
\coordinate (a6) at (180:2);
\draw (a1) -- (a6);
\draw (a2) -- (a4);
\draw (a3) -- (a5);
\end{scope}
\begin{scope}[shift={(0,-12)}]
\draw (2.5,-2.3) -- (-15+2*5,-6+2.3);
\draw (2.5,-2.3) -- (-15+5*5,-6+2.3);
\end{scope}

\begin{scope}[shift = {(7.5,-12)}]
\draw (0,0) circle (2cm);
\coordinate (a1) at (120:2);
\coordinate (a2) at (60:2);
\coordinate (a3) at (0:2);
\coordinate (a4) at (-60:2);
\coordinate (a5) at (-120:2);
\coordinate (a6) at (180:2);
\draw (a1) -- (a2);
\draw (a3) -- (a5);
\draw (a4) -- (a6);
\end{scope}
\begin{scope}[shift={(0,-12)}]
\draw (7.5,-2.3) -- (-15+1*5,-6+2.3);
\draw (7.5,-2.3) -- (-15+3*5,-6+2.3);
\end{scope}

\begin{scope}[shift = {(12.5,-12)}]
\draw (0,0) circle (2cm);
\coordinate (a1) at (120:2);
\coordinate (a2) at (60:2);
\coordinate (a3) at (0:2);
\coordinate (a4) at (-60:2);
\coordinate (a5) at (-120:2);
\coordinate (a6) at (180:2);
\draw (a2) -- (a3);
\draw (a4) -- (a6);
\draw (a1) -- (a5);
\end{scope}
\begin{scope}[shift={(0,-12)}]
\draw (12.5,-2.3) -- (-15+2*5,-6+2.3);
\draw (12.5,-2.3) -- (-15+4*5,-6+2.3);
\end{scope}

\begin{scope}[shift = {(-10,-18)}]
\draw (0,0) circle (2cm);
\coordinate (a1) at (120:2);
\coordinate (a2) at (60:2);
\coordinate (a3) at (0:2);
\coordinate (a4) at (-60:2);
\coordinate (a5) at (-120:2);
\coordinate (a6) at (180:2);
\draw (a1) -- (a2);
\draw (a3) -- (a4);
\draw (a5) -- (a6);
\end{scope}
\begin{scope}[shift = {(-5,-18)}]
\draw (0,0) circle (2cm);
\coordinate (a1) at (120:2);
\coordinate (a2) at (60:2);
\coordinate (a3) at (0:2);
\coordinate (a4) at (-60:2);
\coordinate (a5) at (-120:2);
\coordinate (a6) at (180:2);
\draw (a2) -- (a3);
\draw (a4) -- (a5);
\draw (a1) -- (a6);
\end{scope}
\begin{scope}[shift = {(0,-18)}]
\draw (0,0) circle (2cm);
\coordinate (a1) at (120:2);
\coordinate (a2) at (60:2);
\coordinate (a3) at (0:2);
\coordinate (a4) at (-60:2);
\coordinate (a5) at (-120:2);
\coordinate (a6) at (180:2);
\draw (a1) -- (a2);
\draw (a3) -- (a6);
\draw (a4) -- (a5);
\end{scope}
\begin{scope}[shift = {(5,-18)}]
\draw (0,0) circle (2cm);
\coordinate (a1) at (120:2);
\coordinate (a2) at (60:2);
\coordinate (a3) at (0:2);
\coordinate (a4) at (-60:2);
\coordinate (a5) at (-120:2);
\coordinate (a6) at (180:2);
\draw (a2) -- (a3);
\draw (a4) -- (a1);
\draw (a6) -- (a5);
\end{scope}
\begin{scope}[shift = {(10,-18)}]
\draw (0,0) circle (2cm);
\coordinate (a1) at (120:2);
\coordinate (a2) at (60:2);
\coordinate (a3) at (0:2);
\coordinate (a4) at (-60:2);
\coordinate (a5) at (-120:2);
\coordinate (a6) at (180:2);
\draw (a3) -- (a4);
\draw (a2) -- (a5);
\draw (a1) -- (a6);
\end{scope}

\end{tikzpicture}
\end{center}

\subsection{Matching partial order and Bruhat order}
\label{sec:elecaffine}
The partial order $P_n$ on matchings is induced from the partial order of the affine symmetric group, as we now explain.  To a medial pairing $\tau$, we associate a bounded affine permutation $g_\tau$ by 
$$
g_\tau(i) := \begin{cases} \tau(i) & \mbox{if $i < \tau(i)$} \\
\tau(i) + 2n & \mbox{if $i > \tau(i)$}
\end{cases}
$$ where $\tau$ is thought of as a fixed-point free involution on $[2n]$.  Note that $g_\tau$ is a bounded affine permutation of type $(n,2n)$.  We have $g_0:=g_{\tau_\top}$ is given by $g_0(i) = i+n$, which has length $0$.  The bounded affine permutation $g_0$ plays the role of the identity permutation.

Define the (infinite by infinite) \defn{affine rank matrix} of an affine permutation $f$ by
$$
r(i,j) := |\{ a \leq i \mid f(a) \geq j \}|.
$$
This matrix satisfies the periodicity $r(i+2n,j+2n) = r(i,j)$.

\begin{theorem}[{\cite[Theorem 8.3.7]{BB}}] \label{thm:rank}
We have $f \leq f'$ in Bruhat order if and only if $r_f(i,j) \leq r_{f'}(i,j)$ for all $i,j \in \Z$.
\end{theorem}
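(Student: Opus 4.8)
The statement is \cite[Theorem 8.3.7]{BB}, so a proof proposal amounts to recalling the argument, which runs in close parallel with the finite-type rank-matrix criterion \cite[Theorem 2.1.5]{BB}; the only genuinely new features are the $2n$-periodicity of the matrix $r$ and the infiniteness of the affine symmetric group. First I would record the standard facts to be used. (a) An affine permutation is recovered from its rank matrix, since $r_f(i,j)-r_f(i-1,j)=[f(i)\ge j]$. (b) One has $f\le f'$ if and only if there is a chain $f=f_0\lessdot f_1\lessdot\cdots\lessdot f_m=f'$ of Bruhat covers, each of the form $f_{a+1}=f_a\,t_{[c,d]}$, where for $c<d$ with $c\not\equiv d\bmod 2n$ the affine reflection $t_{[c,d]}$ acts on positions by interchanging $c+2n\Z$ with $d+2n\Z$. (c) Right multiplication by $t_{[c,d]}$ interchanges the values of $f$ at positions $c$ and $d$ (and at their $2n$-translates); one has $\ell(f\,t_{[c,d]})>\ell(f)$ exactly when $f(c)<f(d)$, and the step is a cover exactly when, in addition, no $c<e<d$ satisfies $f(c)<f(e)<f(d)$. (d) Componentwise $r_f\le r_{f'}$ implies $\ell(f)\le\ell(f')$, with equality only when $f=f'$; this is standard and, together with (a), makes the induction below terminate.

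For the forward implication, by (b) and transitivity it suffices to treat a single step $f'=f\,t_{[c,d]}$ with $f(c)<f(d)$ (and one need not even assume it is a cover). A change of variables $a\mapsto t_{[c,d]}(a)$ in the definition $r_g(i,j)=\#\{a\le i:g(a)\ge j\}$ yields
\[
r_{f\,t_{[c,d]}}(i,j)=r_f(i,j)+\bigl[\,c\le i<d\ \text{ and }\ f(c)<j\le f(d)\,\bigr],
\]
extended $2n$-periodically in $(i,j)$; in particular every entry of $r$ weakly increases, so $r_f\le r_{f'}$.

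For the converse I would induct on $\ell(f')-\ell(f)$, a nonnegative integer by (d). Assuming $f\neq f'$, it suffices to produce a Bruhat cover $f\lessdot f\,t$ with $r_{f\,t}\le r_{f'}$ still holding, for then $f<f\,t\le f'$ by the inductive hypothesis. By the displayed formula this amounts to finding a reflection $t_{[c,d]}$ with $f(c)<f(d)$, with no intervening value (so that $f\,t_{[c,d]}$ covers $f$), and whose rectangle $\{\,c\le i<d,\ f(c)<j\le f(d)\,\}$ lies entirely inside the region $D:=\{(i,j):r_f(i,j)<r_{f'}(i,j)\}$. Since $f\neq f'$, one has $D\neq\emptyset$; since $f$ and $f'$ have the same type, $D$ is $2n$-periodic and, modulo the periodicity, finite, so this is a finite search. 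To carry it out I would mimic the finite-type argument: fix the window of positions $\{1,\dots,2n\}$, take $c$ to be a position at which $f$ lags $f'$, and then choose $d$ greedily to its right so that $f\,t_{[c,d]}$ is a cover, and verify the rectangle condition, using the minimality built into the choices of $c$ and $d$ to bound the defect $r_{f'}-r_f$ below by the contribution of position $c$.

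The main obstacle is precisely this verification, and within it the affine wrap-around. In $S_n$ one would simply let $c$ be the leftmost position where $f$ and $f'$ disagree and read off $f(c)<f'(c)$; in the affine group there is no global leftmost disagreement, only a leftmost one inside a chosen fundamental window, and contributions to $r_f$ and $r_{f'}$ from positions $<c$ lying outside that window need not cancel. Thus the choice of $(c,d)$ and the proof that the resulting transposition is both a length-$1$ cover of $f$ and does not overshoot $r_{f'}$ on its rectangle require genuine care; everything else is either formal Coxeter theory or the one-line computation behind the displayed formula. Two remarks ease the bookkeeping. First, $r_f\le r_{f'}$ is equivalent to the statement that for every $i$ the non-increasing rearrangement of $\{f(a):a\le i\}$ (whose $m$-th term is $\sup\{j:r_f(i,j)\ge m\}$) is dominated term-by-term by that of $\{f'(a):a\le i\}$; this reformulation makes the search for a good transposition more transparent. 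Second, for bounded affine permutations of type $(k,n)$ -- the only case needed in this paper -- one can bypass the search entirely: the forward implication, together with the purely combinatorial fact that $r_f\le r_{f'}$ implies $\I(f)\le\I(f')$ in the Grassmann-necklace order, reduces the converse to Theorem \ref{thm:Grassorder}.
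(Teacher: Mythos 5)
The paper does not prove this statement: it is quoted directly from \cite[Theorem 8.3.7]{BB}, so there is no internal argument to compare against, and the question is simply whether your reconstruction stands on its own. The forward implication does: your displayed formula for $r_{f\,t_{[c,d]}}-r_f$ should strictly be a sum over the $2n$-translates of the rectangle (for a reflection with $d-c>2n$ more than one translate of $[c,d)$ can contain a given $i$), but every summand is nonnegative, so monotonicity of $r$ along Bruhat chains follows.

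The converse, which is the actual content of the theorem, is not proven. You correctly reduce it to producing a single cover $f\lessdot f\,t_{[c,d]}$ whose rectangle lies inside the defect region $D=\{(i,j): r_f(i,j)<r_{f'}(i,j)\}$, and then you explicitly concede that choosing $(c,d)$, showing the step is a length-one cover, and showing the rectangle does not overshoot $r_{f'}$ ``require genuine care'' because the affine wrap-around kills the leftmost-disagreement device from the finite case. Naming the obstacle is not overcoming it: as written, the inductive step is an unverified assertion, and that is precisely where all the work in \cite[\S 8.3]{BB} lives. The proposed bypass for bounded affine permutations is also circular in the logical architecture of this paper: Theorem \ref{thm:Grassorder} is itself a cited result whose standard proofs (in \cite{KLS}, building on \cite{Pos}) go through exactly the cyclic-rank-matrix comparison you are trying to establish, so invoking it here proves nothing. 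Finally, your fact (d) --- that $r_f\le r_{f'}$ componentwise with $f\ne f'$ forces $\ell(f)<\ell(f')$ --- is asserted as standard but is itself a nontrivial consequence of expressing $\ell$ in terms of $r$; since the termination of your induction rests on it, it needs at least a formula.
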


Let $\tS_{2n}^0$ denote the affine permutations $f: \Z \to \Z$ satisfying $\sum_{i=1}^{2n} (f(i)-i) = 0$.  Let $t_{a,b} \in \tS_{2n}^0$ be the transposition swapping $a$ and $b$, and $s_i = t_{i,i+1}$.  We note that $s_i g_0 = g_0 s_{i+n}$.

\begin{lemma}\label{lem:wgw}
Let $\tau \in P_n$.  Then there exists $w \in \tS_{2n}^0$ such that
\begin{equation}\label{eq:inv}
g_\tau = w g_0 w^{-1}
\end{equation}
where $\ell(w) = \binom{n}{2}- c(\tau)$, and $\ell(g_\tau) = 2 \ell(w)$.
\end{lemma}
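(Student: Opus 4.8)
The plan is to exhibit $w$ by hand and then verify the two length identities. First observe that $g_\tau$ is a square root of the central translation: the defining formula gives $g_\tau^2(i)=i+2n$ for every $i$ and every matching $\tau$, and more usefully it lets us describe $g_\tau$ uniformly by saying that $g_\tau(a)$ is the unique integer congruent to $\tau(a)$ modulo $2n$ lying in the open interval $(a,a+2n)$, where $\tau$ is extended to a $2n$-periodic fixed-point-free involution of $\Z$; in particular $g_{\tau_\top}=g_0$ is recovered since $g_0(a)=a+n$. To build $w$, list the $n$ pairs of $\tau$ as $\{s_k,\ell_k\}$ with $s_k<\ell_k$ in $[2n]$ and $s_1<s_2<\cdots<s_n$, and set $w(k)=s_k$ and $w(n+k)=\ell_k$ for $k\in[n]$, extended $2n$-periodically. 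Since $\{s_1,\dots,s_n\}$ and $\{\ell_1,\dots,\ell_n\}$ partition $[2n]$, the window $(w(1),\dots,w(2n))$ is a permutation of $[2n]$, so $w\in\tS_{2n}^0$; and the relation $w(i+n)=g_\tau(w(i))$ — which is precisely $g_\tau=wg_0w^{-1}$ — is immediate from the interval description (for $i\in[n]$ it reads $\tau(s_i)=g_\tau(s_i)$, true because $\tau(s_i)>s_i$; for $i\in[n{+}1,2n]$ both sides equal $s_{i-n}+2n$; the rest is periodicity).

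Next I compute $\ell(w)$. Because $w$ is increasing on $[n]$ and its window is $[2n]$, any inversion $(i,j)$ of $w$ (meaning $1\le i\le 2n$, $i<j$, $w(i)>w(j)$) must have $j\le 2n$, hence $j\in[n{+}1,2n]$; splitting off the case $i\in[n]$ from the case $i,j\in[n{+}1,2n]$ and translating into the pairs of $\tau$ gives
$$\ell(w)=\#\{(i,k)\in[n]^2: s_i>\ell_k\}+\#\{(i,k): i<k,\ \ell_i>\ell_k\}.$$
Any two chords $\{s_a,\ell_a\},\{s_b,\ell_b\}$ of $\tau$ with $s_a<s_b$ are in exactly one of the positions crossing ($s_a<s_b<\ell_a<\ell_b$), nested ($s_a<s_b<\ell_b<\ell_a$), or separated ($s_a<\ell_a<s_b<\ell_b$), and one checks directly that such a pair contributes $1$ to the first sum iff it is separated, $1$ to the second sum iff it is nested, and $0$ to both if it crosses. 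Hence $\ell(w)=\#(\text{nested})+\#(\text{separated})=\binom{n}{2}-c(\tau)$, where we use that $c(\tau)$ equals the number of crossing pairs of chords of $\tau$ (distinct strands of a lensless medial graph meet at most once, and they meet iff their endpoints interleave).

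It remains to prove $\ell(g_\tau)=2\ell(w)$. One inequality is formal: $\ell(g_0)=0$ forces $\ell(wg_0)=\ell(w)$, whence $\ell(g_\tau)=\ell((wg_0)w^{-1})\le\ell(wg_0)+\ell(w^{-1})=2\ell(w)$. For the reverse I compute $\ell(g_\tau)$ directly. Since $g_\tau(a)\in(a,a+2n)$, an inversion $(i,j)$ of $g_\tau$ forces $i<j<i+2n$; grouping $i$ modulo $2n$ by the chord of $\tau$ containing it shows every inversion involves two distinct chords, and a case analysis gives the contribution of each pair of chords: for a chord $P=\{s,\ell\}$ the index $i\equiv s$ accounts for one inversion per chord nested strictly inside $P$, while $i\equiv\ell$ accounts for one inversion per chord $Q$ with $P$ nested strictly inside $Q$ and one per chord separated from $P$. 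Summing over all chords, each non-crossing pair of chords is counted exactly twice and each crossing pair not at all, so $\ell(g_\tau)=2(\binom{n}{2}-c(\tau))=2\ell(w)$, completing the proof.

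The step I expect to be the real work is this last inversion count for $g_\tau$: one must track the wrap-around carefully (indices outside $[2n]$) and distinguish the three non-crossing configurations of a pair of chords on the circle — a chord nested inside $P$, $P$ nested inside it, or separated from $P$ — confirming in each case a total contribution of exactly two inversions. A shortcut worth trying first would be to deduce $\ell(g_\tau)=2\ell(w)$ from a length-additivity criterion for $(wg_0)\cdot w^{-1}$, exploiting that $w$ is increasing on $[n]$; but the direct computation above is the dependable route.
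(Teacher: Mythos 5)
Your proof is correct, but it takes a genuinely different route from the paper's. The paper argues by induction on the uncrossing order: if $\tau\neq\tau_\top$ it picks a descent $g_\tau(i)>g_\tau(i+1)$, passes to the uncrossed matching $\tau'\gtrdot\tau$ with $g_\tau=s_ig_{\tau'}s_i$ and $\ell(g_\tau)=\ell(g_{\tau'})+2$, and assembles $w$ as a product of simple reflections along the way; the conjugator is never written down explicitly. You instead exhibit $w$ in closed form (sending $[n]$ to the left endpoints and $[n+1,2n]$ to the right endpoints of the chords), verify the conjugation identity from the interval description $g_\tau(a)\in(a,a+2n)$, and establish both length formulas by direct inversion counts --- identifying $\ell(w)$ with the number of nested-plus-separated chord pairs and checking that each non-crossing pair contributes exactly two inversions to $g_\tau$. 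What the paper's induction buys is brevity and reuse: the relation $g_\tau=s_ig_{\tau'}s_i$ is exactly the mechanism exploited again in the proof of Theorem \ref{thm:poset}. What your argument buys is an explicit minimal-length conjugator, a combinatorial interpretation of $\ell(w)$, and independence from the length-additivity assertions ($\ell(s_ig_{\tau'}s_i)=\ell(g_{\tau'})+2$, $\ell(s_iw')=\ell(w')+1$) that the paper's induction asserts without detailed verification --- at the cost of the careful wrap-around case analysis you correctly flag as the real work, which you would need to write out in full for a complete exposition.
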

\begin{proof}
The claims are trivially true when $\tau=\tau_\top$ and $g_\tau = g_0$.  Suppose $\tau \in P_n$ is not the top element.  Then there exists some $i$ such that $g_\tau(i) > g_\tau(i+1)$.  Let $\tau'$ be obtained from $\tau$ by swapping $i, i+1$ and $g_\tau(i), g_\tau(i+1)$ (all taken modulo $2n$).  It is clear that $\tau' \gtrdot \tau$.  But we have
$$
g_{\tau} = s_i g_{\tau'} s_i
$$
and $\ell(g_{\tau'}) = \ell(g_{\tau}) + 2$, and the claim follows by induction.
\end{proof}

Note that the factorization in Lemma \ref{lem:wgw} is not unique.  For example, if $n = 3$, then $s_1 g_0 s_1 = s_3 g_0 s_3$, so there are multiple choices for $w$.

\begin{theorem}\label{thm:poset}
We have $\ell(g_\tau) = 2(\binom{n}{2} - c(\tau))$.  The map $\tau \mapsto g_\tau$ identifies $P_n$ with an induced subposet of the dual Bruhat order of bounded affine permutations.  In other words, $g_\tau \leq g_{\tau'}$ in Bruhat order if and only if $\tau' \leq \tau$.
\end{theorem}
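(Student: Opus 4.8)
The plan is as follows. Part one is immediate from Lemma~\ref{lem:wgw}: writing $g_\tau=wg_0w^{-1}$ with $\ell(w)=\binom{n}{2}-c(\tau)$ gives $\ell(g_\tau)=2\ell(w)=2\bigl(\binom{n}{2}-c(\tau)\bigr)$, consistent with $c(\tau_\top)=\binom{n}{2}$ and $\ell(g_0)=0$. I also note that $\tau\mapsto g_\tau$ is injective, since $\tau(i)$ is recovered as the residue of $g_\tau(i)$ modulo $2n$. The poset statement then amounts to the equivalence $g_\tau\le g_{\tau'}\iff\tau'\le\tau$, which I would prove in two halves.

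For ``$\tau'\le\tau\Rightarrow g_\tau\le g_{\tau'}$'' it suffices, by transitivity, to treat a cover $\tau'\lessdot\tau$. Realize it by uncrossing one crossing of a lensless medial graph for $\tau$; after a cyclic rotation the crossing strands are $a$--$c$ and $b$--$d$ with $1\le a<b<c<d\le 2n$, and in $\tau'$ they become the nested pair $a$--$d$, $b$--$c$. From the definition of $g$, passing from $g_\tau$ to $g_{\tau'}$ merely interchanges the values $c,d$ at the positions $a,b$ and the values $a+2n,b+2n$ at the positions $c,d$. A short computation with the affine rank matrix then gives
$$
r_{g_{\tau'}}(i,j)-r_{g_\tau}(i,j)=\sum_{k\in\Z}\Bigl(\mathbf 1_{\{a\le i-2nk<b,\ c<j-2nk\le d\}}+\mathbf 1_{\{c\le i-2nk<d,\ a+2n<j-2nk\le b+2n\}}\Bigr)\ \ge\ 0,
$$
so $g_\tau\le g_{\tau'}$ by Theorem~\ref{thm:rank}, strictly since the two permutations differ. (The lensless hypothesis is not used here; it only guarantees the rank jump is exactly $2$, which part one already tells us.)

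For the converse ``$g_\tau\le g_{\tau'}\Rightarrow\tau'\le\tau$'' I would induct on $\ell(g_{\tau'})-\ell(g_\tau)=2\bigl(c(\tau)-c(\tau')\bigr)$, the base case being $g_\tau=g_{\tau'}$, hence $\tau=\tau'$ by injectivity. For the inductive step it is enough to produce a $P_n$-cover $\tau''\lessdot\tau$ with $g_{\tau''}\le g_{\tau'}$, for then $\tau'\le\tau''$ by induction and hence $\tau'\le\tau$. To find $\tau''$ I would fix a pair $(i_0,j_0)$ with $r_{g_\tau}(i_0,j_0)<r_{g_{\tau'}}(i_0,j_0)$ that is extremal for a suitable functional (say $j_0$ maximal, then $i_0$ minimal), locate a crossing of a lensless graph for $\tau$ one of whose two resolutions is an uncrossing whose rank-matrix increment (as in the displayed formula) contains $(i_0,j_0)$, choose such a crossing to be innermost so that by Lemma~\ref{lem:covers} the uncrossing preserves lenslessness and $\tau'':=\tau(G'')$ is a genuine cover of $\tau$ from below, and finally check that extremality of $(i_0,j_0)$ forces the two rectangles added to the rank matrix to stay inside the region $\{r_{g_\tau}<r_{g_{\tau'}}\}$, yielding $g_{\tau''}\le g_{\tau'}$.

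The hard part will be this lifting step: one must arrange simultaneously that the chosen uncrossing remains lensless (so that it is a cover in $P_n$) and that its two ``support'' rectangles stay inside the strict region $\{r_{g_\tau}<r_{g_{\tau'}}\}$ (so that $g_{\tau''}$ does not overshoot $g_{\tau'}$), and the bookkeeping of these two rectangles together with their $2n$-translates near the boundary of the strict region is the real content. An alternative route is to observe that $g_\tau^2$ is the shift by $2n$, which is central in the affine symmetric group, so that the $g_\tau$ form a set of $\theta$-twisted involutions (with $\theta$ conjugation by $g_0$) and one may invoke the general description of Bruhat order on twisted involutions as a subword order, translating subwords into sequences of uncrossings; but I would prefer to keep the argument internal to medial graphs.
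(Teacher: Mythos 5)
Your first part and the forward implication are fine and match the paper: part one is exactly Lemma \ref{lem:wgw}, and for a cover $\tau'\lessdot\tau$ the paper likewise observes $g_{\tau'}=t_{a,b}g_\tau t_{a,b}$ and checks $g_{\tau'}>g_\tau$ (your explicit rank-matrix increment just makes the paper's ``it is easy to see'' concrete). The problem is the converse, where your proposal is a strategy rather than a proof, and you say so yourself: ``the bookkeeping \ldots is the real content.'' The two requirements you must meet simultaneously --- that the chosen uncrossing be lensless (so $\tau''\lessdot\tau$ in $P_n$) and that the resulting rank-matrix increment stay inside $\{r_{g_\tau}<r_{g_{\tau'}}\}$ (so $g_{\tau''}\le g_{\tau'}$) --- are exactly the crux, and you give no argument that your selection rule (extremal $(i_0,j_0)$, innermost crossing) achieves both at once. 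As it stands the inductive step is not established.

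The paper closes this gap with a different mechanism, and it is worth seeing because you actually name the key structural fact and then set it aside. From $g_{\tau'}>g_\tau$, the chain (lifting) property of Bruhat order produces a reflection $t_{a,b}$ with $g_{\tau'}\ge t_{a,b}g_\tau\gtrdot g_\tau$; this is what selects the crossing, with the ``stay below $g_{\tau'}$'' condition built in on one side for free. The cover condition $t_{a,b}g_\tau\gtrdot g_\tau$ translates, via Lemma \ref{lem:covers}, into the lenslessness of the corresponding uncrossing, so $t_{a,b}g_\tau t_{a,b}=g_{\tau''}$ for a genuine cover $\tau''\lessdot\tau$. The remaining issue is to pass from the one-sided product to the conjugate, and here the paper uses precisely the twisted-involution symmetry you mention at the end: writing $g_\tau=wg_0w^{-1}$ and $g_{\tau'}=vg_0v^{-1}$ and using $s_ig_0=g_0s_{i+n}$ together with the inversion anti-automorphism of Bruhat order, the inequality $g_{\tau'}\ge t_{a,b}g_\tau$ is converted into $g_{\tau'}\ge g_\tau t_{a,b}$. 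Finally, since the rank matrices of $t_{a,b}g_\tau$ and of $g_\tau t_{a,b}$ differ from that of $g_\tau$ on disjoint families of translated rectangles, one gets $r_{g_{\tau'}}\ge\max\bigl(r_{t_{a,b}g_\tau},r_{g_\tau t_{a,b}}\bigr)=r_{g_{\tau''}}$, hence $g_{\tau'}\ge g_{\tau''}$, and induction on $\ell(g_{\tau'})-\ell(g_\tau)$ finishes. If you want to salvage your write-up, replace the extremal-entry search by the chain property and add the symmetry step; that is the missing idea.
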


%

\begin{proof}
The first claim follows from Lemma \ref{lem:wgw}.

Suppose $\tau' \lessdot \tau$.  By Lemma \ref{lem:covers}, $\tau'$ is obtained from $\tau$ by uncrossing the intersection point of strands $a \leftrightarrow c$ and $b \leftrightarrow d$ so that after uncrossing $a$ is joined to $d$ and $b$ is joined to $c$, where $(a,b,c,d)$ are in cyclic order, and no other medial strand goes from the arc $(a,b)$ to the arc $(c,d)$.  For simplicity, we suppose that $a < b < c < d$.  Then $g_{\tau'} = t_{a,b} t_{c,d}g_{\tau} = t_{a,b} g_\tau t_{a,b}$, and it is easy to see that $g_{\tau'} > g_\tau$.  

Now suppose that $g_{\tau'} > g_\tau$.  We know that there exists $a < b$ such that $g_{\tau'} > t_{a,b} g_\tau \gtrdot g_\tau$.  Let $c:= g_\tau(a)$ and $d:= g_\tau(b)$.  It is clear that we also have $t_{a,b} g_\tau t_{a,b} \gtrdot t_{c,d} g_\tau = g_\tau t_{a,b} \gtrdot g_\tau$.  We claim that $g_{\tau'} > g_\tau t_{a,b}$ as well.  To see this use Lemma \ref{lem:wgw} to write $g_\tau = wg_0w^{-1}$ and $g_{\tau'} = v g_0 v^{-1}$.  Define the group isomorphism $\iota: \tS_n^0 \to \tS_n^0$ by $s_i \mapsto s_{i+n}$ for all $i$.  Then $g_{\tau'} > t_{a,b} g_\tau$ implies $v \iota(v^{-1}) > t_{a,b} w \iota(w^{-1})$.  Taking inverses we get $\iota(v) v^{-1} > \iota(w) w^{-1} t_{a,b}$, and left multiplying by $g_0$, we get $g_{\tau'} > g_\tau t_{a,b}$.

So $g_{\tau'}$ is greater than both $g_\tau t_{a,b}$ and $t_{a,b} g_\tau$.  We now show that $g_{\tau'}$ greater than $t_{a,b} g_\tau t_{a,b}$.  We have
\begin{equation}\label{eq:order}
a < b < c < d < a+2n.
\end{equation}
Let $R$ be the rectangular region with corners at $(a,c), (a,d), (b,c), (b,d)$.  Similarly, let $R'$ be the rectangular region with corners at $(c,a+2n), (c,b+2n), (d,a+2n), (d,b+2n)$.  Then the affine rank matrices of $g_\tau t_{a,b}$ (resp. $t_{a,b} g_\tau$) and $g_\tau$ differ only in $R$ (resp. $R'$) and the periodic shifts of $R$ (resp. $R'$).  The inequalities \eqref{eq:order} imply that the periodic shifts of $R$ and the periodic shifts of $R'$ never intersect.  Applying Theorem \ref{thm:rank}, we see that $$r_{g_{\tau'}}(i,j) \geq \max\left(r_{g_\tau t_{a,b}}(i,j), r_{t_{a,b}g_\tau}(i,j)\right) = r_{t_{a,b}g_\tau t_{a,b}}(i,j)
$$ 
for each $i,j \in \Z$, and thus $g_{\tau'} > t_{a,b}g_\tau t_{a,b}$.

But $t_{a,b} g_\tau t_{a,b} = g_{\tau''}$ for some $\tau'' \lessdot \tau$.  By induction on $\ell(g_{\tau'}) - \ell (g_\tau)$ we conclude that $\tau' < \tau$.
\end{proof}

Let us also define a bounded affine permutation $f_\tau$ by $f_\tau(i) = g_\tau(i) - 1$.  Since $i < g_\tau(i) < i + 2n$, we have $i \leq f_\tau (i) \leq i+2n-2$.  Note that $f_\tau$ is of type $(n-1,2n)$.  Define an \defn{electrical affine permutation} to be a bounded affine permutation $f$ with period $2n$ satisfying: 
\begin{enumerate}
\item 
$i \leq f(i) \leq i+2n-2$
\item
if $j = f(i)$ then $f(j+1) \equiv (i-1) \mod 2n$.  
\end{enumerate}
Note that (2) and boundedness determines $f(j+1)$ unless $i-1=j+1 + 2n$, in which case (1) forces $f(j+1) = j+1$.  Denote the set of electrical affine permutations by $\Elec(n)$.  
We have
\begin{lemma}
The set of electrical affine permutations is exactly the set $\{f_\tau \mid \tau \in P_n\}$.
\end{lemma}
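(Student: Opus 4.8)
The plan is to prove the two inclusions $\{f_\tau \mid \tau \in P_n\}\subseteq \Elec(n)$ and $\Elec(n)\subseteq\{f_\tau\}$ separately, in both cases by passing to the shifted map $g:=f+1$ and recognizing the two conditions defining $\Elec(n)$ as the single assertion that $g=g_\tau$ for a fixed-point-free involution $\tau$ of $\Z/2n\Z$. Throughout I will use that, for $\tau\in P_n$ extended periodically to a fixed-point-free involution of $\Z$ (so $\tau(i+2n)=\tau(i)+2n$), the value $g_\tau(i)$ is, directly from its definition, the unique integer congruent to $\tau(i)$ modulo $2n$ lying strictly between $i$ and $i+2n$.

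First I will check $f_\tau\in\Elec(n)$ for every $\tau\in P_n$. The map $f_\tau=g_\tau-1$ is a bounded affine permutation of period $2n$, and condition (1), namely $i\le f_\tau(i)\le i+2n-2$, is precisely the inequality recorded immediately after $f_\tau$ was introduced (it follows from $i<g_\tau(i)<i+2n$). For condition (2): if $j=f_\tau(i)$ then $g_\tau(i)=j+1$, so $j+1\equiv\tau(i)\pmod{2n}$; applying the involution $\tau$ gives $\tau(j+1)\equiv i\pmod{2n}$, hence $g_\tau(j+1)\equiv i$ and $f_\tau(j+1)=g_\tau(j+1)-1\equiv i-1\pmod{2n}$. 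This half is pure bookkeeping.

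For the reverse inclusion, take $f\in\Elec(n)$ and set $g(i):=f(i)+1$. Condition (1) rewrites as $i<g(i)<i+2n$, and $g$ inherits the periodicity $g(i+2n)=g(i)+2n$ from $f$; in particular $g(i)\not\equiv i\pmod{2n}$, so $\tau(i):=g(i)\bmod 2n$ is a well-defined fixed-point-free self-map of $\Z/2n\Z$. Condition (2), applied with $j=f(i)$ so that $j+1=g(i)$, says exactly that $g(g(i))\equiv i\pmod{2n}$, i.e. that $\tau$ is an involution; hence $\tau\in P_n$. Finally, $g(i)$ is an integer congruent to $\tau(i)$ modulo $2n$ lying in $(i,i+2n)$, and $g_\tau(i)$ is the unique such integer, so $g=g_\tau$ and $f=g_\tau-1=f_\tau$.

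I do not expect a genuine obstacle: the argument is a one-step dictionary between bounded affine permutations of type $(n-1,2n)$ and fixed-point-free involutions on $[2n]$. The only point needing care is confirming that conditions (1) and (2) carry exactly the right information — (1) isolates a single representative of each residue class inside the open interval $(i,i+2n)$, so that $g$ (hence $\tau$) is recoverable, while (2) is equivalent to involutivity with nothing lost. The degenerate clause singled out in the definition (where the boundedness window fails to contain the residue forced by (2), so that $f(j+1)=j+1$) imposes no extra constraint, being automatically compatible with $g$ arising from an involution; I will simply note this. As a byproduct, the same uniqueness statement shows $\tau\mapsto f_\tau$ is injective, so the correspondence is a bijection $P_n\to\Elec(n)$, though only the set equality is needed here.
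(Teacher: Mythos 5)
Your proof is correct, and it is precisely the routine verification that the paper omits (the lemma is stated there without proof, immediately after the definitions of $g_\tau$, $f_\tau$ and $\Elec(n)$). The dictionary $f\leftrightarrow g=f+1\leftrightarrow$ fixed-point-free involution is exactly the intended mechanism, since $g_\tau(i)$ is by construction the unique representative of $\tau(i)\bmod 2n$ in $(i,i+2n)$ and every matching on $[2n]$ lies in $P_n$ by Proposition \ref{prop:cactus}(3).
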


We also have the following corollary of Theorem \ref{thm:poset}.

\begin{corollary}\label{cor:order}\
The following are equivalent:
\begin{enumerate}
\item
$\tau' \leq \tau$
\item
$f_\tau \leq f_{\tau'}$ in Bruhat order
\item
$\I(f_\tau) \leq \I(f_{\tau'})$
\end{enumerate}
\end{corollary}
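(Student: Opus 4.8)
The plan is to reduce the corollary to three facts already established: Theorem~\ref{thm:poset}, which identifies $P_n$ with the dual of a subposet of Bruhat order via $\tau \mapsto g_\tau$; Theorem~\ref{thm:rank}, the characterization of Bruhat order by affine rank matrices; and Theorem~\ref{thm:Grassorder}, which translates Bruhat order on bounded affine permutations into the dominance order on Grassmann necklaces. The only new ingredient is the observation that subtracting $1$ from all values, i.e.\ the passage from $g_\tau$ to $f_\tau$, preserves Bruhat order on the relevant cosets of the affine symmetric group.

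For the equivalence of (1) and (2), I would first note that $f_\tau(i) = g_\tau(i) - 1$ yields $r_{f_\tau}(i,j) = r_{g_\tau}(i,j+1)$ for all $i,j \in \Z$, directly from $r(i,j) = |\{a \le i \mid f(a) \ge j\}|$ and the equivalence $g_\tau(a) - 1 \ge j \Leftrightarrow g_\tau(a) \ge j+1$. Hence, up to the reindexing $j \mapsto j-1$ of the second argument, the system of inequalities ``$r_{f_\tau}(i,j) \le r_{f_{\tau'}}(i,j)$ for all $i,j$'' is literally the same as the system ``$r_{g_\tau}(i,j) \le r_{g_{\tau'}}(i,j)$ for all $i,j$''. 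Applying Theorem~\ref{thm:rank} to the pairs $f_\tau, f_{\tau'}$ and $g_\tau, g_{\tau'}$ (all bounded affine permutations, hence affine permutations), we obtain $f_\tau \le f_{\tau'} \Leftrightarrow g_\tau \le g_{\tau'}$; and by Theorem~\ref{thm:poset} the latter is equivalent to $\tau' \le \tau$.

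For the equivalence of (2) and (3), I would invoke Theorem~\ref{thm:Grassorder}: as recorded just before the statement of the corollary, $i \le f_\tau(i) \le i+2n-2$ and $f_\tau$ is of type $(n-1,2n)$, so $f_\tau$ and $f_{\tau'}$ are bounded affine permutations of type $(n-1,2n)$, and therefore $f_\tau \le f_{\tau'}$ in Bruhat order is equivalent to $\I(f_\tau) \le \I(f_{\tau'})$ in the dominance order on $(n-1,2n)$-Grassmann necklaces. Concatenating the two equivalences proves the corollary.

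I do not expect a substantive obstacle; the one point requiring care is the direction of the comparisons, since Theorem~\ref{thm:poset} is phrased in terms of the \emph{dual} Bruhat order. The shift $g \mapsto g-1$ is monotone (not anti-monotone) because it merely translates the second coordinate of the affine rank matrix, so all the entrywise inequalities---and hence the directions of the order relations---match up as claimed; equivalently, $g \mapsto g-1$ is left multiplication by a length-zero element of the extended affine symmetric group and so an automorphism of Bruhat order, but the rank-matrix formulation is more self-contained given Theorem~\ref{thm:rank}.
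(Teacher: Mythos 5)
Your proof is correct, and its skeleton is the same as the paper's: transfer the comparison from $f_\tau$ to $g_\tau$ and then quote Theorem~\ref{thm:poset} (for the link to $\tau' \leq \tau$) and Theorem~\ref{thm:Grassorder} (for the link to Grassmann necklaces). The one place where you genuinely diverge is the mechanism of the transfer. The paper works entirely with necklaces: it observes that the necklace $\I(f_\tau) = (J_1,\ldots,J_{2n})$ is obtained from $\I(g_\tau) = (I_1,\ldots,I_{2n})$ by $J_a = \{b-1 \mid b \in I_a\setminus\{a\}\}$ (using that $a \in I_a$ always holds for $g_\tau$), so that the two necklace comparisons are equivalent, and only then converts to Bruhat order via Theorem~\ref{thm:Grassorder}. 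You instead compare Bruhat orders directly through the affine rank matrices, via the identity $r_{f_\tau}(i,j) = r_{g_\tau}(i,j+1)$ and Theorem~\ref{thm:rank}. Both routes are sound. Your rank-matrix argument makes it immediate that $g \mapsto g-1$ is an isomorphism of Bruhat orders between the two cosets (and your caveat about the direction of the inequalities is handled correctly); the paper's necklace computation is slightly more work but produces the explicit relation between $\I(f_\tau)$ and $\I(g_\tau)$, which is reused in Section~\ref{sec:Catalan}. Note that Theorem~\ref{thm:rank} is stated for affine permutations with $\sum_i (f(i)-i)=0$, so strictly one needs its extension to the cosets $\tS_{2n}^{n-1}$ and $\tS_{2n}^{n}$; this is exactly the extension the paper itself already uses in the proof of Theorem~\ref{thm:poset}, so your appeal to it is consistent with the paper's conventions.
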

\begin{proof}
Let $\tau \in P_n$.  And let $\I(g) = (J_1,J_2,\ldots,J_{2n})$ be the Grassmann necklace of $g_\tau$.  Then $a \in J_a$ for each $a$.  Let $\I(f_\tau) = (I_1,I_2,\ldots,I_{2n})$.  We have that $I_a = \{b-1 \mid b \in J_a \setminus \{a\}\}$.  It follows that $\I(f_\tau) \leq \I(f_{\tau'})$ if and only if $\I(g_\tau) \leq \I(g_{\tau'})$.  The claim then follows from Theorem \ref{thm:Grassorder} and Theorem \ref{thm:poset}.
\end{proof}

Corollary \ref{cor:order} gives a new non-recursive criterion for the partial order $P_n$, even when we consider only matchings that arise from circular planar electrical networks (instead of the more general cactus networks).

\subsection{Catalan subsets and Catalan necklaces}\label{sec:Catalan}
The Grassmann necklace $\I(g_\tau) = (J_1,\ldots,J_{2n})$ can be read off from $\tau$ as follows.  Draw $\tau$ as a medial pairing in the disk.  For each $a \in [2n]$, let us traverse the circle clockwise starting at the boundary vertex $a$.  At each boundary vertex, write ``$U$" if the vertex is at the end of a strand we have not previously encountered, and write ``$D$" if the vertex is at the end of a strand we have previously encountered.  The boundary vertices marked with $U$ gives the set $J_a$.  To get $I_a(f_\tau)$, we remove $a$ (which is always present in $J_a$) and shift everything by 1.

Let $P$ be a Dyck path of length $2n$, thought of as a sequence of U-s and D-s.  A sequence of $n$ U-s and $n$ D-s is a Dyck path if it satisfies the property that in any initial subsequence there is at least as many U-s as there are D-s.  Note that a Dyck path must start with a U.

A \defn{Catalan subset} $I$ is an $(n-1)$-element subset of $[2n]$ such that
$$
\{1\} \cup \{a+1 \mid a \in I\} = \{\text{positions of up steps in } P(I)\}
$$
for some Dyck path $P(I)$.  More generally, we call $I$ a Catalan subset with respect to $\leq_a$ if the subset $I-a \mod 2n$ is a Catalan subset.

\begin{lemma}
Let $I, J$ be two Catalan subsets.  Then $I \leq J$ in dominance order if and only if the Dyck path $P(I)$ never goes below $P(J)$. 
\end{lemma}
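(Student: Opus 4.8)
The plan is to pass from Dyck paths to their sets of up-step positions and invoke the standard fact that ``lies weakly above'' on Dyck paths corresponds to dominance order on those sets. For a lattice path $P$ of length $2n$, let $h_P(i)$ be its height after $i$ steps (number of U-steps minus number of D-steps among the first $i$ steps), and let $U(P) \subset [2n]$ be the set of positions at which $P$ takes a U-step; for a Dyck path $|U(P)| = n$ and $1 \in U(P)$, $2n \notin U(P)$. The relation ``$P(I)$ never goes below $P(J)$'' is by definition $h_{P(I)}(i) \ge h_{P(J)}(i)$ for all $0 \le i \le 2n$. First I would record the trivial identity $\#\{u \in U(P) \mid u \le i\} = (i + h_P(i))/2$, from which $h_{P}(i) \ge h_{Q}(i)$ for all $i$ is equivalent to $\#\{u \in U(P) \mid u \le i\} \ge \#\{u \in U(Q) \mid u \le i\}$ for all $i$. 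Writing $U(P) = \{u_1 < \cdots < u_n\}$ and $U(Q) = \{u'_1 < \cdots < u'_n\}$, a short argument (take $i = u'_k$ for the forward direction; for the converse use $u_k \le u'_k \le i$ whenever exactly $k$ of the $u'_\ell$ are $\le i$) shows this is in turn equivalent to $u_k \le u'_k$ for all $k$, i.e. to $U(P) \le U(Q)$ in the dominance order on $\binom{[2n]}{n}$. I would prove this elementary equivalence rather than cite it.

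Next I would translate the comparison of $I$ and $J$ into a comparison of $U(P(I))$ and $U(P(J))$. By definition $U(P(I)) = \{1\} \cup \{a+1 \mid a \in I\}$, so if $I = \{i_1 < \cdots < i_{n-1}\}$ then $U(P(I))$ is the increasing sequence $(1, i_1+1, i_2+1, \ldots, i_{n-1}+1)$, and likewise $U(P(J)) = (1, j_1+1, \ldots, j_{n-1}+1)$ for $J = \{j_1 < \cdots < j_{n-1}\}$. Hence comparing $U(P(I))$ with $U(P(J))$ entrywise amounts to comparing the first entries $1 = 1$ and the entries $i_k + 1$ with $j_k + 1$ for $k = 1, \ldots, n-1$; thus $U(P(I)) \le U(P(J))$ in dominance order if and only if $i_k \le j_k$ for all $k$, i.e. if and only if $I \le J$ in dominance order. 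In other words $I \mapsto U(P(I))$ is an order embedding of Catalan subsets, ordered by dominance, into $\binom{[2n]}{n}$ ordered by dominance.

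Combining the two steps yields $I \le J \iff U(P(I)) \le U(P(J))$ in dominance $\iff P(I)$ lies weakly above $P(J)$, which is exactly the assertion that $P(I)$ never goes below $P(J)$. I do not expect a genuine obstacle: the statement is essentially a repackaging of a classical fact, and the only points deserving a careful sentence are the height/counting equivalence in the first step and the remark that $P(I)$ is well defined (its U-step positions are prescribed, and the Catalan-subset hypothesis is precisely what forces the resulting path to satisfy $h_{P(I)} \ge 0$ throughout, hence to be a Dyck path). If the ``with respect to $\le_a$'' version is needed later, it follows immediately by applying the proved statement after the cyclic relabelling $x \mapsto x - a \bmod 2n$.
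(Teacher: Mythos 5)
Your proof is correct; the paper states this lemma without proof (treating it as a standard fact), and your argument -- translating ``weakly above'' into the prefix-count condition $\#\{u\in U(P)\mid u\le i\}\ge \#\{u\in U(Q)\mid u\le i\}$ and then into entrywise comparison of up-step positions -- is exactly the routine verification the paper implicitly relies on. The only point worth a sentence in a written-up version is the one you already flag: the shift $a\mapsto a+1$ and the common first entry $1$ make $I\mapsto U(P(I))$ an order embedding, so dominance on Catalan subsets matches dominance on up-step sets.
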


The Catalan subset $\{1,2,4,5,9\}$ corresponds to the UD-sequence $UUUDUUDDDUDD$ and the following Dyck path:
\begin{center}
\begin{tikzpicture}[scale=0.6]
\draw (0,0) -- (1,1) -- (2,2) -- (3,3) -- (4,2) --(5,3) -- (6,4) -- (7,3) -- (8,2) -- (9,1) -- (10,2) -- (11,1) -- (12,0);
\filldraw[black] (0,0) circle (0.05cm);
\filldraw[black] (1,1) circle (0.05cm);
\filldraw[black] (2,2) circle (0.05cm);
\filldraw[black] (3,3) circle (0.05cm);
\filldraw[black] (4,2) circle (0.05cm);
\filldraw[black] (5,3) circle (0.05cm);
\filldraw[black] (6,4) circle (0.05cm);
\filldraw[black] (7,3) circle (0.05cm);
\filldraw[black] (8,2) circle (0.05cm);
\filldraw[black] (9,1) circle (0.05cm);
\filldraw[black] (10,2) circle (0.05cm);
\filldraw[black] (11,1) circle (0.05cm);
\filldraw[black] (12,0) circle (0.05cm);
\end{tikzpicture}
\end{center}

Define $\I(\tau) = (I_1,I_2,\ldots,I_{2n})$ by $\I(\tau) = \I(f_\tau)$.   The previous discussion gives:

\begin{lemma}
Let $\tau \in P_n$.  For each $a \in [2n]$, the set $I_a(\tau)$ is a Catalan subset with respect to $\leq_a$.
\end{lemma}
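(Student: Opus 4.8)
The plan is to reduce the statement to a single elementary observation about matchings on a circle: the $UD$-sequence attached to $\tau$ and a base point $a$, described in the discussion immediately preceding the lemma, is always a Dyck path. Given that, the lemma is just the translation — already spelled out in that discussion — between this sequence, the Grassmann necklace $\I(g_\tau)$, and the Grassmann necklace $\I(f_\tau)=\I(\tau)$.

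First I would fix $a\in[2n]$, draw $\tau$ as a medial pairing in the disk, and list the boundary vertices clockwise starting at $a$ as $v_1,v_2,\ldots,v_{2n}$, so $v_k\equiv a+k-1 \pmod{2n}$. Form the word $w=w_1w_2\cdots w_{2n}$ with $w_k=U$ if $v_k$ is the first of the two endpoints of its $\tau$-chord to appear in the list $v_1,\ldots,v_{2n}$, and $w_k=D$ otherwise. By construction $w$ has exactly $n$ letters $U$ and $n$ letters $D$, and $w_1=U$.

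The key step is to check that $w$ is a Dyck path. For $1\le m\le 2n$, the quantity $\#\{k\le m : w_k=U\}-\#\{k\le m : w_k=D\}$ equals the number of chords of $\tau$ having exactly one endpoint among $\{v_1,\ldots,v_m\}$: such a chord contributes $+1$ (its earlier endpoint is counted, its later one is not), while a chord with both or neither endpoints in the prefix contributes $0$. This count is manifestly $\ge 0$, and it is $0$ when $m=2n$; hence $w$ is a Dyck path, which I will call $P_a$. This is the only place where real content enters, and note it uses nothing about $\tau$ being non-crossing. Finally I would invoke the discussion before the lemma: the positions of the $U$'s in $w$, read with respect to $\le_a$, give $J_a:=I_a(g_\tau)$ (with $a\in J_a$ corresponding to $w_1=U$), and $I_a(\tau)=I_a(f_\tau)$ is obtained from $J_a$ by deleting $a$ and shifting by $1$. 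Unwinding these identifications, the set $\{1\}\cup\{x+1 \mid x\in I_a(\tau)-a \bmod 2n\}$ is exactly the up-step set of $P_a$, which is precisely the assertion that $I_a(\tau)$ is a Catalan subset with respect to $\le_a$, with $P(I_a(\tau))=P_a$.

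The hard part is not mathematical but notational: one must carefully reconcile the several index conventions in play — positions around the circle starting at $a$, the cyclic order $\le_a$, the passage from $g_\tau$ to $f_\tau$ (the ``shift by $1$''), and the convention that the up-steps of a length-$2n$ Dyck path form a subset of $[2n]$ automatically containing $1$ — so that the deletion of $a$ and the shift line up and no spurious ``$2n$'' is created. Each of these is an elementary reindexing modulo $2n$; once they are pinned down, the prefix-counting argument above is exactly what makes the resulting $(n-1)$-subset Catalan.
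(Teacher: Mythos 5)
Your proposal is correct and is essentially the paper's own argument: the paper gives no formal proof, stating only that "the previous discussion gives" the lemma, where that discussion is exactly the $UD$-word reading of $\I(g_\tau)$ and the remove-$a$-and-shift passage to $\I(f_\tau)$ that you spell out. Your prefix-counting verification that the $UD$-word is a Dyck path (the count equals the number of chords with exactly one endpoint in the prefix) is the one step the paper leaves implicit, and you have filled it in correctly.
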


A Grassmann necklace of type $(n-1,2n)$ is a \defn{Catalan necklace} if each $I_a$ subset is an $a$-shifted Catalan subset.  Thus $\I(\tau)$ is a Catalan necklace for $\tau \in P_n$.  We shall characterize Catalan necklaces in terms of non-crossing partitions in Proposition \ref{prop:Catalan}.

\begin{example}
Let $n = 5$ and $\tau = \{(1,7),(2,9),(3,8),(4,10),(5,6)\} \in P_5$.  Then $g_\tau = [7,9,8,10,6,15,11,13,12,14]$ and $f_\tau = [6,8,7,9,5,14,10,12,11,13]$.  We have $\I(\tau) = (1234,2346,3468,4678,6789,6789,7894,89(10)4,9(10)24,(10)124)$.
\end{example}

For a non-crossing partition $\sigma$, we shall also write $\I(\sigma)$ for $\I(\tau(\sigma))$.  The Grassmann necklaces $\I(\sigma)$ have the property that not only does it consist of Catalan subsets, but each subset $I_a(\sigma)$ determines the whole $\I(\sigma)$.    The subset $I_a(\sigma)$ has the following explicit description.  Suppose $\sigma = (\sigma_1 | \sigma_2 | \cdots | \sigma_r)$ so that $\sigma_i$ are the parts of $\sigma$, and similarly let $\tsigma=(\sigma_1 | \sigma_2 | \cdots | \sigma_{\tilde r})$.  Then
\begin{equation}\label{eq:Ia}
[2n] \setminus I_a(\sigma) = \{\max_{\leq_a} \sigma_1,  \max_{\leq_a} \sigma_2, \ldots, \max_{\leq_a} \sigma_r \} \bigcup \{\max_{\leq_a} \tsigma_1,  \max_{\leq_a} \sigma_2, \ldots, \max_{\leq_a} \tsigma_{\tilde r} \}.
\end{equation}
Note that in this equation we have identified $[\bar n] \cup [\tilde n]$ with $[2n]$.  Thus the order $\leq_a$ on $[2n]$ induces an order on $[\bar n]$ and an order on $[\tilde n]$ and $\max_{\leq a}$ is the maximum with respect to this order.  Also note that the right hand side has cardinality $n+1$ by Lemma \ref{lem:dualpart}.  The following result is straightforward.

\begin{lemma}
For each $a \in [2n]$, the map $\sigma \mapsto I_a(\sigma)$ is a bijection between $\NC_n$ and Catalan subsets.
\end{lemma}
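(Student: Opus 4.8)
The plan is to realize $\sigma \mapsto I_a(\sigma)$ as a composition of bijections; as a cross-check (or as the main argument) one notes that both sides have cardinality the Catalan number $\frac{1}{n+1}\binom{2n}{n}$, so it suffices to prove injectivity. Indeed $|\NC_n| = \frac{1}{n+1}\binom{2n}{n}$ by Lemma \ref{lem:planarset}, while the Catalan subsets with respect to any fixed order $\leq_a$ (being a cyclic relabeling of the Catalan subsets with respect to $\leq_1$) are in bijection with Dyck paths of length $2n$: from a Catalan subset $I$ one reads off the Dyck path $P(I)$, and conversely $I$ is recovered from $P(I)$ as the set of up-step positions of $P(I)$ with the initial position $1$ deleted and every remaining entry decreased by $1$. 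There are $\frac{1}{n+1}\binom{2n}{n}$ Dyck paths of length $2n$, so the two sets in the lemma have the same finite cardinality.

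First I would recall the bijection $\sigma \mapsto \tau(\sigma)$ of Lemma \ref{lem:planarset} from $\NC_n$ to non-crossing matchings on $[2n]$. Next, given a non-crossing matching $\tau$ on $[2n]$, cut the circle just before the vertex $a$ and traverse the $2n$ vertices clockwise, recording $U$ at the first endpoint of each chord encountered and $D$ at the second; because $\tau$ is non-crossing this produces a Dyck path of length $2n$, and the assignment is invertible (match each $D$ to the nearest preceding unmatched $U$ and re-close the circle), hence a bijection for each fixed $a$. Finally, pass from the Dyck path to its up-step set, normalized as in the previous paragraph. The heart of the proof is to check that this composite equals $\sigma \mapsto I_a(\sigma)$: this is exactly the content of the ``$UD$-reading'' description of the Grassmann necklace $\I(g_{\tau(\sigma)})$ recalled just before the statement, together with the identity $I_a(\sigma) = I_a(f_{\tau(\sigma)})$ and the passage from $\I(g_\tau)$ to $\I(f_\tau)$ by deleting $a$ and shifting by $1$. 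Each arrow in the chain is injective, so $\sigma \mapsto I_a(\sigma)$ is injective, and with the cardinality count it is a bijection. Alternatively one can recover $\sigma$ from $I_a(\sigma)$ directly via \eqref{eq:Ia} --- from $[2n]\setminus I_a(\sigma)$ one reads the $\leq_a$-maxima of the blocks of $\sigma$ and of $\tsigma$, and these determine $\sigma$ --- but verifying that again reduces to the same Dyck-path bookkeeping, so I would take the matching route as primary.

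The one genuine obstacle is keeping track of the several shift conventions at once: the interleaved $\bar{\cdot}/\tilde{\cdot}$ labeling of the $2n$ medial vertices underlying the definition of $\tau(\sigma)$, the shift by $1$ between $g_\tau$ and $f_\tau$, and the $\leq_a$-normalization ``$I - a \bmod 2n$''. I would first reduce to the case $a = 1$ using the cyclic symmetry of the construction (rotation of the $2n$ medial vertices corresponds on $\NC_n$ to the rotation $\bar i \mapsto \overline{i+1}$ together with the duality $\sigma \leftrightarrow \tsigma$), after which the identification of the composite with $\sigma \mapsto I_a(\sigma)$ is a direct, if slightly tedious, unwinding of the definitions, requiring no idea beyond the three bijections above.
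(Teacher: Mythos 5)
Your proof is correct and is exactly the argument the paper intends: the paper offers no proof of this lemma (it is declared ``straightforward''), and the intended justification is precisely your chain of bijections $\NC_n \leftrightarrow \{\text{non-crossing matchings}\} \leftrightarrow \{\text{Dyck paths}\} \leftrightarrow \{\text{Catalan subsets}\}$, identified with $\sigma \mapsto I_a(\sigma)$ via the $UD$-reading of the Grassmann necklace. The only quibble is that the $UD$-sequence of \emph{any} matching is a Dyck path (non-crossingness is needed only for invertibility of that step, via matching each $D$ to the nearest preceding unmatched $U$), but this does not affect your conclusion.
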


The $a$-shifted dominance order on subsets can be transferred to non-crossing partitions via the bijection $\sigma \mapsto I_a(\sigma)$: we define $\sigma \leq_a \sigma'$ if and only if $\I_a(\sigma) \leq_a I_a(\sigma')$.  Similarly there is an $a$-shifted lexicographic ordering on $\NC_n$.   

\section{Electroid varieties}
In this section we define and study an embedding of $E_n$ into the totally nonnegative Grassmannian.  The section is organized as follows. In Section \ref{sec:Temperley} we explain the construction of a bipartite graph $N(\Gamma)$ from an electrical network $\Gamma$.  In Section \ref{sec:concordant}, we define a linear slice $\X$ of the totally nonnegative Grassmannian and state our main theorems.  Theorem \ref{thm:stratification} states that the only nonempty intersections of $\X$ with the positroid stratification are with the strata labeled by electrical affine permutations.  Theorem \ref{thm:realizability} states that $E_n$ can be identified with the totally nonnegative part $\X_{\geq 0}$ of $\X$, compatibly with the stratifications of all spaces.  In Section \ref{sec:grovematch} we give linear relations between grove coordinates on $E_n$ and boundary measurements on $\Gr(n-1,2n)_{\geq 0}$.  In Section \ref{sec:generators}, we discuss the action of the generators of the electrical Lie group on the Grassmannian.  In Section \ref{sec:strata}, we compare the stratifications of $E_n$ and $\X_{\geq 0}$.  In Section \ref{sec:closure} we discuss the closure partial order on electroid strata.  Sections \ref{sec:reductions} and \ref{sec:stratification} contain the proof of Theorem \ref{thm:stratification}.  Sections \ref{sec:ncorder} and \ref{sec:realizability} contain the rest of the proof of Theorem \ref{thm:realizability}.  In Section \ref{sec:partnecklace}, we give further characterizations of electroids, and define non-crossing partition necklaces.  In Section \ref{sec:quadratic} we discuss quadratic relations for grove coordinates.

\subsection{From electrical networks to bipartite graphs}\label{sec:Temperley}
We produce a planar bipartite network $N = N(\Gamma)$, embedded into the disk, for each electrical network $N$.  Our construction is a modified version (to take into account boundary vertices) of the \defn{generalized Temperley's trick}; see \cite{GK, KPW}.  If $\Gamma$ has boundary vertices $\bar 1, \bar 2,\ldots, \bar n$, then $N$ will have boundary vertices $1,2,\ldots,2n$, where boundary vertex $\bar i$ is identified with $2i-1$, and a boundary vertex $2i$ in $N$ lies between $\bar i$ and $\overline{i+1}$.  The boundary vertex $2i$ can be identified with the vertex $\tilde i$ used to label dual non-crossing partitions.  The planar bipartite network $N$ always has boundary vertices of degree $1$.

The interior vertices of $N$ are as follows: we have a black interior vertex $b_v$ for each interior vertex $v$ of $\Gamma$, and a black interior vertex $b_F$ for each interior face $F$ of $\Gamma$; we have a white interior vertex $w_e$ placed at the midpoint of each interior edge $e$ of $\Gamma$.  For each vertex $\bar i$, we also make a black interior vertex $b_i$.  The edges of $N$ are as follows: (1) if $v$ is a vertex of an edge $e$ in $\Gamma$, then $b_v$ and $w_e$ are joined, and the weight of this edge is equal to the weight $w(e)$ of $e$ in $\Gamma$, (2) if $e$ borders $F$, then $w_e$ is joined to $b_F$ by an edge with weight 1, (3) the vertex $b_i$ is joined (by an edge with weight $1$) to the boundary vertex $2i-1$ in $N$, and $b_i$ is also joined (by an edge with weight $1$) to $w_e$ for any edge $e$ incident to $\bar i$ in $\Gamma$, and (4) even boundary vertices $2i$ in $N$ are joined (by an edge with weight $1$) to the face vertex $w_F$ of the face $F$ that they lie in.  

The construction is extended to cactus networks as follows: if boundary vertices $\overline{a_1},\overline{a_2},\ldots,\overline{a_r}$ are glued together in $\Gamma$, then in $N(\Gamma)$ the vertices $b_{a_1}, b_{a_2},\ldots, b_{a_r}$ are identified.

\begin{example}
Consider the $Y$ electrical network from Example \ref{ex:YDelta}.  Using the computations in Example \ref{ex:YDelta} and directly counting almost perfect matchings, we can compute that for $N(\Gamma)$, we have
\begin{align*}
\Delta_{12} = \Delta_{45} &=ac = L_{\bar 1\bar 3|\bar 2} \\
\Delta_{23} = \Delta_{56} &=bc = L_{\bar 2\bar 3|\bar 1} \\
\Delta_{34} = \Delta_{16} &=ab = L_{\bar 1\bar 2|\bar 3} \\
\Delta_{13} = \Delta_{35}=\Delta_{15} &=abc = L_{\bar 1\bar 2\bar 3} \\
\Delta_{24} = \Delta_{46}=\Delta_{26} &=a+b+c = L_{\bar 1|\bar 2|\bar 3} \\
\Delta_{14} &=ab + ac = L_{\bar 1\bar 2|\bar 3} + L_{\bar 1 \bar 3| \bar 2}\\
\Delta_{25} &=ab + bc = L_{\bar 1\bar 2|\bar 3} + L_{\bar 2 \bar 3| \bar 1}\\
\Delta_{36} &=bc + ac = L_{\bar 1\bar 3|\bar 2} + L_{\bar 1 \bar 3| \bar 2}
\end{align*}
\begin{center}
\begin{tikzpicture}[scale = 0.7]
\draw (0,0) circle (4cm);
\draw (0:4) -- (0:0) -- (120:4);
\draw (0:0) -- (240:4);
\node at (120:4.3) {$\bar 1$};
\node at (0:4.3) {$\bar 2$};
\node at (240:4.3) {$\bar 3$};
\node at ($(120:2)+(0.2,0)$) {$a$};
\node at ($(0:2)+(0,0.2)$) {$b$};
\node at ($(240:2)+(0.2,0)$) {$c$};
\node at (0,-4.5) {$\Gamma$};
\begin{scope}[shift={(10,0)}]
\draw (0,0) circle (4cm);
\draw (0:4) -- (0:0) -- (120:4);
\draw (0:0) -- (240:4);
\draw (0:2) -- (60:2) -- (120:2) -- (180:2) -- (240:2) -- (300:2) -- (0:2);
\draw (60:2) -- (60:4);
\draw (180:2) -- (180:4);
\draw (300:2) -- (300:4);
\node at ($(120:1)+(0.2,0)$) {$a$};
\node at ($(120:3)+(0.2,0)$) {$a$};
\node at ($(0:1)+(0,0.2)$) {$b$};
\node at ($(0:3)+(0,0.2)$) {$b$};
\node at ($(240:1)+(0.2,0)$) {$c$};
\node at ($(240:3)+(0.2,0)$) {$c$};
\filldraw[black] (60:2) circle (0.1cm);
\filldraw[black] (0:0) circle (0.1cm);
\filldraw[black] (300:2) circle (0.1cm);
\filldraw[black] (180:2) circle (0.1cm);
\filldraw[black] (0:3.6) circle (0.1cm);
\filldraw[black] (120:3.6) circle (0.1cm);
\filldraw[black] (240:3.6) circle (0.1cm);
\filldraw[white] (0:2) circle (0.1cm);
\draw (0:2) circle (0.1cm);
\filldraw[white] (120:2) circle (0.1cm);
\draw (120:2) circle (0.1cm);
\filldraw[white] (240:2) circle (0.1cm);
\draw (240:2) circle (0.1cm);
\node at (120:4.3) {$1$};
\node at (60:4.3) {$2$};
\node at (0:4.3) {$3$};
\node at (300:4.3) {$4$};
\node at (240:4.3) {$5$};
\node at (180:4.3) {$6$};
\node at (0,-4.5) {$N(\Gamma)$};
  \end{scope}
\end{tikzpicture}
\end{center}

\end{example}

Let us first observe that the map $\Gamma \to N(\Gamma)$ is compatible with sending conductances to $0$ or $\infty$.  For example, if $\Gamma'$ is obtained from $\Gamma$ by deleting an edge $e$ then modulo a valent two vertex removal $N(\Gamma')$ is obtained from $N(\Gamma)$ by sending the corresponding weight to $0$.

\begin{center}
\begin{tikzpicture}[scale=0.7]
\draw(0,1.5) -- (0.3,2.2);
\draw(0,1.5) -- (-0.3,2.2);
\draw(0,-1.5) -- (-0.3,-2.2);
\draw (0,-1.5) -- (0,-2.2);
\draw(0,-1.5) -- (0.3,-2.2);
\draw (0,0) -- (1.5,0);
\draw (0,0) -- (-1.5,0);
\draw (0,0) -- (0,1.5);
\draw (0,0) -- (0,-1.5);
\node at (0.725,0.2) {$a$};
\node at (-0.725,0.2) {$a$};
\filldraw[black] (1.5,0) circle (0.1cm);
\filldraw[black] (-1.5,0) circle (0.1cm);
\filldraw[black] (0,1.5) circle (0.1cm);
\filldraw[black] (0,-1.5) circle (0.1cm);
\filldraw[white] (0,0) circle (0.1cm);
\draw (0,0) circle (0.1cm);
\draw[->] (2.5,0) -- (4.5,0);
\node at (3.5,0.3) {$a \to 0$};
\begin{scope}[shift={(7,0)}]
\draw(0,1.5) -- (0.3,2.2);
\draw(0,1.5) -- (-0.3,2.2);
\draw(0,-1.5) -- (-0.3,-2.2);
\draw (0,-1.5) -- (0,-2.2);
\draw(0,-1.5) -- (0.3,-2.2);
\draw (0,0) -- (0,1.5);
\draw (0,0) -- (0,-1.5);
\filldraw[black] (1.5,0) circle (0.1cm);
\filldraw[black] (-1.5,0) circle (0.1cm);
\filldraw[black] (0,1.5) circle (0.1cm);
\filldraw[black] (0,-1.5) circle (0.1cm);
\filldraw[white] (0,0) circle (0.1cm);
\draw (0,0) circle (0.1cm);
\end{scope}
\draw[->] (9,0) -- (11,0);
\begin{scope}[shift={(12,0)}]
\draw(0,0) -- (0.3,1);
\draw(0,0) -- (-0.3,1);
\draw(0,0) -- (-0.3,-1);
\draw (0,0) -- (0,-1);
\draw(0,0) -- (0.3,-1);
\filldraw[black] (0,0) circle (0.1cm);
\end{scope}
\end{tikzpicture}
\end{center}

\subsection{Electroid varieties}\label{sec:concordant}
In the following, we will often identify $[2n]$ with $\{\bar 1, \tilde 1, \bar 2, \tilde 2, \ldots, \bar n, \tilde n\}$.  This is the identification we will use when we compare subsets of $[2n]$ with the vertices of partitions $\sigma$ and dual partitions $\tsigma$.  Call an $(n-1)$-element subset $I \subset [2n]$ \defn{concordant} with a non-crossing partition $\sigma$ if each part of $\sigma$, and each part of the dual partition $\tsigma$, contains exactly one element not in $I$.  In this situation we also say that $\sigma$ is concordant with $I$, or that $(\sigma,\tsigma)$ is concordant with $I$.  For $I \in \binom{[2n]}{n-1}$, let $\E(I) \subset \NC_n$ denote the set of non-crossing partitions concordant with $I$.  For $\sigma \in \NC_n$, let $\M(\sigma) \subset \binom{[2n]}{n-1}$ denote the collection of subsets concordant with $\sigma$. In Proposition \ref{prop:Msigma}, we will show that $\M(\sigma)$ is actually a positroid.

\begin{example}
Let $\sigma = (\bar 1, \bar 4, \bar 6| \bar 2, \bar 3| \bar 5)$ so that $\tsigma = (\tilde 1, \tilde 3| \tilde 2| \tilde 4, \tilde 5| \tilde 6)$.  Then $\sigma$ is concordant with $\{2,5,7,8,11\}$ but not concordant with $\{2,5,7,8,12\}$.  In the diagram we use black and white vertices to indicate elements of $\sigma$ versus $\tsigma$.  This color should not be confused with the bipartite coloring of $N(\Gamma)$.
\begin{center}
\begin{tikzpicture}
\draw (0,0) circle (3cm);
\node at (180:3.5) {$1$};
\node at (120:3.5) {$3$};
\node at (60:3.5) {$5$};
\node at (0:3.5) {$7$};
\node at (300:3.5) {$9$};
\node at (240:3.5) {$11$};
\node at (150:3.5) {$2$};
\node at (90:3.5) {$4$};
\node at (30:3.5) {$6$};
\node at (-30:3.5) {$8$};
\node at (-90:3.5) {$10$};
\node at (-150:3.5) {$12$};

\coordinate (a11) at (240:3);
\coordinate (a7) at (0:3);
\coordinate (a8) at (-30:3);
\coordinate (a5) at (60:3);
\coordinate (a2) at (150:3);

\draw ($(a11)+(0.2,0.2)$) rectangle ($(a11)+(-0.2,-0.2)$);
\draw ($(a7)+(0.2,0.2)$) rectangle ($(a7)+(-0.2,-0.2)$);
\draw ($(a8)+(0.2,0.2)$) rectangle ($(a8)+(-0.2,-0.2)$);
\draw ($(a5)+(0.2,0.2)$) rectangle ($(a5)+(-0.2,-0.2)$);
\draw ($(a2)+(0.2,0.2)$) rectangle ($(a2)+(-0.2,-0.2)$);

\draw (180:3) -- (0:3) -- (240:3) -- (180:3);
\draw (120:3) -- (60:3);
\draw (150:3) -- (30:3);
\draw (-30:3) -- (-90:3);
\filldraw[black] (180:3) circle (0.1cm);
\filldraw[black] (120:3) circle (0.1cm);
\filldraw[black] (60:3) circle (0.1cm);
\filldraw[black] (0:3) circle (0.1cm);
\filldraw[black] (240:3) circle (0.1cm);
\filldraw[black] (300:3) circle (0.1cm);

\filldraw[white] (90:3) circle (0.1cm);
\filldraw[white] (150:3) circle (0.1cm);
\filldraw[white] (30:3) circle (0.1cm);
\filldraw[white] (-30:3) circle (0.1cm);
\filldraw[white] (-90:3) circle (0.1cm);
\filldraw[white] (-150:3) circle (0.1cm);
\draw (90:3) circle (0.1cm);
\draw (-90:3) circle (0.1cm);
\draw (150:3) circle (0.1cm);
\draw (-150:3) circle (0.1cm);
\draw (30:3) circle (0.1cm);
\draw (-30:3) circle (0.1cm);
\node at (-0,-4.2) {CONCORDANT};
\begin{scope}[shift={(9,0)}]
\draw (0,0) circle (3cm);
\node at (180:3.5) {$1$};
\node at (120:3.5) {$3$};
\node at (60:3.5) {$5$};
\node at (0:3.5) {$7$};
\node at (300:3.5) {$9$};
\node at (240:3.5) {$11$};
\node at (150:3.5) {$2$};
\node at (90:3.5) {$4$};
\node at (30:3.5) {$6$};
\node at (-30:3.5) {$8$};
\node at (-90:3.5) {$10$};
\node at (-150:3.5) {$12$};

\coordinate (a12) at (210:3);
\coordinate (a7) at (0:3);
\coordinate (a8) at (-30:3);
\coordinate (a5) at (60:3);
\coordinate (a2) at (150:3);

\draw ($(a12)+(0.2,0.2)$) rectangle ($(a12)+(-0.2,-0.2)$);
\draw ($(a7)+(0.2,0.2)$) rectangle ($(a7)+(-0.2,-0.2)$);
\draw ($(a8)+(0.2,0.2)$) rectangle ($(a8)+(-0.2,-0.2)$);
\draw ($(a5)+(0.2,0.2)$) rectangle ($(a5)+(-0.2,-0.2)$);
\draw ($(a2)+(0.2,0.2)$) rectangle ($(a2)+(-0.2,-0.2)$);

\draw (180:3) -- (0:3) -- (240:3) -- (180:3);
\draw (120:3) -- (60:3);
\draw (150:3) -- (30:3);
\draw (-30:3) -- (-90:3);
\filldraw[black] (180:3) circle (0.1cm);
\filldraw[black] (120:3) circle (0.1cm);
\filldraw[black] (60:3) circle (0.1cm);
\filldraw[black] (0:3) circle (0.1cm);
\filldraw[black] (240:3) circle (0.1cm);
\filldraw[black] (300:3) circle (0.1cm);

\filldraw[white] (90:3) circle (0.1cm);
\filldraw[white] (150:3) circle (0.1cm);
\filldraw[white] (30:3) circle (0.1cm);
\filldraw[white] (-30:3) circle (0.1cm);
\filldraw[white] (-90:3) circle (0.1cm);
\filldraw[white] (-150:3) circle (0.1cm);
\draw (90:3) circle (0.1cm);
\draw (-90:3) circle (0.1cm);
\draw (150:3) circle (0.1cm);
\draw (-150:3) circle (0.1cm);
\draw (30:3) circle (0.1cm);
\draw (-30:3) circle (0.1cm);
\node at (0,-4.2) {NOT CONCORDANT};
\end{scope}
\end{tikzpicture}
\end{center}
\end{example}

\begin{remark}
It is easy to see that $\E(I)$ and $\M(\sigma)$ are always non-empty.  In an earlier version of this work we observed that the number of $I \in \binom{[2n]}{n-1}$ satisfying $|\E(I)| = 1$ appeared to be the sequence $1,4,12,32,80,...$, $n2^{n-1}$.  David Speyer has recently proved this numerology.
\end{remark}
\begin{remark}
It is not true that for each $\tau$, there exists some $I$ such that $\tau$ is the only non-crossing matching concordant with $I$.  For example, take $n = 6$, and $\tau$ the non-crossing matching $\{(1,2),(3,12),(4,5),(6,9),(7,8),(10,11)\}$.  Then for each $I$ concordant with $\tau$, we have that $I$ is concordant with at least two non-crossing matchings. 
\end{remark}

Let $\P^{\binom{[2n]}{n-1}}$ be the Pl\"ucker projective space in which $\Gr(n-1,2n)$ is embedded.  Define a matrix $A = (a_{I \sigma})$ with columns labeled by non-crossing partitions and rows labeled by $(n-1)$-element subsets of $[2n]$ by
$$
a_{I \sigma} = \begin{cases} 1 & \mbox{if $\sigma$ is concordant with $I$} \\
0 & \mbox{otherwise.}
\end{cases}
$$
Let $\H' \subset \R^{\binom{[2n]}{n-1}}$ be the column space of the matrix $A$ (that is, the image of the corresponding linear transformation).  Let $\H$ be the image of $\H'$ in $\P^{\binom{[2n]}{n-1}}$.  Define 
$$\X = \X_n := \Gr(n-1,2n) \cap \H \subset \Gr(n-1,2n)$$ 
to be the intersection of the Grassmannian with the linear subspace $\H$.  

\begin{definition}
Let $f \in \Bound(n-1,2n)$.  If $\X \cap \oPi_f$ is non-empty, we define $\X_f := \X \cap \Pi_f$ to be the \defn{electroid variety} indexed by $f$, and $\oX_f:= \X \cap \oPi_f$ to be the \defn{open electroid variety} indexed by $f$. 
\end{definition}

\begin{theorem}\label{thm:stratification}
The intersection $\X \cap \oPi_f$ is non-empty exactly when $f  \in \Elec(n)$.  We have 
$$
\X_{f_\tau} = \bigsqcup_{\tau' \leq \tau} \oX_{f_{\tau'}}.
$$
\end{theorem}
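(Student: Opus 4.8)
The plan is to deduce the displayed closure formula from the single assertion
$$
\X \cap \oPi_f \neq \emptyset \iff f \in \Elec(n),
$$
together with Corollary \ref{cor:order} and Theorem \ref{thm:TNNmain}(1). Indeed $\X_{f_\tau} := \X \cap \Pi_{f_\tau}$, and by Theorem \ref{thm:TNNmain}(1) we have $\Pi_{f_\tau} = \bigsqcup_{f \geq f_\tau}\oPi_f$, so $\X_{f_\tau} = \bigsqcup_{f \geq f_\tau}(\X \cap \oPi_f)$; by the displayed equivalence the only surviving terms have $f = f_{\tau'} \in \Elec(n)$, and by Corollary \ref{cor:order} $f_\tau \leq f_{\tau'}$ iff $\tau' \leq \tau$. (Nonemptiness of each $\oX_{f_{\tau'}}$ for $\tau' \leq \tau$ is exactly the ``$\Leftarrow$'' half.) So everything reduces to proving the displayed equivalence.

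For ``$\Leftarrow$'', i.e.\ $\X \cap \oPi_{f_\tau} \neq \emptyset$ for every $\tau \in P_n$, I would exhibit a point. By Proposition \ref{prop:cactus}(3) choose a critical cactus network $\Gamma$ with medial pairing $\tau$, form the bipartite network $N(\Gamma)$ of Section \ref{sec:Temperley} (replacing it by a reduced representative via local moves, which does not change $M(N(\Gamma))$), and consider $M(N(\Gamma)) \in \Gr(n-1,2n)_{\geq 0}$ (Theorem \ref{thm:matchingplucker}; almost perfect matchings of $N(\Gamma)$ biject with groves of $\Gamma$). By Theorem \ref{thm:concordant} the Pl\"ucker vector of $M(N(\Gamma))$ equals $A\cdot(L_\sigma(\Gamma))_\sigma$, hence lies in $\H'$, so $M(N(\Gamma)) \in \X$. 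It then remains to identify its positroid cell as $\oPi_{f_\tau}$: since $f_{M(N)}=f_N$ for a reduced bipartite graph, this is the combinatorial statement that the trip permutation of $N(\Gamma)$ equals $f_\tau$, which I would check by following a trip in $N(\Gamma)$ and observing that it shadows a medial strand of $\Gamma$, the shift in $f_\tau(i)=g_\tau(i)-1$ accounting for the alternation of turning rules at black and white vertices.

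The hard direction is $X \in \X \cap \oPi_f \Rightarrow f \in \Elec(n)$, and I would attack it by electrical bridge reduction together with downward induction on Bruhat order. The crucial lemma to establish first is that $\H$ --- equivalently $\X$ --- is stable under the one-parameter subgroups of $GL_{2n}$ implementing the addition of a boundary bridge in the Temperley pattern, i.e.\ the electrical analogues of the generators $x_i(a),y_i(a)$ of Section \ref{sec:xy}; this is a direct (if fiddly) computation with the columns of $A$, or equivalently with the linear relations of Theorem \ref{thm:concordant}. Granting this, if $f$ is not the minimal element of $\Bound(n-1,2n)$ one applies Proposition \ref{prop:reduce} with the appropriate electrical generator to send $X$ to $X' \in \X \cap \oPi_{f'}$ with $f' \lessdot f$; iterating, one descends to a point $X_0$ lying in a $0$-dimensional positroid cell contained in $\X$. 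One then classifies these bottom points: a point of $\X$ whose positroid is a (necessarily non-crossing) matching must be one of the $\iota(p_\sigma)$, $\sigma \in \NC_n$, with positroid $\M(\sigma)$, the positroid of $f_{\tau(\sigma)}$. Finally one runs the chain of bridge additions backwards (bridge addition being reversible) and checks inductively, using Theorem \ref{thm:poset} and the explicit description of $\Elec(n)$, that adding an electrical bridge to a point whose cell is $\oPi_{f_{\tau'}}$ lands in $\oPi_{f_{\tau''}}$ for some cover $\tau'' \gtrdot \tau'$ in $P_n$; hence every cell encountered, in particular $\oPi_f$, is an electroid cell. The main obstacle is exactly the interlocking of the two structural inputs here --- stability of the slice $\X$ under the electrical generators, and the precise matching of Bruhat covers among electrical affine permutations with the available bridge-addition moves --- everything else being bookkeeping with results already at hand.
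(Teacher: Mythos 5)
Your reduction of the closure formula to the non-emptiness claim (via Theorem \ref{thm:TNNmain}(1) and Corollary \ref{cor:order}) and your construction of a point of $\X \cap \oPi_{f_\tau}$ as $M(N(\Gamma))$ for a cactus network with medial pairing $\tau$ both match the paper; the latter is Proposition \ref{prop:ftau}. The genuine gap is in the hard direction, and it is structural. Your descent strips bridges off $X \in \X \cap \oPi_f$ using the electrical generators $u_i(-a) = x_i(-a)y_{i-1}(-a)$. Since $\X$ is only known to be stable under the $u_i$, not under $x_i$ or $y_{i-1}$ separately, a single parameter $a$ must simultaneously serve as the reduction parameter of Proposition \ref{prop:reduce} for both Chevalley factors; that is exactly the coincidence $\Delta_{I}(X)/\Delta_{I'}(X) = \Delta_{J}(X)/\Delta_{J'}(X)$ of Proposition \ref{prop:elecreduce}, whose proof (Lemma \ref{lem:CIJ}) runs through $\E(I)\cap\E(f) = \E(J)\cap\E(f)$ and therefore presupposes that $f = f_\tau$ is already electrical. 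For a general $f$ the two ratios need not agree, $u_i(-a)$ need not strictly increase the length of the cell, and the induction does not close: the argument is circular. A second symptom that the descent cannot end where you say it does: the bottom points $\iota(p_\sigma)$ of $\X_{\geq 0}$ do \emph{not} lie in $0$-dimensional positroid cells --- their nonvanishing Pl\"ucker coordinates fill out the whole positroid $\M(\sigma)$, and $(\oPi_{f_{\tau(\sigma)}})_{>0}$ has dimension $(n^2-1)-\ell(f_{\tau(\sigma)}) = n-1 > 0$ for $n \geq 2$; no $0$-dimensional positroid cell meets $\X$ at all, which is consistent with (indeed a consequence of) the theorem you are trying to prove.

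The paper escapes the circularity by running the induction in the opposite direction, on $\ell(f)$ with base case the top cell $f = f_{\tau_\top}$: given $X \in \X\cap\oPi_f$ with $\ell(f)>0$, pick a descent $f(i) > f(i+1)$ and apply a \emph{generic} bridge $u_i(a)$. This unconditionally lands $X' = u_i(a)\cdot X$ in $\X \cap \oPi_{f'}$ with $f' < f$ (Proposition \ref{prop:closed} plus \eqref{eq:perm}), so $\ell(f')<\ell(f)$ and the inductive hypothesis makes $f'$ electrical. Only then is Proposition \ref{prop:elecreduce} invoked, for the already-electrical $f'$, to conclude that undoing the bridge returns $X$ to $\oPi_{s_{i-1}f's_i}$ and hence $f = s_{i-1}f's_i \in \Elec(n)$. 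If you wish to keep a descent-style argument you would first have to prove the ratio coincidence for an arbitrary $f$ with $\X\cap\oPi_f\neq\emptyset$, which is essentially equivalent to the theorem itself.
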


\begin{remark}
The definitions and Theorem \ref{thm:stratification} also make sense and hold over $\C$.
\end{remark}

Let $\X_{\geq 0} = \X \cap \Gr(n-1,2n)_{\geq 0}$, and let $(\oX_{f})_{\geq 0} = \oX_f \cap \Gr(n-1,2n)_{\geq 0}$.

\begin{theorem}\label{thm:realizability}
The construction $\Gamma \mapsto N(\Gamma)$ gives an injection $\iota: E_n \hookrightarrow \Gr(n-1,2n)$ that induces a bijection $E_n \simeq \X_{\geq 0}$.  Thus every point in $\X_{\geq 0}$ is realizable by an electrical network.  Furthermore, 
$\iota(E_\tau) =  (\oX_{f_\tau})_{\geq 0}$, and we have
$$
\overline{\oX_{f_\tau}} = \bigsqcup_{\tau' \leq \tau} \oX_{f_{\tau'}} \qquad \text{ and } \qquad \overline{E_\tau} = \bigsqcup_{\tau' \leq \tau} E_{\tau'}
$$
where the closures are taken in the Hausdorff topologies on $\X_{\geq 0}$ and $E_n$ respectively.
\end{theorem}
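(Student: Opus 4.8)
\emph{Strategy.} The plan is to verify, in order: (i) $\iota$ is well-defined, continuous and injective; (ii) $\iota$ carries $E_n=\bigsqcup_\tau E_\tau$ into $\X_{\geq0}=\bigsqcup_\tau(\oX_{f_\tau})_{\geq0}$ stratum by stratum; (iii) each $\iota|_{E_\tau}$ is onto $(\oX_{f_\tau})_{\geq0}$ (the realizability statement, which is the crux); and (iv) the closure relations, which then follow formally. For (i), that $\iota(\Gamma)=M(N(\Gamma))$ lies in $\X_{\geq0}$ is a repackaging of Theorems~\ref{thm:matchingplucker} and~\ref{thm:concordant}: the latter writes the Pl\"ucker vector $(\Delta_I(N(\Gamma)))$ as $A$ applied to the grove vector $(L_\sigma(\Gamma))$, placing it in $\H$, while the former places it in $\Gr(n-1,2n)_{\geq0}$; and $\L(\Gamma)$ is an electrical-equivalence invariant by Proposition~\ref{prop:Lequiv}, so $\iota$ descends to $E_n$ and is (projective-)linear, hence continuous. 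For injectivity I would show $A$ is injective by exhibiting a unitriangular square submatrix: by \eqref{eq:Ia} every $\sigma$ is concordant with $I_1(\sigma)$, and since $\sigma\mapsto I_1(\sigma)$ is a bijection onto Catalan subsets, the point to check is that $\sigma$ concordant with $I_1(\sigma')$ forces $\sigma$ and $\sigma'$ to be comparable in the $1$-shifted order with $\sigma'$ extremal; then $A$ restricted to the rows $\{I_1(\sigma)\}$ is triangular with $1$'s on the diagonal. Since $E_n$ is compact, $\iota$ is then a homeomorphism onto a closed subset of $\X_{\geq0}$.

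\emph{Step (ii).} If $\Gamma$ is a critical cactus network with $\tau(\Gamma)=\tau$, I would check directly from the Temperley construction of Section~\ref{sec:Temperley} that $N(\Gamma)$ is reduced and that its trip permutation equals $f_\tau$, matching each medial strand of $\Gamma$ (running between two boundary arcs) with the corresponding trip of $N(\Gamma)$ and translating endpoints by the $2i-1\leftrightarrow\bar i$, $2i\leftrightarrow\tilde i$ dictionary. Then $f_{M(N(\Gamma))}=f_{N(\Gamma)}=f_\tau$, so $\iota(\L(\Gamma))\in\oPi_{f_\tau}$, and combining with (i), $\iota(E_\tau)\subseteq(\oX_{f_\tau})_{\geq0}$. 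Using $E_n=\bigsqcup_\tau E_\tau$ (Proposition~\ref{prop:cactusparam}) and $\X_{\geq0}=\bigsqcup_\tau(\oX_{f_\tau})_{\geq0}$ (Theorem~\ref{thm:stratification}), everything reduces to proving $\iota(E_\tau)=(\oX_{f_\tau})_{\geq0}$ for each fixed $\tau$.

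\emph{Step (iii): realizability.} I would argue by induction on the crossing number $c(\tau)$. When $c(\tau)=0$, $\tau$ is a non-crossing matching, $E_\tau$ is the single point $p_\sigma$ of Section~\ref{sec:bottom}, and since $\tau$ is then minimal in $P_n$, Theorem~\ref{thm:stratification} gives $\oX_{f_\tau}=\X_{f_\tau}$, which is closed in $\Gr(n-1,2n)$ and contained in the affine open cell $\oPi_{f_\tau}$, hence finite; a short argument using the triangular structure of $A$ then identifies $(\oX_{f_\tau})_{\geq0}$ with the single point $\iota(p_\sigma)$. When $c(\tau)>0$, the critical cactus network representing $E_\tau$ has an edge in some disk, so by the Curtis--Ingerman--Morrow peeling \cite{CIM} applied to that disk there is an electrical generator $v_j$ and a cover $\tau''\lessdot\tau$ with $c(\tau'')=c(\tau)-1$ for which $(t,\L'')\mapsto v_j(t)\cdot\L''$ parametrizes $E_\tau$ by $\R_{>0}\times E_{\tau''}$. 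On the Grassmannian $v_j(t)$ acts as an explicit composite $V_j(t)$ of the bridge operators $x_i,y_i$ of Lemma~\ref{lem:networkbridge}---this and the fact that $V_j(t)$ preserves $\X$ are the content of Section~\ref{sec:generators}, via the representation of the electrical braid relations in Proposition~\ref{prop:elecbraid}. Given $X\in(\oX_{f_\tau})_{\geq0}$, I would invert $V_j$ at $X$ one bridge at a time using Proposition~\ref{prop:reduce}: at each stage the prescribed ratio is well-defined and positive because the current point is totally nonnegative, and the bounded affine permutation goes up by the prescribed simple transposition, so the output is a point $X''\in\X\cap(\oPi_{f_{\tau''}})_{\geq0}=(\oX_{f_{\tau''}})_{\geq0}$ together with a $t>0$ such that $X=V_j(t)\cdot X''$. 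By the inductive hypothesis $X''=\iota(\L(\Gamma''))$ with $\tau(\Gamma'')=\tau''$, so $X=\iota(\L(v_j(t)\cdot\Gamma''))\in\iota(E_\tau)$. This closes the induction, proving $\iota(E_\tau)=(\oX_{f_\tau})_{\geq0}$ and hence $\iota(E_n)=\X_{\geq0}$.

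\emph{Step (iv) and the main obstacle.} The closure of $(\oPi_{f_\tau})_{>0}$ in $\Gr(n-1,2n)_{\geq0}$ is $\bigsqcup_{f'\geq f_\tau}(\oPi_{f'})_{>0}$ \cite{Pos}, so intersecting with the closed set $\X$, using Theorem~\ref{thm:stratification} and $f_{\tau'}\geq f_\tau\Leftrightarrow\tau'\leq\tau$, gives $\overline{(\oX_{f_\tau})_{\geq0}}\subseteq\bigsqcup_{\tau'\leq\tau}(\oX_{f_{\tau'}})_{\geq0}$; conversely each cover $\tau'\lessdot\tau$ arises by sending one conductance of the critical network for $\tau$ to $0$ or $\infty$, so $E_{\tau'}\subseteq\overline{E_\tau}$, and by continuity of $\iota$ together with the identifications above $(\oX_{f_{\tau'}})_{\geq0}\subseteq\overline{(\oX_{f_\tau})_{\geq0}}$; taking transitive closures gives equality, and transporting through the homeomorphism $\iota$ yields $\overline{E_\tau}=\bigsqcup_{\tau'\leq\tau}E_{\tau'}$. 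I expect the realizability induction of Step (iii) to be the main obstacle: specifically, checking that when $c(\tau)>0$ a suitable electrical generator can always be peeled so as to stay inside $\X$ and land in the expected positroid cell (i.e.\ that the intermediate points satisfy the hypotheses of Proposition~\ref{prop:reduce} and that $V_j$ is built from exactly the right simple transpositions), together with nailing down the zero-dimensional base case. The remaining parts are essentially bookkeeping on top of Theorems~\ref{thm:concordant} and~\ref{thm:stratification}.
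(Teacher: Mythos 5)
Your overall architecture tracks the paper's quite closely: injectivity via the triangular system on Catalan subsets (Proposition~\ref{prop:injection}), containment $\iota(E_n)\subseteq\X_{\geq0}$ via Theorem~\ref{thm:concordant}, realizability by inverting bridge moves with Proposition~\ref{prop:reduce}, and the closure order from the positroid closure relations plus edge-weight degeneration. The genuine gap is in your Step~(iii). Your induction on $c(\tau)$ assumes that whenever $c(\tau)>0$ some electrical generator $v_j$ can be peeled, i.e.\ that $E_\tau=v_j(\R_{>0})\cdot E_{\tau''}$ for some $\tau''\lessdot\tau$; equivalently, that some index $i$ satisfies $i<f_\tau(i)<f_\tau(i+1)$. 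This fails for cactus networks. Take $n=6$, glue $\bar 1$ with $\bar 2$ and $\bar 4$ with $\bar 5$, leave $\bar 3$ and $\bar 6$ isolated, and join the two glued points by a single edge. The medial pairing is $\tau=\{(1,7),(4,10),(2,3),(5,6),(8,9),(11,12)\}$ with $c(\tau)=1$, and one checks directly that for every $i$ either $g_\tau(i)=i+1$ or $g_\tau(i)>g_\tau(i+1)$, so no $i$ satisfies $i<f_\tau(i)<f_\tau(i+1)$: the unique edge joins two non-adjacent glued boundary vertices, hence is neither a boundary spike nor a boundary edge (and with one edge no $Y$-$\Delta$ move is available), so no generator $v_j$ can be inverted. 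The induction on crossing number alone cannot get started here even though $c(\tau)>0$. The existence of a peelable index is only guaranteed when $f_\tau$ has no fixed points; the fixed points of $f_\tau$ (isolated vertices and adjacent gluings) are exactly what obstructs it.

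The paper's proof is organized precisely to handle this: it inducts on $n$ first and then on the codimension of $\oPi_f$, and whenever $f_\tau$ has a fixed point it invokes Proposition~\ref{prop:fixedpoint} to delete a pair of columns and descend to $\X_{n-1}\subset\Gr(n-2,2n-2)$, where (after enough such reductions) the offending edge does become peelable; only for fixed-point-free $f$ does it peel a generator. Your proposal has no analogue of this second reduction, and without it the induction does not close. A secondary issue is your base case: $c(\tau)=0$ comprises all the Catalan-many zero-dimensional cells $p_\sigma$, not one trivial case; the finiteness argument leans on $\oPi_{f_\tau}$ being affine (a fact not established in the paper), and finiteness of $\oX_{f_\tau}$ does not by itself identify $(\oX_{f_\tau})_{\geq0}$ with the single point $\iota(p_\sigma)$ --- one still has to run the triangularity and positroid-support argument in detail. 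In the paper this case never arises separately, because every minimal $\tau$ has fixed points and is absorbed by the reduction on $n$. (Your Step~(ii), verifying $f_{N(\Gamma)}=f_\tau$ by a direct trip-to-medial-strand correspondence rather than the paper's induction in Proposition~\ref{prop:ftau}, is a reasonable alternative, but it faces the same cactus degeneracies and would need the isolated/glued cases treated explicitly.)
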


We shall also call $\X_{\geq 0}$ the \defn{compactified space of electrical networks}, or the \defn{space of cactus networks}.  The proofs of Theorems \ref{thm:stratification} and \ref{thm:realizability} will be given in Sections \ref{sec:grovematch}--\ref{sec:realizability}.  Theorem \ref{thm:stratification} is proved in Sections \ref{sec:reductions} and \ref{sec:stratification}.  Theorem \ref{thm:realizability} is proved in Section \ref{sec:grovematch} ($\iota(E_n) \subseteq \X_{\geq 0}$), Section \ref{sec:strata} ($\iota(E_\tau) =  (\oX_{f_\tau})_{\geq 0}$), Section \ref{sec:ncorder} ($\iota$ is injective), Section \ref{sec:closure} (closure order), and Section \ref{sec:realizability} ($\iota$ is surjective).

\subsection{From groves to matchings}\label{sec:grovematch}
Let $\Gamma$ be an electrical network and $N(\Gamma)$ the corresponding bipartite graph.  Suppose $F \subset \Gamma$ is a grove in $\Gamma$.  Let $\tGamma$ be the planar dual of $\Gamma$, with vertices given by the faces of $\Gamma$, and edges for adjacent faces.  Also $\tGamma$ has boundary vertices $[\tilde n]$ arranged in the same way the vertices of $\tsigma$ are.  The spanning forest $F$ induces a dual spanning forest $\tF$ in $\tGamma$, determined by the condition: an edge $e \in \Gamma$ is present in $F$ if and only if the unique dual edge $\tilde e \in \tGamma$ intersecting $e$ is absent in $\tF$.

Let $\sigma$ be the boundary partition for $F$ and $\tsigma$ the boundary partition of $\tF$.  (Note that $\tsigma$ depends only on $\sigma$.)  As usual we may think of $\tsigma$ as a boundary partition on the even boundary vertices $\{2,4,\ldots,2n\}$ of $N(\Gamma)$.  That is, the boundary vertex $\bar i$ of $\sigma$ is identified with vertex $(2i-1)$ of $N(\Gamma)$, and the boundary vertex $\tilde i$ of $\tsigma$ is identified with vertex $2i$ of $N(\Gamma)$.

A \defn{rooting} $\xi$ of $(\sigma,\tsigma)$ is a choice of a boundary vertex, called the \defn{root}, for each component of $\sigma$, and each component of $\tsigma$.  Given $(F,\tF)$ and a rooting $\xi$ of $(\sigma(F),\tsigma(\tF))$, we define an almost perfect matching $\Pi = \Pi(F,\xi)$ in $N(\Gamma)$. This is a variant of a construction in work of Kenyon, Propp and Wilson \cite{KPW}.  Orient each component of $F$ and of $\tF$ towards the root vertex.  We match each interior white vertex $w_e$ in $F$ or $\tF$ with the black vertex which is at the source of $e$ for this orientation.  Any remaining unmatched interior vertex is matched with the marked boundary vertex.

\begin{lemma}
Let $\Pi = \Pi(F,\xi)$. Then the boundary partition $I(\Pi)$ is equal to the set of vertices in $N$ that are not roots.  Furthermore, $|I(\Pi)| = n-1$.
\end{lemma}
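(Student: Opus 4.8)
The plan is to analyse the matching $\Pi = \Pi(F,\xi)$ directly. I would begin with the structural observation that \emph{every} boundary vertex of $N(\Gamma)$ is white: the odd boundary vertex $2i-1$ is joined only to the black vertex $b_i$, and the even boundary vertex $2i$ is joined only to a black face vertex. Hence $I(\Pi)$ is exactly the set of boundary vertices of $N(\Gamma)$ that are \emph{not} used by $\Pi$, and the whole lemma reduces to the assertion that a boundary vertex of $N(\Gamma)$ is used by $\Pi$ if and only if it is a root. Granting this, the second statement is immediate: the rooting $\xi$ selects one root in each part of $\sigma$ and one root in each part of $\tsigma$, so the number of roots is $|\sigma|+|\tsigma|$, which equals $n+1$ by Lemma~\ref{lem:dualpart}, and therefore $|I(\Pi)| = 2n-(n+1) = n-1$.

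For the reduced statement I would unwind the definition of $\Pi$. Orient every component of $F$, and of $\tF$, towards its root. Since $F$ is a grove, its components are exactly the parts of $\sigma$, each carrying a unique root; in this orientation every non-root vertex $v$ of $F$ has out-degree exactly $1$, its outgoing edge $e(v)$ has source $v$, and $v\mapsto e(v)$ is a bijection from the non-root vertices of $F$ onto the edges of $F$ (each edge, once oriented, has as its source the one of its two endpoints that is farther from the root). By construction $\Pi$ matches $w_{e(v)}$ to the black vertex attached to $v$, and symmetrically for $\tF$ and $\tsigma$. Thus the first matching rule (sending each interior white vertex $w_e$ to the black vertex at the source of $e$ or of $\tilde e$) uses, among the black vertices of $N(\Gamma)$, precisely those attached to non-root vertices of $F$ together with those attached to non-root vertices of $\tF$. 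In particular all black vertices $b_v$ with $v$ an interior vertex of $\Gamma$, and all black face vertices, are used, because every root lies on the boundary. The black vertices left over after this first rule are therefore exactly $b_i$ for $\bar i$ a root of $\sigma$, together with the face vertex attached to $\tilde i$ for $\tilde i$ a root of $\tsigma$; each of these has a \emph{unique} boundary neighbour in $N(\Gamma)$ --- namely $2i-1$, respectively $2i$ --- so the clean-up rule is forced to match it to that boundary vertex. This simultaneously verifies that $\Pi$ is a well-defined almost perfect matching and shows that the boundary vertices used by $\Pi$ are exactly the roots, which completes the proof.

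The one genuinely fiddly point is the dual side: one must know that for $\tF$ and the dual graph $\tGamma$ the black face vertices of $N(\Gamma)$ play exactly the role that the vertices $b_v$ and $b_i$ play for $F$ and $\Gamma$ --- in particular that even boundary vertices of $N(\Gamma)$ are adjacent only to face vertices, and that the face vertex attached to a root $\tilde i$ of $\tsigma$ has $2i$ as its unique boundary neighbour. Here I would invoke the symmetry of the construction $\Gamma\mapsto N(\Gamma)$ under planar duality $\Gamma\leftrightarrow\tGamma$, which interchanges interior vertices with interior faces and odd with even boundary vertices while fixing each $w_e$ (since $e\leftrightarrow\tilde e$ is a bijection on edges); under this symmetry the argument of the second paragraph applied to $F$ carries over to $\tF$. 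A minor point, consistent with everything above, is that a singleton component of $F$ or $\tF$ has its lone vertex as its own root, and so contributes a left-over black vertex exactly as described.
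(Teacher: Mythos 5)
Your proof is correct and follows essentially the same route as the paper's, which simply asserts that all boundary vertices are white, that the boundary vertices used by $\Pi(F,\xi)$ are exactly the roots, and then counts via Lemma~\ref{lem:dualpart}; you have merely supplied the orientation/out-degree argument that the paper leaves implicit. (One tiny caveat: a black face vertex can have more than one boundary neighbour when a single face of $\Gamma$ contains several of the points $\tilde i$, so the clean-up step is forced not by uniqueness of the neighbour but by the rule's explicit instruction to match the leftover vertex to the root; this does not affect your conclusion.)
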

\begin{proof}
Recall that by convention each boundary vertex is joined to a black interior vertex, so that the boundary vertices should be considered to be white.  The boundary vertices that are used in the matching $\Pi(F,\xi)$ are exactly the boundary vertices that are roots.  Since each boundary vertex is white, by definition $I(\Pi)$ consists of the boundary vertices that are not roots.  The statement $|I(\Pi)| = n-1$ follows from Lemma \ref{lem:dualpart}.
\end{proof}

\begin{center}
\begin{tikzpicture}
\tikzset{->-/.style={decoration={
  markings,
  mark=at position .5 with {\arrow{>}}},postaction={decorate}}}
\draw (0,0) circle (3cm);
\coordinate (a) at (135:3);
\coordinate (b) at (45:3);
\coordinate (c) at (-45:3);
\coordinate (d) at (-135:3);
\coordinate (e) at (135:1.3);
\coordinate (f) at (45:1.3);
\coordinate (g) at (-45:1.3);
\coordinate (h) at (-135:1.3);
\draw[thick] (a) -- (e) -- (f) -- (b);
\draw[thick] (d) -- (h) -- (g) -- (c);
\draw[thick] (e) -- (h);
\draw[thick] (f) -- (g);
\coordinate (F) at (90:3);
\coordinate (G) at (0:3);
\coordinate (H) at (-90:3);
\coordinate (I) at (180:3);
\coordinate (A) at (90:2.3);
\coordinate (B) at (0:2.3);
\coordinate (C) at (-90:2.3);
\coordinate (D) at (180:2.3);
\coordinate (E) at (0,0);
\draw[dashed] (F) -- (A) -- (E) -- (B) -- (G);
\draw[dashed] (I) -- (D) -- (E) -- (C) -- (H);
\draw[dashed] (C) -- (B) --(A) -- (D) -- (C);
\node at (0,-3.5) {$\Gamma$ in thick lines and $\tGamma$ in dashed lines};

\begin{scope}[shift={(8,0)}]
\draw (0,0) circle (3cm);
\coordinate (a) at (135:3);
\coordinate (b) at (45:3);
\coordinate (c) at (-45:3);
\coordinate (d) at (-135:3);
\coordinate (e) at (135:1.3);
\coordinate (f) at (45:1.3);
\coordinate (g) at (-45:1.3);
\coordinate (h) at (-135:1.3);
\coordinate (F) at (90:3);
\coordinate (G) at (0:3);
\coordinate (H) at (-90:3);
\coordinate (I) at (180:3);
\coordinate (A) at (90:2.3);
\coordinate (B) at (0:2.3);
\coordinate (C) at (-90:2.3);
\coordinate (D) at (180:2.3);
\coordinate (E) at (0,0);
\filldraw[black] (b) circle (0.05cm);
\filldraw[black] (d) circle (0.05cm);
\filldraw[black] (G) circle (0.05cm);
\filldraw[black] (I) circle (0.05cm);
\draw[thick,->-] (a) -- (e);
\draw[thick,->-] (f) -- (e);
\draw[thick,->-] (e) -- (h);
\draw[thick,->-] (h) -- (d);
\draw[thick,->-] (g) -- (c);
\draw[dashed,->-] (F) -- (A);
\draw[dashed,->-] (A) -- (B);
\draw[dashed,->-] (B) -- (G);
\draw[dashed,->-] (D) -- (I);
\draw[dashed,->-] (H) -- (C);
\draw[dashed,->-] (C) -- (E);
\draw[dashed,->-] (E) -- (B);
\node at (0,-3.5) {$F$ and a dual forest $\tF$ with roots chosen};
\end{scope}
\end{tikzpicture}
\end{center}

\begin{center}
\begin{tikzpicture}
\draw (0,0) circle (3cm);
\coordinate (a) at (135:3);
\coordinate (b) at (45:3);
\coordinate (c) at (-45:3);
\coordinate (d) at (-135:3);
\coordinate (e) at (135:1.3);
\coordinate (f) at (45:1.3);
\coordinate (g) at (-45:1.3);
\coordinate (h) at (-135:1.3);

\coordinate (aa) at (135:2.7);
\coordinate (bb) at (45:2.7);
\coordinate (cc) at (-45:2.7);
\coordinate (dd) at (-135:2.7);
\coordinate (ae) at ($(aa)!0.5!(e)$);
\coordinate (bf) at ($(bb)!0.5!(f)$);
\coordinate (cg) at ($(cc)!0.5!(g)$);
\coordinate (dh) at ($(dd)!0.5!(h)$);
\coordinate (ef) at ($(e)!0.5!(f)$);
\coordinate (eh) at ($(e)!0.5!(h)$);
\coordinate (fg) at ($(f)!0.5!(g)$);
\coordinate (gh) at ($(g)!0.5!(h)$);

\draw (a) -- (e) -- (f) -- (b);
\draw (d) -- (h) -- (g) -- (c);
\draw (e) -- (h);
\draw (f) -- (g);
\coordinate (F) at (90:3);
\coordinate (G) at (0:3);
\coordinate (H) at (-90:3);
\coordinate (I) at (180:3);
\coordinate (A) at (90:2.3);
\coordinate (B) at (0:2.3);
\coordinate (C) at (-90:2.3);
\coordinate (D) at (180:2.3);
\coordinate (E) at (0,0);
\draw[dashed] (F) -- (A) -- (E) -- (B) -- (G);
\draw[dashed] (I) -- (D) -- (E) -- (C) -- (H);
\draw[dashed] (C) -- (cg) -- (B) -- (bf) -- (A) -- (ae) -- (D) -- (dh) -- (C);

\filldraw[black] (aa) circle (0.1cm);
\filldraw[black] (bb) circle (0.1cm);
\filldraw[black] (cc) circle (0.1cm);
\filldraw[black] (dd) circle (0.1cm);
\filldraw[black] (e) circle (0.1cm);
\filldraw[black] (f) circle (0.1cm);
\filldraw[black] (g) circle (0.1cm);
\filldraw[black]  (h) circle (0.1cm);
\filldraw[black] (A) circle (0.1cm);
\filldraw[black] (B) circle (0.1cm);
\filldraw[black] (C) circle (0.1cm);
\filldraw[black] (D) circle (0.1cm);
\filldraw[black] (aa) circle (0.1cm);

\filldraw[white] (ae) circle (0.1cm);
\draw (ae) circle (0.1cm);
\filldraw[white] (bf) circle (0.1cm);
\draw (bf) circle (0.1cm);
\filldraw[white] (cg) circle (0.1cm);
\draw (cg) circle (0.1cm);
\filldraw[white] (dh) circle (0.1cm);
\draw (dh) circle (0.1cm);
\filldraw[white] (ef) circle (0.1cm);
\draw (ef) circle (0.1cm);
\filldraw[white] (eh) circle (0.1cm);
\draw (eh) circle (0.1cm);
\filldraw[white] (fg) circle (0.1cm);
\draw (fg) circle (0.1cm);
\filldraw[white] (gh) circle (0.1cm);
\draw (gh) circle (0.1cm);

\node at (0,-3.5) {The planar bipartite graph $N(\Gamma)$};

\begin{scope}[shift={(8,0)}]
\draw (0,0) circle (3cm);
\coordinate (a) at (135:3);
\coordinate (b) at (45:3);
\coordinate (c) at (-45:3);
\coordinate (d) at (-135:3);
\coordinate (e) at (135:1.3);
\coordinate (f) at (45:1.3);
\coordinate (g) at (-45:1.3);
\coordinate (h) at (-135:1.3);

\coordinate (aa) at (135:2.7);
\coordinate (bb) at (45:2.7);
\coordinate (cc) at (-45:2.7);
\coordinate (dd) at (-135:2.7);
\coordinate (ae) at ($(aa)!0.5!(e)$);
\coordinate (bf) at ($(bb)!0.5!(f)$);
\coordinate (cg) at ($(cc)!0.5!(g)$);
\coordinate (dh) at ($(dd)!0.5!(h)$);
\coordinate (ef) at ($(e)!0.5!(f)$);
\coordinate (eh) at ($(e)!0.5!(h)$);
\coordinate (fg) at ($(f)!0.5!(g)$);
\coordinate (gh) at ($(g)!0.5!(h)$);

\draw (a) -- (e) -- (f) -- (b);
\draw (d) -- (h) -- (g) -- (c);
\draw (e) -- (h);
\draw (f) -- (g);
\coordinate (F) at (90:3);
\coordinate (G) at (0:3);
\coordinate (H) at (-90:3);
\coordinate (I) at (180:3);
\coordinate (A) at (90:2.3);
\coordinate (B) at (0:2.3);
\coordinate (C) at (-90:2.3);
\coordinate (D) at (180:2.3);
\coordinate (E) at (0,0);
\draw[dashed] (F) -- (A) -- (E) -- (B) -- (G);
\draw[dashed] (I) -- (D) -- (E) -- (C) -- (H);
\draw[dashed] (C) -- (cg) -- (B) -- (bf) -- (A) -- (ae) -- (D) -- (dh) -- (C);

\draw[line width=0.1cm] (aa) -- (ae);
\draw[line width=0.1cm] (f) -- (ef);
\draw[line width=0.1cm] (e) -- (eh);
\draw[line width=0.1cm] (h) -- (dh);
\draw[line width=0.1cm] (dd) -- (d);
\draw[line width=0.1cm] (g) -- (cg);
\draw[line width=0.1cm] (cc) -- (c);
\draw[line width=0.1cm] (bb) -- (b);
\draw[line width=0.1cm] (A) -- (bf);
\draw[line width=0.1cm] (B) -- (G);
\draw[line width=0.1cm] (E) -- (fg);
\draw[line width=0.1cm] (C) -- (gh);
\draw[line width=0.1cm] (D) -- (I);

\filldraw[black] (aa) circle (0.1cm);
\filldraw[black] (bb) circle (0.1cm);
\filldraw[black] (cc) circle (0.1cm);
\filldraw[black] (dd) circle (0.1cm);
\filldraw[black] (e) circle (0.1cm);
\filldraw[black] (f) circle (0.1cm);
\filldraw[black] (g) circle (0.1cm);
\filldraw[black]  (h) circle (0.1cm);
\filldraw[black] (A) circle (0.1cm);
\filldraw[black] (B) circle (0.1cm);
\filldraw[black] (C) circle (0.1cm);
\filldraw[black] (D) circle (0.1cm);
\filldraw[black] (aa) circle (0.1cm);

\filldraw[white] (ae) circle (0.1cm);
\draw (ae) circle (0.1cm);
\filldraw[white] (bf) circle (0.1cm);
\draw (bf) circle (0.1cm);
\filldraw[white] (cg) circle (0.1cm);
\draw (cg) circle (0.1cm);
\filldraw[white] (dh) circle (0.1cm);
\draw (dh) circle (0.1cm);
\filldraw[white] (ef) circle (0.1cm);
\draw (ef) circle (0.1cm);
\filldraw[white] (eh) circle (0.1cm);
\draw (eh) circle (0.1cm);
\filldraw[white] (fg) circle (0.1cm);
\draw (fg) circle (0.1cm);
\filldraw[white] (gh) circle (0.1cm);
\draw (gh) circle (0.1cm);
\node at (0,-3.5) {The almost perfect matching $\Pi(F,\xi)$};
\end{scope}

\end{tikzpicture}
\end{center}

\begin{theorem}\label{thm:concordant}
We have a bijection between matchings in $N(\Pi)$ with boundary partition $I$ and groves $F$ in $\Gamma$ with boundary partition $\sigma$ concordant with $I$.  Therefore $M(N(\Pi)) \in \Gr(n-1,2n)_{\geq 0}$ and for $I \in \binom{[2n]}{n-1}$
$$
\Delta_I(N(\Gamma)) = \sum_{\sigma} a_{I \sigma} L_{\sigma}(\Gamma).
$$
 In other words, $M(N(\Gamma)) \in \X_{\geq 0}$. 
\end{theorem}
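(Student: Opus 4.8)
The plan is to establish the bijection asserted in the first sentence and then read off everything else. One direction is already constructed above: from a grove $F\subset\Gamma$, its dual forest $\tF\subset\tGamma$, and a rooting $\xi$ of $(\sigma(F),\tsigma(F))$, the construction preceding the theorem produces an almost perfect matching $\Pi(F,\xi)$ of $N(\Gamma)$, and by the Lemma just proved $I(\Pi(F,\xi))$ is the complement in $[2n]$ of the set of roots of $\xi$. So I must show: (i) $(F,\xi)\mapsto \Pi(F,\xi)$ is a bijection onto the set of all almost perfect matchings of $N(\Gamma)$; (ii) $\wt(\Pi(F,\xi))=\wt(F)$; and (iii) for fixed $I\in\binom{[2n]}{n-1}$, the matchings with $I(\Pi)=I$ correspond exactly to groves $F$ with $\sigma(F)$ concordant with $I$.

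For (iii): a rooting selects one vertex in each part of $\sigma(F)$ and one in each part of $\tsigma(F)$ (so $|\sigma(F)|+|\tsigma(F)|=n+1$ roots by Lemma \ref{lem:dualpart}, matching $|[2n]\setminus I|=n+1$), and $I(\Pi(F,\xi))=I$ forces each part of $\sigma(F)$ and each part of $\tsigma(F)$ to contain exactly one vertex outside $I$, namely its root. Since $\tsigma(F)$ is determined by $\sigma(F)$, this says precisely that $\sigma(F)$ is concordant with $I$; and when it holds the rooting is uniquely determined by $I$ (the root of a part is its unique element not in $I$). Hence, granting (i), the map $\{\Pi:I(\Pi)=I\}\to\{F:\sigma(F)\text{ concordant with }I\}$ extracting the grove is a well-defined bijection. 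For (ii): the weights on $N(\Gamma)$ are assigned so that every edge other than the white-to-black edge recording an edge $e\in F$ matched toward its source has weight $1$, while that edge carries $w(e)$; comparing with the explicit description of $\Pi(F,\xi)$ gives $\wt(\Pi(F,\xi))=\prod_{e\in F}w(e)=\wt(F)$. This is a direct case check (interior vertices $b_v$, face vertices $b_F$, the vertices $b_i$, and the odd/even boundary vertices of $N$) that I would carry out once.

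The main obstacle is (i): that every almost perfect matching $\Pi$ of $N(\Gamma)$ equals $\Pi(F,\xi)$ for a unique $(F,\xi)$. This is the generalized Temperley bijection of Kenyon--Propp--Wilson, but here adapted to the boundary. One recovers $F$ from $\Pi$ by putting $e\in F$ exactly when $w_e$ is matched to a black vertex at an endpoint of $e$ (rather than to a face vertex), recovers $\tF$ dually, and must then verify that the resulting $(F,\tF)$ is a genuine complementary pair of a grove and its dual forest, with the roots recovered as the black vertices $b_v,b_i,b_F$ that are matched to boundary vertices of $N$. The key structural inputs are: complementation $F\leftrightarrow\tF$ is a bijection between groves of $\Gamma$ and dual forests of $\tGamma$; a matching-induced acyclic structure on a forest corresponds to a choice of root-ward orientation in each component; and the degree-one boundary vertices of $N$ are consumed precisely by the root black vertices.

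Granting (i)--(iii) we conclude
$$
\Delta_I(N(\Gamma))=\sum_{\Pi:\,I(\Pi)=I}\wt(\Pi)=\sum_{\substack{\sigma\in\NC_n\\ \sigma\text{ concordant with }I}}\ \sum_{F:\,\sigma(F)=\sigma}\wt(F)=\sum_{\sigma}a_{I\sigma}L_\sigma(\Gamma).
$$
If $\Gamma$ admits no grove the statement is vacuous; otherwise $N(\Gamma)$ admits an almost perfect matching, so by Theorem \ref{thm:matchingplucker} the point $M(N(\Gamma))$ lies in $\Gr(n-1,2n)_{\geq 0}$. Moreover the displayed identity says the Pl\"ucker vector of $M(N(\Gamma))$ is $A$ applied to the vector $(L_\sigma(\Gamma))_{\sigma\in\NC_n}$, hence lies in the column space $\H'$ of $A$; therefore $M(N(\Gamma))\in\Gr(n-1,2n)\cap\H=\X$, and combining with nonnegativity, $M(N(\Gamma))\in\X_{\geq 0}$.
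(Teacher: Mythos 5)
Your overall strategy is the same as the paper's: construct the inverse map by reading off $F$ and $\tF$ from which black vertex each $w_e$ is matched to, check that boundary data corresponds to concordance, and then sum weights. Your step (iii) is done cleanly and correctly (the root of each part is its unique element outside $I$, with the count $|\sigma|+|\tsigma|=n+1=|[2n]\setminus I|$ coming from Lemma \ref{lem:dualpart}), and your weight bookkeeping in (ii) is at least as explicit as the paper's.

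However, there is a genuine gap at exactly the point you yourself flag as ``the main obstacle.'' Having defined $e\in F$ when $w_e$ is matched to a vertex-type black vertex and $\tilde e\in\tF$ otherwise, you still owe an argument that the resulting $F$ and $\tF$ are acyclic (equivalently, that $F$ is a grove and $\tF$ its dual forest) for an \emph{arbitrary} almost perfect matching $\Pi$. None of the three ``key structural inputs'' you list supplies this: the statement that complementation is a bijection between groves and dual forests presupposes you already have a grove, and the correspondence between matchings on a forest and root-ward orientations again assumes acyclicity. Without this step the inverse map is not known to land in the set of groves, and the bijection --- hence the identity $\Delta_I(N(\Gamma))=\sum_\sigma a_{I\sigma}L_\sigma(\Gamma)$ --- is unproved. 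The paper closes this with a parity argument: if $F$ contained a cycle bounding a region $R$, then the edges of $\Pi$ along that cycle match its white and black vertices among themselves, so every vertex of $N(\Gamma)$ strictly inside $R$ would have to be matched to another vertex strictly inside $R$; but an induction on the number of edges and vertices of $\Gamma$ inside $R$ shows that $N(\Gamma)$ has an \emph{odd} number of vertices strictly inside $R$, which cannot be perfectly matched --- a contradiction. Some argument of this kind (or an explicit citation of the boundary-adapted Temperley bijection with the hypotheses verified) is needed to complete your proof.
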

\begin{proof}
We describe the map inverse to $(F,\xi) \mapsto \Pi(F,\xi)$.  Fix $I \in \binom{[2n]}{n-1}$ and let $\Pi$ be a matching with boundary partition $I$.  We construct $F \subset \Gamma$ and $\tF \subset \tGamma$ as follows: if the interior vertex $w_e$ is matched to a black vertex corresponding to a vertex of $\Gamma$, then we set $e \in F$, otherwise we set $e' \in \tF$, where $e' \in \tGamma$ is the edge dual to $e$.  Note that all interior white vertices in $N(\Gamma)$ are matched with interior black vertices.  The remaining edges of the matching $\Pi$ involve boundary vertices, and that determines a the roots $\xi$ (which is the same information as the subset $I$).  It only remains to argue that $F$ and $\tF$ defined in this way are trees.  If not, then let us suppose $F$ has a cycle, and let $R$ be the region inside this cycle.  It follows by an induction on the total number of edges and vertices inside $R$ that $N(\Gamma)$ has an odd number of vertices strictly inside $R$.  It is not possible for these vertices to be perfectly matched with each other, and hence this situation can never arise starting from an almost perfect matching $\Pi$.

This gives a bijection
$$
\{\Pi \mid I(\Pi) = I\} \leftrightarrow \{(\sigma(F),\tsigma(\tF)) \text{ concordant  with } I\}.
$$
The stated identity follows by taking weight generating functions.
\end{proof}


\begin{corollary}\label{cor:equiv}
Suppose $\Gamma$ and $\Gamma'$ are electrically equivalent cactus networks.  Then $N(\Gamma)$ and $N(\Gamma')$ are equivalent via local moves (including gauge equivalences) of planar bipartite graphs.
\end{corollary}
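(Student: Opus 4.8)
\textit{Proof plan.} The plan is to reduce the statement to Theorem \ref{thm:Pos} by showing that $N(\Gamma)$ and $N(\Gamma')$ represent the \emph{same} point of $\Gr(n-1,2n)$, i.e. $M(N(\Gamma)) = M(N(\Gamma'))$. Granting this, since $N(\Gamma)$ and $N(\Gamma')$ are honest planar bipartite networks in the sense of Section \ref{sec:TNN} (by construction all boundary vertices have degree one and no edge joins two boundary vertices), and since almost perfect matchings exist for both — Theorem \ref{thm:concordant} already records that $M(N(\Gamma)) \in \X_{\geq 0}$, so the dimer partition functions are not all zero — Theorem \ref{thm:Pos} applies verbatim and produces a sequence of local moves (M1), (M2), (R1)--(R3) and gauge equivalences relating $N(\Gamma)$ to $N(\Gamma')$.

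To obtain $M(N(\Gamma)) = M(N(\Gamma'))$, I would combine Proposition \ref{prop:Lequiv} with Theorem \ref{thm:concordant}. Electrical equivalence of the cactus networks $\Gamma$ and $\Gamma'$ gives, by Proposition \ref{prop:Lequiv}, that $\L(\Gamma) = \L(\Gamma')$ in $\P^{\NC_n}$; equivalently the grove vectors $(L_\sigma(\Gamma))_{\sigma \in \NC_n}$ and $(L_\sigma(\Gamma'))_{\sigma \in \NC_n}$ are proportional, say $L_\sigma(\Gamma') = \lambda\, L_\sigma(\Gamma)$ for all $\sigma$, where $\lambda > 0$ since the coordinates are nonnegative and not all zero. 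By Theorem \ref{thm:concordant}, for every $I \in \binom{[2n]}{n-1}$ we have $\Delta_I(N(\Gamma)) = \sum_{\sigma} a_{I\sigma} L_\sigma(\Gamma)$ (and likewise for $\Gamma'$), so $\Delta_I(N(\Gamma')) = \lambda\, \Delta_I(N(\Gamma))$ for all $I$. Hence the two Pl\"ucker vectors agree in $\P^{\binom{[2n]}{n-1}}$, so $M(N(\Gamma)) = M(N(\Gamma'))$ as points of $\Gr(n-1,2n)$.

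This argument is essentially a bookkeeping composition of earlier results, so I do not expect a serious obstacle; the only point deserving care is to apply the linear formula of Theorem \ref{thm:concordant} to the natural affine representatives $(L_\sigma)$ and $(\Delta_I)$ and to carry the common scalar $\lambda$ through, so that the conclusion is an equality of projective points rather than merely equality up to two unrelated scalars. (One could instead give a hands-on proof, checking directly that each generating electrical equivalence — series-parallel reduction, loop removal, pendant removal, and the $Y-\Delta$ transformation — transforms $N(\Gamma)$ by an explicit short sequence of bipartite local moves; but the proof via Theorem \ref{thm:Pos} is shorter and is the one I would present.)
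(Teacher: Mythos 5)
Your proposal is correct and follows exactly the paper's own argument: Proposition \ref{prop:Lequiv} gives $\L(\Gamma)=\L(\Gamma')$, Theorem \ref{thm:concordant} then gives $M(N(\Gamma))=M(N(\Gamma'))$, and Theorem \ref{thm:Pos} concludes. The extra care you take with the projective scalar and the hypotheses of Theorem \ref{thm:Pos} is sound but not a departure from the paper's route.
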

\begin{proof}
Suppose $\Gamma$ and $\Gamma'$ are electrically equivalent.  Then by Proposition \ref{prop:Lequiv}, we have $\L(\Gamma) = \L(\Gamma')$.  By Theorem \ref{thm:concordant}, we have $M(N(\Gamma)) = M(N(\Gamma'))$.  By Theorem \ref{thm:Pos}, $N(\Gamma)$ and $N(\Gamma')$ are equivalent via local moves.
\end{proof}

The claim of Corollary \ref{cor:equiv} could also be checked case-by-case (see Goncharov and Kenyon \cite{GK} for a discussion of this check in the absence of boundary vertices).   


\subsection{Electrical generators acting on the Grassmannian}
\label{sec:generators}
The operation $\Gamma \mapsto N(\Gamma)$ is also compatible with the operations of adding boundary spikes and boundary edges.  Note that $x_i(a) y_{i-1}(a) = y_{i-1}(a) x_i(a)$ as matrices.

\begin{proposition}\label{prop:elecaction}
The planar bipartite graphs $N(v_i(a) \cdot \Gamma)$ and $(x_i(a) y_{i-1}(a)) \cdot N(\Gamma) = (y_{i-1}(a) x_i(a)) \cdot N(\Gamma)$ are equivalent up to valent two vertex removals or additions.
\end{proposition}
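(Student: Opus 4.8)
The plan is to reduce the statement to a finite, explicit comparison of local pictures. Both operations $\Gamma\mapsto v_i(a)\cdot\Gamma$ and $N\mapsto x_i(a)y_{i-1}(a)\cdot N$ are supported near the boundary: by definition $v_i(a)$ changes $\Gamma$ only in a disk neighbourhood of the boundary arc carrying $\bar k$ (when $i=2k-1$) or carrying the gap between $\bar k$ and $\overline{k+1}$ (when $i=2k$), and the construction $\Gamma\mapsto N(\Gamma)$ of Section \ref{sec:Temperley} is manifestly local, so $N(v_i(a)\cdot\Gamma)$ differs from $N(\Gamma)$ only within a bounded neighbourhood of the boundary vertices $i-1,i,i+1$ of $N(\Gamma)$. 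On the other side, by Lemma \ref{lem:networkbridge} the graph $x_i(a)y_{i-1}(a)\cdot N(\Gamma)$ is obtained from $N(\Gamma)$ by inserting two bridges, one between $i-1$ and $i$ and one between $i$ and $i+1$, which again only touches a neighbourhood of $i-1,i,i+1$. Hence it suffices to check that the two modifications agree up to valent two vertex removals/additions, and this check can be carried out on the ``building block'' networks --- an otherwise edgeless network to which one spike or one boundary edge has been attached --- with the rest of $N(\Gamma)$ carried along verbatim.

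First I would do the boundary-edge case $i=2k$. Adding the edge $e$ from $\bar k$ to $\overline{k+1}$ of weight $a$ creates, in $N(\cdot)$, a white vertex $w_e$ joined to $b_k$, to $b_{k+1}$, to the face vertex $b_F$ of the face that previously contained the even boundary vertex $2k$, and to the face vertex $b_{F'}$ of the small new face cut off by $e$; here $b_{F'}$ is joined only to $w_e$ and to the boundary vertex $2k$, so it has degree two and is removed by a valent two vertex removal. After this cleanup (and the degree-two insertions one is always free to perform at boundary vertices to normalise their colours) one reads off precisely $N(\Gamma)$ with a weight-$a$ bridge between $2k-1$ and $2k$ and a weight-$a$ bridge between $2k$ and $2k+1$, carrying exactly the black/white assignments prescribed by Lemma \ref{lem:networkbridge} for $y_{2k-1}(a)$ and $x_{2k}(a)$. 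The boundary-spike case $i=2k-1$ is analogous: $v_{2k-1}(a)$ turns $\bar k$ into an interior vertex $u$ attached by a single new edge to the new boundary vertex, and in $N(\cdot)$ the white vertex on that edge together with $b_u$ forms a degree-two chain which, after valent two vertex removals, produces $N(\Gamma)$ with bridges between $2k-2$ and $2k-1$ and between $2k-1$ and $2k$, matching $y_{2k-2}(a)x_{2k-1}(a)$. That the two products $x_i(a)y_{i-1}(a)$ and $y_{i-1}(a)x_i(a)$ give the same graph is the graph-level shadow of the commuting of the two elementary matrices: the two bridges meet the common vertex $i$ only in disjoint ways, so they may be inserted in either order.

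The hard part will not be conceptual but bookkeeping: one must track the colours of all vertices created near the boundary, decide precisely which degree-two vertices to contract and in which direction, and verify that the edge weights match on the nose --- in particular that the reciprocal convention (the spike has conductance $1/a$ while the boundary edge has conductance $a$, which is exactly what makes the uniform notation $v_i$ legitimate) combines with the weight-$1$ edges produced by rules (1) and (3) of the construction to leave a bridge of weight precisely $a$. A secondary point needing care is the cyclic case $i\equiv 0,1\pmod{2n}$, where the relevant bridge uses the twisted generator $x_{2n}$ or $y_{2n}$ defined through the sign-twisted rotation $\chi$; but since $N(\Gamma)$ is a genuine planar bipartite graph and the sign $(-1)^{k-1}$ in $\chi$ is already built into the meaning of $x_i(a)\cdot N$ in Lemma \ref{lem:networkbridge}, no new sign issue arises. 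Once these details are arranged the proposition follows, and it is this picture that underlies the reinterpretation of the electrical braid relations in Proposition \ref{prop:elecbraid}.
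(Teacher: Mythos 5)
Your argument is exactly the paper's: the entire published proof of Proposition \ref{prop:elecaction} is the single sentence ``Checked directly using the definition of $N(\Gamma)$,'' and your localization to a neighbourhood of the boundary vertices $i-1,i,i+1$, the case split into boundary edge ($i=2k$) versus boundary spike ($i=2k-1$), and the comparison with the bridge pictures of Lemma \ref{lem:networkbridge} after valent-two-vertex removals is a correct elaboration of that direct check. The bookkeeping you defer (vertex colours, the $1/a$ versus $a$ weight convention, and the cyclic case) is genuinely routine and is likewise suppressed in the paper.
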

\begin{proof}
Checked directly using the definition of $N(\Gamma)$.
\end{proof}

This suggests the following simple representation of the electrical braid relations studied in \cite{LP} (see Theorem \ref{thm:LP}).  This result was obtained jointly with Alex Postnikov.

\begin{proposition}\label{prop:elecbraid}
For $1 \leq i \leq 2n$, let $u_i(a) = x_i(a) y_{i-1}(a)=y_{i-1}(a) x_i(a) \in GL_{2n}$.  Then $u_i(a)$ satisfy the relations
\begin{enumerate}
\item
$u_i(a)u_i(b) = u_i(a+b)$
\item
$u_i(a) u_j(b) = u_j(b) u_i(a)$ for $|i-j|\geq 2$
\item
$$
u_i(a) u_{i \pm 1}(b) u_i(c) = u_{i \pm 1}({bc}/({a+c+abc})) u_i(a+c+abc) u_{i \pm 1}({ab}/({a+c+abc})).
$$
\end{enumerate}
\end{proposition}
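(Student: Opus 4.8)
The three asserted identities are identities of matrices in $GL_{2n}$ (equivalently, of operators on $\Gr(n-1,2n)$ acting by right multiplication), so I would simply compute. The first step is to identify $u_i(a)$ explicitly. Since $x_i(a)=I+aE_{i,i+1}$ and $y_{i-1}(a)=I+aE_{i,i-1}$ (matrix units; for $i=1$ or $i=2n$ the two ``seam'' entries, in positions $(1,2n)$ and $(2n,1)$, carry the sign $(-1)^{k-1}$, $k=n-1$, dictated by the signed cyclic generators $\chi x_1(a)\chi^{-1}$, $\chi y_1(a)\chi^{-1}$), and since $E_{i,i+1}E_{i,i-1}=E_{i,i-1}E_{i,i+1}=0$, the two factors commute and
\[
u_i(a)=I+aM_i,\qquad M_i:=E_{i,i-1}+E_{i,i+1}
\]
(with the sign understood on the seam entries). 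A direct computation with matrix units then yields the multiplication table I will need: $M_i^2=0$; $M_iM_j=M_jM_i=0$ whenever $i,j$ are at cyclic distance $\ge 2$ in $\Z/2n\Z$; and for $j=i\pm1$ the products $M_iM_j$ and $M_jM_i$ satisfy $(M_iM_j)M_i=M_i$, $(M_jM_i)M_j=M_j$, $(M_iM_j)M_j=0=(M_jM_i)M_i$, together with the linear independence of $\{M_i,M_j,M_iM_j,M_jM_i\}$ (they live on rows $i$ and $j$ with disjoint column supports). The sign $(-1)^{k-1}$ always rides along consistently in these products (it appears squared, or not at all), so the table is unaffected by the seam; alternatively, every relation involving a seam index follows from a bulk one by conjugating with $\chi$, which cyclically permutes the $u_i$.

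\textbf{Relations (1) and (2).} These are immediate. From $M_i^2=0$ we get $u_i(a)u_i(b)=(I+aM_i)(I+bM_i)=I+(a+b)M_i=u_i(a+b)$. From $M_iM_j=M_jM_i=0$ for $|i-j|\ge2$ we get $u_i(a)u_j(b)=I+aM_i+bM_j=u_j(b)u_i(a)$.

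\textbf{Relation (3).} Here one just expands both sides using the table. For the left-hand side,
\[
u_i(a)u_{i\pm1}(b)u_i(c)=I+(a+c+abc)\,M_i+b\,M_{i\pm1}+ab\,M_iM_{i\pm1}+bc\,M_{i\pm1}M_i,
\]
and writing $\alpha:=a+c+abc$, $\beta:=bc/\alpha$, $\gamma:=ab/\alpha$, the right-hand side expands to
\[
u_{i\pm1}(\beta)\,u_i(\alpha)\,u_{i\pm1}(\gamma)=I+\alpha\,M_i+(\beta+\gamma+\alpha\beta\gamma)\,M_{i\pm1}+\alpha\gamma\,M_iM_{i\pm1}+\alpha\beta\,M_{i\pm1}M_i.
\]
By linear independence of $M_i,M_{i\pm1},M_iM_{i\pm1},M_{i\pm1}M_i$, equality of the two sides is equivalent to $\alpha\gamma=ab$, $\alpha\beta=bc$ (which hold by the definition of $\beta,\gamma$) together with the scalar identity $\beta+\gamma+\alpha\beta\gamma=b$; the latter is $\frac{ab+bc+ab^2c}{\alpha}=\frac{b(a+c+abc)}{\alpha}=b$. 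The $i+1$ and $i-1$ cases run through verbatim, since only the formal multiplication rules of $M_i,M_{i\pm1}$ are used.

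\textbf{Main obstacle.} There is essentially no obstacle beyond bookkeeping: the one computation with content is the braid expansion above, and the only thing requiring care is tracking the matrix-unit products and the sign on the two seam entries (handled once and for all when the multiplication table is established). I would also record the conceptual reason the relations must hold: a short computation gives $[M_i,[M_i,M_{i\pm1}]]=-2M_i$ and $[M_i,M_j]=0$ for $|i-j|\ge2$, so $a\mapsto u_i(a)=\exp(aM_i)$ is a representation of the electrical Lie group, whose one-parameter generators satisfy exactly (1)--(3) by Theorem \ref{thm:LP}; the explicit verification above can be viewed as making this representation concrete.
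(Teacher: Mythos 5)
Your verification is correct. The paper states this proposition without proof (attributing the result to joint work with Postnikov), and the implied argument is exactly the direct matrix computation you carry out: $u_i(a)=I+aM_i$ with $M_i=E_{i,i-1}+E_{i,i+1}$, the nilpotency and product relations for the $M_i$, and the coefficient match in the braid expansion all check out, including your handling of the signed seam entries (which can indeed be reduced to the bulk case by conjugating with $\chi$).
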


\begin{remark}
In \cite{LP} it is established that the relations of Proposition \ref{prop:elecbraid} essentially generate the symplectic group.  This is however not clear from our current perspective.
\end{remark}

\begin{proposition}\label{prop:closed}
$\X_n$ is closed under the actions $\{u_i(a)\}$ for $i \in [2n]$.
\end{proposition}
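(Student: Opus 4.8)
The plan is to reduce the statement to a linear-algebra fact about the subspace $\H'$ and then verify that fact on a spanning set coming from the $0$-dimensional cells of $E_n$.

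First recall that $u_i(a) = x_i(a)y_{i-1}(a) \in GL_{2n}$ acts on $\Gr(n-1,2n)$ by right multiplication, and that this action is the restriction of a projective-linear automorphism of $\P^{\binom{[2n]}{n-1}}$ induced by an invertible linear map $\phi_i(a)$ of $\R^{\binom{[2n]}{n-1}}$ on Pl\"ucker vectors, with $\phi_i(a)^{-1} = \phi_i(-a)$; the explicit transformation of Pl\"ucker coordinates is the composite of the two substitutions of Lemma~\ref{lem:networkbridge} (equivalently \eqref{eq:Delta}). Since $u_i(a)$ preserves the Grassmannian and $\X = \Gr(n-1,2n)\cap\H$, we get $u_i(a)\cdot\X = \Gr(n-1,2n)\cap\P(\phi_i(a)(\H'))$, so it suffices to prove $\phi_i(a)(\H') = \H'$, and by invertibility it is enough to prove $\phi_i(a)(\H') \subseteq \H'$. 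As $\H'$ is by definition the span of the columns $v_\sigma := (a_{I\sigma})_{I}$, $\sigma \in \NC_n$, of the matrix $A$, it suffices to check $\phi_i(a)(v_\sigma) \in \H'$ for each $\sigma \in \NC_n$.

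Next I would identify each $v_\sigma$ with (a representative of the Pl\"ucker vector of) a point of $\X$. Let $\Gamma_\sigma$ be the edgeless cactus network whose boundary vertices are glued according to $\sigma$. Its only grove is the trivial spanning forest, so $L_{\sigma'}(\Gamma_\sigma) = \delta_{\sigma,\sigma'}$, and hence by Theorem~\ref{thm:concordant} we have $\Delta_I(N(\Gamma_\sigma)) = \sum_{\sigma'} a_{I\sigma'}L_{\sigma'}(\Gamma_\sigma) = a_{I\sigma}$; thus $v_\sigma$ represents the Pl\"ucker vector of $\iota(p_\sigma) = M(N(\Gamma_\sigma)) \in \X$, and $\phi_i(a)(v_\sigma)$ represents the Pl\"ucker vector of $u_i(a)\cdot M(N(\Gamma_\sigma))$. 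By Lemma~\ref{lem:networkbridge} this equals $M(N')$, where $N'$ is obtained from $N(\Gamma_\sigma)$ by adjoining the two bridges corresponding to $x_i(a)$ and $y_{i-1}(a)$; by Proposition~\ref{prop:elecaction} — whose proof is a local check near the boundary and applies verbatim to cactus networks — $N'$ differs from $N(v_i(a)\cdot\Gamma_\sigma)$ only by valent-two-vertex moves, which preserve $M(\cdot)$. Since $v_i(a)\cdot\Gamma_\sigma$ is again a cactus network, Theorem~\ref{thm:concordant} shows that the Pl\"ucker vector of $M(N(v_i(a)\cdot\Gamma_\sigma))$ is $\sum_{\sigma'} L_{\sigma'}(v_i(a)\cdot\Gamma_\sigma)\,v_{\sigma'}$, a nonnegative combination of columns of $A$, hence an element of $\H'$. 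Therefore $\phi_i(a)(v_\sigma) \in \H'$ for every $\sigma$, so $\phi_i(a)(\H')\subseteq\H'$, and finally $u_i(a)\cdot\X = \Gr(n-1,2n)\cap\H = \X$.

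The main point to watch is bookkeeping rather than a genuine difficulty: one must keep straight the distinction between Pl\"ucker vectors and points of $\P^{\binom{[2n]}{n-1}}$ (harmless because $\H'$ is a linear cone, so positive rescalings are irrelevant), and one must make sure Proposition~\ref{prop:elecaction} is legitimately available for the degenerate cactus networks $\Gamma_\sigma$. If one wishes to avoid the latter, an alternative is to prove $\phi_i(a)(v_\sigma)\in\H'$ directly, by analyzing how concordance between $(n-1)$-subsets of $[2n]$ and non-crossing partitions transforms under the transposition $i\leftrightarrow i+1$; but that essentially re-derives the content of Theorem~\ref{thm:concordant} and Proposition~\ref{prop:elecaction}, so invoking those results is the more economical route.
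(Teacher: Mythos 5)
Your argument is correct, and it rests on the same two pillars as the paper's proof --- the bridge/spike compatibility (Lemma \ref{lem:networkbridge}, Proposition \ref{prop:elecaction}) and the grove--matching dictionary (Theorem \ref{thm:concordant}) --- but it packages them differently. The paper takes an arbitrary $X\in\X$ with $\Delta_I(X)=\sum_\sigma a_{I\sigma}L_\sigma$ and writes down the induced linear map on the $\NC_n$-coordinates explicitly (for $i=2k-1$ odd: $L'_\sigma=L_\sigma+a\sum_\kappa L_\kappa$ over the $\kappa$ obtained by merging the singleton $\bar k$ into another part, and $L'_\sigma=L_\sigma$ otherwise), then asserts $\Delta_I(u_i(a)\cdot X)=\sum_\sigma a_{I\sigma}L'_\sigma$, justified either by a direct check on the coefficients $a_{I\sigma}$ or by noting that this is what the network picture gives. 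You instead reduce to the purely linear statement $\phi_i(a)(\H')\subseteq\H'$ and verify it on the spanning columns $v_\sigma$ of $A$, identified with the $0$-cell points $\iota(p_\sigma)$; this delegates all the combinatorics to Theorem \ref{thm:concordant} applied to the concrete networks $v_i(a)\cdot\Gamma_\sigma$, at the cost of not producing the explicit transformation law for $\L$ (which the paper's version yields as a byproduct). Your reduction to a spanning set of network points is in fact a clean way to make rigorous the paper's second justification, which as written only addresses $X$ of the form $M(N(\Gamma))$ even though a general element of $\H'$ need not be a network point. One detail to make explicit: the network $v_i(a)\cdot\Gamma_\sigma$ only exists for $a\geq 0$, so your verification directly yields $\phi_i(a)(\H')=\H'$ for $a\geq 0$ only; the case $a<0$, which is needed later (e.g.\ $u_i(-a)$ in Proposition \ref{prop:elecreduce} and generic $a$ in Section \ref{sec:stratification}), then follows from $\phi_i(-a)=\phi_i(a)^{-1}$ together with the equality you already established, or alternatively from the fact that $\phi_i(a)(v_\sigma)$ depends polynomially on $a$ and $\H'$ is a linear subspace.
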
 
\begin{proof}
Let $X \in \X_n$ and let $X' = u_i(a) \cdot X$.  By definition, there exists a point $\L = (L_\sigma) \in \P^{{\NC_n}}$, such that $\Delta_I(X) = \sum a_{I \sigma} L_\sigma$.  Assume that $i = 2k-1$ is odd; the case $i$ even is similar.  If $k$ is isolated in $\sigma$, define $L'_\sigma$ by 
$$
L'_\sigma =  L_\sigma +a\sum_{\kappa} L_\kappa 
$$
where the summation is over non-crossing partitions $\kappa$ obtained from $\sigma$ by merging $k$ with any of the parts of $\sigma$.  If $k$ is not isolated in $\sigma$, then define 
$$
L'_\sigma  =  L_\sigma.
$$
We claim that $\Delta_I(X') = \sum a_{I \sigma} L'_\sigma$.  One way to see this is by directly using the combinatorial interpretation of $a_{I \sigma}$.  Another way to see this is to note that the above formulae for $\L'$ are what we would get if $X$ is of the form $M(N(\Gamma))$ for some electrical network, and $\L' = \L(\Gamma')$, where $\Gamma' = v_i(a) \cdot \Gamma$.  
\end{proof}

\begin{remark}
It follows from Proposition \ref{prop:closed} that the electrically nonnegative part $(EL_{2n})_{\geq 0}$ of the electrical Lie group of \cite{LP} acts on the compactified space $E_n$ (or $\X_{\geq 0}$) of electrical networks.
\end{remark}

\subsection{Electrical strata to positroid strata}
\label{sec:strata}
The following proposition proves part of Theorem \ref{thm:realizability}.
\begin{proposition}\label{prop:ftau}
Suppose that $\Gamma$ is an electrical network on $[\bar n]$, and that $\L(\Gamma) \in E_\tau$.  Then $M(N(\Gamma)) \in \oPi_{f_\tau}$.
\end{proposition}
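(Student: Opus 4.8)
The plan is to identify the trip permutation of the planar bipartite graph $N(\Gamma)$ with $f_\tau$, and then invoke the identification of $f_{M(N)}$ with the trip permutation $f_N$ for reduced planar bipartite graphs.

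First I would reduce to the case where $\Gamma$ is critical. By Proposition \ref{prop:Lequiv} the point $\L(\Gamma)$ depends only on the electrical-equivalence class of $\Gamma$, and by Corollary \ref{cor:equiv} (or directly via Theorem \ref{thm:concordant}) so does the point $M(N(\Gamma)) \in \Gr(n-1,2n)$; in particular $f_{M(N(\Gamma))}$ is an invariant of the class. Since $\L(\Gamma) \in E_\tau$, we may choose the representative so that $\Gamma$ is critical with medial pairing $\tau(\Gamma) = \tau$. It then remains to prove $f_{M(N(\Gamma))} = f_\tau$.

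The heart of the argument is a local analysis of $N(\Gamma)$ relating its trips to the medial strands of $\Gamma$. Recall that the interior of $N(\Gamma)$ is assembled from a white vertex $w_e$ for each edge $e$, black vertices $b_v$ and $b_F$ for each interior vertex $v$ and interior face $F$ of $\Gamma$, and black vertices $b_i$ at the boundary; each $w_e$ has degree four, with the two edges to $b_u,b_v$ (the endpoints of $e$) alternating around $w_e$ with the two edges to the face vertices bordering $e$. I would check that a trip of $N(\Gamma)$, turning maximally left at white vertices and maximally right at black vertices, traverses the patch of $N(\Gamma)$ attached to $e$ by entering along one edge at $w_e$ and leaving along the opposite one, i.e. it runs alongside the medial strand passing straight through the medial vertex $t_e$; a similar check at the black boundary vertices $b_i$ and the even boundary vertices then matches the endpoints of trips with the medial boundary points $t_1,\dots,t_{2n}$, up to a uniform shift by one. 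Concretely, the trip $T_i$ of $N(\Gamma)$ beginning at boundary vertex $i$ follows the medial strand from $t_i$ and terminates at boundary vertex $g_\tau(i)-1 \bmod 2n = f_\tau(i)$, the shift reflecting the offset between the cyclic positions of the boundary vertices $1,2,\dots,2n$ of $N(\Gamma)$ and the medial boundary points $t_1,t_2,\dots,t_{2n}$; the cases in which a strand is an arc between two adjacent boundary points (so $f_\tau(i) = i$) match the leaf-colour convention in the definition of $f_N$. Since $\Gamma$ is critical its medial graph is lensless, so no trip of $N(\Gamma)$ has a self-intersection or a bad double crossing, and, since $\Gamma$ has no pendants, $N(\Gamma)$ has no interior leaves; hence $N(\Gamma)$ is reduced after harmlessly deleting the degree-two vertices $b_i$ coming from univalent boundary vertices. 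Applying the fact that $f_{M(N)} = f_N$ for reduced planar bipartite graphs yields $f_{M(N(\Gamma))} = f_{N(\Gamma)} = f_\tau$, i.e. $M(N(\Gamma)) \in \oPi_{f_\tau}$.

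The main obstacle is the combinatorial bookkeeping in the middle step: tracking a zig-zag path through the Temperley-type graph $N(\Gamma)$ patch by patch, matching it to the correct medial strand, and getting the $\pm 1$ index shift right — this shift is precisely what makes $f_\tau = g_\tau - 1$ of type $(n-1,2n)$ rather than $g_\tau$ of type $(n,2n)$ — along with the boundary and leaf cases. An alternative to the direct trip computation would be to induct on $c(\tau)$ using Proposition \ref{prop:elecaction}: build a critical $\Gamma$ from a smaller one by adjoining a boundary spike or edge, so that $N(\Gamma)$ is obtained from $N(\Gamma')$ by acting with $u_i(a) = x_i(a)y_{i-1}(a)$, and track the effect on the bounded affine permutation via Proposition \ref{prop:reduce}; but one still has to verify the base cases (the $0$-dimensional cells $p_\sigma$) and that $f_\tau$ transforms identically, so the trip analysis seems the cleanest route.
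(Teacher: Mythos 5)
Your strategy is sound and the conclusion is reachable along your route, but it is genuinely different from the paper's. The paper does \emph{not} compute trips globally: it proves $f_{M(N(\Gamma))}=f_\tau$ by induction on $n$ and then on the number of edges, using exactly the ``alternative'' you sketch at the end. The base cases are hollow cacti (isolated or glued boundary vertices), where the trip permutation is checked directly; the inductive step peels off a boundary spike or boundary edge, uses Proposition \ref{prop:elecaction} to realize this as adding two bridges to $N(\Gamma')$, and then uses Lemma \ref{lem:networkbridge} together with \eqref{eq:perm} (the argument of Proposition \ref{prop:reduce}) to see that both $f_{M(N(\cdot))}$ and $f_{\tau(\cdot)}$ transform by $f\mapsto s_{i-1}fs_i$. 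What the paper's route buys is that it never needs $N(\Gamma)$ to be reduced, nor the identification $f_{M(N)}=f_N$ beyond the trivial base cases; it also reuses machinery (bridges, Chevalley generators) that is needed elsewhere in Sections \ref{sec:reductions}--\ref{sec:realizability}. What your route buys is a direct conceptual statement -- zig-zag paths of the Temperley graph are the medial strands, shifted by one -- without having to argue that every critical network in the equivalence class can be built up by spikes and edges.

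One local inaccuracy to fix if you pursue the trip computation: at the $4$-valent white vertex $w_e$ the trip does \emph{not} enter along one edge and leave along the opposite one. The four edges at $w_e$ alternate between the two endpoint vertices $b_u,b_v$ and the two face vertices $b_F,b_{F'}$, and turning maximally left sends an incoming edge to an \emph{adjacent} one (vertex-type to face-type or vice versa); the correspondence with the medial strand through $t_e$ only emerges after the subsequent maximal right turn at the next black vertex, which carries the trip to $w_{e'}$ for the next edge $e'$ sharing a vertex and a face with $e$ -- i.e.\ the medial-graph neighbour of $t_e$. (You can calibrate against the $Y$-network: the trip from boundary vertex $1$ runs $1\to b_1\to w_a\to b_{F}\to w_b\to b_2\to 3$, matching the strand $t_1\to t_a\to t_b\to t_4$ and the value $f_\tau(1)=3$.) With that correction, and with the check that lenslessness of the medial graph forces $N(\Gamma)$ to be reduced after suppressing the degree-two vertices $b_i$, your argument goes through, including for cactus networks, where the glued $b_{a_i}$ and isolated boundary vertices produce exactly the leaf conventions $f_N(i)=i$ you mention.
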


\begin{proof}
Suppose $\Gamma$ is not critical.  Then it is electrically equivalent to a critical graph $\Gamma'$ and by Theorem \ref{thm:concordant}, we have $M(N(\Gamma)) = M(N(\Gamma'))$.  Thus we may, and will suppose that $\Gamma$ is critical.

So it is enough to prove the claim for critical electrical networks $\Gamma$, and we may pick the electrically equivalent representative that we like.  We shall proceed by induction on $n$, followed by induction on the number of edges in $\Gamma$.  The base case $n = 1$ is trivial.

Now suppose $\Gamma$ has no edges.  Then it is a hollow cactus with some boundary points identified.  In this case, the claim can be checked directly.   For example, take $n = 5$ and the hollow cactus $\Gamma$ with boundary points $\bar 2, \bar 3, \bar 5$ identified.  Then we have $N(\Gamma)$ and $\tau(\Gamma)$ as illustrated.

\begin{tikzpicture}
\draw (0,0) circle (3cm);
\foreach \i in {1,...,10}
{
\node at (180-\i*36:3.3) {$\i$};
}
\draw (180-36:3) -- (180-36:2.6);
\filldraw[black] (180-36:2.6) circle (0.1cm);
\draw (180-2*36:3) -- (180-36:2) -- (180-10*36:3);
\filldraw[black] (180-36:2) circle (0.1cm);
\draw (0,0) -- (180-3*36:3);
\draw (0,0) -- (180-5*36:3);
\draw (0,0) -- (180-9*36:3);
\filldraw[black] (0,0) circle (0.1cm);
\draw (180-4*36:3) -- (180-4*36:2.6);
\filldraw[black] (180-4*36:2.6) circle (0.1cm);
\draw (180-7*36:3) -- (180-7*36:2.6);
\filldraw[black] (180-7*36:2.6) circle (0.1cm);
\draw (180-6*36:3) -- (180-7*36:2) -- (180-8*36:3);
\filldraw[black] (180-7*36:2) circle (0.1cm);
\node at (0,-3.8) {The bipartite graph $N(\Gamma)$};
\begin{scope}[shift={(7,0)}]
\draw (0,0) circle (3cm);
\foreach \i in {1,...,10}
{
\node at (196-\i*36:3.3) {$\i$};
\draw (196-36:3) to [bend right] (196-2*36:3);
\draw (196-3*36:3) to [bend left] (196-10*36:3);
\draw (196-4*36:3) to [bend right] (196-5*36:3);
\draw (196-6*36:3) to [bend right] (196-9*36:3);
\draw (196-7*36:3) to [bend right] (196-8*36:3);
}
\node at (0,-3.7) {The medial pairing $\tau(\Gamma)$};
\end{scope}
\end{tikzpicture}

The reader is encouraged to verify that the bounded affine permutation $f_{N(\Gamma)} = f_{M(N)}$ is equal to $f_{\tau(\Gamma)}$.  For example, the trip $T_9$ turns right at the black vertex and ends at $5$, so $f_{N(\Gamma)}(9) = 15$.  (Alternatively, one can compute $M(N)$ using matchings, and then use \eqref{eq:perm}.)

Suppose $\Gamma$ has an isolated boundary vertex $\bar k$.  Then $\tau(2k-1) = 2k$ while $f_\tau(2k-1) = 2k-1$ and $f_\tau(2k) = 2k+2n-2$.  Let $\Gamma'$ be the critical electrical network on $\bar 1, \ldots, \overline{k-1},\overline{k+1}, \ldots, \bar n$ obtained from $\Gamma$ by removing $\bar k$.  By induction, the claim is true for $\Gamma'$.  It is straightforward to check that the claim also holds for $\Gamma$.

In a similar manner, one deals with the case that boundary vertices $\bar k$ and $\overline{k+1}$ are glued in $\Gamma$.

Otherwise, it is easy to see by considering medial graphs that some critical representative of the electrical equivalence class of $\Gamma$ will have a boundary spike at some boundary vertex $\bar k$, or a boundary edge between $\bar k$ and $\overline{k+1}$.  Let us assume we are in the boundary spike case, the other case being similar.  Let $\tau$ be the medial pairing of $\Gamma$ and $\tau'$ be the medial pairing of the electrical network $\Gamma'$ where this boundary spike is removed.  Adding a boundary spike at $\bar k$ introduces a crossing between the strands $T_{2k-1}$ and $T_{2k}$ of $\tau$, so $f_\tau = s_{i-1} f_{\tau'} s_i$, where $i = 2k-1$.

According to Proposition \ref{prop:elecaction}, adding a boundary spike to $\Gamma'$ corresponds to adding two bridges to $N(\Gamma')$: one which is white at $i$ and black at $i+1$, and another one which is black at $i-1$ and white at $i$.  By \eqref{eq:perm} and Lemma \ref{lem:networkbridge} (see also the argument in Proposition \ref{prop:reduce}) we see that $f_{M(N(\Gamma))} = s_{i-1} f_{M(N(\Gamma'))} s_i$, so by induction we conclude that $M(N(\Gamma)) \in \oPi_{f_\tau}$.
\end{proof}

Using this we obtain another characterization of $\M(\sigma)$.

\begin{proposition}\label{prop:Msigma}
Let $\sigma \in \NC_n$.  Then 
$$
\M(\sigma) = \M(f_{\tau(\sigma)})
$$
is the positroid of $f_{\tau(\sigma)}$.  Equivalently,
$$
\M(\sigma) = \{I \mid I \geq_a I_a\text{ for all } a\}
$$
where $\I(\sigma) = (I_1,\ldots, I_{2n})$ is the Catalan necklace of $\sigma$.  In particular, $I_a(\sigma)$ is concordant with $\sigma$ for any $a$.
\end{proposition}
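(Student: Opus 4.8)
The plan is to test everything against one extremal point of $E_n$, the $0$-dimensional cell $p_\sigma$, and then to read off both descriptions of $\M(\sigma)$ from the results already in hand.

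First I would recall from the discussion of the bottom cells of $E_n$ that $p_\sigma$ is realized by the edgeless cactus network $\Gamma_\sigma$ obtained by identifying the boundary vertices of a disk according to $\sigma$, and that $\L(\Gamma_\sigma) = p_\sigma$ lies in the stratum $E_{\tau(\sigma)}$, where $\tau(\sigma)$ is the non-crossing matching attached to $\sigma$ (its medial graph being lensless with $c(\tau(\sigma)) = 0$). Since $L_{\sigma'}(p_\sigma) = \delta_{\sigma\sigma'}$, Theorem~\ref{thm:concordant} applied to the point $X_\sigma := M(N(\Gamma_\sigma)) \in \Gr(n-1,2n)_{\geq 0}$ would give
$$
\Delta_I(X_\sigma) = \sum_{\sigma'} a_{I\sigma'} L_{\sigma'}(p_\sigma) = a_{I\sigma} = \begin{cases} 1 & \text{if $\sigma$ is concordant with $I$,}\\ 0 & \text{otherwise,}\end{cases}
$$
so the matroid $\M(X_\sigma)$ equals, on the nose, the combinatorially defined $\M(\sigma)$. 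On the other hand, Proposition~\ref{prop:ftau} (whose proof treats edgeless cacti directly) places $X_\sigma$ in $\oPi_{f_{\tau(\sigma)}}$; being also totally nonnegative, $X_\sigma \in (\oPi_{f_{\tau(\sigma)}})_{>0}$, so Theorem~\ref{thm:TNNmain}(2) identifies $\M(X_\sigma)$ with the positroid $\M(f_{\tau(\sigma)})$. Comparing the two computations yields $\M(\sigma) = \M(f_{\tau(\sigma)})$, and the ``equivalently'' clause then follows by unwinding \eqref{eq:Oh}: $\M(f_{\tau(\sigma)}) = \bigcap_{a=1}^{2n} \S_{I_a,a}$ with $(I_1,\ldots,I_{2n}) = \I(f_{\tau(\sigma)}) = \I(\sigma)$ the Catalan necklace of $\sigma$, and $\bigcap_a \S_{I_a,a} = \{I \mid I \geq_a I_a \text{ for all } a\}$ by the definition of $\S_{I_a,a}$. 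The final assertion, that $I_a(\sigma)$ is concordant with $\sigma$, would then be immediate: $I_a(\sigma)$ is by construction the $\leq_a$-lexicographically minimal nonvanishing Pl\"ucker coordinate of any point of $\oPi_{f_{\tau(\sigma)}}$, in particular of $X_\sigma$, so $a_{I_a(\sigma),\sigma} = \Delta_{I_a(\sigma)}(X_\sigma) \neq 0$, i.e. $I_a(\sigma) \in \M(\sigma)$.

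The hard part will be the bookkeeping at the very start: pinning down that the edgeless shape-$\sigma$ cactus is critical with medial pairing exactly $\tau(\sigma)$ — so that $\L(\Gamma_\sigma) = p_\sigma \in E_{\tau(\sigma)}$ — and that $p_\sigma$ really is the extreme point of $E_n$ supported on the single grove coordinate $L_\sigma$. It is essential to use this vertex of the cell rather than a generic point of $E_{\tau(\sigma)}$: a generic $\L \in E_{\tau(\sigma)}$ has matroid $\{I \mid \text{some } \sigma' \in \E(\tau(\sigma)) \text{ is concordant with } I\}$, which is strictly larger than $\M(\sigma)$ in general, so that weaker input would not suffice. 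Everything after that initial identification is a direct assembly of Theorem~\ref{thm:concordant}, Proposition~\ref{prop:ftau}, Theorem~\ref{thm:TNNmain}(2), and \eqref{eq:Oh}.
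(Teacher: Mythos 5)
Your proposal is correct and follows essentially the same route as the paper: evaluate at the $0$-dimensional point $p_\sigma$, use Theorem~\ref{thm:concordant} to identify its Pl\"ucker coordinates with the concordance indicator $a_{I\sigma}$, use Proposition~\ref{prop:ftau} to place $\iota(p_\sigma)$ in $\oPi_{f_{\tau(\sigma)}}$, and conclude via Theorem~\ref{thm:TNNmain}. The paper's proof is a three-line version of exactly this argument; your additional care about $p_\sigma$ being the extreme point (rather than a generic point of $E_{\tau(\sigma)}$) is a correct and worthwhile observation, but not a different method.
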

\begin{proof}
The point $p_\sigma \in E_n$ has one non-vanishing grove coordinate $L_\sigma$.  By Proposition \ref{prop:ftau}, $\iota(p_\sigma) \in \oPi_{f_{\tau(\sigma)}}$.  The claim then follows from Theorem \ref{thm:concordant} and Theorem \ref{thm:TNNmain}.
\end{proof}

\subsection{Proof of the injectivity part of Theorem \ref{thm:realizability}}
\label{sec:ncorder}
Recall from Section \ref{sec:Catalan} that there is an $a$-shifted dominance ordering (and a $a$-shifted lexicographic order) on non-crossing partitions obtained via the bijection $\sigma \mapsto I_a(\sigma)$.
The following proposition establishes the injectivity part of Theorem \ref{thm:realizability}.

\begin{proposition}\label{prop:injection}
The map $\Gamma \mapsto N(\Gamma)$ induces an injection $\iota: E_n \hookrightarrow \Gr(n-1,2n)_{\geq 0}$.
\end{proposition}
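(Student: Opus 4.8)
The plan is to show that the grove coordinates $\L(\Gamma)\in\P^{\NC_n}$ can be recovered from the point $M(N(\Gamma))\in\Gr(n-1,2n)_{\geq 0}$; combined with Theorem \ref{thm:cactus} (which says $\L(\Gamma)$ determines $\Gamma$ up to electrical equivalence), this gives injectivity of $\iota$. By Theorem \ref{thm:concordant} we already know $\Delta_I(N(\Gamma)) = \sum_\sigma a_{I\sigma} L_\sigma(\Gamma)$, so what must be shown is that the linear map $A=(a_{I\sigma})$ is injective on the relevant locus, i.e.\ that the nonnegative vector $(L_\sigma(\Gamma))$ is determined by its image $(\Delta_I(N(\Gamma)))$. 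Since $A$ has many more rows than columns, injectivity of $A$ as a linear map would suffice, but it is cleaner (and, I expect, necessary) to exploit the nonnegativity and the stratification.

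The key steps, in order: (1) Fix $\Gamma$ critical with medial pairing $\tau$, so by Proposition \ref{prop:ftau} we have $M(N(\Gamma))\in\oPi_{f_\tau}$, and hence $\Delta_{I_a}(N(\Gamma))\neq 0$ for the Grassmann necklace $\I(f_\tau)=(I_1,\dots,I_{2n})$ of $f_\tau$, which by Section \ref{sec:Catalan} is the Catalan necklace $\I(\tau)$. (2) Recall from the last paragraph of Section \ref{sec:Catalan} that $\sigma\mapsto I_a(\sigma)$ is a bijection between $\NC_n$ and $a$-shifted Catalan subsets, and that this transports the $a$-shifted dominance order to $\NC_n$; moreover $I_a(\sigma)$ is concordant with $\sigma$ (Proposition \ref{prop:Msigma}). (3) The crucial combinatorial input I would isolate as a lemma: if $\sigma$ is concordant with $I$, then $I_a(\sigma)\leq_a I$ for every $a$ — equivalently, among all non-crossing partitions concordant with a given $I$, for each fixed $a$ there is a unique $\leq_a$-minimal one, namely the $\sigma$ with $I_a(\sigma)$ equal to the $\leq_a$-minimal Catalan subset dominated by... no: more precisely, $\E(I)$ has, for each $a$, a $\leq_a$-maximal element $\sigma^{(a)}_{\max}(I)$, and one can read off $L_{\sigma}(\Gamma)$ by downward induction on this order. (4) Run the recovery: order $\NC_n$ by (say) $\leq_1$-dominance; for the $\leq_1$-maximal $\sigma$ in the electroid $\E(\tau)$, the subset $I=I_1(\sigma)$ is concordant only with partitions $\sigma'\leq_1\sigma$, so $\Delta_{I_1(\sigma)}(N(\Gamma)) = \sum_{\sigma'\leq_1\sigma}a_{I_1(\sigma)\,\sigma'}L_{\sigma'}$ and, peeling off from the top using the previously-determined $L_{\sigma'}$, one solves for $L_\sigma$. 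Iterating downward recovers all $L_\sigma$ from the $\Delta_I$, hence $\L(\Gamma)$, hence $\Gamma$.

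The main obstacle is step (3): establishing that the incidence structure $a_{I\sigma}$ is \emph{triangular} with respect to the dominance order in the precise sense needed for the peeling argument — i.e.\ that $a_{I_a(\sigma)\,\sigma'}=1$ forces $\sigma'\leq_a\sigma$, and that $a_{I_a(\sigma)\,\sigma}=1$. The second is exactly the concordance assertion of Proposition \ref{prop:Msigma}. The first requires understanding, via the explicit description \eqref{eq:Ia} of $I_a(\sigma)$ as the complement of the set of $\leq_a$-maxima of the parts of $\sigma$ and of $\tsigma$, why a partition $\sigma'$ whose parts and dual parts each avoid exactly one element of the complement must have all its $\leq_a$-maxima weakly below those of $\sigma$; this is a direct but slightly delicate combinatorial argument about non-crossing partitions and their duals, which I would carry out by translating to Dyck paths (using the lemma in Section \ref{sec:Catalan} that $I\leq J$ iff $P(I)$ never goes below $P(J)$) and arguing that the Dyck path of a concordant $\sigma'$ cannot cross above that of $\sigma$. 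An alternative, possibly cleaner route avoiding step (3) entirely: since $E'_n$ is dense in $E_n$ and $L_\uncrossed\neq 0$ there, first prove injectivity on $E'_n$ by noting that $\Delta_{I_a(\uncrossed)}$ recovers $L_\uncrossed$ and that the $\Delta_{I}$ for $I=I_a(\sigma)$ with $|\sigma|=n-1$ recover the $L_{ij}$, hence the response matrix, hence the network; then upgrade to all of $E_n$ by the stratified continuity already built into Theorems \ref{thm:cactus} and \ref{thm:concordant}. I would pursue the triangularity argument as the main line, since it gives the cleaner statement, and fall back on the density argument if the Dyck-path bookkeeping becomes unwieldy.
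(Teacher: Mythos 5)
Your main line is exactly the paper's proof: restrict attention to the Catalan subsets $I_1(\sigma)$, use the injectivity of $\sigma\mapsto I_1(\sigma)$ together with Proposition \ref{prop:Msigma} to get unit diagonal entries and triangularity with respect to dominance order, and invert the resulting triangular system to recover $(L_\sigma)$ from $(\Delta_I)$. The only divergence is that you flag the triangularity in your step (3) as an open obstacle requiring a separate Dyck-path argument, whereas it is an immediate consequence of Proposition \ref{prop:Msigma} as already stated: $\sigma'$ concordant with $I$ means $I\in\M(\sigma')=\{J \mid J\geq_a I_a(\sigma')\text{ for all }a\}$, so in particular $I_1(\sigma)\geq_1 I_1(\sigma')$, which is precisely the needed implication.
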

\begin{proof}
The map is given in coordinates by $\Delta_I = \sum_{\sigma} L_{\sigma}$, where the sum is over $\sigma$ concordant with $I$.  Let us restrict our attention to Catalan subsets $I$, that is $I = I_1(\sigma)$ for some $\sigma \in \NC_n$.  Recall that the map $\sigma \mapsto I_1(\sigma)$ is injective, and by Proposition \ref{prop:Msigma} $\sigma$ is concordant with $I_1(\sigma)$.  But $\sigma'$ can be concordant with $I_1(\sigma)$ only if $I_1(\sigma') \leq I_1(\sigma)$ in dominance order.  It follows that when this transition formula is restricted to Catalan subsets, we obtain an invertible triangular system.  
\end{proof}
%

\subsection{Reductions used in proof of Theorem \ref{thm:stratification}}\label{sec:reductions}
This section contains technical results used in the proof of Theorem \ref{thm:stratification}.  Let $X \in \X_n$, and suppose $X \in \oPi_f$, where $f = f_\tau \in \Elec(f)$ is an electrical affine permutation.  

\begin{lemma}\label{lem:fixedpoint}
Suppose $f(i) = i$ for some $i$.  Then $\Delta_I(X) = 0$ if $i \in I$ or if $I \cap \{i-1,i+1\} = \emptyset$.  Furthermore, we have $\Delta_I = \Delta_{I - \{i-1\} \cup \{i+1\}}$ whenever $i-1 \in I$ but $i+1 \notin I$.
\end{lemma}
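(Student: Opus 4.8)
Recall the situation: $X \in \X_n$ means $X \in \Gr(n-1,2n)$ and there is a point $\L = (L_\sigma) \in \P^{\NC_n}$ with $\Delta_I(X) = \sum_\sigma a_{I\sigma} L_\sigma$, i.e. $\Delta_I(X) = \sum_{\sigma \text{ concordant with } I} L_\sigma$. We are told $f = f_\tau$ is electrical with $f(i) = i$, which (via $g_\tau(i) = f_\tau(i)+1 = i+1$) forces $\tau(i) = i+1$ when $i$ is odd, and more generally says the strand through position $i$ is "short." Since $X \in \oPi_f$, we also know $\Delta_{I_a}(X) \neq 0$ where $\I(f) = (I_1,\dots,I_{2n})$ is the Grassmann necklace, and $\Delta_J(X) \ne 0 \Rightarrow J \geq_a I_a$ for all $a$ (the positroid inequalities, Theorem~\ref{thm:TNNmain} and the discussion of $\M(f)$).

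**Plan.** I would argue purely combinatorially from the concordance condition, translating $f(i)=i$ into a statement about where $i$ and $i\pm1$ can sit inside $[2n] \setminus I$ for $I$ concordant with some $\sigma$. The key observation: under the identification of $[2n]$ with $\{\bar1,\tilde1,\dots,\bar n,\tilde n\}$, the position $i$ and its two neighbors $i-1, i+1$ correspond to two consecutive "dots" in the same part of $\sigma$ or of $\tsigma$ whenever the strand is short. Concretely, using Proposition~\ref{prop:Msigma}, $I_a(\sigma)$ is obtained by deleting one "maximal" element from each part of $\sigma$ and each part of $\tsigma$ (equation~\eqref{eq:Ia}); for any $\sigma$ concordant with $I$, each part of $\sigma$ and of $\tsigma$ meets the complement $[2n]\setminus I$ in exactly one element. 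So I would: (1) identify which part $P$ (of $\sigma$ or $\tsigma$) contains $i$; since $f(i)=i$ the strand structure forces $\{i-1,i\}$ or $\{i,i+1\}$ to lie in the \emph{same} part — and in fact the medial-graph picture for a short strand at $i$ shows that both $i-1$ and $i+1$ are the only candidates adjacent to $i$ in the relevant part, while no $\sigma$ concordant with a necklace-minimal $I$ can avoid having its unique missing element from that part be one of $i-1,i,i+1$. (2) From this deduce: if $i \in I$ then $I$ is not concordant with any $\sigma$ (the missing element of $P$ would have to be something other than $i$, contradicting the forced structure), so $\Delta_I(X)=\sum_{\sigma}a_{I\sigma}L_\sigma = 0$ since the sum is empty. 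Similarly if $I \cap \{i-1,i+1\} = \emptyset$, then \emph{both} $i-1,i+1$ are missing from $I$, but the part(s) containing $i-1$ and $i+1$ can only afford one missing element each and the short-strand constraint ties them together, giving a contradiction — so again $\Delta_I = 0$. (3) For the last claim: if $i-1 \in I$ and $i+1 \notin I$, then the bijection $\sigma \leftrightarrow \sigma$ between $\{\sigma \text{ concordant with } I\}$ and $\{\sigma \text{ concordant with } I - \{i-1\}\cup\{i+1\}\}$ is the identity — swapping which of $i-1, i+1$ is the "missing" element of the relevant part does not change $\sigma$ at all, because $i-1$ and $i+1$ lie in the same part $P$ and $i \in P$ sits strictly between them. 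Hence the two sums $\sum a_{I\sigma}L_\sigma$ and $\sum a_{I'\sigma} L_\sigma$ are literally equal, giving $\Delta_I = \Delta_{I-\{i-1\}\cup\{i+1\}}$.

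**Carrying it out.** First I would pin down, from the electrical affine permutation axioms (condition (2) in the definition of $\Elec(n)$) and $f(i)=i$, exactly which of the two cases "$i$ odd, $\tau(i)=i+1$" or the even analogue we are in, and record that in both the two vertices flanking $i$ among $[\bar n]\cup[\tilde n]$ — one $\bar{}$-type and one $\tilde{}$-type — are forced by concordance with a valid necklace to behave as described. Then I would prove the three bullet assertions in the order (i) $i \in I \Rightarrow \Delta_I = 0$, (ii) $I \cap \{i-1,i+1\} = \emptyset \Rightarrow \Delta_I = 0$, (iii) the equality statement, each reducing to: "how many elements of $\{i-1,i,i+1\}$ can the complement of a concordant $I$ omit, and which ones." This is a finite, local check once the part structure near position $i$ is understood.

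**Main obstacle.** The technical heart is step (1): making precise that $f(i)=i$ (a strand-level statement) forces $i-1, i, i+1$ to sit inside a single part of $\sigma$ or $\tsigma$ for \emph{every} $\sigma$ that could be concordant with a subset appearing with nonzero coefficient. One has to be careful that concordance alone is a weaker condition than "lies in the positroid," so I expect to need the positroid inequalities $\Delta_J \neq 0 \Rightarrow J \geq_a I_a(f)$ to rule out spurious $\sigma$; equivalently, work with $\E(I) \cap \{\sigma : \tau(\sigma) \le \tau, \text{ or } \sigma \text{ concordant with the necklace}\}$ rather than all of $\E(I)$. Alternatively — and this may be cleaner — I would sidestep the combinatorics by using a network model: by Theorem~\ref{thm:realizability}'s partial progress (Proposition~\ref{prop:ftau}) every totally nonnegative $X \in \oPi_{f_\tau}$ is $M(N(\Gamma))$ for a cactus network $\Gamma$ whose medial pairing $\tau$ has a short strand at $i$; a short strand means $\bar k$ is either isolated or glued to a neighbor, and then the claimed relations among $\Delta_I(N(\Gamma)) = \sum a_{I\sigma}L_\sigma(\Gamma)$ follow from the explicit structure of $N(\Gamma)$ near that boundary vertex (an isolated $\bar k$ contributes a degree-one configuration forcing the stated vanishing, exactly as in the "$\Gamma$ has no edges / isolated vertex" cases of the proof of Proposition~\ref{prop:ftau}). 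But since Theorem~\ref{thm:stratification} is being used to \emph{prove} Theorem~\ref{thm:realizability}, I should give the self-contained combinatorial argument and only use the network picture as a sanity check.
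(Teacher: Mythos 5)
There is a genuine gap, and it is in the very first step. You propose to prove the vanishing statements by showing that the set of $\sigma$ concordant with $I$ is empty when $i \in I$ (or when $I \cap \{i-1,i+1\} = \emptyset$), so that the sum $\Delta_I = \sum_\sigma a_{I\sigma}L_\sigma$ is empty. But $\E(I)$ is \emph{never} empty (the paper remarks this explicitly in Section \ref{sec:concordant}): when $i = 2k-1 \in I$, the partitions concordant with $I$ are exactly certain $\sigma$ in which $\bar k$ is \emph{not} isolated, and there are plenty of those. Concordance is a condition on the pair $(\sigma, I)$ alone and knows nothing about $f$; the vanishing of $\Delta_I(X)$ is a statement about the particular point $X \in \oPi_f$ and cannot come out of the combinatorics of $I$ by itself. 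The paper's mechanism is linear algebra via \eqref{eq:perm}: $f_X(i)=i$ means $v_i \in \spn(\emptyset)$, i.e.\ the $i$-th column of a matrix representative is zero, killing every $\Delta_I$ with $i \in I$; and the forced value $f(i+1) = i+2n-1$ means $v_{i+1} \notin \spn(v_{i+2},\ldots,v_{i-2})$, so those $2n-3$ columns span at most $n-2$ dimensions, killing every $\Delta_I$ with $I \cap\{i-1,i,i+1\}=\emptyset$. This is the idea your proposal is missing, and no amount of care about which parts contain $i\pm 1$ will substitute for it.

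The gap propagates into your step (3). The identification $\E(I) = \E(I - \{i-1\}\cup\{i+1\})$ is false as stated: for $\sigma$ in which $\bar k$ is not isolated, $\widetilde{k-1}$ and $\tilde k$ lie in \emph{different} parts of $\tsigma$, and swapping which of $i-1, i+1$ is omitted can break concordance. (Also note $i = \bar k$ cannot ``sit in the part $P$ between them,'' since $\sigma$ and $\tsigma$ partition disjoint vertex sets.) The correct route, which the paper takes, is first to deduce $L_\sigma = 0$ for every $\sigma$ with $\bar k$ not isolated — this follows from the already-proved vanishing $\Delta_I = 0$ for $i \in I$ together with the triangular system of Proposition \ref{prop:injection}, since $\sigma \mapsto I_i(\sigma)$ bijects such $\sigma$ onto the $i$-shifted Catalan subsets containing $i$ — and only then to run your concordance-swapping argument on the surviving $\sigma$ (those with $\bar k$ isolated, where $\widetilde{k-1}$ and $\tilde k$ genuinely share a part). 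You correctly sense in your ``main obstacle'' paragraph that extra input is needed to rule out spurious $\sigma$, and you correctly reject the network-model shortcut as circular, but the self-contained argument you fall back on does not close either hole.
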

\begin{proof}
The first claim only uses that $X \in \oPi_f$.  By \eqref{eq:perm} the column $v_i$ of (an $(n-1) \times 2n$ matrix representative of) $X$ is the 0-column.  Thus $\Delta_I(X) = 0$ if $i \in I$.  Similarly, $v_{i+1}$ is not in the span of $v_{i+2},\ldots, v_{i-2}$, so $\Delta_I(X) = 0$ if $I \cap \{i-1,i+1\} = \emptyset$.

The final claim requires that $X \in \H$.  For simplicity, assume that $i = 2k-1$ is odd.  We note that $\sigma$ is concordant with some $I$ satisfying $i \in I$ if and only if $\bar k$ is not isolated in $\sigma$.  Furthermore $\sigma \mapsto I_i(\sigma)$ is a bijection between $\{\sigma \mid \bar k \text{ is not isolated } \}$ and $i$-shifted Catalan subsets  $I$ satisfying $i \in I$.  Since $\Delta_I(X) = 0$ if $i \in I$ we deduce in the same way as Proposition \ref{prop:injection} that $L_\sigma(X) = 0$ whenever $\bar k$ is not isolated in $\sigma$.

Let $\sigma$ be a non-crossing partition where $\bar k$ is isolated.  Then $\widetilde{(k-1)}$ and $\tilde k$ must belong to the same part of $\tsigma$.  Suppose $i-1 \in I$ and $i,i+1 \notin I$.  Then $\sigma$ and $I$ are concordant if and only if $\sigma$ and $I - \{i-1\} \cup \{i+1\}$ are concordant.  This proves the claim.\end{proof}

\begin{proposition}\label{prop:fixedpoint}
Suppose $X \in \oPi_f \cap \X_n$, where $f(i) = i$ and $f(i+1) = i+2n-1$.  Then as $J$ varies over $(n-2)$-element subsets of $\{1,2,\ldots,i-1,i+2,\ldots,2n\}$, the collection of Pl\"ucker coordinates
$$
\Delta_J(Y) = \Delta_{J \cup \{i+1\}}(X)  
$$
defines a point $Y \in \X_{n-1} \subset \Gr(n-2,2n-2)$.  Furthermore, $X$ can be recovered from $Y$.
\end{proposition}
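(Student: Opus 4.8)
The plan is to construct $Y$ by deleting the zero column $i$ and the column $i+1$ from a matrix representing $X$, recording exactly the Pl\"ucker coordinates that contain $i+1$, and then to transport the linear-slice condition $X\in\X_n$ down to $Y\in\X_{n-1}$ through the concordance combinatorics of Section~\ref{sec:concordant}. First I would collect the structural consequences of the hypotheses: by \eqref{eq:perm}, $f(i)=i$ forces the $i$-th column $v_i$ of any representative of $X$ to vanish, while $f(i+1)=i+2n-1$ forces $v_{i+1}\ne 0$. What I actually use is Lemma~\ref{lem:fixedpoint}: $\Delta_I(X)=0$ whenever $i\in I$ or $I\cap\{i-1,i+1\}=\emptyset$, and $\Delta_I(X)=\Delta_{I\setminus\{i-1\}\cup\{i+1\}}(X)$ whenever $i-1\in I$ and $i,i+1\notin I$. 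A short case check on $I\in\binom{[2n]}{n-1}$ then shows that the full Pl\"ucker vector of $X$ is determined by the values $\Delta_{J\cup\{i+1\}}(X)$ with $J\in\binom{[2n]\setminus\{i,i+1\}}{n-2}$: coordinates meeting $\{i\}$ or avoiding $\{i-1,i,i+1\}$ vanish, and those meeting $\{i-1\}$ but avoiding $\{i,i+1\}$ equal one of the recorded values. This simultaneously yields the reconstruction of $X$ from $Y$ and pins down the data $Y$ must carry.

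Next I would verify that $Y$ is an honest point of $\Gr(n-2,2n-2)$. Identify $[2n]\setminus\{i,i+1\}$ with $[2n-2]$ by the order-preserving relabeling and set $\Delta_J(Y):=\Delta_{J\cup\{i+1\}}(X)$. Since $X$ has rank $n-1$, the case analysis above guarantees that these do not all vanish. The three-term Pl\"ucker relations for $\Gr(n-2,2n-2)$ among these values are exactly the three-term Pl\"ucker relations for $\Gr(n-1,2n)$ with the pivot set enlarged by the single element $i+1$; since $i+1$ occupies a fixed position in the linear order, each of the three products in such a relation acquires one and the same sign, so the relation is equivalent to a genuine Pl\"ucker relation satisfied by $X$. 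Hence $Y\in\Gr(n-2,2n-2)$, and together with the previous paragraph this gives both that $Y$ is well defined and that $X$ is recovered from $Y$.

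The substantive step is $Y\in\X_{n-1}$. Assume $i=2k-1$ is odd; the even case is entirely analogous (it is the odd case after rotating $[2n]$ by one step, which interchanges the roles of $\sigma$ and $\tsigma$). By the proof of Lemma~\ref{lem:fixedpoint}, $L_\sigma(X)=0$ unless $\bar k$ is isolated in $\sigma$, and deleting the singleton part $\{\bar k\}$ (with the obvious relabeling) defines a bijection $\phi$ from $\NC_{n-1}$ onto $\{\sigma\in\NC_n\mid \bar k\text{ isolated}\}$; put $L'_\rho:=L_{\phi(\rho)}(X)$. Writing $\sigma=\phi(\rho)$, the assertion $Y\in\X_{n-1}$ reduces to the combinatorial claim that, for $J\in\binom{[2n]\setminus\{i,i+1\}}{n-2}$ with relabeling $J'\subset[2n-2]$, the partition $\sigma$ is concordant with $J\cup\{i+1\}$ in $\NC_n$ if and only if $\rho$ is concordant with $J'$ in $\NC_{n-1}$. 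Granting this, $\Delta_J(Y)=\Delta_{J\cup\{i+1\}}(X)=\sum_{\sigma'}L_{\sigma'}(X)=\sum_{\rho'}L'_{\rho'}$, where the first sum runs over $\sigma'$ concordant with $J\cup\{i+1\}$ (equivalently, over such $\sigma'$ with $\bar k$ isolated, since $L_{\sigma'}=0$ otherwise) and the second over $\rho'$ concordant with $J'$; thus the coordinates of $Y$ lie in the slice defining $\X_{n-1}$, so $Y\in\X_{n-1}$.

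I expect the concordance equivalence to be the main obstacle — not deep, but demanding careful bookkeeping of how the relabeling of $[2n]\setminus\{i,i+1\}$ interacts with deleting $\bar k$ from $\sigma$ and from $\tsigma$ simultaneously. The points I would make are: the singleton part $\{\bar k\}$ of $\sigma$ is automatically concordance-compatible, since $\bar k=i\notin J\cup\{i+1\}$ and this part disappears on passing to $\rho$; because $\bar k$ is isolated, $\widetilde{k-1}=i-1$ and $\tilde k=i+1$ lie in a common part $Q_0$ of $\tsigma$, which becomes a part of $\tilde\rho$ with $i-1$ and $i+1$ fused to a single dual point, and since $i+1\in J\cup\{i+1\}$ the concordance condition at $Q_0$ only constrains $Q_0\setminus\{i+1\}$ (which still contains $i-1$), matching exactly the condition at the fused part of $\tilde\rho$ under the relabeling; and every other part of $\sigma$ and of $\tsigma$ avoids $\{i-1,i,i+1\}$ entirely and is matched with its relabel with no change in condition. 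Assembling these identifications (and dualizing them for even $i$) completes the argument.
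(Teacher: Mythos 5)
Your proposal is correct and follows essentially the same route as the paper's proof: nonvanishing and the Pl\"ucker relations descend to give $Y\in\Gr(n-2,2n-2)$, the slice condition is transported by setting $L'_\kappa = L_{\kappa\cup\{\bar k\}}$ for $\bar k$ isolated, and Lemma \ref{lem:fixedpoint} gives the reconstruction of $X$ from $Y$. You merely spell out the concordance bookkeeping (correctly) that the paper leaves implicit in the identity $\Delta_J(Y)=\sum_\kappa a_{J\kappa}L'_\kappa$.
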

\begin{proof}
As $f(i+1) = i+2n-1$, we have $\Delta_J(Y) \neq 0$ for some $J$.  Since $\Delta_I(X)$ satisfy the Pl\"ucker relations, it is clear that $\Delta_J(Y)$ satisfy the Pl\"ucker relations as well, so $Y \in \Gr(n-2,2n-2)$.  To see that $Y \in \X_{n-1}$, let us assume for simplicity that $i = 2k-1$.  Then we define $L'_\kappa = L_{\kappa \cup \{\bar k\}}$, where $\kappa$ is a non-crossing partition of $\{\bar 1,\bar 2,\ldots,\overline{k-1},\overline{k+1},\ldots,\bar n\}$ and $\bar k$ is isolated in $\kappa \cup \{\bar k\}$.  Then $\Delta_J(Y) = \sum_{\kappa} a_{J \kappa} L'_\kappa$.

Finally, Lemma \ref{lem:fixedpoint} shows that all $\Delta_{I}(X)$ can be recovered from $\Delta_{J \cup \{i+1\}}(X)$ for  $J$ varying over $(n-2)$-element subsets of $\{1,2,\ldots,i-1,i+2,\ldots,2n\}$.
\end{proof}


\begin{lemma}\label{lem:union}
Suppose $f \in \Elec(n)$.  Then 
$$
\{a+1 \mod 2n \mid a \in I_{i+1}(f)\}  \bigsqcup J_i(f) \bigsqcup \{i,i+1\} = [2n].
$$
\end{lemma}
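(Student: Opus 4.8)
The plan is to pass to the fixed-point-free permutation $g:=g_\tau$ attached to $\tau$, where $f=f_\tau$ by the lemma immediately preceding this one. Writing $f(x)=g(x)-1$, the permutation $g$ satisfies $x<g(x)<x+2n$ and $g(x+2n)=g(x)+2n$ for all $x$, and moreover $g\circ g$ equals the shift $x\mapsto x+2n$ --- this last identity following from $g_\tau=wg_0w^{-1}$ (Lemma~\ref{lem:wgw}) together with $g_0^2(x)=x+2n$, or directly from $\tau$ being an involution on $[2n]$. I would introduce $S:=\{b\in\Z\mid b\le i,\ g(b)\ge i+1\}$, the set of ``balls in the air just after time $i$''. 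Substituting $f=g-1$ into the definitions of the Grassmann and dual Grassmann necklaces, I would rewrite the three sets in the statement as follows: $\{a+1\bmod 2n\mid a\in I_{i+1}(f)\}$ becomes the landing set $\{g(b)\bmod 2n\mid b\in S\}$ with the point $i+1$ deleted, while $J_i(f)$ becomes the throwing set $\{b\bmod 2n\mid b\in S\}$ with the point $i$ deleted. The deleted points are legitimate: $i$ itself is the unique element of $S$ thrown at time $i$, and $g^{-1}(i+1)$ is the unique element of $S$ landing at time $i+1$, and (as noted below) the reduction mod $2n$ is injective on both $S$ and $g(S)$.

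With this rewriting the claim reduces to a clean statement: the throwing set $T:=\{b\bmod 2n\mid b\in S\}$ and the landing set $L:=\{g(b)\bmod 2n\mid b\in S\}$ form a partition of $[2n]$; reinstating $i$ into $T$ and $i+1$ into $L$ then recovers $A\sqcup B\sqcup C=[2n]$ exactly, in the notation of the lemma. First I would record the window estimates: since $b\le i$, $g(b)\ge i+1$ and $b<g(b)<b+2n$, every $b\in S$ lies in $(i-2n,\,i]$ and every $g(b)$ lies in $(i,\,i+2n)$. Hence the $2n$-reductions are injective on $S$ and on $g(S)$, and $|T|=|L|=|S|=n$ (the cardinality following from $|I_{i+1}(f)|=|J_i(f)|=n-1$ together with the fact that $i\notin J_i(f)$ and $i+1\notin\{a+1\bmod 2n\mid a\in I_{i+1}(f)\}$, both forced by $f(x)\le x+2n-2$). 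Since $|T|+|L|=2n$, it is enough to prove $T\cap L=\emptyset$.

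This disjointness is the crux, and is where membership in $\Elec(n)$ is genuinely used. Suppose $b,b'\in S$ with $g(b')\equiv b\pmod{2n}$. The two window estimates leave only the possibility $g(b')=b+2n$ exactly; applying $g$ and using $g(x+2n)=g(x)+2n$ and $g\circ g=(x\mapsto x+2n)$ then gives $g(b)=b'$. But $b\in S$ forces $g(b)\ge i+1$ while $b'\in S$ forces $b'\le i$, a contradiction. Therefore $T\cap L=\emptyset$, so $T\sqcup L=[2n]$, and the lemma follows.

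I expect the only real difficulty to be this last step: the window count that pins $g(b')=b+2n$ and the application of the quadratic relation $g_\tau\circ g_\tau=(x\mapsto x+2n)$. For an arbitrary bounded affine permutation of type $(n-1,2n)$ the three sets need not be pairwise disjoint, so the structure of $\Elec(n)$ must enter precisely there and nowhere else; everything around it is bookkeeping with the juggling/necklace dictionary already set up in the paper.
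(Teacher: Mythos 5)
Your proof is correct and is essentially the paper's own argument in different clothing: the paper verifies the disjointness directly from the defining relation $f(f(b)+1)\equiv b-1\pmod{2n}$ of $\Elec(n)$ (which is exactly your identity $g_\tau\circ g_\tau=(x\mapsto x+2n)$) together with the boundedness $f(x)\le x+2n-2$, and then concludes by the same cardinality count $(n-1)+(n-1)+2=2n$. Your ``throw set / land set of the balls in the air'' packaging is a tidy way of organizing that computation, but it is not a genuinely different route.
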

\begin{proof}
Set $I = I_{i+1}(f)$ and $J = J_i(f)$.  Suppose $a \in I$ and $f(b) = a$ where $b < i+1 \leq a$.  Then  $f(a+1) = b+2n-1 < i+2n$, so $a+1 \mod n \notin J$.  Thus $\{a+1 \mod n \mid a \in I\}  \cap J = \emptyset$.  Also $i-1,i \notin I$ because $f(a) < a+2n-1$ for all $a \in \Z$.  Similarly $\{i,i+1\} \notin J$.  So the stated union is disjoint, and counting shows that we the union is $[2n]$.
\end{proof}

\begin{lemma}\label{lem:IJ}
Let $f \in \Elec(n)$ be an electrical affine permutation.   Suppose there exists $i$ such that $i < f(i) < f(i+1)$.  Then $i+1 \in I_{i+1}(f)$ and $(i-1) \in J_i(f)$.  Also $f(i+1) < i+2n-1$.
\end{lemma}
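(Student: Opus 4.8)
The plan is to argue directly from the two defining axioms of an electrical affine permutation $f=f_\tau\in\Elec(n)$: the boundedness $i\le f(i)\le i+2n-2$, and the relation that $f(i)=j$ forces $f(j+1)\equiv i-1\pmod{2n}$. I will combine these with the usual descriptions of the (dual) Grassmann necklace entries, $I_a(f)=\{f(b)\bmod 2n\mid b<a,\ f(b)\ge a\}$ and $J_b(f)=\{a\bmod 2n\mid a<b,\ f(a)\ge b\}$; by boundedness the relevant $b$ (resp.\ $a$) ranges over a window of length $2n$ below $a$ (resp.\ $b$). Two preliminary remarks: first, I may assume $n\ge 2$, since $\Elec(1)$ consists only of the permutation $i\mapsto i$, which admits no index with $i<f(i)$; second, reading $i<f(i)<f(i+1)$ with integer values gives $f(i)\ge i+1$ and hence $f(i+1)\ge f(i)+1\ge i+2$.

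I will prove the three assertions in the order: $f(i+1)<i+2n-1$, then $i+1\in I_{i+1}(f)$, then $(i-1)\in J_i(f)$, since the last depends on the first. For $f(i+1)<i+2n-1$, boundedness already gives $f(i+1)\le i+2n-1$, so I only need to exclude equality; if $f(i+1)=i+2n-1$, axiom (2) applied with $j=f(i+1)=i+2n-1$ gives $f(i+2n)\equiv i\pmod{2n}$, and periodicity rewrites this as $f(i)\equiv i\pmod{2n}$, which with boundedness forces $f(i)=i$, contradicting $f(i)>i$.

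For $i+1\in I_{i+1}(f)$, I take $b$ to be the unique integer with $f(b)=i+1$. Since $b\le f(b)=i+1$ and $f(i+1)\ge i+2\ne i+1$ (so $b\ne i+1$), I get $b\le i$; and $f(b)=i+1\le b+2n-2$ gives $b\ge i+3-2n$. Thus $b$ lies in the window $\{i+1-2n,\dots,i\}$ used to compute $I_{i+1}(f)$, and the membership condition $f(b)\ge i+1$ holds, so $i+1\in I_{i+1}(f)$. For $(i-1)\in J_i(f)$, it suffices to show $f(i-1)\ge i$; boundedness gives $f(i-1)\ge i-1$, so I must rule out $f(i-1)=i-1$. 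If $f(i-1)=i-1$, then axiom (2) with $j=f(i-1)=i-1$ gives $f(i)\equiv i-2\pmod{2n}$, which with $i\le f(i)\le i+2n-2$ forces $f(i)=i+2n-2$; then $f(i+1)>f(i)=i+2n-2$ together with boundedness forces $f(i+1)=i+2n-1$, contradicting the first assertion.

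I do not expect a real obstacle here: each assertion reduces to a one-line consequence of the two axioms. The only points requiring care are the bookkeeping — pinning down the length-$2n$ index windows for $I_{i+1}$ and $J_i$ and reducing correctly modulo $2n$ — and checking that the degenerate case $n=1$ is genuinely vacuous, so that $i-1$, $i$, $i+1$ are distinct mod $2n$ throughout.
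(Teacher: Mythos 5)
Your proof is correct and follows essentially the same route as the paper's: bound $f(i+1)$ by excluding $f(i+1)=i+2n-1$ via axiom (2) and periodicity, and rule out $f(i-1)=i-1$ by showing it would force $f(i)=i+2n-2$, which is incompatible with $f(i)<f(i+1)<i+2n-1$. The only difference is that you spell out the juggling-window bookkeeping for $i+1\in I_{i+1}(f)$, which the paper leaves implicit.
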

\begin{proof}
Since $f \in \Elec(n)$, we have $f(i+1) \leq i+2n-1$.  If $f(i+1) = i+2n-1$ we would get $f(i) = i$, contradicting the asusmption.

Set $I = I_{i+1}(f)$ and $J = J_i(f)$.  We have $i+1 \in I$ since $f(i+1) > i+1$.  We repeatedly use the definition (Section \ref{sec:elecaffine}) of electrical affine permutation in the following.  Suppose $f(i-1) = i-1$.  Then $f(i) = i+2n-2$.  This is impossible since $f(i) < f(i+1)$ and $f(i+1) < i+2n-1$.  So $f(i-1) > i-1$.  This gives $(i-1) \in J$.
\end{proof}

In the situation of Lemma \ref{lem:IJ}, define 
\begin{equation}\label{eq:IJ}
\begin{aligned}
I &=I_{i+1}(f) &\qquad  I' &= I - \{i+1\} \cup \{i\} \\ 
J&=J_{i}(f) & \qquad J' &= J - \{i-1\} \cup \{i\}.
\end{aligned}
\end{equation}
Recall that $\M(\sigma)$ denotes the collection of subsets concordant with $\sigma$, and $\I(\sigma)$ is the Grassmann necklace associated with $\sigma$.  Also recall that $\E(I)$ is the set of non-crossing partitions concordant with $I$.  If $f = f_\tau \in \Elec(n)$, we write $\E(f):=\E(\tau)$ to be the electroid of $E_\tau$.


\begin{lemma}\label{lem:CIJ}
In the above situation, we have 
$$
\E(I) \cap \E(f) = \E(J) \cap \E(f) \qquad \text{and} \qquad \E(I') \cap \E(f) = \E(J') \cap \E(f).
$$
\end{lemma}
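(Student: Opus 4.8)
The plan is to prove both equalities pointwise: for a fixed $\sigma \in \E(f)$ I will show that $\sigma$ is concordant with $I$ iff it is concordant with $J$, and likewise for the pair $I',J'$. Write $\bar K := [2n]\setminus K$. Throughout I use the reformulation, immediate from Lemma \ref{lem:dualpart}, that for $|K| = n-1$ one has $|\bar K| = |\sigma|+|\tsigma|$, so $\sigma$ is concordant with $K$ exactly when $\bar K$ is a transversal of the blocks of $\sigma$ together with the blocks of $\tsigma$ (meets each in exactly one point) — equivalently, by counting, in at most one point, or in at least one. I also use two facts from Proposition \ref{prop:Msigma}: $\sigma$ is concordant with $K$ iff $K \geq_a I_a(\sigma)$ for all $a$, where $\I(\sigma)=(I_1,\dots,I_{2n})$ is the Catalan necklace of $\sigma$; and $\sigma$ is concordant with each of its own necklace entries $I_a(\sigma)$.

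The key reduction is: for $\sigma \in \E(f)$, $\sigma$ is concordant with $I = I_{i+1}(f)$ if and only if $I_{i+1}(\sigma) = I_{i+1}(f)$. One direction is immediate: if $I_{i+1}(\sigma) = I$, then $\sigma$ is concordant with $I$, being concordant with $I_{i+1}(\sigma)$. For the converse, observe that $\sigma \in \E(f)$ forces $I_a(\sigma) \geq_a I_a(f)$ for every $a$. Indeed, pick any $\L \in E_\tau$ (so $L_\sigma>0$ since $\sigma\in\E(\tau)$; such $\L$ exist by Proposition \ref{prop:cactusparam}) and set $X = \iota(\L) \in \oPi_{f_\tau}\cap\Gr(n-1,2n)_{\geq 0}$ (Proposition \ref{prop:ftau}); then by Theorem \ref{thm:concordant}, $\Delta_{I_a(\sigma)}(X) = \sum_{\sigma'\in\E(I_a(\sigma))} L_{\sigma'} \geq L_\sigma > 0$, so $I_a(\sigma)$ lies in the positroid $\M(f_\tau)$ and hence $I_a(\sigma)\geq_a I_a(f)$ by \eqref{eq:Oh} and Theorem \ref{thm:TNNmain}. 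Consequently, if $\sigma$ is concordant with $I$ then $I \geq_{i+1} I_{i+1}(\sigma) \geq_{i+1} I_{i+1}(f) = I$, forcing $I_{i+1}(\sigma) = I$. The identical argument with the dual Grassmann necklace $\J(\cdot)$ in place of $\I(\cdot)$ — using the complementary description of the positroid $\M(\sigma)$ by its dual Grassmann necklace and the appropriate cyclically shifted order, together with the fact that $\sigma$ is concordant with each dual necklace entry $J_a(\sigma)$ — yields: for $\sigma\in\E(f)$, $\sigma$ is concordant with $J = J_i(f)$ if and only if $J_i(\sigma) = J_i(f)$.

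Granting these two reductions, the first equality follows at once from Lemma \ref{lem:union}, which (requiring only $g\in\Elec(n)$, with no hypothesis on $i$) asserts $J_i(g) = [2n]\setminus\big(\{i,i+1\}\cup\{a+1\bmod 2n : a\in I_{i+1}(g)\}\big)$. Applying this both to $g=f$ and to the electrical affine permutation $g = f_{\tau(\sigma)}$ shows $I_{i+1}(\sigma)=I_{i+1}(f)\iff J_i(\sigma)=J_i(f)$, whence $\E(I)\cap\E(f) = \{\sigma\in\E(f): I_{i+1}(\sigma)=I_{i+1}(f)\} = \{\sigma\in\E(f): J_i(\sigma)=J_i(f)\} = \E(J)\cap\E(f)$. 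For the second equality the point is that $I' = I_{i+1}(f)\cup\{i\}\setminus\{i+1\}$ and $J' = J_i(f)\cup\{i\}\setminus\{i-1\}$ are precisely the subsets appearing alongside $I_{i+1}(f)$ and $J_i(f)$ in the reduction of Proposition \ref{prop:reduce}; I would show that they play, for the points obtained after the corresponding $x_i$- and $y_{i-1}$-moves, the same role that $I$ and $J$ play for $X$, so that the argument of the previous two paragraphs applies to them verbatim — or, failing a clean such identification, handle them by the direct transversal computation of the first paragraph, writing $\bar{I'} = \bar I\cup\{i+1\}\setminus\{i\}$ and $\bar{J'} = \bar J\cup\{i-1\}\setminus\{i\}$ and checking, using Lemma \ref{lem:union} and the membership facts of Lemma \ref{lem:IJ} ($i+1\in I$, $i-1\in J$, $i-1,i\notin I$, $i,i+1\notin J$), that $\bar{I'}$ is a transversal of $(\sigma,\tsigma)$ iff $\bar{J'}$ is.

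The main obstacle I anticipate lies in the dual half of the second paragraph together with the $I',J'$ case: making precise the dual-Grassmann-necklace characterization of the positroid $\M(\sigma)$ (with the correct cyclically shifted order, and the statement that $\sigma$ is concordant with its dual necklace entries), and then either fitting $I',J'$ into the necklace framework or carrying out the local transversal bookkeeping at the three positions $i-1,i,i+1$. In the latter approach the only non-formal input is again the hypothesis $\sigma\in\E(f)$: it is exactly this that pins down which endpoint of a block of $\sigma$ or of $\tsigma$ is extremal with respect to $\leq_{i+1}$, and hence that forces the two candidate transversals built from $\bar I$ and $\bar J$ (respectively $\bar{I'}$ and $\bar{J'}$) to have the same block-covering pattern.
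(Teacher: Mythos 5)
Your treatment of the first equality is essentially sound and follows the same route as the paper: you reduce membership in $\E(I)\cap\E(f)$ to the single condition $I_{i+1}(\sigma)=I_{i+1}(f)$ (the paper phrases this as $|\E(I)\cap\E(f)|\leq 1$ via the two-sided inequality $I \geq_{i+1} I_{i+1}(\sigma)\geq_{i+1} I_{i+1}(f)=I$, which is exactly your chain), and then transport this to $J$ via Lemma \ref{lem:union} applied to both $f$ and $f_{\tau(\sigma)}$ — the paper does the same thing with the $U$/$D$-sequence of $\tau(\sigma)$. Your justification of $I_a(\sigma)\geq_a I_a(f)$ through $\Delta_{I_a(\sigma)}(X)\geq L_\sigma>0$ is legitimate and non-circular. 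The dual-necklace half (concordance with $J$ iff $J_i(\sigma)=J_i(f)$) is left at the same level of "similarly" as the paper itself, so I won't count that against you.

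The genuine gap is the second equality. Your first proposed strategy — that the argument for $I,J$ "applies verbatim" to $I',J'$ — cannot work: the whole mechanism of the first equality is that $I_{i+1}(f)$ is the $\leq_{i+1}$-minimal element of the positroid $\M(f)$, which pins $\sigma$ down uniquely; $I'=I-\{i+1\}\cup\{i\}$ is not a Grassmann necklace entry of $f$, enjoys no such minimality, and $\E(I')\cap\E(f)$ is in general \emph{not} a singleton (any non-crossing $\kappa$ obtained by splitting the part of $\sigma$ containing both $\bar k$ and $\overline{k+1}$ into two pieces can occur). Your fallback — local transversal bookkeeping at positions $i-1,i,i+1$ — is also insufficient as stated, because $[2n]\setminus I'$ and $[2n]\setminus J'$ differ globally (by Lemma \ref{lem:union}, $[2n]\setminus J$ is essentially the shift by one of $I$, not of $[2n]\setminus I$), so one cannot compare their transversality for an arbitrary $\kappa\in\E(f)$ by inspecting three positions. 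The missing idea, which the paper supplies, is structural: the hypothesis $i<f(i)<f(i+1)$ forces the medial pairing $\tau$ to join $i$ to $i+1$, so one may choose a critical network in $E_\tau$ with a boundary edge (or spike) between $\bar k$ and $\overline{k+1}$; then every $\kappa\in\E(I')\cap\E(f)$ glues (along that edge) to the \emph{unique} $\sigma\in\E(I)\cap\E(f)$, and the containment $\kappa\in\E(J')$ is read off from $\sigma\in\E(J)$ by a genuinely local modification of the transversal. Without establishing this gluing relationship between the elements of $\E(I')\cap\E(f)$ and the distinguished $\sigma$, the second equality does not follow from what you have.
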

\begin{proof}
Suppose $\sigma \in \E(I) \cap \E(f)$.  Then $I \in \M(\sigma)$, so $I_{i+1}(\sigma) \leq_{i+1} I$.  But we also have $\I(f) \geq \I(\sigma)$, so $I = I_{i+1}(f) \leq_{i+1} I_{i+1}(\sigma) \leq_{i+1} I$ implies that $I_{i+1}(\sigma) = I$.  Thus $|\E(I) \cap \E(f)| \leq 1$.  Similarly, $|\E(J) \cap \E(f)| \leq 1$.  On the other hand, $\E(I) \cap \E(f)$ is non-empty because any $X \in \X \cap \oPi_f$ satisfies $\Delta_I(X) \neq 0$, so $L_\sigma \neq 0$ for some $\sigma \in \E(I) \cap \E(f) \neq 0$.  Similarly $|\E(J) \cap \E(f)| = 1$.

Let $\sigma$ be given by $I_{i+1}(\sigma) = I$.  We claim that $\sigma \in \E(I) \cap \E(f)$ (and a similar statement holds for $J$).  We show that $\sigma$ given by $I_{i+1}(\sigma) = I$ also satisfies $J_i(\sigma) = J$.  Let $\tau$ be the non-crossing matching corresponding to $\sigma$, thought of as a sequence of $U$-s and $D$-s forming a Dyck path, where $i+1 \in [2n]$ is taken to be the start (and always a $U$).  Then $\tau$ has $U$-s at positions
$$
\{i+1\} \cup \{a+1 \mid a \in I\}.
$$
Let $\tau'$ be the non-crossing matching such that $J_i(\tau') : = J_i(f_{\tau'}) = J$.  Again think of $\tau'$ as a sequence of $U$-s and $D$-s starting at $i+1$.  Then $\tau'$ has $D$-s in the positions specified by $J \cup \{i\}$.  By Lemma \ref{lem:union}, $\tau = \tau'$.  So we have shown that $\E(I) \cap \E(f) = \E(J) \cap \E(f)$.

We now claim that the assumption $i < f(i) < f(i+1) \leq i+2n$ implies that $\tau$ has the property that $i$ is always joined to $i+1$.    To see this suppose $f = f_\eta$.  Then the $U$, $D$-sequence of $\tau$ is obtained from $\eta$ as follows: starting from $i+1$ and going clockwise, we write a $U$ whenever we encounter an endpoint of a strand in $\eta$ the first time, and a $D$ whenever we encounter the endpoint the second time.  The inequalities imply that the strands $T_i$ and $T_{i+1}$ in $\eta$ starting at $i$ and $i+1$ intersect, from which the claim follows.

Now suppose $\kappa \in \E(I') \cap \E(f)$.  For simplicity, we assume for the rest of the proof that $i = 2k$ is even, with the odd case being analogous. The assumptions imply that in the electrical cell corresponding to $f$, we can find a critical electrical network $\Gamma$ with an edge $e$ joining $\bar k$ to $\overline{k+1}$.  Since $i+1, i-1 \notin I'$, but $i \in I'$ we deduce that $\bar k$ and $\overline{k+1}$ do not belong to the same part of $\kappa$.  Let $\kappa'$ be obtained by gluing the parts of $\kappa$ containing $\bar k$ and $\overline{k+1}$.  We claim that $\kappa' = \sigma$.  Any grove in $\Gamma$ with boundary partition $\kappa$ does not use the edge $e$, and adding the edge $e$ gives a grove with boundary partition $\kappa'$.  It follows that $\kappa' \in \E(f)$.  Furthermore, $\kappa' \in \E(I)$: the new part in $\kappa'$ containing $\bar k$ and $\overline{k+1}$, when considered as a subset of $[2n]$, intersects $[2n]\setminus I$ in exactly $i-1$, while in the dual partition $\tilde{\kappa}'$ the part containing $\tilde k$, considered as a subset of $[2n]$, intersects $[2n]\setminus I$ in exactly $i$.  Thus $\sigma$ is obtained from $\kappa$ by gluing the parts containing $\bar k$ and $\overline{k+1}$.  But $\sigma \in \E(J)$, and it is clear from this description of $\kappa$ that $\kappa \in \E(J')$ as well.  The other inclusion is proved in an identical manner, so we deduce that $\E(I') \cap \E(f) = \E(J') \cap \E(f)$.

(In fact, $\tau(\kappa)$ is obtained from $\tau$ above by replacing edges $(i,i+1), (a < b)$ of $\tau$ by edges $(i,b), (i+1,a)$).  
\end{proof}

\begin{proposition}\label{prop:elecreduce}
Suppose $f \in \Elec(n)$ and $i < f(i) < f(i+1)$, and define $I,I',J,J'$ as in \eqref{eq:IJ}.  Then $s_{i-1} f s_i > s_{i-1}f, fs_i > f$ and $f' = s_{i-1} f s_i \in \Elec(n)$. 
For $X \in \X \cap \oPi_f$, we have
$$
\Delta_I(X) = \Delta_J(X) \qquad \text{ and } \qquad \Delta_{I'}(X) = \Delta_{J'}(X).
$$
If $a = \frac{\Delta_I(X)}{\Delta_{I'}(X)} = \frac{\Delta_J(X)}{\Delta_{J'}(X)}$ is well defined then
$$
X' := u_i(-a) \cdot X \in \X \cap \oPi_{f'}.
$$
Furthermore, if $X \in \X_{\geq 0}$, then $a$ is always well-defined, and $X' \in \X_{\geq 0}$ as well.

\end{proposition}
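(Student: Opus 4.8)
\emph{Overview and the combinatorial assertions.} The plan is to factor $u_i(-a)=x_i(-a)\,y_{i-1}(-a)$ and feed the two factors, in order, into Proposition~\ref{prop:reduce}(1) and (2). First the claims about $f'$. Write $f=f_\tau$. By Lemma~\ref{lem:IJ} the hypothesis $i<f(i)<f(i+1)$ upgrades to $i<f(i)<f(i+1)<i+2n-1$, and, read off a lensless representative $G$ of $\tau$, this says exactly that the strands $T_i$ and $T_{i+1}$ cross. Uncrossing that crossing so that $i$ is joined to $\tau(i+1)$ and $i+1$ to $\tau(i)$ gives a medial graph $G'$; since the boundary arc strictly between $i$ and $i+1$ is empty, Lemma~\ref{lem:covers} shows $G'$ is lensless, so $\tau':=\tau(G')\lessdot\tau$ in $P_n$. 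A bookkeeping check on $G'$ — the same one carried out in the boundary-spike / boundary-edge discussion inside the proof of Proposition~\ref{prop:ftau}, with a moment's care in the adjacent-strand degenerate cases — identifies $f_{\tau'}=s_{i-1}f_\tau s_i$, so $f'=s_{i-1}fs_i=f_{\tau'}\in\Elec(n)$. Now $f(i)<f(i+1)$ gives $fs_i>f$ at once; $\ell(f')=\ell(f)+2$ since $\ell(f_\tau)=\ell(g_\tau)$ (the inversions of $g_\tau$ and $f_\tau(i)=g_\tau(i)-1$ coincide) and $c(\tau')=c(\tau)-1$ by Theorem~\ref{thm:poset}; and a length count on $f'=(s_{i-1}f)s_i=s_{i-1}(fs_i)$, together with $\ell(s_{i-1}f)=\ell(f)\pm1$, then forces $s_{i-1}f>f$, $fs_i\lessdot f'$ and $s_{i-1}f\lessdot f'$.

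\emph{The Pl\"ucker identities.} Since $X\in\X=\Gr(n-1,2n)\cap\H$, there is a well-defined $\L(X)=(L_\sigma(X))\in\P^{\NC_n}$ with $\Delta_K(X)=\sum_{\sigma\in\E(K)}L_\sigma(X)$. As already used in the proof of Lemma~\ref{lem:CIJ}, for $X\in\X\cap\oPi_f$ one has $L_\sigma(X)=0$ unless $\sigma\in\E(f)$: inverting, in each cyclic order $\leq_a$, the triangular system $\Delta_{I_a(\sigma)}(X)=L_\sigma(X)+\sum_{\sigma'<_a\sigma,\ \sigma'\text{ conc. }I_a(\sigma)}L_{\sigma'}(X)$ of Proposition~\ref{prop:injection}, and using that $\Delta_{I_a(\sigma')}(X)=0$ whenever $I_a(\sigma')\notin\M(f)$, shows that $L_\sigma(X)\neq0$ forces $\I(\sigma)\geq\I(f)$, hence (Theorem~\ref{thm:Grassorder}, Corollary~\ref{cor:order}) $\tau(\sigma)\leq\tau$, hence $\sigma\in\E(\tau)=\E(f)$ since the electroid only grows under the crossing moves leading from $\tau(\sigma)$ up to $\tau$. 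Therefore $\Delta_I(X)=\sum_{\sigma\in\E(I)\cap\E(f)}L_\sigma(X)$ and likewise for $J,I',J'$, and Lemma~\ref{lem:CIJ} ($\E(I)\cap\E(f)=\E(J)\cap\E(f)$ and $\E(I')\cap\E(f)=\E(J')\cap\E(f)$) gives $\Delta_I(X)=\Delta_J(X)$ and $\Delta_{I'}(X)=\Delta_{J'}(X)$.

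\emph{The reduction and nonnegativity.} Suppose $a=\Delta_I(X)/\Delta_{I'}(X)$ is well-defined. Since $I=I_{i+1}(f)$ and $I'=I_{i+1}(f)\cup\{i\}-\{i+1\}$, and $i<f(i)<f(i+1)<i+2n+1$ by Lemma~\ref{lem:IJ}, Proposition~\ref{prop:reduce}(1) gives $X\cdot x_i(-a)\in\oPi_{fs_i}$. One checks that $fs_i$ has the same set $J_i$ and the same preimages of $i-1$ and $i$ as $f$, and that $\Delta_J$ and $\Delta_{J'}$ are unchanged by $x_i(-a)$ (neither $J=J_i(f)$ nor $J'$ contains $i+1$, by Lemma~\ref{lem:union}), so the hypotheses of Proposition~\ref{prop:reduce}(2) at index $i-1$ hold for $X\cdot x_i(-a)$ with the relevant ratio again equal to $a$; hence $X'=(X\cdot x_i(-a))\cdot y_{i-1}(-a)=X\cdot u_i(-a)\in\oPi_{s_{i-1}fs_i}=\oPi_{f'}$, and $X'\in\X$ by Proposition~\ref{prop:closed}. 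If moreover $X\in\X_{\geq0}$, then $a\geq0$ is well-defined by the nonnegativity clause of Proposition~\ref{prop:reduce}(1), and the nonnegativity clauses of Proposition~\ref{prop:reduce}(1) and then (2) keep $X\cdot x_i(-a)$ and $X'$ in $\Gr(n-1,2n)_{\geq0}$, so $X'\in\X\cap\Gr(n-1,2n)_{\geq0}=\X_{\geq0}$.

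The hard part will be the very first step: confirming that the abstract element $s_{i-1}fs_i$ really is $f_{\tau'}$ for the uncrossing $\tau'\lessdot\tau$ and extracting the one-step covering relations, which is fiddly medial-graph bookkeeping closely patterned on the proof of Proposition~\ref{prop:ftau}. Everything afterwards is an orchestration of Proposition~\ref{prop:reduce}, Lemma~\ref{lem:CIJ} and Proposition~\ref{prop:closed}, plus the routine verification that the hypotheses of Proposition~\ref{prop:reduce}(1),(2) propagate along the two-step factorization.
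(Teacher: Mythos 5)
Your proposal is correct and, for the analytic core, follows the paper's proof exactly: the two Pl\"ucker equalities come from Lemma \ref{lem:CIJ} together with the fact that $L_\sigma(X)=0$ for $\sigma\notin\E(f)$ (a point the paper leaves implicit and you rightly spell out via the triangular system of Proposition \ref{prop:injection}), and the reduction is the same factorization $u_i(-a)=x_i(-a)y_{i-1}(-a)$ fed into Proposition \ref{prop:reduce}(1) then (2), with Lemma \ref{lem:union} guaranteeing that the first factor does not disturb $\Delta_J,\Delta_{J'}$, and Proposition \ref{prop:closed} keeping you in $\X$. The one place you diverge is the opening combinatorial step, which you route through medial graphs, uncrossings, and Lemma \ref{lem:covers}, and flag as ``the hard part'': the paper instead disposes of it in two lines purely algebraically, using the defining relation of $\Elec(n)$ to write $f(j+1)=i+2n-1$ and $f(j'+1)=i+2n$ for $j=f(i)$, $j'=f(i+1)$, whence $s_{i-1}f=f\,t_{j+1,j'+1}>f$ and, since $t_{j+1,j'+1}$ commutes with $s_i$, all the stated Bruhat inequalities and $f'\in\Elec(n)$ follow by inspection of values. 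Your medial-graph route does work, but it imports the bookkeeping of Theorem \ref{thm:poset}'s proof where a direct computation with the involution condition is both shorter and avoids the ``fiddly degenerate cases'' you mention.
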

\begin{proof}
Let $j = f(i)$ and $j' = f(i+1)$.  Then $i < i+1 < j+1 < j'+1 < i+2n$ by Lemma \ref{lem:IJ}.  We have $f(j+1) = i+2n-1$ and $f(j'+1) = i+2n$.  So $s_{i-1}f = f t_{j+1,j'+1} > f$.  Also, by $f(i) < f(i+1)$, we have $fs_i > f$.  Since $t_{j+1,j'+1}$ and $s_i$ commute, we conclude that $s_{i-1} f s_i > s_{i-1}f, fs_i > f$.

The well-definedness of $I'$ and $J'$ follow from Lemma \ref{lem:IJ}.  The equalities $\Delta_I(X) = \Delta_J(X)$ and  $\Delta_{I'}(X) = \Delta_{J'}(X)$ follow from Lemma \ref{lem:CIJ} and the definition of $\X$.  Since $u_i(-a) = x_i(-a)y_{i-1}(-a)$, the statement $X' \in \X \cap \oPi_{f'}$ follows from (two applications of) Proposition \ref{prop:reduce} and Proposition \ref{prop:closed}. Note that by Lemma \ref{lem:union}, $i+1 \notin J$ and $i+1 \notin J'$ so $\Delta_J(X) = \Delta_J(x_i(-a) \cdot X)$ and $\Delta_{J'}(X) = \Delta_{J'}(x_i(-a) \cdot X)$.

Finally, the last statement just from Proposition \ref{prop:reduce}.
\end{proof}

\subsection{Proof of Theorem \ref{thm:stratification}}
\label{sec:stratification}
Suppose $X \in \X \cap \oPi_f$.  We shall show that $f = f_\tau$ for some $\tau$.  We proceed by induction on the length of $f$.  If $\ell(f) = 0$, then $\oPi_f$ is the top positroid cell, and we know that $f = f_{\tau_\top}$.  

Now suppose $\ell(f) > 0$.  Then there is some $i$ such that $f(i) > f(i+1)$.  Then $f s_i < f$.  Let 
$$f' = \begin{cases} s_{i-1}fs_i & \mbox{ if $s_{i-1} f s_i < f s_i$} \\
f s_i. & \mbox{otherwise.}
\end{cases}
$$
Let $X' = u_i(a) \cdot X$ for a generic value of $a$.  It follows from \eqref{eq:perm} that $X' \in \oPi_{f'}$.  By Proposition \ref{prop:closed}, we have $X'\in \X \cap \oPi_{f'}$.  By the inductive hypothesis $f' = f_{\tau'}$ for some $\tau' \in P_n$.  

It is easy to check that $f'$ satisfies $i < f'(i) < f'(i+1)$.
By Proposition \ref{prop:elecreduce} we have that $\Delta_I(X') = \Delta_J(X')$ and $\Delta_{I'}(X') = \Delta_{J'}(X')$, where the subsets $I,J,I',J'$ are as in Proposition \ref{prop:elecreduce} but for $f'$.  But then (essentially by our construction of $X'$) we must have $a = \frac{\Delta_I(X)}{\Delta_{I'}(X)}$.  We conclude that $X$ lies in $\oPi_{s_{i-1}f' s_i}$ where $s_{i-1} f' s_i > s_{i-1} f', s_i f'> f'$.  Thus $f = s_{i-1} f' s_i$, and $f = f_{\tau}$ for some $\tau \in P_n$.

Thus we have a decomposition
$$
\X = \bigsqcup_{\tau \in P_n} \oX_{f_\tau}.
$$

\subsection{Closure partial order on electroid strata}
\label{sec:closure}
The following establishes the closure partial order claim of Theorem \ref{thm:realizability}. 

\begin{proposition}\label{prop:Pnorder}
We have
$$
\overline{E_\tau} = \bigsqcup_{\tau' \leq \tau} E_{\tau'}.
$$
\end{proposition}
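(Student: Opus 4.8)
The plan is to prove the two inclusions of $\overline{E_\tau} = \bigsqcup_{\tau'\le\tau}E_{\tau'}$ separately. The containment $\overline{E_\tau}\subseteq\bigsqcup_{\tau'\le\tau}E_{\tau'}$ I would deduce from the embedding $\iota$ into the Grassmannian together with the stratification results already established, while the reverse containment $\bigsqcup_{\tau'\le\tau}E_{\tau'}\subseteq\overline{E_\tau}$ is the ``degeneration'' direction, which I would prove by adding and then removing a single edge of an electrical network.

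For $\overline{E_\tau}\subseteq\bigsqcup_{\tau'\le\tau}E_{\tau'}$, I would argue as follows. Since $E_n$ is a closed subset of the compact space $\P^{\NC_n}$ it is compact, so the continuous map $\iota$ is a closed map; in particular $\iota(E_n)=\bigsqcup_{\tau''}\iota(E_{\tau''})$ is closed in $\Gr(n-1,2n)$ and $\iota(\overline{E_\tau})=\overline{\iota(E_\tau)}$. By Proposition~\ref{prop:ftau}, $\iota(E_{\tau''})\subseteq\oPi_{f_{\tau''}}$ for every $\tau''$. Now take any $X\in\overline{\iota(E_\tau)}$. On one hand $X$ lies in the closed set $\iota(E_n)$, hence in a unique cell $\iota(E_{\tau''})$, so $X\in\oPi_{f_{\tau''}}$. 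On the other hand $X\in\overline{\iota(E_\tau)}\subseteq\Pi_{f_\tau}$, because $\Pi_{f_\tau}$ is Zariski-closed, hence closed in the Hausdorff topology, and contains $\iota(E_\tau)$; by Theorem~\ref{thm:TNNmain}(1), $\Pi_{f_\tau}=\bigsqcup_{f\ge f_\tau}\oPi_f$. Therefore $f_{\tau''}\ge f_\tau$, and by Corollary~\ref{cor:order} this gives $\tau''\le\tau$. Thus $\overline{\iota(E_\tau)}\subseteq\bigsqcup_{\tau'\le\tau}\iota(E_{\tau'})$, and applying $\iota^{-1}$, which exists since $\iota$ is injective (Proposition~\ref{prop:injection}), yields $\overline{E_\tau}\subseteq\bigsqcup_{\tau'\le\tau}E_{\tau'}$.

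For $E_{\tau'}\subseteq\overline{E_\tau}$ when $\tau'\le\tau$, I would first reduce to a cover: since $P_n$ is graded one may pick a saturated chain $\tau'=\tau_0\lessdot\cdots\lessdot\tau_m=\tau$, and from $E_{\tau_j}\subseteq\overline{E_{\tau_{j+1}}}$ one gets inductively, using $\overline{\overline A}=\overline A$, that $E_{\tau'}\subseteq\overline{E_\tau}$. So suppose $\tau'\lessdot\tau$. By Proposition~\ref{prop:cactus} there is a critical cactus network $\Gamma_0$ with $\tau(\Gamma_0)=\tau$, and its medial graph has exactly $c(\tau)$ crossings, one for each edge. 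By Lemma~\ref{lem:covers} and the way the medial graph encodes an electrical network (Section~\ref{sec:medial}; this is the classical deletion/contraction dictionary of \cite{CIM,CGV} extended to cacti), the cover $\tau'\lessdot\tau$ corresponds to uncrossing the crossing at the medial vertex $t_e$ of a suitable edge $e$ of $\Gamma_0$: one of the two resolutions at $t_e$ is ``delete $e$'' and the other is ``contract $e$'', and the resolution matching our cover produces a critical network $\Gamma_1$ with $\tau(\Gamma_1)=\tau'$. Since all critical networks with medial pairing $\tau'$ are $Y$--$\Delta$ equivalent and $\L$ is invariant under such moves (Proposition~\ref{prop:cactus}(2), Proposition~\ref{prop:Lequiv}), every point of $E_{\tau'}$ is $\L(\Gamma_1(w))$ for some edge-weighting $w$. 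For $s>0$ let $\Gamma_s(w)$ be $\Gamma_0$ with the edge $e$ assigned weight $s$ and the remaining edges weighted by $w$; its underlying graph, hence its medial graph and medial pairing, is that of $\Gamma_0$, so $\L(\Gamma_s(w))\in E_\tau$. Splitting each grove sum according to whether the grove uses $e$ gives $L_\sigma(\Gamma_s(w))=s\,A_\sigma(w)+B_\sigma(w)$, so in $\P^{\NC_n}$ the point with homogeneous coordinates $L_\sigma(\Gamma_s(w))$ converges, as $s\to0$ in the deletion case or $s\to\infty$ in the contraction case, to the point with coordinates $B_\sigma(w)$, respectively $A_\sigma(w)$; in either case this is $\L(\Gamma_1(w))$, since the $B_\sigma$ count groves of $\Gamma_0\setminus e$ and the $A_\sigma$ count groves of $\Gamma_0$ using $e$, which biject boundary-partition-preservingly with groves of $\Gamma_0/e$. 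Hence $\L(\Gamma_1(w))\in\overline{E_\tau}$, proving $E_{\tau'}\subseteq\overline{E_\tau}$.

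I expect the main obstacle to be the combinatorial input in the second inclusion: checking that \emph{every} cover $\tau'\lessdot\tau$ of $P_n$ is induced by a single edge deletion or contraction of a critical network, i.e.\ that the uncrossing move of Lemma~\ref{lem:covers} on a lensless medial graph is exactly dual to deleting or contracting the network edge sitting at that medial crossing, and that the resulting network is again critical. This is classical Curtis--Ingerman--Morrow / Colin de Verdi\`ere--Gitler--Vertigan theory adapted to cactus networks, but it has to be stated and invoked carefully; one also needs that the coefficients $A_\sigma$ or $B_\sigma$ are not all zero, which holds because $\Gamma_1$ is a genuine cactus network and so $\L(\Gamma_1)\neq0$. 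The remaining ingredients — continuity of the $L_\sigma$ in the edge weights, and the point-set topology used in the first inclusion — are routine.
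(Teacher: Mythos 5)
Your proof is correct and takes essentially the same route as the paper's: the inclusion $\overline{E_\tau}\subseteq\bigsqcup_{\tau'\le\tau}E_{\tau'}$ via injectivity and continuity of $\iota$, Proposition \ref{prop:ftau}, the positroid closure order of Theorem \ref{thm:TNNmain}, and Corollary \ref{cor:order}; and the reverse inclusion by interpreting an uncrossing of the medial graph as deleting or contracting the corresponding network edge, i.e.\ sending its conductance to $0$ or $\infty$. The paper compresses this degeneration direction into a single sentence (``it follows easily from the definition of $P_n$''), whereas you spell out the reduction to covers and the projective limit of the grove coordinates $L_\sigma = sA_\sigma+B_\sigma$; this is added detail rather than a different argument.
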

\begin{proof}
Suppose $\tau' \leq \tau$.  Uncrossing a crossing in a medial graph $G$ corresponds to either contracting (that is, gluing the endpoints of) an edge $e$, or deleting that edge.  This corresponds respectively to taking the edge weight $w(e)$ to $\infty$, or to $0$.  It follows easily that from the definition of $P_n$ that if $\tau' \leq \tau$ then $E_{\tau'} \subset \overline{E_\tau}$.

Now for the converse suppose that $L \in \overline{E_\tau}$.  Since $\iota$ is injective (Proposition \ref{prop:injection}) and continuous, we have $\iota(L) \in \overline{\iota(E_\tau)}$.  By Proposition \ref{prop:ftau}, we have $\iota(E_\tau) \subset \oX_{f_\tau}$, and $\iota(L) \in \oX_{f_{\tau'}}$ for some $\tau'$.  By Theorem \ref{thm:TNNmain} we have $f_{\tau'} \geq f_\tau$.  By Corollary \ref{cor:order}, we have $\tau' \leq \tau$.
\end{proof}

Recall that $P_n$ is defined using uncrossings of lensless medial graphs that result in a lensless medial graph. As a consequence of Proposition \ref{prop:Pnorder}, we can show that more general uncrossings of medial graphs also lead to relations in $P_n$.  This result was first established by Alman, Lian, and Tran \cite{ALT}.  

\begin{cor}
Let $G$ be a lensless medial graph.  Let $G'$ be obtained from $G$ by uncrossing any number of crossings in $G$ in an arbitrary manner, and then removing lenses using loop removals and lens removals.  Then $\tau(G') \leq \tau(G)$ in $P_n$.
\end{cor}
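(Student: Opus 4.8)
The plan is to deduce the corollary from Proposition~\ref{prop:Pnorder} via the dictionary of Section~\ref{sec:medial} between medial graphs and cactus networks. First I would realize $G$ as the medial graph $G(\Gamma)$ of a critical cactus network $\Gamma$ with medial pairing $\tau:=\tau(G)$. Each crossing of $G$ is the four-valent vertex attached to an edge $e$ of $\Gamma$, and the two ways of uncrossing it are precisely the two smoothings of that vertex: one merges the two white regions meeting there, which corresponds to contracting $e$ (gluing its endpoints), and the other merges the two black regions, which corresponds to deleting $e$. Hence the prescribed uncrossings produce the medial graph $G(\Gamma')$ of the cactus network $\Gamma'$ obtained from $\Gamma$ by deleting and contracting the chosen edges; contractions may create loops or glue boundary vertices, but the outcome is still a cactus network. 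By Proposition~\ref{prop:cactus}(1), $\Gamma'$ is electrically equivalent to some critical cactus network $\Gamma''$, and $\tau(G')=\tau(\Gamma'')$ because removing loops and lenses from a medial graph never changes which pairs of boundary points are joined; by Proposition~\ref{prop:Lequiv}, $\L(\Gamma')=\L(\Gamma'')\in E_{\tau(G')}$.

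Next I would show that $\L(\Gamma')\in\overline{E_\tau}$. Fix a critical representative of $\Gamma$; for $s\in(0,1]$ let $\Gamma_s$ be the network with the same underlying graph in which every edge scheduled for deletion has its weight multiplied by $s$, every edge scheduled for contraction has its weight multiplied by $s^{-1}$, and all other weights unchanged. Since the medial graph depends only on the underlying graph, each $\Gamma_s$ is critical with medial pairing $\tau$, so $\L(\Gamma_s)\in E_\tau$ by Proposition~\ref{prop:cactusparam}. Let $m$ be the number of contracted edges. The $\Gamma_s$-weight of a grove $F$ of $\Gamma$ has order $s^{a-b}$, where $a$ and $b$ count the deleted and contracted edges used by $F$; this is $O(s^{-m})$, and equals $c\,s^{-m}$ with $c\neq 0$ exactly when $b=m$ and $a=0$, i.e.\ exactly when $F$ descends to a grove of $\Gamma'$. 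Therefore $L_\sigma(\Gamma_s)=s^{-m}\bigl(L_\sigma(\Gamma')+O(s)\bigr)$ for each $\sigma$, the common factor $s^{-m}$ cancels projectively, and (using that $\L(\Gamma')\in\P^{\NC_n}$ is a well-defined point, $\Gamma'$ being a cactus network) $\L(\Gamma_s)\to\L(\Gamma')$ in $\P^{\NC_n}$ as $s\to 0^+$. Hence $\L(\Gamma')\in\overline{E_\tau}$. Combining with the first paragraph, Proposition~\ref{prop:Pnorder} gives $\overline{E_\tau}=\bigsqcup_{\tau'\le\tau}E_{\tau'}$, and since also $\L(\Gamma')\in E_{\tau(G')}$ this forces $\tau(G')\le\tau(G)$ in $P_n$.

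The two points that require genuine care, rather than the logical skeleton, are: (i) the combinatorial verification that an uncrossing at a crossing of a medial graph realizes exactly edge deletion or contraction of the associated network, and that loop and lens removals fix the medial pairing — this is essentially already contained in the proof of Proposition~\ref{prop:Pnorder}, which I would cite; and (ii) the leading-order estimate identifying $\lim_{s\to 0^+}\L(\Gamma_s)$ with $\L(\Gamma')$ when several edge weights are simultaneously sent to $0$ and $\infty$. The latter is the main obstacle: one must check that no unexpected cancellation occurs among groves, which is exactly the observation that the minimal power $s^{-m}$ is attained only by the lifts of groves of $\Gamma'$. Everything else is routine translation through the medial-graph/network dictionary.
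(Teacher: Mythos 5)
Your overall strategy is the same as the paper's: identify uncrossings with deletion/contraction of edges of a critical cactus network, realize the result as a limit of points of $E_{\tau(G)}$, and invoke Proposition~\ref{prop:Pnorder}. However, the step you yourself single out as the main obstacle --- the leading-order identification $L_\sigma(\Gamma_s)=s^{-m}\bigl(L_\sigma(\Gamma')+O(s)\bigr)$ --- is false precisely in the cases that the clause ``then removing lenses using loop removals and lens removals'' exists to handle. If the set $C$ of contracted edges contains a cycle, no grove of $\Gamma$ can contain all $m$ edges of $C$ (a grove is a forest), so the exponent $-m$ is never attained and your formula would give $0$ for every $\sigma$; the true limit is governed by groves using a maximal forest of $C$, and one must check that the resulting prefactor (the spanning-tree weight of the contracted cycle) is common to all $\sigma$ and cancels projectively. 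Likewise, if the deletions disconnect some interior vertex from the boundary (say both edges at a degree-two interior vertex are deleted), then no grove of $\Gamma$ avoids all deleted edges, the minimum of $a-b$ is attained with $a>0$, and moreover $\Gamma'$ itself admits no groves at all, so $\L(\Gamma')$ is the zero vector and is not a point of $\P^{\NC_n}$: your parenthetical appeal to its well-definedness, and the use of Proposition~\ref{prop:Lequiv} to compare $\Gamma'$ with a critical $\Gamma''$, both break down there. These are exactly the situations in which the uncrossed medial graph acquires closed loops and lenses.

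The gap is fixable, and the fix is what the paper's terser proof gestures at: first convert the degenerate $\Gamma'$ into an honest cactus network using the reductions of Proposition~\ref{prop:SP} (a contracted cycle amounts to contracting a spanning tree of it and discarding the resulting self-loops; interior vertices cut off from the boundary are eliminated by pendant removals), and only then identify $\lim_{s\to 0^+}\L(\Gamma_s)$ with the grove measurement of the reduced network. As written, your key estimate fails already on contracting all three edges of a triangle, or on deleting both edges of a path $\bar 1 - v - \bar 2$.
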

\begin{proof}
Uncrossing crossings in $G$ corresponds to sending conductances of edges in $\Gamma$ to $0$ or to $\infty$.  Removing loops and lenses corresponds to using series/parallel reductions, loop removals, and pendant removals.  Using Proposition \ref{prop:cactusparam} we see that $E_{\tau(G')} \subset \overline{E_{\tau(G)}}$.  By Proposition \ref{prop:Pnorder} we have $\tau(G') \leq \tau(G)$.
\end{proof}

Once we prove that $\iota(E_\tau) = (\oX_{f_\tau})_{\geq 0}$, we shall immediately obtain that $\overline{\oX_{f_\tau}} = \bigsqcup_{\tau' \leq \tau} \oX_{f_{\tau'}}$ as well.

\subsection{Proof of realizability part of Theorem \ref{thm:realizability}}\label{sec:realizability}
Suppose $X \in \X_{\geq 0}$.  Suppose that $X \in \oPi_f$, where $f = f_\tau$.  We need to show that $X$ is realizable by a cactus network.  We proceed by induction first on $n$, and then on the codimension of $\oPi_f$.

First suppose that $f$ has a fixed point $f(i) =i$.  Again for simplicity we assume $i = 2k-1$ is odd.  Consider the point $Y \in (\X_{n-1})_{\geq 0}$ from Proposition \ref{prop:fixedpoint}.  By the inductive hypothesis, $Y$ is represented by a cactus network $\Gamma'$ on $\bar 1, \ldots, \overline{k-1}, \overline{k+1},\ldots, \bar n$.  Define $\Gamma$ to be the cactus network obtained from $\Gamma'$ by adding an isolated new vertex labeled $\bar k$ (between $\overline{k-1}$ and $\overline{k+1}$) to $\Gamma'$.  Note that if $\overline{k-1}$ and $\overline{k+1}$ are glued together in $\Gamma'$, then to obtain $\Gamma$ we have to make a new cactus disk and place $\bar k$ on the boundary of that disk away from $\overline{k-1}$ and $\overline{k+1}$.  It is straightforward to check that $\Gamma$ represents $X$.

Now suppose that $f$ has no fixed points.  Then we must be able to find $i$ such that $i < f(i) < f(i+1)$.  By Proposition \ref{prop:elecreduce}, we have a point $X' = u_i(-a) \cdot X$ which lies in $\oX_{s_{i-1}f s_i} \cap \Gr(n-1,2n)_{\geq 0}$, where $s_{i-1} f s_i \in P_n$ is greater than $f$.  By the inductive hypothesis, $X'$ is representable by a cactus network $\Gamma'$.  By Proposition \ref{prop:elecaction}, $\Gamma =v_i(a) \cdot \Gamma'$ represents $X$.  This completes the proof of the realizability statement in Theorem \ref{thm:realizability}.

\subsection{Electroids and partition necklaces}\label{sec:partnecklace}

By Proposition \ref{prop:Lequiv}, the electroid of a critical cactus network $\Gamma$ depends only on the medial pairing $\tau(\Gamma)$.  Recall that $\E(\tau)$ denotes the electroid of any cactus network $\Gamma \in E_\tau$.  

\begin{theorem}\label{thm:electroid}
Let $\eta \in P_n$.  Then 
$$
\E(\eta) = \{\sigma \mid \tau(\sigma) \leq \eta\}.
$$
\end{theorem}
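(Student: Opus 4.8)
The plan is to prove the two inclusions $\E(\eta) \subseteq \{\sigma \mid \tau(\sigma) \leq \eta\}$ and $\{\sigma \mid \tau(\sigma) \leq \eta\} \subseteq \E(\eta)$ separately, using the translation into positroid language established by Theorem \ref{thm:concordant} and Proposition \ref{prop:Msigma}. Fix a point $\L \in E_\eta$ with $\iota(\L) = X \in (\oX_{f_\eta})_{\geq 0}$, so that by Theorem \ref{thm:concordant} we have $\Delta_I(X) = \sum_{\sigma} a_{I\sigma} L_\sigma$ and hence $\E(\eta) = \{\sigma \mid L_\sigma \neq 0\}$. The key observation is that $\sigma \in \E(\eta)$ forces $\sigma$ to be concordant with some $I$ having $\Delta_I(X) \neq 0$; but concordance of $\sigma$ with $I$ means $I \in \M(\sigma)$, and by Proposition \ref{prop:Msigma}, $\M(\sigma)$ is the positroid of $f_{\tau(\sigma)}$. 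On the other hand, since $X \in \oPi_{f_\eta}$, the matroid $\M(X)$ equals $\M(f_\eta)$.

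For the forward inclusion: if $\sigma \in \E(\eta)$, then (taking $I = I_1(\sigma)$, which is concordant with $\sigma$ by Proposition \ref{prop:Msigma}, but more carefully) there is some $I$ concordant with $\sigma$ lying in $\M(X) = \M(f_\eta)$. I would argue that $\M(\sigma) \subseteq \M(f_\eta)$, i.e.\ every subset concordant with $\sigma$ is in the positroid of $f_\eta$: indeed, restricting the transition formula to Catalan subsets as in Proposition \ref{prop:injection}, $L_\sigma \neq 0$ implies $\Delta_{I_a(\sigma)}(X) \neq 0$ for all $a$ (the triangular system and cyclic symmetry), so $I_a(\sigma) \in \M(f_\eta)$ for every $a$, hence $\I(\sigma) \geq \I(f_\eta)$ in the Grassmann-necklace order, which by Theorem \ref{thm:Grassorder} gives $f_{\tau(\sigma)} \geq f_\eta$ in Bruhat order, and finally by Corollary \ref{cor:order} this is exactly $\tau(\sigma) \leq \eta$.

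For the reverse inclusion: suppose $\tau(\sigma) \leq \eta$. By Corollary \ref{cor:order} this gives $\I(f_{\tau(\sigma)}) \leq \I(f_\eta)$, so $I_a(\sigma) \leq_a I_a(f_\eta)$ for each $a$. I want to conclude $L_\sigma \neq 0$ for every point of $E_\eta$. The cleanest route is to use the parametrization $E_\eta \simeq \R^{c(\eta)}_{>0}$ (Proposition \ref{prop:cactusparam}) together with positivity: choose $\sigma' \in \E(\eta)$ to be minimal in $a$-shifted dominance order among elements of $\E(\eta)$ concordant with $I_a(f_\eta)$; by the forward inclusion $\tau(\sigma') \leq \eta$ and in fact $I_a(\sigma') = I_a(f_\eta)$, so $\sigma' \leq_a \sigma$. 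Now I would show that the electroid $\E(\eta)$ is closed downward under the operations that realize $\tau(\sigma) \leq \eta$, by tracking what happens to $L$-coordinates under the network operations $v_i$ (Proposition \ref{prop:closed}) and under degenerations $w(e) \to 0, \infty$: each cover $\tau' \lessdot \eta$ corresponds to such a degeneration, and the explicit formula for $L'_\sigma$ in Proposition \ref{prop:closed} shows grove coordinates can only be "spread out," never created from nothing, so $\E(\tau') \subseteq \E(\eta)$; iterating down the chain from $\eta$ to $\tau(\sigma)$, and using that $\sigma \in \E(\tau(\sigma))$ trivially (the cactus network $p_\sigma$ has $L_\sigma \neq 0$), gives $\sigma \in \E(\eta)$.

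The main obstacle I anticipate is the reverse inclusion — specifically, ensuring that the degeneration argument is airtight, i.e.\ that along a maximal chain $\tau(\sigma) = \tau_0 \lessdot \tau_1 \lessdot \cdots \lessdot \tau_m = \eta$ in $P_n$ each step genuinely has $\E(\tau_{i}) \subseteq \E(\tau_{i+1})$ rather than merely "generically." This should follow from Proposition \ref{prop:Pnorder}: since $E_{\tau_i} \subseteq \overline{E_{\tau_{i+1}}}$ and all $L_\sigma$ are continuous, any coordinate vanishing identically on $E_{\tau_{i+1}}$ vanishes on the closure, hence on $E_{\tau_i}$; contrapositively, $L_\sigma \not\equiv 0$ on $E_{\tau_i}$ (which holds for $\sigma \in \E(\tau_i)$ by Proposition \ref{prop:Lequiv}, as the electroid is constant on the cell) forces $L_\sigma \not\equiv 0$ on $E_{\tau_{i+1}}$, i.e.\ $\sigma \in \E(\tau_{i+1})$. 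Combining the two inclusions completes the proof; I would write it up in the order: (1) recall $\E(\eta)$ via concordance and positroids, (2) forward inclusion via Grassmann-necklace comparison and Corollary \ref{cor:order}, (3) reverse inclusion via the closure order of Proposition \ref{prop:Pnorder} and the base case $\sigma \in \E(\tau(\sigma))$.
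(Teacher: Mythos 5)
Your proof is correct, but it takes a different route from the paper's, which argues both inclusions purely combinatorially at the level of groves. For the forward inclusion the paper takes a grove $F$ with $\sigma(F)=\sigma$, contracts its edges and deletes the rest to degenerate $\Gamma$ to the hollow cactus $p_\sigma$, concluding $p_\sigma\in\overline{E_\eta}$ and hence $\tau(\sigma)\leq\eta$ by Proposition \ref{prop:Pnorder}; you instead pass through the Grassmannian, using nonnegativity and the concordance of $\sigma$ with $I_a(\sigma)$ to get $\Delta_{I_a(\sigma)}(X)\geq L_\sigma>0$ for every $a$, so $I_a(\sigma)\in\M(f_\eta)$, and then the necklace comparison plus Theorem \ref{thm:Grassorder} and Corollary \ref{cor:order} yield $\tau(\sigma)\leq\eta$. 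Both are valid; the paper's is more self-contained (it needs only Proposition \ref{prop:Pnorder}), while yours leans on the positroid machinery but makes the "why" transparent: nonvanishing of $L_\sigma$ forces the whole Catalan necklace of $\sigma$ into the positroid of $\eta$. For the reverse inclusion the paper exhibits an explicit grove with boundary partition $\sigma$ among the contracted edges of the degeneration realizing $\tau(\sigma)\leq\eta$; your continuity argument --- $p_\sigma\in E_{\tau(\sigma)}\subseteq\overline{E_\eta}$ by Proposition \ref{prop:Pnorder}, the vanishing locus of a homogeneous coordinate is closed, and $L_\sigma(p_\sigma)\neq 0$, so $L_\sigma$ cannot vanish on $E_\eta$ --- sidesteps that combinatorial step entirely and is arguably cleaner (note you do not even need to walk down a chain of covers; the one-step closure relation $E_{\tau(\sigma)}\subseteq\overline{E_\eta}$ suffices, and the digression about minimal $\sigma'$ in shifted dominance order is not needed).
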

\begin{proof}
Let $\Gamma$ be a cactus network representing $\L \in E_\eta$.  Suppose $L_\sigma(\Gamma) \neq 0$.  Let $F$ be a grove in $\Gamma$ with boundary partition $\sigma$.  Removing the edges in in $\Gamma \setminus F$, and contracting the edges in $F$ we obtain a cactus network $\Gamma'$ with no edges.  It is clear that $\L(\Gamma') = p_\sigma$.  It follows from Proposition \ref{prop:Pnorder} that $\tau(\sigma) \leq \eta$.  Thus $\E(\L) \subseteq \{\sigma \mid \tau(\sigma) \leq \eta\}$.

Conversely, suppose $\tau(\sigma) \leq \eta$.  Then by definition of the partial order $P_n$ there is a way to contract and delete edges in $\Gamma$ to get a cactus network $\Gamma'$ such that $\L(\Gamma') \in E_{\tau(\sigma)} = p_\sigma$.  We can always find a grove $F$ with boundary partition $\sigma$ among the contracted edges, and it follows that $\sigma \in \E(\L)$.
\end{proof}

For example, if $\eta \in P_n$ is minimal, and $\tau(\sigma) = \eta$, then $E_\eta = p_\sigma$ and the theorem says $\E(p_\sigma) = \{\sigma\}$.

We now define an analogue of Grassmann necklaces where subsets are replaced by non-crossing partitions.   Let $\sigma \in \NC_n$ with parts $\sigma_1,\sigma_2,\ldots,\sigma_r$.  Let $\sigma_i$ and $\sigma_j$ be two parts.  We say that a third part $\sigma_k$ \defn{separates} $\sigma_i$ from $\sigma_j$ if any straight line in the disk from the convex hull of $\sigma_i$ to the convex hull of $\sigma_j$ intersects the convex hull of $\sigma_k$.  This condition can also be formulated as follows: let $(\bar a,\bar b,\bar c,\bar d) \subset [\bar n]$ be in circular order such that $\sigma_i \subset [\bar d,\bar a]$ and $\sigma_j \subset [\bar b,\bar c]$ with we have $\bar a,\bar d \in \sigma_i$ and $\bar b,\bar c \in \sigma_j$.  (Note that we may have $\bar a=\bar d$ or $\bar b=\bar c$.)  We define the circular arcs $(\sigma_i,\sigma_j) := (\bar a, \bar b)$ and $(\sigma_j,\sigma_i):= (\bar c, \bar d)$.  We say that a third part $\sigma_k$ of $\sigma$ separates $\sigma_i$ from $\sigma_j$ if $\sigma_k \cap (\sigma_i,\sigma_j) \neq \emptyset$ and $\sigma_k \cap (\sigma_j,\sigma_i) \neq \emptyset$.   In a similar manner, we define what it means for $\sigma_k$ to separate $\sigma_i$ and $\tsigma_j$, where $\tsigma_j$ is a part of the dual non-crossing partition $\tsigma$.  We also use the notations $(\sigma_i,\tsigma_j)$ and $(\tsigma_j,\tsigma_i)$ for the corresponding circular arcs. 

Denote the possibly empty set of parts of $\sigma$ that separate $\sigma_i$ from $\sigma_j$ (resp. $\tsigma_j$) by $\Sep_\sigma(\sigma_i,\sigma_j)$ (resp. $\Sep_\sigma(\sigma_i,\tsigma_j)$).  Note that $\Sep_\sigma(\sigma_i,\sigma_j)$ (resp. $\Sep_\sigma(\sigma_i,\tsigma_j)$) is linearly ordered: there is a part that is closest to $\sigma_i$, and one that is second closest, and so on.  Note that we will still consider $\Sep_\sigma(\sigma_i,\tsigma_j)$ as a collection of parts of $\sigma$ (there are analogous separating sets $\Sep_{\tsigma}(\tsigma_j,\sigma_i)$).

A pair $(\bar a, \bar b)$ is a \defn{legal transition} of $\sigma$ if either 
\begin{enumerate}
\item
$\bar a, \bar b$ belong to the same part $\sigma_i$, and $\bar b = \max_{\leq_{\bar a}} \sigma_i$,or
\item
 $\bar a \in \sigma_i$ and $\bar b \in \sigma_j$ belong to different parts of $\sigma$ and we have:
\begin{enumerate}
\item $|\sigma_i| > 1$ and $\sigma_j$ is contained in the circular interval $(\bar a, \max_{\leq_{\bar a}}\sigma_i)$
\item $\bar b = \max_{\leq_{\bar a}} \sigma_j$.
\end{enumerate}
\end{enumerate}
If $(\bar a, \bar b)$ is legal, we define a new non-crossing partition $t_{\bar a \bar b}(\sigma) := \sigma' = \in \NC_n$, as follows.  If $\bar a$ and $\bar b$ belong to the same part, then $\sigma' = \sigma$.  Otherwise, let $\sigma_{k_1}, \sigma_{k_2}, \ldots, \sigma_{k_c} \in \Sep_\sigma(\sigma_i, \sigma_j)$ be listed in order, with $\sigma_{k_1}$ closest to $\sigma_i$.  For each $\ell$, let
$$
A_\ell = \sigma_{k_\ell} \cap (\sigma_i, \sigma_j) \qquad \text{and} \qquad B_\ell = \sigma_{k_\ell} \cap (\sigma_j, \sigma_i)
$$
so that $\sigma_{k_\ell} = A_\ell \sqcup B_\ell$ and both $A_\ell$ and $B_\ell$ are non-empty.  Also let $A_0 = \sigma_i \cap [\bar a,\bar b)$, $B_0 = \sigma_i \cap (\bar b,\bar a)$.  Define $\sigma'$ by replacing the parts $\sigma_i, \sigma_{k_1}, \sigma_{k_2}, \ldots, \sigma_{k_c}, \sigma_j$ by the parts
$$
A_0, A_1 \cup B_0, A_2 \cup B_1,\ldots, A_c \cup B_{c-1}, \sigma_j \cup B_c. 
$$

Similarly, define a pair $(\bar a, \tilde b)$ to be a \defn{legal transition} of $\sigma$ if the part $\tsigma_j \subset [\tilde n]$ containing $\tilde b$ satisfies:
\begin{enumerate}
\item
$\tsigma_j$ is contained in the circular interval $(\bar a, \max_{\leq_{\bar a}}\sigma_i)$
\item
$\bar b = \max_{\leq_{\bar a}} \tsigma_j$.
\end{enumerate}
For a legal transition $(\bar a, \tilde b)$, we define $t_{\bar a \tilde b}(\sigma) := \sigma' = \in \NC_n$, as follows.  Let $\sigma_{k_1}, \sigma_{k_2}, \ldots, \sigma_{k_c} \in \Sep_\sigma(\sigma_i, \tsigma_j)$ be listed in order, with $\sigma_{k_1}$ closest to $\sigma_i$.  For each $\ell$, let
$$
A_\ell = \sigma_{k_\ell} \cap (\sigma_i, \tsigma_j) \qquad \text{and} \qquad B_\ell = \sigma_{k_\ell} \cap (\tsigma_j, \sigma_i)
$$
so that $\sigma_{k_\ell} = A_\ell \sqcup B_\ell$ and both $A_\ell$ and $B_\ell$ are non-empty.  Also let $A_0 = \sigma_i \cap [\bar a,\tilde b)$, $B_0 = \sigma_i \cap (\tilde b,\bar a)$.  Define $\sigma'$ by replacing the parts $\sigma_i, \sigma_{k_1}, \sigma_{k_2}, \ldots, \sigma_{k_c}$ by the parts
$$
A_0, A_1 \cup B_0, A_2 \cup B_1,\ldots, A_c \cup B_{c-1},  B_c. 
$$

Informally, $t_{\bar a \bar b}(\sigma)$ (and similarly and $t_{\bar a \tilde b}(\sigma)$) is obtained by drawing a line from $\bar a$ to $\bar b$, cutting up the parts $\Sep_\sigma$ using this line, and then reattaching the parts by a shift.  

\begin{lemma}\label{lem:sswap} \ 
\begin{enumerate}
\item
For a legal transition $(\bar a, \bar b)$ ,the set partition $\sigma' = t_{\bar a \bar b}(\sigma)$ is non-crossing satisfying $|\sigma'|=|\sigma|$.  Furthermore, if $I_{\bar a}(\sigma) = I$ then $I_{\tilde a}(\sigma') = I - \{2a-1\} \cup \{2b-1\}$.
\item
For a legal transition $(\bar a, \tilde b)$ ,the set partition $\sigma' = t_{\bar a \tilde b}(\sigma)$ is non-crossing satisfying $|\sigma'|=|\sigma|+1$.  Furthermore, if $I_{\bar a}(\sigma) = I$ then $I_{\tilde a}(\sigma') = I - \{2a-1\} \cup \{2b\}$.
\end{enumerate}
\end{lemma}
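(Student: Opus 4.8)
The plan is to prove both statements by a direct analysis of the cut-and-reattach prescription defining $t_{\bar a\bar b}(\sigma)$ and $t_{\bar a\tilde b}(\sigma)$, combined with the explicit formula \eqref{eq:Ia} expressing $[2n]\setminus I_a(\sigma)$ as the set of $\leq_a$-maxima of the parts of $\sigma$ together with the parts of $\tsigma$. By the cyclic symmetry of the construction we may assume $\bar a=\bar 1$, so that $\leq_{\bar a}$ is the standard order on $[2n]$ under the identification $\bar i=2i-1$, $\tilde i=2i$. The single fact driving the index formulas is that passing from $\leq_{\bar a}$ to $\leq_{\tilde a}$ moves $\bar a=2a-1$ from being the minimum to being the maximum while fixing the relative order of all other elements; hence the $\leq$-maximum of any part not containing $\bar a$ is unaffected.

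First I would dispose of the degenerate case of (1) in which $\bar a,\bar b$ lie in the same part $\sigma_i$ and $\sigma'=\sigma$. Then $\bar b=\max_{\leq_{\bar a}}\sigma_i$, and by the previous remark the only block-maximum that changes on passing to $\leq_{\tilde a}$ is that of $\sigma_i$, which goes from $2b-1$ to $2a-1$; by \eqref{eq:Ia} this is exactly $I_{\tilde a}(\sigma)=I-\{2a-1\}\cup\{2b-1\}$ (an identity if $\sigma_i$ is a singleton).

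For the substantive cases I would proceed in three steps. \emph{(i) Planarity and the part count.} Under the legality hypotheses the chord from $\bar a$ to $\bar b$ (resp.\ $\tilde b$) meets only $\sigma_i$, the separating blocks $\sigma_{k_1},\dots,\sigma_{k_c}$ --- which by definition of $\Sep_\sigma$ are linearly nested around $\sigma_j$ (resp.\ $\tsigma_j$) inside a single gap of $\sigma_i$ --- and $\sigma_j$ in case (1); the reattachment $A_\ell\cup B_{\ell-1}$ simply slides these pieces outward by one notch along the chord. I would check that $\sigma'$ is non-crossing by induction on $c$ (peeling off one separating block at a time), the base case $c=0$ being an immediate planarity check on $A_0,B_0$ (and $\sigma_j$). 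The part count follows by counting: in (1), $c+2$ blocks are replaced by $c+2$; in (2), $c+1$ blocks by $c+2$; using that each $A_\ell$ and each $B_\ell$ is nonempty. \emph{(ii) The dual partition.} I would identify $\widetilde{\sigma'}$ via the non-crossing matching $\tau(\sigma)$: the chord crosses a prescribed family of strands of $\tau(\sigma)$, and rerouting these strands along the chord produces $\tau(\sigma')$, which shows that the parts of $\tsigma$ lying in the region swept by the chord are themselves rotated by the same shift, with the part $\tsigma_j$ (in case (2)) being merged into an adjacent part. \emph{(iii) The index formula.} Computing $[2n]\setminus I_{\tilde a}(\sigma')$ from \eqref{eq:Ia}: untouched parts of $\sigma$ and of $\tsigma$ contribute the same maxima as before; for a reattached $\sigma$-block $A_\ell\cup B_{\ell-1}$ the $\leq_{\tilde a}$-maximum equals the $\leq_{\bar a}$-maximum of its outer piece $B_{\ell-1}$, which is the old maximum of $\sigma_{k_{\ell-1}}$ (or of $\sigma_i$ when $\ell=1$), since each $A_\ell$ precedes and each $B_\ell$ follows $\bar b$ in $\leq_{\bar a}$-order, so the shift permutes the slots but carries every old maximum over; and the two net effects are that $\bar b$ (resp.\ $\tilde b$), which was $\max_{\leq_{\bar a}}\sigma_j$ (resp.\ $\max_{\leq_{\bar a}}\tsigma_j$), ceases to be a block-maximum while $\bar a=2a-1$ becomes the new $\leq_{\tilde a}$-maximum of the block $A_0$. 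The analogous computation on the dual side --- no change to its maxima in case (1), loss of $\tilde b$ in case (2) --- then yields, via \eqref{eq:Ia} and Lemma \ref{lem:dualpart}, $I_{\tilde a}(\sigma')=I-\{2a-1\}\cup\{2b-1\}$ in case (1) and $I-\{2a-1\}\cup\{2b\}$ in case (2).

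The step I expect to be the main obstacle is (ii), together with the dual-side half of (iii): pinning down exactly how $\widetilde{\sigma'}$ is obtained from $\tsigma$ under the cut-and-reattach, and verifying that the combined multiset of maxima of the parts of $\sigma$ and of $\tsigma$ changes by precisely the single replacement $2b-1\leftrightarrow 2a-1$ (resp.\ $2b\leftrightarrow 2a-1$). This is where planarity, the precise legality conditions, and the interaction between $\sigma$ and $\tsigma$ through $\tau(\sigma)$ must all be used in concert; the remaining steps are routine if intricate bookkeeping.
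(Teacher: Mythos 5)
Your proposal is correct and follows essentially the same route as the paper's proof: one checks that $A_0,A_1,\ldots,A_c,\sigma_j,B_c,\ldots,B_1,B_0$ sit in circular order (giving planarity and the part count), and then tracks the multiset of block maxima through \eqref{eq:Ia}, using that passing from $\leq_{\bar a}$ to $\leq_{\tilde a}$ only moves $2a-1$ and that $\max_{\leq_{\tilde a}}(A_\ell\cup B_{\ell-1})=\max_{\leq_{\bar a}}\sigma_{k_{\ell-1}}$ lives in the outer piece $B_{\ell-1}$. The dual-side bookkeeping you flag as the main obstacle is exactly the step the paper itself dispatches with ``a similar computation,'' so your plan is, if anything, slightly more explicit there.
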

\begin{proof}
We prove (1); the other claim is similar.  The claim concerning the number of parts follows immediately from the definitions.  If $\bar a$ and $\bar b$ belong to the same part all the claims is trivial, so we assume otherwise.  The sets $A_0, A_1, \ldots, A_c, \sigma_j, B_c, B_{c-1},\ldots, B_1, B_0$ are in circular order.  It follows from this that $\sigma'$ is non-crossing.  For the second statement, we use \eqref{eq:Ia}.  Set $I' = I - \{2a-1\} \cup \{2b-1\}$.  We have that 
$$
\max_{\leq_{\tilde a}} A_0 = \bar a, \qquad \max_{\leq_{\tilde a}} A_1 \cup B_0 = \max_{\leq_{\bar a}} \sigma_i, \cdots, \max_{\leq_{\tilde a}} A_\ell \cup B_{\ell-1} = \max_{\leq_{\bar a}} \sigma_{k_\ell}, \cdots
$$
$$
\max_{\leq_{\tilde a}} A_c \cup B_{c-1} = \max_{\leq_{\bar a}} \sigma_{k_{c-1}}, \qquad \max_{\leq_{\tilde a}} \sigma_j \cup B_c = \max_{\leq_{\bar a}} \sigma_{k_c}.
$$
Thus $([2n]\setminus I') \cap [\bar n]$ agrees via \eqref{eq:Ia} with $([2n]\setminus I_{\tilde a}(\sigma')) \cap [\bar n]$.  A similar computation for the dual partition shows that  $([2n]\setminus I') \cap [\tilde n]$ equals $([2n]\setminus I_{\tilde a}(\sigma')) \cap [\tilde n]$.
\end{proof}

\begin{example}
Let $\sigma = (\bar 1 \bar 8| \bar 2 \bar 6 \bar 7| \bar 3 \bar 5| \bar 4 | \bar 9)$.  Then $\Sep_\sigma(\bar1 \bar 8, \bar 4) = \{ \bar 2 \bar 6 \bar7,\bar 3 \bar 5\}$.  The pair $(\bar 1,\bar 4)$ is a legal transition.   We have
$$
A_0 = \bar 1, A_1 = \bar 2, A_2 = \bar 3, B_0 = \bar 8, B_1 = \bar 6 \bar 7, B_2 = \bar 5
$$
so that $\sigma' = t_{\bar 1\bar 4}(\sigma)= (\bar 1|\bar 2 \bar 8| \bar 3 \bar 6 \bar 7|\bar 4 \bar 5|\bar 9)$.

\begin{center}
\begin{tikzpicture}[scale = 0.6]
\draw (0,0) circle (4cm);
\foreach \i in {1,...,9}
{
\filldraw[black] (120-40*\i:4) circle (0.1cm);
\node at (120-40*\i:4.4) {$\bar \i$};
\coordinate (a\i) at (120-40*\i:4);
}
\draw (a1) -- (a8);
\draw (a2) -- (a6) -- (a7) -- (a2);
\draw (a3) --(a5);
\node at (0,-5.5) {the partition $\sigma$};
\begin{scope}[shift={(9,0)}]
\draw (0,0) circle (4cm);
\foreach \i in {1,...,9}
{
\filldraw[black] (120-40*\i:4) circle (0.1cm);
\node at (120-40*\i:4.4) {$\bar \i$};
\coordinate (a\i) at (120-40*\i:4);
}
\draw (a1) -- (a8);
\draw (a2) -- (a6) -- (a7) -- (a2);
\draw (a3) --(a5);
\draw[line width=0.5cm,white] (88:3.8) -- (120-40*4-8:3.8);
\draw[dashed] (88:3.8) -- (120-40*4-8:3.8);
\node at (0,-5.5) [text width=4cm] {cut near the line joining $\bar 1$ and $\bar 4$};
\end{scope}
\begin{scope}[shift={(18,0)}]
\draw (0,0) circle (4cm);
\foreach \i in {1,...,9}
{
\filldraw[black] (120-40*\i:4) circle (0.1cm);
\node at (120-40*\i:4.4) {$\bar \i$};
\coordinate (a\i) at (120-40*\i:4);
}
\draw (a2) -- (a8);
\draw (a3) -- (a6) -- (a7) -- (a3);
\draw (a4) --(a5);
\node at (0,-5.5)  [text width=4cm] {shift before rejoining to get $\sigma' = t_{\bar 1\bar 4}(\sigma)$};
\end{scope}
\end{tikzpicture}

\end{center}

Now consider the legal transition $(\bar 1, \tilde 4)$.  Then as before we have $\Sep_\sigma(\bar1 \bar 8, \tilde 4) = \{ \bar 2 \bar 6 \bar7,\bar 3 \bar 5\}$, and the same $A$- and $B$-sets.   But this time
that $\sigma' = t_{\bar 1 \tilde 4}(\sigma) = (\bar 1|\bar 2 \bar 8| \bar 3 \bar 6 \bar 7|\bar 4 |\bar 5|\bar 9)$.
\end{example}

For $s \in [2n]$, we will define a relation $\sigma \rightarrow_s \sigma'$ called an \defn{$s$-swap}, as follows.  First suppose $s = 2a-1$ is odd.  
\begin{enumerate}
\item
If $\bar a$ is a singleton in $\sigma$, then we must have $\sigma' = \sigma$.
\item
If not, then we must have $\sigma' = t_{\bar a \bar b}(\sigma)$ or $\sigma' = t_{\bar a \tilde b}(\sigma)$, where $(\bar a, \bar b)$ or $(\bar a, \tilde b)$ is a legal transition.
\end{enumerate}
Otherwise, $s = 2a$ is even.  We then ask for the same condition between $\tsigma$ and $\tsigma'$, with $\tilde a$ replacing $\bar a$ (and using $\Sep_{\tsigma}$ in the definitions instead).

A \defn{partition necklace} on $[\bar n]$ is a sequence $\Sigma =(\sigma_1,\sigma_2,\ldots,\sigma_{2n})$ of non-crossing partitions on $[\bar n]$ such that each $\sigma_s \rightarrow_s \sigma_{s+1}$ is a $s$-swap for each $s$.  

Let $\I = (I_1,I_2,\ldots,I_{2n})$ be a Catalan necklace.  Then we may define a necklace $\Sigma(\I) =(\sigma^{(1)},\sigma^{(2)},\ldots,\sigma^{(2n)})$ of non-crossing partitions by the condition that $\sigma^{(s)}$ satisfies $I_s(\sigma^{(s)}) = I_s$.

\begin{proposition}\label{prop:Catalan}
The map $\I \longmapsto \Sigma(\I)$ is a bijection between Catalan necklaces and partition necklaces.
\end{proposition}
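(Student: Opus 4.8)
The plan is to reduce the proposition to a single \emph{local} statement about one $s$-swap and then extract that statement from Lemma~\ref{lem:sswap}. Recall the bijection $\sigma \mapsto I_s(\sigma)$ between $\NC_n$ and the $s$-shifted Catalan subsets from Section~\ref{sec:Catalan}: for a Catalan necklace $\I = (I_1,\dots,I_{2n})$ the partitions $\sigma^{(s)}$ with $I_s(\sigma^{(s)}) = I_s$ are uniquely determined, so $\Sigma(\I)$ is a well-defined sequence of non-crossing partitions, and since $I_s = I_s(\sigma^{(s)})$ is read back off $\Sigma(\I)$ the map $\I \mapsto \Sigma(\I)$ is automatically injective. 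The local statement I would isolate is: for $\sigma \in \NC_n$ and $s \in [2n]$ (indices mod $2n$), one has $\sigma \to_s \sigma'$ if and only if $(I_s(\sigma), I_{s+1}(\sigma'))$ satisfies the Grassmann-necklace relation at position $s$, i.e.\ $I_{s+1}(\sigma') = I_s(\sigma)$ when $s \notin I_s(\sigma)$, and $I_{s+1}(\sigma') = I_s(\sigma) - \{s\} \cup \{x\}$ for some $x$ when $s \in I_s(\sigma)$. Granting this, the proposition follows at once: if $\I$ is a Catalan necklace then consecutive entries satisfy the Grassmann relation, so the ``if'' direction gives $\sigma^{(s)} \to_s \sigma^{(s+1)}$ for every $s$ and $\Sigma(\I)$ is a partition necklace; conversely, given a partition necklace $\Sigma = (\sigma_1,\dots,\sigma_{2n})$, put $I_s := I_s(\sigma_s)$, an $s$-shifted Catalan subset, and apply the ``only if'' direction to each $s$-swap $\sigma_s \to_s \sigma_{s+1}$ to conclude that $\I := (I_1,\dots,I_{2n})$ is a Grassmann necklace, hence a Catalan necklace, with $\Sigma(\I) = \Sigma$ by uniqueness.

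For the local statement I would first record the bookkeeping fact that, writing $s = 2a-1$ (resp.\ $s = 2a$), the vertex $\bar a$ (resp.\ $\tilde a$) is a singleton part of $\sigma$ exactly when $s \notin I_s(\sigma)$; this is immediate from \eqref{eq:Ia}, since $\bar a$ is the $\leq_{\bar a}$-minimum and therefore the $\leq_s$-maximum of its part only when that part is $\{\bar a\}$. The ``only if'' direction is then Lemma~\ref{lem:sswap} combined with this fact: for $s$ odd, if $\bar a$ is a singleton then $\sigma' = \sigma$ and the desired $I_{s+1}(\sigma') = I_s(\sigma)$ is precisely the Grassmann-necklace relation for $\I(\sigma) = \I(\tau(\sigma))$, which is a genuine Grassmann necklace by Theorem~\ref{thm:Grassorder}; if $\bar a$ is not a singleton then $\sigma' = t_{\bar a \bar b}(\sigma)$ or $t_{\bar a \tilde b}(\sigma)$ for a legal transition and Lemma~\ref{lem:sswap} gives $I_{s+1}(\sigma') = I_{\tilde a}(\sigma') = I_s(\sigma) - \{s\} \cup \{x\}$ with $x \in \{2b-1, 2b\}$. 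The case $s$ even is, by definition of the $s$-swap, the odd-type statement for the dual partition $\tsigma$, and reduces to Lemma~\ref{lem:sswap} applied there via the duality $\sigma \leftrightarrow \tsigma$ (which interchanges $[\bar n]$ and $[\tilde n]$ up to the relabeling $\bar i \leftrightarrow \tilde i$ and matches $I_s(\sigma)$ with the corresponding shifted Catalan subset of $\tsigma$).

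The ``if'' direction is the real content, and the step I expect to be the main obstacle. One must show that, for fixed $\sigma$ with $\bar a$ not a singleton, the legal transitions $(\bar a, \bar b)$ and $(\bar a, \tilde b)$ together realize \emph{every} admissible target, namely that each $x$ for which $I_s(\sigma) - \{s\} \cup \{x\}$ is an $(s+1)$-shifted Catalan subset equals $2b-1$ or $2b$ for some legal transition. Since distinct legal transitions produce distinct $x$ by Lemma~\ref{lem:sswap}, this would identify the $s$-swaps out of $\sigma$ with the Grassmann-compatible local moves out of $I_s(\sigma)$; in particular the move carrying $I_s(\sigma)$ to $I_{s+1}(\sigma')$ would be realized by some $s$-swap $\sigma \to_s \sigma''$, and $I_{s+1}(\sigma'') = I_{s+1}(\sigma')$ then forces $\sigma'' = \sigma'$ by the uniqueness in the bijection $\kappa \mapsto I_{s+1}(\kappa)$. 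To carry this out I would pass to the Dyck-path / medial-pairing description of $\I(\sigma)$ from Section~\ref{sec:Catalan}: the $(s+1)$-shifted Catalan subsets obtained from $I_s(\sigma)$ by a single replacement of $s$ correspond to elementary modifications of the Dyck path $P(I_s(\sigma))$, and these should be matched one-to-one with the legal transitions of $\sigma$ at $s$ by inspecting the part $\sigma_i \ni \bar a$ together with the parts of $\sigma$ and of $\tsigma$ lying in the arc $(\bar a, \max_{\leq_{\bar a}} \sigma_i)$ --- equivalently, one can check that the two sets have equal cardinality and invoke the injectivity just noted. This bookkeeping is somewhat lengthy but uses only \eqref{eq:Ia}, Lemma~\ref{lem:dualpart}, and Lemma~\ref{lem:sswap}, and the even case again reduces to it by duality.
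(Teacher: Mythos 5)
Your overall architecture is the same as the paper's: the inverse map $\Sigma \mapsto (I_1(\sigma^{(1)}),\ldots,I_{2n}(\sigma^{(2n)}))$ lands in Catalan necklaces by Lemma \ref{lem:sswap}, and the substance is the converse direction, that a Grassmann step $I_{s+1} = I_s - \{s\}\cup\{s'\}$ between appropriately shifted Catalan subsets is always realized by an $s$-swap. Your bookkeeping fact ($\bar a$ is a singleton iff $s \notin I_s(\sigma)$, via \eqref{eq:Ia}) and your treatment of the "only if" direction are both correct and match what the paper uses implicitly.

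The gap is exactly at the step you flag as "the main obstacle," and your proposed resolution does not actually discharge it. You reduce to showing that every admissible target $x$ (one for which $I_s(\sigma)-\{s\}\cup\{x\}$ is an $(s+1)$-shifted Catalan subset) comes from a legal transition, and you propose to get this from injectivity plus an equality of cardinalities -- but computing the number of admissible targets of a shifted Catalan subset, and showing it equals $1 + \#\{\text{parts of }\sigma\text{ in }(\bar a, \max_{\leq_{\bar a}}\sigma_i)\} + \#\{\text{parts of }\tsigma\text{ in }(\bar a, \max_{\leq_{\bar a}}\sigma_i)\}$, is the entire content of the hard direction; asserting it is "somewhat lengthy bookkeeping" is not a proof. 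The paper avoids the global count and argues directly on the given necklace: writing $\bar b = \max_{\leq_{\bar a}}\sigma_i$, it observes that $s' \notin I_s$ forces $\bar a'$ (resp.\ $\tilde a'$) to be the $\leq_{\bar a}$-maximum of its part by \eqref{eq:Ia}, so the only thing that can fail in legality is containment of that part in the arc $(\bar a, \bar b)$; it then excludes the bad case $\bar a' \in (\bar b,\bar a)$ by a count on the interval $[2a,2b-1]$ (one has $|I_s \cap [2a-1,2b-1]| = b-a$, so moving the deleted index outside the arc leaves $I_{s+1}$ with too few elements relative to its complement on that interval for the Dyck-path condition to hold). Some version of this interval count -- or an equivalent computation -- is what your sketch still owes; without it the surjectivity onto admissible targets, and hence the bijection, is unproved.
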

\begin{proof}
The inverse map is given by sending $\Sigma = (\sigma_1,\ldots,\sigma_{2n})$ to the Grassmann necklace $\I(\Sigma) = (I_1(\sigma^{(1)}),I_2(\sigma^{(2)}),\ldots,I_{2n}(\sigma^{(2n)}))$.  By Lemma \ref{lem:sswap}, $\I(\Sigma)$ is a Catalan necklace if $\Sigma$ is a partition necklace.

It suffices to show that if $\I = (I_1,\ldots,I_{2n})$ is a Catalan necklace then $\Sigma(\I) = (\sigma^{(1)},\ldots,\sigma^{(2n)})$ is a partition necklace.  Let us consider the $s$-th subset $I_s$, and set $\sigma = \sigma^{(s)}$ and $\sigma' = \sigma^{(s+1)}$.  If $s \notin I_s$, then $I_{s+1} = I_s$ and in this case we have $\sigma = \sigma'$, and $\sigma \to_s \sigma'$.  Otherwise, let us assume for simplicity that $s = 2a-1$ is odd.  

Suppose first that $I_{s+1} = I_s - \{s\} \cup \{s'\}$ where $s' = 2a'-1$.  Obviously $\bar a' \notin I_s$ so by \eqref{eq:Ia} we have that $\bar a'$ is maximal (with respect to $\leq_{\bar a}$) in its part of $\sigma$.  If $\bar a'$ and $\bar a$ are in the same part, then $\sigma' = \sigma$, and $\sigma \to_{s} \sigma'$ is indeed a $s$-swap.  If $\bar a' \in \sigma_j$ while $\bar a \in \sigma_i$ are in different parts, and $\sigma_j$ is contained in the circular interval $(\bar a, \max_{\leq_{\bar a}}\sigma_i)$, then $(\bar a, \bar a')$ is a legal transition, so again $\sigma \to_s \sigma'$.  Let $\bar b = \max_{\leq_{\bar a}}\sigma_i$.  Finally, we show that it is impossible for $\bar a'$ to be contained in the circular interval $(\bar b, \bar a)$.  We claim that if this is the case that $I_{s+1}$ is not a $(s+1)$-shifted Catalan subset.  A counting argument similar to Lemma \ref{lem:dualpart} gives that $|I_s \cap [2a-1,2b-1]| =(b-a)$.  If $\bar a' \in (\bar b, \bar a)$, then $|I_{s+1} \cap [2a,2b-1]| = (b-a-1)$ but $([2n]\setminus I_{s+1}) \cap [2a,2b-1] = 2(b-a) = |I_{s+1} \cap [2a,2b-1]| + 2$.  This would mean that $I_{s+1}$ is not a $(s+1)$-shifted Catalan subset.

Next, suppose that $I_{s+1} = I_s - \{s\} \cup \{s'\}$ where $s' = 2a'$.  Then $\tilde a'$ belongs to a part $\tsigma_j$ of $\tsigma$.  By \eqref{eq:Ia} we have that $\tilde a'$ is maximal (with respect to $\leq_{\bar a}$) in its part of $\tsigma$.  The same argument as above shows that $\tsigma_j \subset (\bar a, \bar b)$ where $\bar b = \max_{\leq_{\bar a}}\sigma_i$.  So $(\bar a, \tilde a')$ is a legal transition, and the claims follow.
\end{proof}

The partition necklace $\Sigma(\tau) = (\sigma^{(1)},\sigma^{(2)},\ldots,\sigma^{(2n)})$ of $\tau \in P_n$ is defined as follows.   For each $s$, $\sigma^{(s)}$ is chosen so that $I_s(\sigma_s(\tau))$ is the $s$-shifted lexicographically minimal subset in $\{I_s(\sigma) \mid \sigma \in \E(\tau))$.  By definition, $\sigma_s(\tau) \in \E(\tau)$ for each $\tau$.  The following characterization of electroids is analogous to a theorem of Oh \cite{Oh}.

\begin{theorem}\label{thm:Ohelectroid}
For each $s$, we have $I_s(\sigma_s(\tau)) = I_s(\tau)$.  Thus $\Sigma(\tau) = \Sigma(\I(\tau))$.  The electroid of $\tau$ is given by 
$$
\E(\tau) = \{\sigma \mid \sigma \geq_s \sigma_s(\tau) \text{ for all } s\}
$$
where $\geq_s$ is the $s$-shifted dominance order on $\NC_n$ from Section \ref{sec:ncorder}.
\end{theorem}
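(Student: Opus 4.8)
The plan is to prove the three assertions in order, with the identity $I_s(\sigma_s(\tau)) = I_s(\tau)$ as the crux; the electroid description then falls out of Corollary~\ref{cor:order} and Theorem~\ref{thm:electroid}. I would first pin down the positroid $\M(f_\tau)$ combinatorially. Picking any $\L = \L(\Gamma) \in E_\tau$ with $\Gamma$ a critical cactus network of positive edge weights, Proposition~\ref{prop:ftau} places $\iota(\L) = M(N(\Gamma))$ in $\oPi_{f_\tau}$; since this point is totally nonnegative, Theorem~\ref{thm:TNNmain}(2) identifies its matroid with $\M(f_\tau)$, and Theorem~\ref{thm:concordant} gives $\Delta_I(M(N(\Gamma))) = \sum_{\sigma \in \E(I)} L_\sigma(\Gamma)$. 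As the summands are nonnegative with $L_\sigma(\Gamma) > 0$ exactly for $\sigma \in \E(\tau)$, this yields the description
\[ \M(f_\tau) = \{\, I \in \textstyle\binom{[2n]}{n-1} \mid \E(I) \cap \E(\tau) \neq \emptyset \,\}. \]
Moreover, since $I_s(\sigma)$ is concordant with $\sigma$ for every $\sigma$ (Proposition~\ref{prop:Msigma}), we also get $\{I_s(\sigma) \mid \sigma \in \E(\tau)\} \subseteq \M(f_\tau)$.

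Next I would prove the lexicographic identity. By definition $I_s(\tau) = I_s(f_\tau)$ is the $\leq_s$-lexicographically least element of $\M(f_\tau)$, so it lies $\leq_s$-lex below $m_s := \min_{\leq_s\text{-lex}}\{I_s(\sigma) \mid \sigma \in \E(\tau)\}$. For the reverse inequality, I would use the description above to pick $\sigma^\ast \in \E(I_s(f_\tau)) \cap \E(\tau)$; then $I_s(f_\tau) \in \M(\sigma^\ast)$, so $I_s(f_\tau) \geq_s I_s(\sigma^\ast)$ in $\leq_s$-dominance order (Proposition~\ref{prop:Msigma}), hence also in $\leq_s$-lex order since dominance refines lexicographic order, while $I_s(\sigma^\ast) \geq_s m_s$ is immediate. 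Squeezing gives $m_s = I_s(f_\tau) = I_s(\tau)$. Since $I_s(\tau)$ is a Catalan subset with respect to $\leq_s$ (it is the $s$-th term of the Catalan necklace $\I(\tau)$), the partition $\sigma_s(\tau)$ is the unique $\sigma$ with $I_s(\sigma) = I_s(\tau)$, so $I_s(\sigma_s(\tau)) = I_s(\tau)$; comparing with the definition of $\Sigma(\I(\tau))$, this also gives $\Sigma(\tau) = \Sigma(\I(\tau))$.

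For the electroid I would chain equivalences. Using the identity just proved together with $I_s(\sigma) = I_s(\tau(\sigma)) = I_s(f_{\tau(\sigma)})$, the condition that $\sigma \geq_s \sigma_s(\tau)$ for every $s$ is equivalent to $\I(f_\tau) \leq \I(f_{\tau(\sigma)})$ as Grassmann necklaces, which by Corollary~\ref{cor:order} is equivalent to $\tau(\sigma) \leq \tau$, which by Theorem~\ref{thm:electroid} holds iff $\sigma \in \E(\tau)$. This gives $\E(\tau) = \{\sigma \mid \sigma \geq_s \sigma_s(\tau) \text{ for all } s\}$, as desired.

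I expect the only real friction to be in the middle step: one must keep the $\leq_s$-shifted dominance and lexicographic orders on $\binom{[2n]}{n-1}$ (and their transport to $\NC_n$ via $\sigma \mapsto I_s(\sigma)$) carefully apart, check that dominance refines lexicographic order so the squeeze closes, and observe that the nonemptiness of $\E(I_s(f_\tau)) \cap \E(\tau)$ coming from the first step is exactly what forces the lexicographic minimum over $\E(\tau)$ to be attained at a partition concordant with $\tau$. Everything else is a routine assembly of Proposition~\ref{prop:ftau}, Theorem~\ref{thm:concordant}, Proposition~\ref{prop:Msigma}, Corollary~\ref{cor:order}, and Theorem~\ref{thm:electroid}.
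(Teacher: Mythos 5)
Your proof is correct, but it reaches the crucial fact by a genuinely different route than the paper. Both arguments share the same skeleton: everything reduces to showing that the unique non-crossing partition $\sigma^\ast$ with $I_s(\sigma^\ast)=I_s(\tau)$ actually lies in $\E(\tau)$, after which the three assertions are assembled from Corollary \ref{cor:order} and Theorem \ref{thm:electroid} essentially as you do. The paper proves this key fact combinatorially: it reduces the necklace inequalities $I_t(\tau)\leq_t I_t(\tau(\sigma^\ast))$ to the statement that, for every circular arc, $\tau$ has no more strands with both endpoints on the arc than $\tau(\sigma^\ast)$ does, and verifies that by a parenthesis-matching argument on the $U$/$D$-sequence read from $s$. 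You instead go through the geometry: the totally nonnegative representative of $E_\tau$ together with Theorem \ref{thm:concordant} and Theorem \ref{thm:TNNmain} identifies $\M(f_\tau)$ with $\{I\mid \E(I)\cap\E(\tau)\neq\emptyset\}$, and then the squeeze between the $\leq_s$-lexicographic minimality of $I_s(f_\tau)$ in $\M(f_\tau)$ and the $\leq_s$-dominance bound of Proposition \ref{prop:Msigma} forces the minimum over $\E(\tau)$ to be attained at $I_s(\tau)$. Your route is shorter and requires no new combinatorics, at the price of leaning on the Grassmannian embedding for what is in the end a purely combinatorial statement about matchings and non-crossing partitions; the paper's argument keeps the analogue of Oh's theorem self-contained at the level of Dyck paths. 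The one step worth flagging in your write-up --- that $\leq_s$-dominance implies $\leq_s$-lexicographic order, so the squeeze closes --- does hold (at the first index where the sorted subsets differ, dominance forces the smaller entry), and I see no gap.
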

\begin{proof}
Fix $s$.  It is enough to show that if $I_s(\sigma) = I_s(\tau)$ then $\sigma \in \E(\tau)$.  By Theorem \ref{thm:electroid} and Corollary \ref{cor:order}, it is enough to show that $ I_t(\tau) \leq_t I_t(\sigma)$ for all $t \in [2n]$.  

For a circular arc $[t,r]$, let $\tau[t,r]$ be the number of strands of $\tau$ with both endpoints on the arc.  Let $\tau' = \tau(\sigma)$.  Then the stated inequalities are equivalent to: $\tau[t,r] \leq \tau'[t,r]$ for every circular arc $[t,r]$.  Note that by considering the complement circular interval, we can assume that $[t,r]$ does not contain $s$.  

We think of $\tau'$ as a sequence of U-s and D-s (with the sequence starting at $s$).  Then $\tau'[t,r]$ is a consecutive subsequence of this, which we call $S$.  By matching U-s and D-s like parentheses, we can count the number of matched pairs in any sequence of U-s and D-s (not necessarily a sequence corresponding to a Dyck path).  Each strand in $\tau$ with both endpoints in $[t,r]$ is a pair consisting of a U followed (not immediately) by a D in $S$.  So the claim follows from the following statement: if we add extra letters to a sequence of U-s and D-s, the number of matched pairs cannot decrease.  This is easy to see directly.
\end{proof}

Let $\Sigma =(\sigma^{(1)},\ldots,\sigma^{(2n)})$ and $\Theta = (\theta^{(1)},\ldots, \theta^{(2n)})$ be two partition necklaces.  Then we define $\Sigma \leq \Theta$ if $\sigma^{(s)} \leq_s \theta^{(s)}$ for each $s \in [2n]$.

\begin{corollary}
Let $\tau, \tau' \in P_n$.  The following are equivalent:
\begin{enumerate}
\item
$\tau' \leq \tau$
\item
$\E(\tau') \subseteq \E(\tau)$
\item
$\Sigma(\tau')  \geq \Sigma(\tau)$.
\end{enumerate}
\end{corollary}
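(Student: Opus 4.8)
The plan is to deduce the corollary purely formally from Theorem~\ref{thm:Ohelectroid}, Corollary~\ref{cor:order}, and the definitions of the relevant partial orders; no new geometric input should be needed.

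First I would set up the dictionary. For each fixed $s \in [2n]$, the bijection $\sigma \mapsto I_s(\sigma)$ between $\NC_n$ and the $s$-shifted Catalan subsets is, by its very definition in Section~\ref{sec:ncorder}, an isomorphism of posets with respect to the $s$-shifted dominance orders. By Theorem~\ref{thm:Ohelectroid}, the entries of the partition necklace $\Sigma(\tau) = (\sigma_1(\tau),\ldots,\sigma_{2n}(\tau))$ satisfy $I_s(\sigma_s(\tau)) = I_s(\tau)$ for every $s$, i.e.\ $\Sigma(\tau) = \Sigma(\I(\tau))$. Unwinding the definition of the order on partition necklaces, the inequality $\Sigma(\tau') \ge \Sigma(\tau)$ says $\sigma_s(\tau) \le_s \sigma_s(\tau')$ for all $s$; translating through the dictionary this is $I_s(\tau) \le_s I_s(\tau')$ for all $s$, that is, $\I(f_\tau) \le \I(f_{\tau'})$. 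By Corollary~\ref{cor:order} this last statement is equivalent to $\tau' \le \tau$, giving $(1)\Leftrightarrow(3)$.

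For $(3)\Rightarrow(2)$: assuming $\sigma_s(\tau)\le_s\sigma_s(\tau')$ for all $s$, I would use the description $\E(\eta)=\{\sigma \mid \sigma \ge_s \sigma_s(\eta)\text{ for all }s\}$ from Theorem~\ref{thm:Ohelectroid} twice: any $\sigma\in\E(\tau')$ has $\sigma\ge_s\sigma_s(\tau')\ge_s\sigma_s(\tau)$ for every $s$, hence lies in $\E(\tau)$. For the reverse $(2)\Rightarrow(3)$: since $\sigma_s(\tau')\in\E(\tau')$ for each $s$ by construction, the inclusion $\E(\tau')\subseteq\E(\tau)$ forces $\sigma_s(\tau')\in\E(\tau)$, and then Theorem~\ref{thm:Ohelectroid} yields $\sigma_s(\tau')\ge_s\sigma_s(\tau)$; as this holds for all $s$ we get $\Sigma(\tau')\ge\Sigma(\tau)$. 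Together with $(1)\Leftrightarrow(3)$ this closes the loop. (If one prefers, $(1)\Rightarrow(2)$ is instead immediate from Theorem~\ref{thm:electroid}, since $\tau'\le\tau$ makes $\{\sigma \mid \tau(\sigma)\le\tau'\}\subseteq\{\sigma \mid \tau(\sigma)\le\tau\}$; this can replace $(3)\Rightarrow(2)$ in the cycle.)

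There is no genuine obstacle: every ingredient is already in hand, and the only thing demanding care is the bookkeeping of the several order-reversals in play. The uncrossing order on $P_n$ is dual to affine Bruhat order, the electroid $\E(\tau)$ is monotone increasing in $\tau$, but the coordinatewise lex-minima $\sigma_s(\tau)$ that constitute $\Sigma(\tau)$ decrease as $\tau$ increases --- which is exactly why condition $(3)$ is written as $\Sigma(\tau')\ge\Sigma(\tau)$ and not the reverse. I would also be careful to invoke that $\sigma\mapsto I_s(\sigma)$ is order-preserving in \emph{both} directions (immediate from the definition of $\le_s$ on $\NC_n$), so that the implications above are genuine equivalences.
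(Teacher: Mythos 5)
Your proposal is correct and follows essentially the same route as the paper: the equivalence $(1)\Leftrightarrow(3)$ via the first statement of Theorem \ref{thm:Ohelectroid} (so that $\Sigma(\tau)=\Sigma(\I(\tau))$, reducing the necklace comparison to Corollary \ref{cor:order}), and the equivalence $(2)\Leftrightarrow(3)$ via the description $\E(\tau)=\{\sigma\mid\sigma\geq_s\sigma_s(\tau)\text{ for all }s\}$ together with the fact that $\sigma_s(\tau')\in\E(\tau')$. Your write-up merely makes explicit the order-unwinding that the paper's two-sentence proof leaves implicit.
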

\begin{proof}
The equivalence of (1) and (3) follows from Corollary \ref{cor:order} and the first statement of Theorem \ref{thm:Ohelectroid}.  The equivalence of (2) and (3) follows from the second statement of Theorem \ref{thm:Ohelectroid}.
\end{proof}

Not all partition necklaces are of the form $\Sigma(\tau)$.  The partition necklaces that arise in this manner are satisfy the following additional conditions:
\begin{enumerate}
\item
if $\bar a$ is a singleton in  $\sigma_{\bar a}$ then $\sigma^{(\overline{a+1})} = t_{\tilde a \widetilde{a-1}}\sigma^{(\tilde a)}$, and
\item
if $\sigma^{(\bar a)} \to_{\bar a} \sigma^{(\tilde a)}$ is the swap given by the transition $(\bar a, \bar a')$, then $\sigma^{(\tilde a')} \to_{\tilde a'} \sigma^{\overline{a'+1}}$ is given by the transition $(\tilde a', \widetilde{a-1})$, and
\item
if $\sigma^{(\bar a)} \to_{\bar a} \sigma^{(\tilde a)}$ is the swap given by the transition $(\bar a, \tilde a')$, then $\sigma^{(\overline{a'+1})} \to_{\overline{a'+1}} \sigma^{\widetilde{a'+1}}$ is given by the transition $(\overline{a'+1}, \widetilde{a-1})$, and
\item
similar conditions with $\sigma^{(\bar a)}$ replaced by $\sigma^{(\tilde a)}$.
\end{enumerate}

\begin{example}
Let $\tau$ be the matching $\{(1,4),(2,6),(3,7),(5,8)\}$.  Then $f_\tau = [3,5,6,8,7,9,10,12]$, and $\I(\tau)=(124,234,435,456,568,678,781,812)$ and 
$$
\sigma(\tau) = \left((14|2|3),(24|1|3),(12|34),(13|2|4),(23|1|4),(24|1|3),(12|34),(13|2|4)\right).
$$
We can check that each consecutive pair is given by a $s$-swap.  For example $(12|34) \to_{\bar 2} (13|2|4)$ is given by the legal transition $(\bar 2, \tilde 3)$. In this case the separating set consists of only $(\bar 3 \bar 4)$.
\end{example}

\subsection{Quadratic relations for grove measurements}\label{sec:quadratic}
The well known Pl\"ucker relations \cite{Ful} for the Grassmannian gives rise to certain quadratic relations for the grove measurements $L_\sigma$.  

\begin{proposition}\label{prop:quadratic}
Suppose $\L(\Gamma) \in \P^{\NC_n}$ is the grove measurement point of a cactus network $\Gamma$.  Then for each $1 \leq k < n-1$ and each $I = \{i_1 < i_2 < \cdots < i_{n-1}\}$, $J = \{j_1 < j_2 < \cdots < j_{n-1}\}$, we have
$$
 \sum_{\sigma \in \E(I), \kappa \in \E(J)} L_\sigma L_\kappa = \sum_{I',J'} (-1)^{a(I',J')}  \sum_{\sigma \in \E(I'), \kappa \in \E(J')} L_\sigma L_\kappa
$$
where:
\begin{enumerate}
\item
 the summation is over $I', J'$ obtained from swapping the first $k$ indices in $J$ with with any $k$ indices in $I$, keeping the order in both;
\item
$a(I',J')$ is the total number of swaps needed to put both subsets $I'$ and $J'$ in order.
\end{enumerate}
\end{proposition}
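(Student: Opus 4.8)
The plan is to pull back the Pl\"ucker relations for $\Gr(n-1,2n)$ along the linear map $\iota$ of Theorem \ref{thm:concordant}.

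First I would invoke Theorem \ref{thm:concordant} (equivalently Theorem \ref{thm:matchingplucker}): the homogeneous coordinates $(\Delta_I(N(\Gamma)))_{I\in\binom{[2n]}{n-1}}$ define a genuine point $M(N(\Gamma))$ of the Grassmannian $\Gr(n-1,2n)$, and hence satisfy every Pl\"ucker relation. I would then write the Pl\"ucker relations in their \emph{exchange form}: for any two $(n-1)$-element subsets $I=\{i_1<\cdots<i_{n-1}\}$ and $J=\{j_1<\cdots<j_{n-1}\}$ of $[2n]$ and any $1\le k\le n-2$,
\[
\Delta_I(X)\,\Delta_J(X)=\sum_{I',J'}(-1)^{a(I',J')}\,\Delta_{I'}(X)\,\Delta_{J'}(X),
\]
where $(I',J')$ ranges over all pairs obtained by exchanging the first $k$ entries $j_1,\dots,j_k$ of $J$ with an arbitrarily chosen $k$-subset of the entries of $I$ (leaving the other entries of $I$ and $J$ in place), and $a(I',J')$ is the number of transpositions needed to sort both $I'$ and $J'$ into increasing order. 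This is a standard reformulation of the quadratic Pl\"ucker relations, valid at every point $X\in\Gr(n-1,2n)$; I would either cite \cite{Ful} or derive it from the three-term relations by induction on $k$, exactly as in the proof of the straightening law.

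Next I would substitute. By Theorem \ref{thm:concordant}, $\Delta_I(N(\Gamma))=\sum_{\sigma\in\E(I)}L_\sigma(\Gamma)$ for every $I$, for one fixed choice of the overall scaling of the projective coordinates $L_\sigma$. Plugging $X=M(N(\Gamma))$ into the exchange relation and expanding each Pl\"ucker coordinate linearly in the $L_\sigma$, every product $\Delta_{I''}(N(\Gamma))\,\Delta_{J''}(N(\Gamma))$ becomes the double sum $\sum_{\sigma\in\E(I''),\,\kappa\in\E(J'')}L_\sigma L_\kappa$; collecting terms yields precisely the asserted identity. Since it is homogeneous of degree $2$ in the $L_\sigma$, the identity is independent of the chosen scaling and therefore holds for the point $\L(\Gamma)\in\P^{\NC_n}$ itself.

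The only genuine difficulty is the sign bookkeeping: one must verify that the sign $(-1)^{a(I',J')}$ occurring in the exchange form of the Pl\"ucker relations for $\Gr(n-1,2n)$ agrees with the sign recorded in the statement. I expect this to be the main obstacle, although it is purely combinatorial; as in the proof of Proposition \ref{prop:reduce}, the cleanest route is to pin down the sign of the basic three-term relation in a single normalized instance and then propagate it through the inductive derivation of the general exchange relation.
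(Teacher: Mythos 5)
Your proposal is correct and matches the paper's (essentially one-line) justification: the identity is exactly the exchange form of the Pl\"ucker relations for $\Gr(n-1,2n)$ pulled back through the linear substitution $\Delta_I(N(\Gamma))=\sum_{\sigma\in\E(I)}L_\sigma(\Gamma)$ of Theorem \ref{thm:concordant}, with the convention that $\E(I')=\emptyset$ when $I'$ has repeated elements accounting for the vanishing Pl\"ucker coordinates. Your remark on verifying the sign normalization is the right point to be careful about, but it is standard and poses no real obstacle.
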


Note that $\E(I)$ is taken to be empty set if $I$ has repeated elements.

\begin{proposition}
Let $p \in \P^{\NC_n}$ be an arbitrary point.  Then $\L \in E_n$ if and only if
\begin{enumerate}
\item
$L_\sigma(p) \geq 0$ for each $\sigma \in \NC_n$, and
\item
$L_\sigma(p)$ satisfies the relations of Proposition \ref{prop:quadratic} (only the relations with $k=1$ is sufficient).
\end{enumerate}
\end{proposition}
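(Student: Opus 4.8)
The plan is to transfer the statement across the projective isomorphism induced by the matrix $A=(a_{I\sigma})$ and then invoke the classical fact that the Grassmannian is cut out set-theoretically by its ``one-element-exchange'' Pl\"ucker relations. Recall from the proof of Proposition \ref{prop:injection} that $A$, when its rows are restricted to Catalan subsets, is triangular with $1$'s on the diagonal; hence the linear map $A\colon\R^{\NC_n}\to\R^{\binom{[2n]}{n-1}}$ is injective and its image is $\H'$. Therefore $A$ descends to a projective isomorphism $\P^{\NC_n}\xrightarrow{\ \sim\ }\H$ under which the coordinates transform by $\Delta_I(A(p))=\sum_\sigma a_{I\sigma}L_\sigma(p)$, and by Theorem \ref{thm:concordant} this isomorphism restricts on $E_n$ to the embedding $\iota$. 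Since $A(p)\in\H$ for every $p$, Theorem \ref{thm:realizability} (which gives $\iota(E_n)=\X_{\geq 0}=\Gr(n-1,2n)_{\geq 0}\cap\H$) yields the reformulation: $p\in E_n$ if and only if $A(p)\in\Gr(n-1,2n)_{\geq 0}$.

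For the ``only if'' direction, suppose $\L\in E_n$. By Theorem \ref{thm:cactus} we have $\L=\L(\Gamma)$ for a cactus network $\Gamma$ with positive conductances, so each $L_\sigma(\Gamma)=\sum_{F\,:\,\sigma(F)=\sigma}\wt(F)\geq 0$, which is (1). Moreover $A(\L)\in\X_{\geq 0}\subset\Gr(n-1,2n)$, so the $\Delta_I(A(\L))$ satisfy all Pl\"ucker relations; substituting $\Delta_I=\sum_\sigma a_{I\sigma}L_\sigma$ turns these into the relations of Proposition \ref{prop:quadratic} for the $L_\sigma(\L)$ (this substitution is exactly how Proposition \ref{prop:quadratic} was derived), in particular the ones with $k=1$, which is (2).

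For the ``if'' direction, assume $p$ satisfies (1) and (2). Since $a_{I\sigma}\in\{0,1\}$, (1) forces $\Delta_I(A(p))=\sum_\sigma a_{I\sigma}L_\sigma\geq 0$ for all $I$, and these do not all vanish because $A$ is injective. Condition (2) with $k=1$, rewritten through $\Delta_I=\sum_\sigma a_{I\sigma}L_\sigma$ (with the convention $\Delta_I=0$, equivalently $\E(I)=\emptyset$, when $I$ has a repeated index), says precisely that the $\Delta_I(A(p))$ satisfy the Pl\"ucker relations obtained by fixing an $(n-2)$-subset $A$ and an $n$-subset $B$ and forming the alternating sum $\sum_{x\in B}\pm\,\Delta_{B\setminus x}\,\Delta_{A\cup x}$; indeed, taking $B=I\cup\{j_1\}$ and $A=J\setminus\{j_1\}$ identifies such a $k=1$ relation with one of these sums, its left-hand term being $\Delta_I\Delta_J$. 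It is classical (see e.g.\ \cite{Ful}) that this family of Pl\"ucker relations cuts out $\Gr(n-1,2n)$ set-theoretically, so $A(p)$ is a point of $\Gr(n-1,2n)$; being represented by the nonnegative, nonzero vector $(\Delta_I(A(p)))$ it then lies in $\Gr(n-1,2n)_{\geq 0}$, and the reformulation gives $p\in E_n$. The final assertion that the $k=1$ relations suffice is immediate, since (2) entered only through this $k=1$ family.

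I expect the one genuinely delicate point to be the combinatorial bookkeeping in the ``if'' direction: verifying that the $k=1$ relations of Proposition \ref{prop:quadratic}, after the substitution and the reindexing $B=I\cup\{j_1\}$, $A=J\setminus\{j_1\}$, really do range over a set of Pl\"ucker relations that by itself cuts out the Grassmannian — with the signs $(-1)^{a(I',J')}$ matching those of the standard Pl\"ucker relations, and with the terms involving repeated indices (equivalently empty $\E(I)$) dropping out consistently. This is essentially the statement that the straightening relations of standard monomial theory on $\Gr(n-1,2n)$ generate the Pl\"ucker ideal; once it is in place, the rest of the argument is formal given Theorems \ref{thm:concordant}, \ref{thm:realizability} and \ref{thm:cactus}.
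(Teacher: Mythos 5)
Your proposal is correct and follows essentially the same route as the paper: both directions pass through the linear map $A$, use the fact that the one-element-exchange ($k=1$) Pl\"ucker relations cut out the Grassmannian set-theoretically to place $A(p)$ in $\X_{\geq 0}$, and then conclude via the realizability statement of Theorem \ref{thm:realizability} together with the triangularity/injectivity argument of Proposition \ref{prop:injection}. The only difference is that you spell out the ``only if'' direction and the reindexing of the $k=1$ relations in more detail than the paper, which simply declares these steps clear.
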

\begin{proof}
The ``only if" direction is clear.  Suppose $p \in \P^{\NC_n}$ satisfies both conditions.  Use the relations in Theorem \ref{thm:concordant} to produce a point $X_p \in \P^{\binom{2n}{n-1}-1}$ in Pl\"ucker projective space.  A point in Pl\"ucker space lies in the Grassmannian if and only if the Pl\"ucker relations are satisfied with one index swapped (that is $k = 1$ in Proposition \ref{prop:quadratic}).  Thus Condition (2) gives $X_p \in \X$.  Condition (1) gives $X_p \in \X_{\geq 0}$.  But by the argument in Proposition \ref{prop:injection}, $\Delta_I(X_p)$ determine $L_\sigma(p)$, and by the realizability statement in Theorem \ref{thm:realizability} we must have $p \in E_n$.  
\end{proof}

\end{document}